%20120710-20140623
\documentclass[11pt,a4paper,reqno]{amsart}
\usepackage[all]{xy}
\usepackage{amsmath,amsfonts,amssymb,amscd}
\usepackage{soul,color}

\setcounter{tocdepth}{1}
\setlength{\textwidth}{15.2cm}
\setlength{\oddsidemargin}{0pt}
\setlength{\evensidemargin}{0pt}

\theoremstyle{plain}
\newtheorem{theorem}{Theorem}%[section]
\newtheorem{mytheorem}{Theorem}[subsection]
\newtheorem{corollary}[mytheorem]{Corollary}%[subsection]
%[subsection]
\newtheorem{lemma}[mytheorem]{Lemma}%[section]
\newtheorem{proposition}[mytheorem]{Proposition}%[section]
\newtheorem{definition}[mytheorem]{Definition}%[section]
\newtheorem{remark}[mytheorem]{Remark}

\newtheorem{example}[mytheorem]{Example}

\font\cyr wncyr10 at 11pt \def\Sha{\hbox{\cyr X}}

\def\NN{{\mathbb N}}

\def\QQ{{\mathbb Q}}
\def\RR{{\mathbb R}}
\def\ZZ{{\mathbb Z}}

\def\E{{\mathcal E}}

%cannot use \def\H for {{\mathcal H}} it is an accent

\def\K{{\mathcal K}}
%cannot use \def\L for {{\mathcal L}} it is a european letter

\def\M{{\mathcal M}}

%cannot use \O for {{\mathcal O}} it is a swedish letter
\def\cO{{\mathcal O}}
%cannot use \def\P for {{\mathcal P}} it is paragraph symbol

%cannot use \S which is used for section symbol

\newcommand{\fA}{\mathfrak{A}}

\newcommand{\fC}{\mathfrak{C}}

\newcommand{\fE}{\mathfrak{E}}
\newcommand{\fF}{\mathfrak{F}}

\newcommand{\fa}{\mathfrak{a}}
\newcommand{\fb}{\mathfrak{b}}
\newcommand{\fc}{\mathfrak{c}}
\newcommand{\fd}{\mathfrak{d}}
\newcommand{\fe}{\mathfrak{e}}
\newcommand{\ff}{\mathfrak{f}}

\newcommand{\fk}{\mathfrak{k}}

\newcommand{\fm}{\mathfrak{m}}
\newcommand{\fn}{\mathfrak{n}}

\newcommand{\fp}{\mathfrak{p}}

\newcommand{\fr}{\mathfrak{r}}
\newcommand{\fs}{\mathfrak{s}}

\newcommand{\La}{\operatorname{\Lambda}}
\newcommand{\lra}{\longrightarrow}

\newcommand{\lr}{{\longrightarrow\;}}
\newcommand{\mbb}{\mathbb}

\def\+{{\dagger}}
\DeclareMathOperator{\Ker}{Ker}
\DeclareMathOperator{\Coker}{Coker}

\DeclareMathOperator{\Aut}{Aut}

\DeclareMathOperator{\coh}{H}

\DeclareMathOperator{\tr}{tr}

\DeclareMathOperator{\Gal}{Gal}

\DeclareMathOperator{\GL}{GL}

\DeclareMathOperator{\image}{Im}
\DeclareMathOperator{\Sel}{Sel}

\DeclareMathOperator{\Hom}{Hom}
\DeclareMathOperator{\Nm}{N}
\DeclareMathOperator{\Tor}{Tor}

\DeclareMathOperator{\coker}{coker}

\DeclareMathOperator{\rank}{rank}
\DeclareMathOperator{\corank}{corank}

\definecolor{purple}{rgb}{0.7,0,1}

\begin{document}
\title[ ] {Pontryagin duality for Iwasawa modules \\ and abelian varieties}

\author[Lai] {King Fai Lai}
\address{School of Mathematical Sciences\\
Capital Normal University\\
Beijing 100048, China}
\email{kinglaihonkon@gmail.com}

\author[Longhi] {Ignazio Longhi}
\address{Department of Mathematical Sciences\\
Xi'an Jiaotong-Liverpool University\\
No.111 Ren'ai Road, Suzhou Dushu Lake Higher Education Town, Suzhou Industrial Park, Jiangsu, China}
\email{Ignazio.Longhi@xjtlu.edu.cn}

\author[Tan]{Ki-Seng Tan}
\address{Department of Mathematics\\
National Taiwan University\\
Taipei 10764, Taiwan}
\email{tan@math.ntu.edu.tw}

\author[Trihan]{Fabien Trihan}
\address{Department of Information and Communication Sciences\\
Faculty of Science and Technology, Sophia University\\
4 Yonbancho, Chiyoda-ku, Tokyo 102-0081 JAPAN}
\email{f-trihan-52m@sophia.ac.jp}

\subjclass[2000]{11S40 (primary), 11R23, 11R34, 11R42, 11R58, 11G05, 11G10 (secondary)}

\keywords{Pontryagin duality, Abelian variety, Selmer group,  Iwasawa theory}

\begin{abstract}
We prove a functional equation for
two projective systems of finite abelian $p$-groups, $\{\fa_n\}$ and $\{\fb_n\}$, endowed with an action of $\ZZ_p^d$ such that $\fa_n$ can be identified with the Pontryagin dual of $\fb_n$ for all $n$.

Let $K$ be a global field.
Let $L$ be a $\ZZ_p^d$-extension of $K$ ($d\geq 1$), unramified outside a finite set of places.
Let $A$ be an abelian variety over $K$.
We prove an algebraic functional equation for the Pontryagin dual of the Selmer group  of
$A$.
\end{abstract}

\maketitle

\section{Introduction}  \label{sec:intro}

Let $\Gamma$ be an abelian $p$-adic Lie group isomorphic to $\ZZ_p^d$ where $\ZZ_p$ is the ring of
$p$-adic integers and $d$ is a positive integer. The Iwasawa algebra is the complete group ring
$\ZZ_p[[\Gamma]]$ which we denote by $\Lambda$. An  Iwasawa module is a topological $\Lambda$ module.

We study a Pontryagin duality for Iwasawa modules which are inverse limits of finite  Iwasawa modules with $\Lambda$ acting through $\Gamma_n$ where $\Gamma_n$ denotes $\Gamma/\Gamma^{p^n}$. The result leads to a functional equation for characteristic ideals. We then applied these results to Selmer groups of abelian varieties over $\ZZ_p^d$ extensions of global fields.

Before we describe our Pontryagin duality we recall a few simple notions concerning
Iwasawa modules. A $\Lambda$-module $M$ is said to be \textit{pseudo-null} if  no height one prime ideal contains its annihilator (\cite[\S4]{bou65}). A \textit{pseudo-isomorphism} of
$\Lambda$-modules is a homomorphism with pseudo-null kernel and cokernel. We  write $M\sim N$ to mean that $M$ is pseudo-isomorphic to $N$.

The group of roots of unity $\boldsymbol\mu_{p^{\infty}}:=\cup_m\boldsymbol\mu_{p^{m}}$
We say $f\in\Lambda$ is a \textit{simple element} if there exist $\gamma\in\Gamma- \Gamma^p$ and $\zeta\in\boldsymbol\mu_{p^\infty}$ such that
$$f=f_{\gamma,\zeta}:=\prod_{\sigma\in\Gal(\QQ_p(\zeta)/\QQ_p)} (\gamma-\sigma(\zeta)).$$

The inversion $\Gamma\to \Gamma:\gamma\mapsto \gamma^{-1}$ gives rise to an isomorphism
from $\Lambda$ to  $\Lambda$ which we denote by
sending an element $\lambda$ to $\lambda^\sharp$. This allows us to twist a $\Lambda$-module
$M$ to $\Lambda {}_\sharp\!\otimes_{\Lambda} M$ which we denote by $M^\sharp$
(see $\S$\ref{su:twistmodule}).

Now let us describe the formal structure we need for the  Pontryagin duality.

Consider a collection
$$\fA=\{\fa_n,\fb_n,\langle\;,\;\rangle_n,\fr_m^n,\fk_m^n\;\mid\; n,m\in\NN\cup\{0\},\; n\geq m\}$$
where
\begin{enumerate}
\item[($\Gamma$-1)]  $\fa_n, \fb_n$ are finite abelian groups, with an action of $\La$ factoring through $\ZZ_p[\Gamma_n]$.
\item[($\Gamma$-2)] For $n\geq m$,
$$\fr_m^n\colon\fa_m\times \fb_m\longrightarrow \fa_n\times \fb_n\,,$$
$$\fk_m^n\colon\fa_n\times \fb_n\longrightarrow \fa_m\times \fb_m$$
    are $\Gamma$-morphisms such that $\fr_m^n(\fa_m)\subset \fa_n$, $\fr_m^n(\fb_m)\subset \fb_n$, $\fk_m^n(\fa_n)\subset \fa_m$, $\fk_m^n(\fb_n)\subset \fb_m$ and $\fr^n_n=\fk^n_n={\rm id}$. Also, $\{\fa_n\times \fb_n, \fr_m^n\}_n$ form an inductive system and $\{\fa_n\times \fb_n, \fk_m^n\}_n$ form a projective system.
\item[($\Gamma$-3)] We have
$$\fr_m^n\circ\fk_m^n=\Nm_{\Gamma_n/\Gamma_m}\colon\fa_n\times \fb_n\longrightarrow \fa_n\times \fb_n$$
(where $\Nm_{\Gamma_n/\Gamma_m}:=\sum_{\sigma\in\Ker(\Gamma_n\rightarrow\Gamma_m)}\sigma$ is the norm associated with $\Gamma_n\twoheadrightarrow\Gamma_m$) and
$$\fk_m^n \circ \fr_m^n=p^{d(n-m)}\cdot \text{id}\colon\fa_m\times \fb_m\longrightarrow \fa_m\times \fb_m.$$
\item[($\Gamma$-4)] For each $n$, $\langle\;,\;\rangle_n\colon\fa_n\times \fb_n\longrightarrow \QQ_p/\ZZ_p$ is a perfect pairing (and hence $\fa_n$ and $\fb_n$ are dual $p$-groups) respecting $\Gamma$-action as well as the morphisms $\fr_m^n$ and $\fk_m^n$ in the sense that
$$\langle \gamma\cdot a, \gamma\cdot b\rangle_n=\langle a,b\rangle_n\;\forall\,\gamma\in\Gamma,$$
\begin{equation} \label{e:rkpairing} \langle a,\fr_m^n(b)\rangle_n=\langle\fk_m^n(a),b\rangle_m \end{equation}
    and
$$\langle\fr_m^n(a),b\rangle_n=\langle a,\fk_m^n(b)\rangle_m.$$
\end{enumerate}
Write
$$\fa:=\varprojlim_{n}\fa_n\;\;
\text{and}\;\;\fb:=\varprojlim_{n}\fb_n\,.$$

 We say $\fA$ as above is a \textit{$\Gamma$-system} if both $\fa$ and $\fb$ are finitely generated torsion $\La$-modules.  We say that the $\Gamma$-system $\fA$ is \textit{twistable} of order $k$ if there exists an integer $k$ such that $p^{n+k}\fa_n=0$ for every $n$.
Given a $\Gamma$-system $\fA$ we put
$$\fa^0\times \fb^0:=\varprojlim_{m}  \bigcup_{n\geq m}\Ker(\fr_m^{n}).$$
We say $\fA$ is \textit{pseudo-controlled}  if $\fa^0\times \fb^0$ is pseudo-null.

The following Pontryagin duality theorem %on the {\em algebraic functional equation}
is proved in $\S$\ref{se:al}.

\begin{theorem}\label{t:al}
Let $$\fA=\{\fa_n,\fb_n,\langle\;,\;\rangle_n,\fr_m^n,\fk_m^n\;\mid\; n,m\in\NN,\; n\geq m\}$$ be a pseudo-controlled $\Gamma$-system.
Then there is a pseudo-isomorphism
$$\fa^\sharp\sim \fb$$
in the following three cases:\begin{enumerate}
\item there exists $\xi\in\La$ not divisible by any simple element and such that $\xi\fb$ is pseudo-null;
\item $\fA$ is pseudo-isomorphic to a twistable pseudo-controlled $\Gamma$-system;
\item $\fA$ is part of a {\em{\textbf{T}}}-system.
\end{enumerate}
\end{theorem}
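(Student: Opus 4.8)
The plan is to use Pontryagin duality at finite level to rewrite the statement in terms of the two natural limits built from $\{\fa_n\}$, and then to reduce all three cases to a single control-type comparison whose error is supported at divisors coming from simple elements. \textbf{Step 1: reduction via the pairings.} Fix $n$. Since $\langle\,,\,\rangle_n$ is perfect and $\langle\gamma a,\gamma b\rangle_n=\langle a,b\rangle_n$, i.e. $\langle\gamma a,b\rangle_n=\langle a,\gamma^{-1}b\rangle_n$, the map $a\mapsto\langle a,\,\cdot\,\rangle_n$ is a $\Gamma_n$-equivariant isomorphism $\fa_n\iso\Hom(\fb_n,\QQ_p/\ZZ_p)$, and symmetrically $\fb_n\iso\Hom(\fa_n,\QQ_p/\ZZ_p)$. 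The adjunction formulas of ($\Gamma$-4), notably (\ref{e:rkpairing}), say exactly that under these identifications the Pontryagin dual of $\fk_m^n$ is $\fr_m^n$ and the Pontryagin dual of $\fr_m^n$ is $\fk_m^n$. As Pontryagin duality exchanges $\varprojlim$ and $\varinjlim$ for systems of finite groups, this yields a canonical isomorphism of $\La$-modules
$$\fb\;\cong\;\Bigl(\varinjlim_n\bigl(\fa_n,\fr_m^n\bigr)\Bigr)^{\!\vee}.$$
Hence the assertion $\fa^\sharp\sim\fb$ becomes: the ``descent'' limit $\fa=\varprojlim_n(\fa_n,\fk_m^n)$, twisted by $\sharp$, is pseudo-isomorphic to the Pontryagin dual of the ``ascent'' limit $\varinjlim_n(\fa_n,\fr_m^n)$. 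The occurrence of $\sharp$ is precisely the record of the fact that Pontryagin duality flips the $\Gamma$-action to its inverse in the limit.

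\textbf{Step 2: the control comparison.} The two limits are tied together by ($\Gamma$-3): on $\fa_n$ the composite $\fr_m^n\circ\fk_m^n$ is multiplication by $\nu_{n,m}:=\Nm_{\Gamma_n/\Gamma_m}\in\La$, while on $\fa_m$ the composite $\fk_m^n\circ\fr_m^n$ is $p^{d(n-m)}$. Choosing a $\ZZ_p$-basis $\gamma_1,\dots,\gamma_d$ of $\Gamma$ one gets $\nu_{n,m}=\prod_{i=1}^d\prod_{l=m+1}^{n}\Phi_{p^l}(\gamma_i)$, and $\Phi_{p^l}(\gamma_i)=f_{\gamma_i,\zeta_{p^l}}$ is a simple element; so $\nu_{n,m}$ is a product of simple elements, whereas $p^{d(n-m)}$ is coprime to every simple element. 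Using these relations and the \emph{pseudo-controlled} hypothesis — which says exactly that the stable kernel $\fa^0\times\fb^0$ of the $\fr$-maps is pseudo-null — I would exhibit four-term exact sequences with pseudo-null ends identifying $\fa_n$ and $\fb_n$, up to pseudo-null error, with level-$n$ quotients of $\fa$ and of $\fb$, and deduce that $\fa^\sharp$ and $\bigl(\varinjlim_n(\fa_n,\fr_m^n)\bigr)^\vee$ are pseudo-isomorphic \emph{away from the height-one primes dividing the elements $\nu_{n,m}$}, i.e. away from the simple elements. This localisation away from the bad locus is exactly where the three hypotheses intervene.

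\textbf{Step 3: the three cases.} In case (1), $\xi\fb$ pseudo-null with $\xi$ prime to every simple element forces $\text{char}(\fb)$ to be coprime to every $\nu_{n,m}$, so the comparison of Step 2 has empty bad locus and one concludes at once. In case (2) all the constructions above respect pseudo-isomorphism, so one may first replace $\fA$ by a twistable pseudo-controlled $\Gamma$-system; the uniform bound $p^{n+k}\fa_n=0$ then makes the interaction of $\sharp$ with the limits controllable, and one twists $\fA$ by a suitable character of $\Gamma$ to move its support off the finitely many relevant simple elements, applies case (1) to the twist, and untwists — twisting being exact and $\sharp$-compatible. In case (3) the extra structure of a \textbf{T}-system supplies, by its defining properties, precisely the vanishing needed to annihilate the bad-locus contribution, and again Step 2 finishes the argument. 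In all three cases the kernels and cokernels accumulated en route are pseudo-null; and since pseudo-isomorphism is not symmetric for $d\ge2$, one must be careful to produce a genuine $\La$-morphism realising $\fa^\sharp\sim\fb$, which is provided by assembling the finite-level duality isomorphisms $\fa_n\cong\fb_n^\vee$ along the control comparison.

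\textbf{The main obstacle.} The essential difficulty is Step 2 together with Step 3: measuring the discrepancy between the $\fk$-limit $\fa$ and the dual of the $\fr$-limit exactly at the primes dividing $\nu_{n,m}=\Nm_{\Gamma_n/\Gamma_m}$, where the clean interchange ``$\fk\fr=p^{d(n-m)}$ versus $\fr\fk=\nu_{n,m}$'' fails and where the structure theory of finitely generated torsion modules over $\ZZ_p[[\Gamma]]\cong\ZZ_p[[T_1,\dots,T_d]]$ is badly behaved for $d\ge2$. Each of the three hypotheses is precisely a device that forces this discrepancy to be pseudo-null; granting that, the rest is formal Pontryagin duality and inverse/direct-limit bookkeeping.
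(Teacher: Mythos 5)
Your Step 1 is correct and matches the paper's starting point: the perfect pairings identify $\fa_n$ with $\fb_n^\vee$ compatibly, so $\fa^\sharp\simeq\varprojlim\Hom(\fb_n,\QQ_p/\ZZ_p)$ with the appropriate transition maps, and the $\sharp$-twist records the inversion $\gamma\mapsto\gamma^{-1}$. Your observation in Step 2 that $\nu_{n,m}=\Nm_{\Gamma_n/\Gamma_m}$ factors as a product of simple elements while $\fk\circ\fr=p^{d(n-m)}$ is coprime to them is also the right basic tension. But from there the proposal is a promise, not a proof, and the gap is precisely where all the work lies.

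First, the phrase ``identify $\fa_n$ and $\fb_n$, up to pseudo-null error, with level-$n$ quotients of $\fa$ and $\fb$, and deduce that $\fa^\sharp$ and $(\varinjlim\fa_n)^\vee$ are pseudo-isomorphic away from the primes dividing $\nu_{n,m}$'' does not survive inspection, because the object that actually appears in the limit, $\varprojlim\Hom_\Lambda(\fb_n,Q_n/\La_n)$, packages into $\Hom_\Lambda(\fb,Q_\infty/\La)$, and $\Hom_\Lambda(\fb,Q_\infty/\La)$ is \emph{not} a finitely generated $\La$-module (e.g.\ any group homomorphism $\fb\to E_\omega$ with $\omega$ in the zero locus of the annihilator of $\fb$ is automatically a $\La$-homomorphism). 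So one cannot ``localize away from the bad locus'' in the naive sense inside the category of finitely generated torsion modules; one has to first pass from $Q_\infty/\La$ to $Q(\La)/\La$, control the kernel of that projection by exhibiting two coprime annihilators built from $\ZZ_p$-flats, and only then land in $\Hom_\Lambda(\fb,Q(\La)/\La)\sim\fb$. This requires the character-space decomposition $Q_\infty\simeq\prod_{[\omega]}E_{[\omega]}$ and, crucially, Monsky's theorem that the zero set $\triangle_\xi$ is a proper finite union of $\ZZ_p$-flats, which lets you translate ``$\xi$ has no simple factor'' into ``$\triangle_\xi$ contains no codimension-one flat'' and then produce the two coprime elements $\varphi_1,\varphi_2$ killing the defect. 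None of this appears in your sketch, yet it is the heart of case (1); in fact the paper explicitly flags Monsky's theorem as the essential and unexpected input, particularly for $d\ge2$.

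Second, your treatment of case (3) is effectively empty: ``the extra structure of a \textbf{T}-system supplies precisely the vanishing needed to annihilate the bad-locus contribution'' restates the conclusion rather than deriving it. The actual argument is an induction on the irreducible factors of the annihilator of $\fa$ whose base case is the single-simple-element case, and that base case requires the \textbf{T}-hypothesis in a very concrete way: one passes to an intermediate $\ZZ_p^{d-1}$-subextension where the corresponding $\fa_{\infty,m}$, $\fb_{\infty,m}$ are torsion over the smaller Iwasawa algebra, produces an $\eta\in\La'$ making $\eta\cdot\fA$ twistable, and only then invokes case (2). The higher-power and multi-prime steps are a further induction using the derived systems $\fC$, $\fE$ from \S3 and a comparison of structure constants $a_i$, $b_i$. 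Your sketch gives no hint of any of this. Also, a minor but telling slip: pseudo-isomorphism \emph{is} an equivalence relation on finitely generated torsion $\La$-modules even for $d\ge2$ (Lemma \ref{l:pseudoisom}), so the caution you append at the end is misplaced; the real subtlety is elsewhere, namely in getting a pseudo-\emph{injection} $\fa^\sharp\to\fb$ in the first place and then using symmetry of the $\Gamma$-system (not of pseudo-isomorphism) to get one the other way.
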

The definition of \textbf{T}-system is given in $\S$\ref{su:Ts}.

In $\S$ 4 we introduce Cassels-Tate systems which are constructed
from Selmer groups of abelian varieties defined over global fields.
In \S\ref{ss:gamma}
we show in theorem  \ref{t:ct} that a Cassels-Tate system  is actually a $\Gamma$-system.
In $\S$\ref{sb:fleq} we apply our results on Pontryagin duality to prove
the following:
 \begin{theorem}\label{t:xaat}
Let $K$ be a global field. Let $L/K$ be a $\ZZ^d_p$ extension with a finite ramification locus and
$d \geq 1$.
Let $A$ be an abelian variety defined over $K$.
Suppose that  $A$ has potentially ordinary reduction at each ramified place of $L/K$. Then the characteristic ideal of the
Pontryagin dual $X_p(A/L)$ of the Selmer group of $A$ and that of
 the Pontryagin dual $X_p(A^t/L)$  of the Selmer group of
  of the dual abelain variety
 $A^t$ satisfy the following equation
$$\chi(X_p(A/L))^\sharp=\chi( X_p(A^t/L))=\chi(X_p(A/L))=\chi( X_p(A^t/L))^\sharp.$$
\end{theorem}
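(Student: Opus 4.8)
The plan is to realize $X_p(A/L)$ and $X_p(A^t/L)$ as the inverse limits of a single Cassels--Tate system and then to feed that system into Theorem~\ref{t:al}. Concretely, let $K_n\subseteq L$ be the fixed field of $\Gamma^{p^n}$, so that $\Gamma_n=\Gal(K_n/K)$, and let $\fA=\{\fa_n,\fb_n,\langle\;,\;\rangle_n,\fr_m^n,\fk_m^n\}$ be the Cassels--Tate system of \S 4 attached to $A$ and $A^t$ along the tower $\{K_n\}$: here $\fa_n$ (resp.\ $\fb_n$) is the finite group that the Cassels--Tate construction extracts from the $p$-primary Selmer group of $A$ (resp.\ of $A^t$) over $K_n$, the pairing $\langle\;,\;\rangle_n$ is the Cassels--Tate pairing over $K_n$, $\fr_m^n$ is restriction and $\fk_m^n$ is corestriction. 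The axioms ($\Gamma$-1)--($\Gamma$-4) then reduce to the $\Gamma$-equivariance and functoriality of the Cassels--Tate pairing together with the elementary relations $\fk_m^n\circ\fr_m^n=p^{d(n-m)}\cdot\mathrm{id}$ and $\fr_m^n\circ\fk_m^n=\Nm_{\Gamma_n/\Gamma_m}$. By Theorem~\ref{t:ct} the datum $\fA$ is a $\Gamma$-system, and the construction of \S 4 identifies its limits, up to pseudo-isomorphism, with the Selmer duals: $\fa\sim X_p(A/L)$ and $\fb\sim X_p(A^t/L)$, so that $\chi(\fa)=\chi(X_p(A/L))$ and $\chi(\fb)=\chi(X_p(A^t/L))$.

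Next I would check the hypotheses of Theorem~\ref{t:al}. The first point is pseudo-control: one must show that $\fa^0\times\fb^0=\varprojlim_m\bigcup_{n\ge m}\Ker(\fr_m^n)$ is pseudo-null, which is a control theorem for the Selmer group along the whole $\ZZ_p^d$-tower $L/K$ with pseudo-null --- not merely bounded --- defect. This is exactly where the hypothesis that $A$ has potentially ordinary reduction at each ramified place of $L/K$ enters: it provides enough control on the local conditions at those places (via the connected--\'etale filtration of the $p$-divisible group of the reduction, after a finite base change) to force the cokernels of the restriction maps to be pseudo-null. The second point is that $\fA$ must fall into one of the three cases of Theorem~\ref{t:al}; I would exhibit it as part of a \textbf{T}-system (case (3)), the additional operator required in \S\ref{su:Ts} being furnished by adjoining to $L$ the cyclotomic $\ZZ_p$-extension of $K$ (the constant-field extension when $K$ is a function field) and using the Frobenius coming from the potentially ordinary reduction. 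When $p\neq\mathrm{char}(K)$ the groups $\fa_n$ have uniformly bounded exponent, so one may alternatively observe that $\fA$ is twistable and invoke case (2).

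Granting this, Theorem~\ref{t:al} gives a pseudo-isomorphism $\fa^\sharp\sim\fb$, whence
$$\chi(X_p(A/L))^\sharp=\chi(\fa)^\sharp=\chi(\fa^\sharp)=\chi(\fb)=\chi(X_p(A^t/L)),$$
which is the first equality of the theorem; applying the involution $\lambda\mapsto\lambda^\sharp$ to it yields the last one, $\chi(X_p(A/L))=\chi(X_p(A^t/L))^\sharp$ (equivalently, this is Theorem~\ref{t:al} applied to the pair $(A^t,(A^t)^t)=(A^t,A)$). It remains to produce the middle equality $\chi(X_p(A^t/L))=\chi(X_p(A/L))$, which does not come from the Pontryagin-duality machinery; for this I would use the global duality between the $p$-Selmer groups of $A$ and $A^t$ furnished by the Weil pairing $A[p^{\infty}]^\vee\cong T_pA^t(-1)$ and the Poitou--Tate exact sequence --- the Iwasawa-module counterpart of the identity $L(A/K,s)=L(A^t/K,s)$. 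Concatenating the three equalities gives the asserted chain.

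The hard part will be the second step, and within it the proof of pseudo-control together with the \textbf{T}-system structure. Pseudo-control is a sharp control theorem over a $\ZZ_p^d$-extension, and it is precisely the local terms at the ramified places --- which, in characteristic $p$, also involve the wild part of the $p$-divisible group of the N\'eron model --- that obstruct the naive argument; the potentially ordinary hypothesis is exactly what is needed to tame them. Packaging this control uniformly in $n$, so that $\fA$ becomes part of a \textbf{T}-system and is therefore eligible for Theorem~\ref{t:al}(3), is the technical core of the proof.
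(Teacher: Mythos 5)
Your central identification is incorrect: the limits $\fa=\varprojlim\fa_n$ and $\fb=\varprojlim\fb_n$ of the Cassels--Tate system are \emph{not} pseudo-isomorphic to $X_p(A/L)$ and $X_p(A^t/L)$. By construction $\fa_n$ and $\fb_n$ are the quotients of the Selmer groups by their divisible parts, so what one actually has is the exact sequence \eqref{e:axy},
\begin{equation*}
0\lr\fa\lr X_p(A/L)\lr Y_p(A/L)\lr 0,
\end{equation*}
and the cokernel $Y_p(A/L)$ --- the dual of $\varinjlim\Sel_{p^\infty}(A/F)_{div}$, which records Mordell--Weil rank growth --- is in general \emph{not} pseudo-null. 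So $\chi(\fa)\neq\chi(X_p(A/L))$, and a pseudo-isomorphism $\fa^\sharp\sim\fb$ by itself gives you information only about the Tate--Shafarevich piece. The paper's proof closes this gap using Tan's Theorem~\ref{t:flat}: $Y_p(A/L)$ is annihilated by a product of \emph{simple} elements, so $[Y_p(A/L)]=[Y_p(A/L)]_{si}$ (Corollary~\ref{c:y}); consequently the non-simple part of $X_p(A/L)$ comes entirely from $\fa$, i.e.\ $[X_p(A/L)]_{ns}=[\fa]_{ns}$ (Lemma~\ref{l:axsharp}). The $\Gamma$-system machinery is then applied only to the non-simple part, while the simple part is handled separately by an isogeny $A\to A^t$ (Corollary~\ref{c:isogAtA}), using that isogenies have $p$-power degree defect and that $[\cdot]_{si}$ is $\sharp$-invariant by \eqref{e:fld}.

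Your second assumption --- that the potentially-ordinary hypothesis yields pseudo-control of the Cassels--Tate system $\fA$ over all of $L/K$, so that Theorem~\ref{t:al} may be applied --- is also not what happens and is, as far as the paper knows, not true in the stated generality. The paper only proves pseudo-control when $d=1$ (Proposition~\ref{p:pscontrl}) or when $L/K$ is ramified only at \emph{good} ordinary places (Proposition~\ref{p:gdordpscontrl}), neither of which is assumed in Theorem~\ref{t:xaat}. Instead, the paper sidesteps the question: it passes to the derived system $\fA'$ of~\S\ref{ss:a'}, which is \emph{always} strongly controlled by construction, and applies Corollary~\ref{c:nons} (which rests on Theorem~\ref{t:al}(1), not on case (2) or case~(3)). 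The bridge from $\fA$ to $\fA'$ is again through Tan's theorem: the injection~\eqref{e:subset} shows $\bar\fa^0$ is killed by a product of simple elements, which together with Lemma~\ref{l:a00} gives $[\fa^0]=[\fa^0]_{si}$ and hence $[\fa]_{ns}=[\fa']_{ns}$. Your alternative suggestions --- packaging $\fA$ as a \textbf{T}-system (case~(3)), or noting bounded exponent of $\fa_n$ for $p\neq\mathrm{char}(K)$ to get twistability (case~(2)) --- are not used; and the bounded-exponent claim is simply false, since the Tate--Shafarevich quotients need not have uniformly bounded exponent. Finally, the middle equality is obtained by the same isogeny $A\to A^t$ (via the decomposition into $p$-part and prime-to-$p$ part), not by a Weil-pairing/Poitou--Tate argument; and the non-torsion case is dispatched trivially (all characteristic ideals vanish), with no appeal to Theorem~\ref{t:ct} --- indeed the paper explicitly remarks that Theorem~\ref{t:ct} is \emph{not} used in the proof of Theorem~\ref{t:xaat}.
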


In section \S\ref{s:al}, we prove functional equations for characteristic ideals of Pontryagin duals of the
projections of Selmer groups by central idempotents. This  proves to be a powerful tool for  solving the Iwasawa Main Conjecture in the constant ordinary case (\cite{LLTT}).

Characteristic ideal $\chi$ is explained in $\S$\ref{sss:krul}
 and $X_p(A/L)$ is defined in $\S$\ref{se:se}.

Theorem \ref{t:al} implies the corresponding
equation of characteristic ideals.
 Using Fitting ideals
Mazur and Wiles \cite{MW84} proves such functional equation the $d=1$ case, in which $\fA$ is automatically a $\bf{T}$-system.
For $d\geq 2$, as far as we know,
no progress has been made  and Fitting ideals do not seem to yield a promising approach for a proof.
Our proof is very intricate. Even in the case of
 Cassels-Tate systems, our theorem is not a straightforward consequence of the control theorems in the number field (\cite{gr03}) or function field case (\cite{bl09}, \cite{tan10a}) that one might have expected.
 See in particular our use of an old result of Monsky (Theorem \ref{t:monsky}).

\begin{subsubsection}*{Acknowledgements} The fourth author has been supported by EPSRC. He would like also to express his gratitude to Takeshi Saito for his hospitality at the University of Tokyo where part of this work has been written. Authors 2, 3 and 4 thank Centre de Recerca Matem\`atica for hospitality while working on part of this paper.
Authors 1, 2 and 3 have been partially supported by the National Science Council of Taiwan, grants NSC98-2115-M-110-008-MY2, NSC100-2811-M-002-079 and NSC99-2115-M-002-002-MY3 respectively. Finally, it is our pleasure to thank NCTS/TPE for supporting a number of meetings of the authors in National Taiwan University. \end{subsubsection}

\begin{section}{Preparations}\label{s:setting}

In this section we set up notations for later use.

\begin{subsection}{Iwasawa modules}\label{su:ba}

\subsubsection{}\label{sss:krul}
Let $M$ be a finitely generated $\La$-module. We write $\chi(M)=\chi_{\Gamma}(M)\subset\La$ for
its \textit{characteristic ideal}. Thus, $\chi(M)=0$, if and only if $M$ is non-torsion.
Suppose $M$ is torsion.
By the general theory of modules over a Krull domain, there is a pseudo-isomorphism
\begin{equation} \label{e:structureM} \Phi\colon\bigoplus_{i=1}^m \La/\xi_i^{r_i}\La\longrightarrow M,\end{equation}
where each $\xi_i$ is irreducible. In this situation, we have
$$\chi(M)=\prod_{i=1}^m (\xi_i^{r_i}).$$
It follows that $\chi(M)=\La$, if and only if $M$ is pseudo-null.

\begin{lemma}\label{l:psn}
A finitely generated $\La$-module $M$ is pseudo-null if and only if there exist relatively prime $f_1,...,f_k\in\La$, $k\geq2$, such that $f_iM=0$ for every $i$.
\end{lemma}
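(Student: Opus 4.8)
The plan is to prove both implications using the structure theory of finitely generated torsion modules over the Krull domain $\Lambda$, together with the characterization of pseudo-nullity in terms of codimension (height) of the support.

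\textbf{($\Leftarrow$)} Suppose $f_1,\dots,f_k\in\Lambda$ are relatively prime with $f_iM=0$ for all $i$. First I would observe that $M$ is torsion (killed by $f_1\neq 0$), so by \eqref{e:structureM} there is a pseudo-isomorphism $\Phi\colon\bigoplus_j\Lambda/\xi_j^{r_j}\Lambda\to M$ with each $\xi_j$ irreducible. Suppose for contradiction $M$ is not pseudo-null; then some $\xi_j$ appears, so some height-one prime $\fp=(\xi_j)$ contains $\mathrm{Ann}_\Lambda(M)$. Hence every $f_i\in\fp$, i.e.\ $\xi_j\mid f_i$ for all $i$, contradicting that the $f_i$ are relatively prime. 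Therefore $\chi(M)=\Lambda$ and $M$ is pseudo-null.

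\textbf{($\Rightarrow$)} Suppose $M$ is pseudo-null. Since $M$ is finitely generated over the Noetherian ring $\Lambda$, the annihilator $\fI:=\mathrm{Ann}_\Lambda(M)$ is an ideal whose support $V(\fI)=\mathrm{Supp}(M)$ has codimension $\geq 2$ in $\Spec\Lambda$; equivalently, no height-one prime contains $\fI$. The key step is to extract from $\fI$ a finite set of relatively prime elements. I would argue: pick any $0\neq f_1\in\fI$ (exists since $M$ is torsion, so $\fI\neq 0$). Factor $f_1=u\prod_j\pi_j^{e_j}$ into irreducibles in the UFD-like setting of the Krull domain $\Lambda$ (here one uses that $\Lambda$, being a regular local — or a finite product of regular local — ring, is a UFD, so height-one primes are principal). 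For each prime $\pi_j$ dividing $f_1$, the height-one prime $(\pi_j)$ does not contain $\fI$ (by pseudo-nullity), so there exists $g_j\in\fI$ with $\pi_j\nmid g_j$. Then set $f_2:=\sum_j c_j g_j$ for generic scalars/elements $c_j$, or more carefully argue prime by prime, to produce $f_2\in\fI$ with $\gcd(f_1,f_2)=1$: indeed any common irreducible divisor of $f_1$ and $f_2$ would be one of the $\pi_j$, but $\pi_j\nmid g_j$ forces $\pi_j\nmid f_2$ for a suitable choice. Then $f_1,f_2$ (so $k=2$ suffices) are relatively prime elements of $\mathrm{Ann}_\Lambda(M)$, i.e.\ $f_iM=0$.

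\textbf{Main obstacle.} The delicate point is the $(\Rightarrow)$ direction: producing a \emph{second} annihilator $f_2$ sharing no irreducible factor with a fixed $f_1$. This requires knowing that $\Lambda\cong\ZZ_p[[T_1,\dots,T_d]]$ (after choosing a basis of $\Gamma$) is a UFD so that ``relatively prime'' and ``no common height-one prime'' coincide, and that one can simultaneously avoid the finitely many prime divisors of $f_1$ — a prime-avoidance-type argument, but for \emph{non-containment of an ideal in a prime} rather than the usual prime avoidance lemma. One must be slightly careful when $p\mid f_1$ (the prime $(p)$ is among the $\pi_j$), but the same argument applies since $(p)$ is height one and does not contain $\fI$. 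Everything else is a direct translation via the structure theorem and the dictionary between $\chi(M)=\Lambda$, pseudo-nullity, and codimension $\geq 2$ support.
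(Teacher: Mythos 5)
Your proposal is correct and takes essentially the same route as the paper, which simply invokes that $\La$ is a UFD so that height-one primes are principal: relative primality of annihilators then contradicts a common height-one prime containing $\mathrm{Ann}_\La(M)$, and conversely prime avoidance against the finitely many irreducible divisors of a fixed $f_1\in\mathrm{Ann}_\La(M)$ produces a coprime $f_2$. The step you flag as delicate is exactly the standard prime avoidance lemma (an ideal not contained in any of finitely many primes is not contained in their union), so $k=2$ suffices as you say.
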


\begin{proof}
Since $\La$ is a unique factorization domain, all height one prime ideals are principal and the claim follows.
\end{proof}

Denote $[M]:=\bigoplus_{i=1}^m \La/\xi_i^{r_i}\La$.
Since a non-zero element in $[M]$ cannot be simultaneously annihilated by relatively prime elements of $\La$, there is no non-trivial pseudo-null submodule of $[M]$, and hence $\Phi$ in \eqref{e:structureM} is an embedding. The module $[M]$ is uniquely determined by $M$ up to isomorphism, while $\Phi$ is not. However, we shall fix one such $\Phi$ and view $[M]$ as a submodule of $M$.

\begin{lemma} \label{l:pseudoisom} A composition of pseudo-injections {\em{(}}resp.\! pseudo-surjections,  resp.\! pseudo-isomorphisms{\em{)}} is a pseudo-injection {\em{(}}resp.\! pseudo-surjection,  resp.\! pseudo-isomorphism{\em{)}}. Pseudo-isomorphism is an equivalence relation in the category of finitely generated torsion $\La$-modules.
\end{lemma}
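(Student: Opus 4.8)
The plan is to reduce all four assertions to one basic fact: the class of pseudo-null finitely generated $\La$-modules is closed under submodules, quotients, and extensions. Closure under submodules and quotients is immediate from Lemma~\ref{l:psn}, since a module annihilated by relatively prime $f_1,\dots,f_k$ forces the same on any sub- or quotient module. For an extension $0\to N\to M\to Q\to 0$ with $N$ and $Q$ pseudo-null, I would pick relatively prime $a_1,a_2$ killing $N$ and relatively prime $b_1,b_2$ killing $Q$ (available by Lemma~\ref{l:psn}); then $M$ is killed by the four products $a_ib_j$, and these are relatively prime, because an irreducible $\pi$ dividing all four would have to divide both $a_1$ and $a_2$ (inspect $a_1b_1,a_1b_2$, then $a_2b_1,a_2b_2$), contradicting $\gcd(a_1,a_2)=1$; so $M$ is pseudo-null by Lemma~\ref{l:psn} again.

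Granting this, the composition statements are short diagram chases. For pseudo-injections $f\colon A\to B$, $g\colon B\to C$, I would use the exact sequence $0\to\Ker f\to\Ker(gf)\to f(\Ker(gf))\to 0$ with $f(\Ker(gf))\subseteq\Ker g$, exhibiting $\Ker(gf)$ as an extension of a submodule of a pseudo-null module by a pseudo-null module. Dually, for pseudo-surjections, $\coker(gf)=C/g(f(A))$ fits in $0\to g(B)/g(f(A))\to\coker(gf)\to\coker(g)\to 0$, and $g(B)/g(f(A))$ is a quotient of $\coker(f)$ via $g$, so both ends are pseudo-null. The claim for pseudo-isomorphisms is the conjunction of these two; reflexivity of $\sim$ is trivial ($\mathrm{id}$ has zero kernel and cokernel) and transitivity is exactly this composition statement.

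The only real obstacle is symmetry of $\sim$, where a literal reverse morphism must be produced and where the torsion hypothesis genuinely enters (symmetry fails for general finitely generated $\La$-modules). Given a pseudo-isomorphism $f\colon M\to N$ of finitely generated torsion modules, I would factor it as $M\twoheadrightarrow M/\Ker f\hookrightarrow N$, a surjective pseudo-isomorphism followed by an injective one, and use transitivity to reduce to inverting each factor separately. To invert an injection $\iota\colon M'\hookrightarrow N$ with $Q:=N/M'$ pseudo-null: fix $0\neq\beta\in\mathrm{Ann}_{\La}(N)$ (nonzero since $N$ is finitely generated torsion) and use prime avoidance to choose $\xi\in\mathrm{Ann}_{\La}(Q)$ prime to $\beta$, possible because $\mathrm{Ann}_{\La}(Q)$ lies in no height-one prime, in particular in none of the finitely many principal primes dividing $\beta$. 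Then $\xi N\subseteq M'$, so multiplication by $\xi$ is a map $g\colon N\to M'$ with $\Ker g=N[\xi]$ and $\coker g=M'/\xi N\subseteq N/\xi N$, both annihilated by the relatively prime pair $\xi,\beta$ and hence pseudo-null by Lemma~\ref{l:psn}; thus $g$ is a pseudo-isomorphism. To invert a surjection $\pi\colon M\twoheadrightarrow M'$ with $P:=\Ker\pi$ pseudo-null: pick $\eta\in\mathrm{Ann}_{\La}(P)$ prime to a fixed $0\neq\alpha\in\mathrm{Ann}_{\La}(M)$; since $\eta P=0$, sending $\bar m\mapsto\eta m$ for any lift $m$ is a well-defined $\La$-homomorphism $h\colon M'\to M$ with $\Ker h$ a quotient of $M[\eta]$ and $\coker h=M/\eta M$, again pseudo-null. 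Composing the two reverse maps gives the sought pseudo-isomorphism $N\to M$. I expect this symmetry step --- specifically the need to choose the annihilating elements coprime to a fixed annihilator, so that the resulting kernels and cokernels are killed by a coprime pair --- to be the main point requiring care.
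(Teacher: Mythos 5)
Your proof is correct, and it takes a genuinely different route from the paper's. The paper argues via multiplicativity of characteristic ideals: for pseudo-injections $\alpha\colon M\to N$, $\beta\colon N\to P$ it uses the exact sequence $0\to\Ker\alpha\to\Ker(\beta\alpha)\to\Ker\beta\twoheadrightarrow\coker$ to deduce $\chi(\Ker(\beta\alpha))=\La$ from the pseudo-nullity of the other three terms, and then states that the remaining claims are ``similarly proved.'' You instead work entirely at the level of annihilators and Lemma~\ref{l:psn}: you first show that pseudo-null modules are closed under submodules, quotients, and extensions, and then reduce each composition claim to this by placing the kernel (resp.\ cokernel) of the composite in a short exact sequence whose extreme terms are pseudo-null. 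For the composition claims the two methods are about equal in weight --- your closure-under-extensions step could equally be read off from $\chi(M)=\chi(N)\chi(Q)=\La$, so nothing is gained or lost there. Where your version adds real substance is symmetry of $\sim$: the paper's ``similarly proved'' glosses over it, but symmetry is not a composition statement and requires producing a map in the reverse direction, which is exactly where the finitely-generated-torsion hypothesis becomes essential. Your reduction --- factor a pseudo-isomorphism through its image as a pseudo-surjection followed by a pseudo-injection, and invert each factor by multiplication by an element of the annihilator of the pseudo-null kernel (resp.\ cokernel) chosen, via prime avoidance, to be coprime to a fixed nonzero annihilator of the ambient module --- is the standard argument, and your verification that the resulting kernel and cokernel are each killed by a coprime pair, hence pseudo-null by Lemma~\ref{l:psn}, is carried out correctly.
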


\begin{proof}Let $\alpha\colon M\to N$ and $\beta\colon N\to P$ be two pseudo-injections. Denote by $\ker_\alpha$,
$\ker_\beta$, and $\ker_{\beta\circ \alpha}$ the kernels of $\alpha$, $\beta$ and $\beta\circ \alpha$, respectively.
The exact sequence
$$\xymatrix{0\ar[r] & \ker_\alpha \ar[r] & \ker_{\beta\circ\alpha} \ar[r]^-{\beta} & \ker_\beta \ar@{->>}[r] & \coker}$$
implies $\chi(\ker_{\beta\circ\alpha})\cdot \chi(\coker)=\chi(\ker_\alpha)\cdot \chi(\ker_\beta)$.
Since $\ker_\alpha$, $\ker_\beta$, and hence $\coker$, are pseudo-null, $\chi(\ker_{\beta\circ\alpha})=\La$.
%This proves the pseudo-injection case, o
Other cases can be similarly proved.
\end{proof}

\begin{lemma} \label{l:pseudoinj}
Let $\alpha\colon M\to N$ and $\beta\colon N\to M$ be two pseudo-injections of finitely generated torsion $\La$-modules. Then $\beta\circ\alpha$ is a pseudo-isomorphism.
\end{lemma}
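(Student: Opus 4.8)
The plan is to deduce the statement from the multiplicativity of characteristic ideals along short exact sequences, in exactly the same spirit as Lemma~\ref{l:pseudoisom}. Recall that a pseudo-isomorphism is precisely a homomorphism that is simultaneously a pseudo-injection and a pseudo-surjection. By Lemma~\ref{l:pseudoisom} the composition $\varphi:=\beta\circ\alpha\colon M\to M$ is already a pseudo-injection, i.e.\ $\Ker\varphi$ is pseudo-null; so it remains only to prove that $\coker\varphi$ is pseudo-null.

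To this end put $K:=\Ker\varphi$ and $I:=\image\varphi$. Since $M$ is finitely generated and torsion over $\La$, the same holds for the submodule $I$ and for the quotients $M/K$ and $\coker\varphi=M/I$, so their characteristic ideals are defined and $\chi$ is multiplicative along the two exact sequences
$$0\longrightarrow K\longrightarrow M\longrightarrow I\longrightarrow 0,\qquad 0\longrightarrow I\longrightarrow M\longrightarrow \coker\varphi\longrightarrow 0.$$
From the first, $\chi(M)=\chi(K)\,\chi(I)=\chi(I)$, because $\chi(K)=\La$ by pseudo-nullity of $K$. Substituting this into the relation coming from the second sequence gives $\chi(M)=\chi(M)\,\chi(\coker\varphi)$.

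Now comes the point where the torsion hypothesis is genuinely used. Because $M$ is torsion, the structure theory recalled in \ref{sss:krul} shows that $\chi(M)$ is a \emph{nonzero} principal ideal, say $\chi(M)=(g)$ with $g\ne 0$. The identity $(g)=(g)\,\chi(\coker\varphi)=g\cdot\chi(\coker\varphi)$ forces $g=g\,h$ for some $h\in\chi(\coker\varphi)$, and since $\La$ is a domain and $g\ne0$ we conclude $h=1$, whence $\chi(\coker\varphi)=\La$. Therefore $\coker\varphi$ is pseudo-null, and $\varphi=\beta\circ\alpha$ is a pseudo-isomorphism.

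There is no serious obstacle here beyond this bookkeeping; the only thing that must not be glossed over is the appeal to $\chi(M)\ne0$, which is exactly what the torsion assumption buys. (Without it the statement fails: take $M=N=\La$, let $\alpha$ be multiplication by a non-unit $\varpi\in\La$ and $\beta=\mathrm{id}$; then $\alpha,\beta$ are injective, hence pseudo-injections, but $\coker(\beta\circ\alpha)=\La/(\varpi)$ is not pseudo-null.) If one prefers to avoid characteristic ideals entirely, the same conclusion follows by localizing at each height-one prime $\fp$: there $\La_\fp$ is a discrete valuation ring and $M_\fp$ has finite length, pseudo-nullity of $\Ker\varphi$ makes $\varphi_\fp$ injective and therefore bijective, so $(\coker\varphi)_\fp=0$; as this holds for every height-one $\fp$, $\coker\varphi$ is pseudo-null.
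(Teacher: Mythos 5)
Your proof is correct and follows essentially the same route as the paper's: both extract $\chi(M) = \chi(M)\cdot\chi(\coker(\beta\circ\alpha))$ from the four-term exact sequence (you just break it into two short ones), invoke Lemma~\ref{l:pseudoisom} to kill $\chi(\Ker(\beta\circ\alpha))$, and cancel $\chi(M)\ne 0$. You are a bit more explicit than the paper about why the cancellation is legitimate (the paper leaves this tacit) and you add a localization alternative, but the core argument is identical.
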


\begin{proof} We have $\chi(\ker_{\beta\circ\alpha})\cdot \chi(M)=\chi(M)\cdot \chi(\coker_{\beta\circ\alpha})$ deduced from
the sequence
$$\xymatrix{0\ar[r] & \ker_{\beta\circ\alpha} \ar[r] & M \ar[r] & M \ar[r] & \coker_{\beta\circ\alpha} \ar[r] & 0}.$$
By Lemma \ref{l:pseudoisom}, $\chi(\ker_{\beta\circ\alpha})=\La$, whence $\chi(\coker_{\beta\circ\alpha})=\La$.
\end{proof}

\end{subsection}

%**************************

\subsection{Twists}
Any continuous group homomorphism $\Gamma\rightarrow\La^\times$ gives rise by linearity to an
endomorphism $\Lambda\rightarrow\Lambda$.
\subsubsection{}\label{ss:iwH1}
An example is the map
${}^\sharp\colon\Lambda\,\lr \Lambda$ we have defined by using
$\gamma \to \gamma^{-1}$ for $\gamma \in \Gamma$.
The particular importance of this map  for us
stems from the fact that if $\langle\;,\;\rangle$ is
a $\Gamma$-invariant pairing between $\La$-modules then
\begin{equation} \label{e:twistpairing} \langle \lambda\cdot a,b\rangle=\langle a,\lambda^\sharp\cdot b\rangle
\end{equation}
for any $\lambda\in\La$.

Suppose $\phi\colon{\Gamma}\rightarrow \ZZ_p^{\times}$ is a continuous homomorphism. Define
$\phi^*\colon\Lambda \to \Lambda$ to
be the ring homomorphism determined by $\phi^*(\gamma):=\phi(\gamma)^{-1}\cdot\gamma$ for $\gamma\in\Gamma$. Since on $\Gamma$ the composition $\phi^*\circ (1/\phi)^*$ is the identity map, we see that $\phi^*$ is an isomorphism on $\Lambda$.

\subsubsection{}\label{su:twistmodule}

Let $M$ be a $\La$-module. Any endomorphism $\alpha\colon\La\to\La$
defines a twisted $\La$-module
$\La \,{}_\alpha\!\otimes_{\La}M\,,$ where the action on the copy of $\La$ on the left is via $\alpha$ (i.e., we have $(\alpha(\lambda)\mu)\otimes m=\mu\otimes\lambda m$ for
$\lambda$, $\mu\in\La$ and $m\in M$) and the module structure is given by
\begin{equation} \label{e:twistaction} \lambda\cdot(\mu\otimes m) := (\lambda\mu)\otimes m \end{equation}
(where $\lambda\mu$ is the product in $\La$ ).
If moreover $\alpha$ is an isomorphism, $\La\,{}_\alpha\!\otimes_{\La}M$ can be identified with $M$ with the $\Lambda(\Gamma')$-action twisted by $\alpha^{-1}$, since in this case \eqref{e:twistaction} becomes
\begin{equation} \label{e:twistaction2} \lambda\cdot(1\otimes m)=1\otimes\alpha^{-1}(\lambda)m \,.\end{equation}

Following the above  we shall write
$$M^\sharp:=\La\,{}_\sharp\!\otimes_{\La}M.$$
Since $\cdot^\sharp$ is an involution, \eqref{e:twistaction2} shows that the action of $\La$ becomes $\lambda\cdot m=\lambda^\sharp m$.
For any finitely generated torsion $\La$-module $M$, we get a decomposition in
\textit{simple part} and \textit{non-simple part}
$$[M]=[M]_{si}\oplus [M]_{ns},$$
in the following way: recalling that $[M]$ is a direct sum of components $\La/\xi_i^{r_i}\La$, we define $[M]_{si}$ as the sum over those $\xi_i$ which are simple and $[M]_{ns}$ as its complement.

It is easy to check that simple elements are irreducible in $\La$ and that
\begin{equation} \label{e:coprimecriter}(f_{\gamma',\zeta'})=(f_{\gamma,\zeta}) \Longleftrightarrow (\gamma')^{\ZZ_p}=\gamma^{\ZZ_p}\text{ and } \zeta'\in\Gal(\QQ_p(\zeta)/\QQ_p)\cdot\zeta\,. \end{equation}
In particular, we have
\begin{equation}\label{e:fs}
(f_{\gamma,\zeta})^{\sharp}=(f_{\gamma^{-1},\zeta})=(f_{\gamma,\zeta}).
\end{equation}
It follows that
\begin{equation}\label{e:fld}
[M]_{si}^{\sharp}=[M]_{si}\,.
\end{equation}

Let $\phi$ be as in \S\ref{ss:iwH1}. Set
\begin{equation} \label{e:twistbyphi}
M(\phi):=\La\,{}_{\phi^*}\!\otimes_{\La}M. \end{equation}
Note that, if we endow $\ZZ_p$ with the trivial action of $\Gamma$, then the $\La$-module
$\ZZ_p(\phi)$ can be viewed as the free rank one $\ZZ_p$-module with the action of $\Gamma$ through multiplication by $\phi$, in the sense that
$$\gamma \cdot a=\phi(\gamma) a \text{ for all } \gamma\in\Gamma, a\in \ZZ_p(\phi)\,.$$
Then for a $\La$-module $M$ we have
$$M(\phi)=\ZZ_p(\phi)\otimes_{\ZZ_p}M,$$
where $\Gamma$ acts by
$$\gamma\cdot(a\otimes x):=(\gamma\cdot a)\otimes (\gamma\cdot x)=\phi(\gamma)\cdot (a\otimes \gamma x)\,.$$

The proof of the following is straight forward, it can be found in \cite{LLTT}.

\begin{lemma}\label{l:phi[]chi} Let $\alpha$ be an automorphism of $\Lambda$. Suppose $M$ is a finitely generated torsion $\La$-module with
$$[M]=\bigoplus_{i=1}^m \La/\xi_i^{r_i}\La\,.$$
Then
$$[\La\,{}_\alpha\!\otimes_{\La}M]=\La \,{}_\alpha\!\otimes_{\La}[M]=\bigoplus_{i=1}^m \La/\alpha(\xi_i)^{r_i}\La,$$
and hence
$$\chi\big(\La \,{}_\alpha\!\otimes_{\La}M\big)=\alpha(\chi(M)).$$
\end{lemma}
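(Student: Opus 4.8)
The plan is to verify the three assertions in order, reducing everything to the elementary behavior of the twisting operation $\La\,{}_\alpha\!\otimes_\La(-)$ on cyclic modules. First I would observe that $\La\,{}_\alpha\!\otimes_\La(-)$ is an exact functor: indeed, since $\alpha$ is an automorphism of $\La$, the bimodule $\La\,{}_\alpha$ (with left structure the standard one and right structure through $\alpha$) is free of rank one on both sides, so tensoring with it is exact and moreover commutes with finite direct sums. This already gives $\La\,{}_\alpha\!\otimes_\La[M]=\bigoplus_{i=1}^m \La\,{}_\alpha\!\otimes_\La \big(\La/\xi_i^{r_i}\La\big)$. For each summand, the natural surjection $\La\twoheadrightarrow\La/\xi_i^{r_i}\La$ yields, after applying the exact functor, a surjection $\La\cong\La\,{}_\alpha\!\otimes_\La\La\twoheadrightarrow\La\,{}_\alpha\!\otimes_\La(\La/\xi_i^{r_i}\La)$ whose kernel is the image of $\xi_i^{r_i}\La$; using the identification \eqref{e:twistaction2}, namely $\lambda\cdot(1\otimes m)=1\otimes\alpha^{-1}(\lambda)m$, one checks that this kernel is exactly $\alpha(\xi_i)^{r_i}\La$, so $\La\,{}_\alpha\!\otimes_\La(\La/\xi_i^{r_i}\La)\cong\La/\alpha(\xi_i)^{r_i}\La$. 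This establishes the displayed formula for $\La\,{}_\alpha\!\otimes_\La[M]$.

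Next I would check the equality $[\La\,{}_\alpha\!\otimes_\La M]=\La\,{}_\alpha\!\otimes_\La[M]$. Fix the chosen pseudo-isomorphism $\Phi\colon[M]\hookrightarrow M$ from \S\ref{sss:krul}, which is injective with pseudo-null cokernel. Applying the exact functor gives an injection $\La\,{}_\alpha\!\otimes_\La[M]\hookrightarrow\La\,{}_\alpha\!\otimes_\La M$ with cokernel $\La\,{}_\alpha\!\otimes_\La\Coker\Phi$. Since $\Coker\Phi$ is pseudo-null, say annihilated by relatively prime $f_1,\dots,f_k$ ($k\ge2$) by Lemma \ref{l:psn}, its twist is annihilated by $\alpha(f_1),\dots,\alpha(f_k)$, which are again relatively prime because $\alpha$ is a ring automorphism (hence preserves the UFD structure, sending primes to primes up to units); so the twisted cokernel is pseudo-null, i.e. $\La\,{}_\alpha\!\otimes_\La[M]$ is pseudo-isomorphic to $\La\,{}_\alpha\!\otimes_\La M$. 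On the other hand $\La\,{}_\alpha\!\otimes_\La[M]=\bigoplus\La/\alpha(\xi_i)^{r_i}\La$ has no nonzero pseudo-null submodule (again because $\alpha$ sends irreducibles to irreducibles, so no nonzero element is killed by two coprime elements), so it is its own $[\,\cdot\,]$; by uniqueness of $[-]$ up to isomorphism this forces $[\La\,{}_\alpha\!\otimes_\La M]=\La\,{}_\alpha\!\otimes_\La[M]$.

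Finally, the characteristic ideal formula is immediate from the definition in \S\ref{sss:krul}: $\chi(\La\,{}_\alpha\!\otimes_\La M)=\chi([\La\,{}_\alpha\!\otimes_\La M])=\prod_{i=1}^m(\alpha(\xi_i)^{r_i})=\alpha\big(\prod_{i=1}^m(\xi_i^{r_i})\big)=\alpha(\chi(M))$, where the penultimate step uses that $\alpha$ is a ring automorphism of $\La$ and hence carries the ideal $\prod(\xi_i^{r_i})$ onto $\prod(\alpha(\xi_i)^{r_i})$. I do not expect any serious obstacle here; the only point requiring a little care is the bookkeeping in identifying the kernel of $\La\to\La\,{}_\alpha\!\otimes_\La(\La/\xi_i^{r_i}\La)$ as $\alpha(\xi_i)^{r_i}\La$ rather than $\alpha^{-1}(\xi_i)^{r_i}\La$, which is settled by reading off \eqref{e:twistaction} and \eqref{e:twistaction2} correctly — and this is presumably why the paper says the proof is straightforward and defers details to \cite{LLTT}.
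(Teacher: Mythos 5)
Your proof is correct, and since the paper itself omits the argument (calling it straightforward and deferring to \cite{LLTT}), your write-up is exactly the standard verification that is intended: exactness of the twist functor, the computation $\La\,{}_\alpha\!\otimes_\La(\La/\xi\La)\cong\La/\alpha(\xi)\La$, preservation of pseudo-nullity and irreducibility under the automorphism $\alpha$, and then the characteristic-ideal identity. In particular you correctly resolve the $\alpha$ versus $\alpha^{-1}$ bookkeeping, which is the only delicate point.
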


\subsection{Some more notation}

The Pontryagin dual of an abelian group $B$ will be denoted $B^\vee$. Since we are going to deal mostly with finite $p$-groups and their inductive and projective limits, we generally won't distinguish between the Pontryagin dual and the set of continuous homomorphisms into the group of roots of unity $\boldsymbol\mu_{p^{\infty}}:=\cup_m\boldsymbol\mu_{p^{m}}$. Note that we shall usually think of $\boldsymbol\mu_{p^{\infty}}$ as a subset of $\bar\QQ_p$ (hence with the discrete topology), so that for a $\La$-module $M$ homomorphisms in $M^\vee$ will often take value in $\bar\QQ_p$.

We shall denote the $\psi$-part of a $G$-module $M$ (for $G$ a group and $\psi\in G^\vee$) by
\begin{equation} \label{e:psipart} M^{(\psi)}:=\{x\in M\;\mid\; g\cdot x=\psi(g)x\;\text{for all}\;g\in G\}. \end{equation}

%\end{subsection}
\end{section}

%SECTION 3++++++++++++++++++++++++++++++++++++++++++++++++++++++++++++++++++++

\begin{section}{Controlled $\Gamma$-systems and the algebraic functional equation} \label{s:con}
Until \S\ref{s:ctsys}, we do not need our group $\Gamma$ to be a Galois group. However, to simplify the notation,
we shall identify $\Gamma$ as $\Gal(L/K)$ for some $L/K$ so that each open subgroup can be written as $\Gal(L/F)$ for some
finite intermediate extension $F$. Denote $K_n=L^{\Gamma_n}$, the $n$th layer, and set $\Gamma^{(n)}:=\Gal(L/K_n)$.

\begin{subsection}{$\Gamma$-systems}\label{su:ga}

\subsubsection{{\em{$\textbf{T}$}} system}\label{su:Ts}

The definition of $\Gamma$-systems can be extended to the notion of a \textit{complete $\Gamma$-system}, for which we stipulate that for each finite intermediate extension $F$ of $L/K$ there are $\Gal(F/K)$-modules $\fa_F$ and $\fb_F$ with a pairing $\langle\;,\;\rangle_F$, and for any pair $F$, $F'$ of finite intermediate extensions with $F\subset F'$, there are $\Gamma$-morphisms $\fr_F^{F'}$ and  $\fk_F^{F'}$ satisfying the obvious analogues of ($\Gamma$-1)-($\Gamma$-4).

We say that $\fA$ is part of a complete $\Gamma$-system $\{\fa_F,\fb_F,\langle\;,\;\rangle_F,\fr_F^{F'},\fk_F^{F'}\}$ if $\fa_n=\fa_{K_n}$, $\fb_n=\fb_{K_n}$, $\fr_n^m=\fr_{K_n}^{K_m}$ and $\fk_n^m=\fk_{K_n}^{K_m}$. Obviously this implies $\fa=\varprojlim_{F}\fa_F$ and $\fb=\varprojlim_{F}\fb_F$.

Assume that $\fA$ is a complete $\Gamma$-system. Let $F$ be a finite intermediate extension and let $L'/F$ be an intermediate $\ZZ_p^{e}$-extension of $L/F$. Write
$$\fa_{L'/F}=\varprojlim_{F\subset F'\subset L'} \fa_{F'},\;\text{and}\;  \fb_{L'/F}=\varprojlim_{F\subset F'\subset L'} \fb_{F'}.$$
They are modules over $\Lambda_{L'/F}:=\ZZ_p[[\Gal(L'/F)]]$. Set the condition
\begin{enumerate}
\item[({\bf T})]
For every finite intermediate extension $F$ and every intermediate $\ZZ_p^{d-1}$-extension $L'/F$ of $L/F$, $\fa_{L'/F}$ and $\fb_{L'/F}$ are finitely generated and torsion over $\La_{L'/F}$.
\end{enumerate}

By a \textbf{T}-system we mean a complete $\Gamma$-system enjoying the property \textbf{T}.

\subsubsection{Morphisms} \label{ss:mor}
We shall always assume that a $\Gamma$-system $\fA=\{\fa_n,\fb_n,\langle\;,\;\rangle_n,\fr_m^n,\fk_m^n\}$ is
\textit{oriented} in the sense that we have fixed an order of the pairs $(\fa_n,\fb_n)$. We define a morphism of  $\Gamma$-systems
$$\fA=\{\fa_n,\fb_n,\langle\;,\;\rangle_n^{\fA},\fr(\fA)_m^n,\fk(\fA)_m^n\} \longrightarrow\fC=\{\fc_n,\fd_n,\langle\;,\;\rangle_n^{\fC},\fr(\fC)_m^n,\fk(\fC)_m^n\}$$
to be a collection of morphisms of $\Gamma$-modules $f_n\colon\fa_n\rightarrow\fc_n$, $g_n\colon\fd_n\rightarrow\fb_n$ commuting with the structure maps and such that $\langle f_n(a),d\rangle_n^{\fC}=\langle a,g_n(d)\rangle_n^{\fA}$ for all $n$.

A pseudo-isomorphism of  $\Gamma$-systems is a morphism $\fA\rightarrow\fC$ such that the induced maps $\fa\rightarrow\fc$, $\fd\rightarrow\fb$ are pseudo-isomorphisms of $\Gamma$-modules.

\begin{example} \label{eg:morf} {\em Given a $\Gamma$-system $\fA=\{\fa_n,\fb_n\}$ and $\lambda\in\La$, let us write
 $\fa_n[\lambda]$ for the $\lambda$-torsion of $\fa_n$,
namely, consisting of those elements in $\fa_n$ killed by $\lambda$.
We can then
 define $\lambda\cdot\fA:=\{\lambda\fa_n,\lambda^\sharp\fb_n\}$  and $\fA[\lambda]:=\{\fa_n[\lambda],\fb_n/\lambda^\sharp\fb_n\}$, with the pairing and the transition maps induced by those of $\fA$. It is easy to check that $\lambda\cdot\fA$ and $\fA[\lambda]$ are $\Gamma$-systems and that the exact sequences $\fa_n[\lambda]\hookrightarrow\fa_n\twoheadrightarrow\lambda\fa_n$  and $\lambda^\sharp\fb_n \hookrightarrow\fb_n\twoheadrightarrow\fb_n/\lambda^\sharp\fb_n$ provide morphisms of oriented $\Gamma$-systems $\fA[\lambda]\rightarrow\fA$ and $\fA\rightarrow\lambda\cdot\fA$.} \end{example}

\subsubsection{Derived systems}\label{su:de}
Let
$\fA=\{\fa_n,\fb_n,\langle\;,\;\rangle_n,\fr_m^n,\fk_m^n\}$
be a $\Gamma$-system.
In the following, we let $\fk_n$ denote the natural map
$$\fa\times \fb\longrightarrow \fa_n\times \fb_n\,.$$

Suppose for each $n$ we are given a $\Gamma$-submodule $\fc_n\subset \fa_n$ such that $\fr_m^n(\fc_m)\subset \fc_n$ and $\fk_m^n(\fc_n)\subset \fc_m$. Using these, we can obtain two derived $\Gamma$-systems from $\fA$. Let $\ff_n\subset \fb_n$ be the annihilator of $\fc_n$, via the duality induced from $\langle\;,\;\rangle_n$, and let $\fd_n:=\fb_n/\ff_n$.
Then we also have $\fr_m^n(\ff_m)\subset \ff_n$ and $\fk_m^n(\ff_n)\subset \ff_m$.
Hence $\fr_m^n$ induces a morphism $\fc_m\times \fd_m\rightarrow \fc_n\times \fd_n$, which, by abuse of notation, we also denote as $\fr_m^n$. Similarly, we have the morphism $\fk_m^n\colon\fc_n\times \fd_n\rightarrow \fc_m\times \fd_m$ and the pairing $\langle\;,\;\rangle_n$ on $\fc_n\times \fd_n$. Let $\fC$ denote the $\Gamma$-system
$$\{\fc_n,\fd_n,\langle\;,\;\rangle_n,\fr_m^n,\fk_m^n\;\mid\; m,n\in \NN, n\geq m\}.$$
We also write $\fe_n:=\fa_n/\fc_n$ and let $\fE$ denote the $\Gamma$-system
$$\{\fe_n,\ff_n,\langle\;,\;\rangle_n,\fr_m^n,\fk_m^n\;\mid\; m,n\in \NN, n\geq m\}.$$
Then we have the sequences
\begin{equation}\label{e:c}
0\longrightarrow \fc \longrightarrow \fa\longrightarrow \fe\longrightarrow 0
\end{equation}
and
\begin{equation}\label{e:e}
0\longrightarrow \ff \longrightarrow \fb\longrightarrow \fd\longrightarrow 0.
\end{equation}
Here $\fc$, $\fd$, $\fe$ and $\ff$ are the obvious projective limits; the systems $\{\fc_n\}$ and $\{\ff_n\}$ satisfy the Mittag-Leffler condition (because all groups are finite), so \eqref{e:c} and \eqref{e:e} are exact.

\begin{lemma} \label{l:e0f0} Assume $\fa_n=\fk_n(\fa)$ for all $n$. Then $\fe\sim 0$ implies $\ff\sim 0$.
\end{lemma}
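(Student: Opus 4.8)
The plan is to push the pseudo-nullity of $\fe=\fa/\fc$ down to each finite layer, dualize there with the perfect pairings $\langle\,,\,\rangle_n$, and read off annihilators of $\ff$; the hypothesis $\fa_n=\fk_n(\fa)$ is exactly what makes the descent work. By the exact sequence \eqref{e:c}, $\fe$ is a quotient of $\fa$, hence finitely generated and torsion over $\La$, so $\fe\sim 0$ together with Lemma~\ref{l:psn} provides relatively prime $g_1,\dots,g_k\in\La$, $k\ge 2$, with $g_i\fe=0$ for all $i$. I will show that $g_1^{\sharp},\dots,g_k^{\sharp}$ annihilate $\ff$.

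Write $\fk_n\colon\fa\to\fa_n$ for the canonical map. From $g_i\fa\subseteq\fc$ we get $\fk_n(g_i\fa)\subseteq\fk_n(\fc)\subseteq\fc_n$; since $\fk_n$ is $\La$-linear, $\fk_n(g_i\fa)=g_i\,\fk_n(\fa)$, and here — and only here — the hypothesis $\fk_n(\fa)=\fa_n$ enters, giving $g_i\fa_n\subseteq\fc_n$ for every $n$. Now dualize at level $n$: for $b\in\ff_n=\fc_n^{\perp}$ and any $a\in\fa_n$, equation \eqref{e:twistpairing} gives $\langle a,\,g_i^{\sharp}b\rangle_n=\langle g_i a,\,b\rangle_n=0$, because $g_i a\in\fc_n$ while $b$ annihilates $\fc_n$; perfectness of $\langle\,,\,\rangle_n$ forces $g_i^{\sharp}b=0$. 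Thus $g_i^{\sharp}$ annihilates every $\ff_n$, hence annihilates $\ff=\varprojlim_n\ff_n$ (the transition maps $\fk_m^n$ being $\La$-linear). Since $\ff$ is a submodule of the finitely generated torsion $\La$-module $\fb$, it is finitely generated and torsion, and $g_1^{\sharp},\dots,g_k^{\sharp}$ are relatively prime because $\cdot^{\sharp}$ is a ring automorphism of $\La$; Lemma~\ref{l:psn} now yields $\ff\sim 0$.

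The content sits entirely in the descent step, and the reason it cannot be replaced by a one-line duality argument is that the Pontryagin dual of $\fe=\varprojlim(\fe_n,\fk_m^n)$ is the \emph{direct} limit $\varinjlim(\ff_n,\fr_m^n)$ — by \eqref{e:rkpairing} the adjoint of $\fk_m^n$ on the $\fa$-side is $\fr_m^n$ on the $\fb$-side — which is not the module $\ff=\varprojlim(\ff_n,\fk_m^n)$ at hand. Passing layer by layer sidesteps this mismatch, and the surjectivity hypothesis $\fa_n=\fk_n(\fa)$ is precisely what upgrades ``$g_i$ kills $\fk_n(\fa)/\fk_n(\fc)$'' to ``$g_i$ kills $\fe_n=\fa_n/\fc_n$''; everything else is routine.
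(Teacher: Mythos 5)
Your proof is correct and follows essentially the same line as the paper's: use the hypothesis $\fk_n(\fa)=\fa_n$ to push the annihilators of $\fe$ down to each finite layer $\fe_n$, dualize with $\langle\,,\,\rangle_n$ to get annihilators of $\ff_n$, pass to the limit, and conclude by Lemma~\ref{l:psn}. The paper phrases the descent step as $\fk_n(\fe)=\fe_n$ rather than $g_i\fa_n\subseteq\fc_n$, but these are the same observation, and your extra remarks about why the naive duality argument fails are a helpful gloss rather than a different route.
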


\begin{proof} The assumption implies $\fk_n(\fe)=\fe_n$. Thus $f\cdot \fe=0$ implies $f\cdot \fe_n=0$, and consequently, by the duality, $f^{\sharp}\cdot \ff_n=0$ for all $n$, yielding $f^{\sharp}\cdot \ff=0$. Now apply Lemma \ref{l:psn}.
\end{proof}

\subsubsection{The system $\fA'$} \label{ss:a'}
In the following case, we apply the above two methods together. We first get a system $\{\fa_n/\fa^0_n,\fb^1_n\}$ by putting
\begin{equation} \label{e:dirlim0} \fa_n^0\times \fb_n^0:=\bigcup_{n'\geq n}\Ker(\fr_n^{n'})= \Ker\!\big(\fa_n\times \fb_n \longrightarrow \lim_{\stackrel{\longrightarrow}{m}}\fa_m\times \fb_m\big) \end{equation}
and letting $\fa_n^1$, $\fb_n^1$ be respectively the annihilators of $\fb_n^0$, $\fa_n^0$, via $\langle\;,\;\rangle_n$.
Then we apply the $\fC$-construction to $\{\fa_n/\fa^0_n,\fb^1_n\}$ defining $\fa_n'\subset\fa_n/\fa_n^0$ via
$$\fa_n'\times \fb_n':=\image\!\big(\fa_n^1\times \fb_n^1\longrightarrow (\fa_n/\fa_n^0)\times (\fb_n/\fb_n^0)\big)\,.$$
Notice that $\fb_n'$ is dual to $\fa_n'$, as can be seen by dualizing the diagram
\[\begin{CD}
0 @>>> \fa^1_n @>>> \fa_n \\
&& @VVV @VVV \\
0 @>>> \fa_n' @>>> \fa_n/\fa^0_n  \end{CD}\]
(recall that the duals of $\fa^1_n$ and $\fa_n/\fa^0_n$ are respectively $\fb_n/\fb_n^0$ and $\fb^1_n$).\\
Thus we get a $\Gamma$-system
$$\fA':=\{\fa_n',\fb_n',\langle\;,\;\rangle_n,\fr_m^n,\fk_m^n\;\mid\; m,n\in \NN, n\geq m\}.$$

Denote, for $i=0,1$,
$$\fa^i\times \fb^i:=\lim_{\stackrel{\leftarrow}{n}} \fa_n^i\times \fb_n^i,$$
and
$$\fa'\times \fb':=\lim_{\stackrel{\leftarrow}{n}}\fa_n'\times \fb_n'=\image\!\big(\fa^1\times\fb^1\longrightarrow (\fa/\fa^0)\times (\fb/\fb^0)\big).$$
The pairings $\langle\;,\;\rangle_n$ allow identifying each $\fa_n\times\fb_n$ with its own Pontryagin dual and this identification is compatible with the maps $\fr_m^n$, $\fk_m^n$. Then $\fa\times\fb$ is the dual of $\displaystyle \lim_{\rightarrow} \fa_n\times\fb_n$. Consider the exact sequence
\begin{equation}   \begin{CD} \label{e:exseqX}
0 @>>> \fa_n^0\times\fb^0_n @>>> \fa_n\times\fb_n @>>> (\fa_n\times\fb_n)/(\fa_n^0\times\fb^0_n) @>>> 0. \end{CD}\end{equation}
By construction, $\fa_n^1\times\fb^1_n$ is the dual of $(\fa_n\times\fb_n)/(\fa_n^0\times\fb^0_n)$. The inductive limit of \eqref{e:exseqX} gets the identity
$$\lim_{\rightarrow} \fa_n\times\fb_n=\lim_{\rightarrow}\, (\fa_n\times\fb_n)/(\fa_n^0\times\fb^0_n)$$
($\varinjlim\fa_n^0\times\fb_n^0=0$ is immediate from \eqref{e:dirlim0}) and hence, taking duals,
$$\fa^1\times \fb^1=\fa\times \fb.$$
Thus we have an exact sequence
\begin{equation}\label{e:0'}
0\longrightarrow \fa^0\times \fb^0\longrightarrow \fa\times \fb\longrightarrow \fa'\times \fb'\longrightarrow 0.
\end{equation}

\subsubsection{Strongly-controlled $\Gamma$-systems}\label{su:st}
In the previous section we saw that, since $\fb=\fb^1$ and $\fa=\fa^1$ , the information carried by $\fa^0$ and $\fb^0$
does not pass to $\fb$ and $\fa$: this explains Definition \ref{d:scontr}. Here we consider a condition stronger than being pseudo-controlled.

\begin{definition}\label{d:scontr}
A $\Gamma$-system $\fA$ is strongly controlled if  $\fa_n^0\times \fb_n^0=0$ for every $n$.
\end{definition}

\begin{lemma}\label{l:eq}
A $\Gamma$-system $\fA$ is strongly controlled if and only if $\fr_m^n$ is injective  {\em{(}}resp. $\fk_m^n$ is surjective{\em{)}} for $n\geq m$.
\end{lemma}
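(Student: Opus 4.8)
The plan is to unwind the definitions and chase the structure maps. Recall that by definition ($\Gamma$-2)--($\Gamma$-3) we have $\fr_m^n\circ\fk_m^n=\Nm_{\Gamma_n/\Gamma_m}$ and $\fk_m^n\circ\fr_m^n=p^{d(n-m)}\cdot\mathrm{id}$, and that $\fa_n^0\times\fb_n^0=\bigcup_{n'\geq n}\Ker(\fr_n^{n'})$. Strong control means this union is trivial for every $n$; since the system $\{\Ker(\fr_n^{n'})\}_{n'\geq n}$ is increasing, this is equivalent to $\Ker(\fr_n^{n'})=0$ for all $n'\geq n$, i.e.\ to injectivity of every $\fr_n^{n'}$. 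So one implication is essentially a tautology once one observes that a nested union of subgroups of a finite group is zero iff each term is zero.

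The genuine content is the equivalence between ``$\fr_m^n$ injective for all $n\geq m$'' and ``$\fk_m^n$ surjective for all $n\geq m$'', and the reduction that it suffices to check these for the one-step maps $\fr_m^{m+1}$, $\fk_m^{m+1}$ (because of the inductive/projective system conditions $\fr_m^n=\fr_{n-1}^n\circ\cdots\circ\fr_m^{m+1}$ and similarly for $\fk$; a composition of injections is injective and a composition of surjections is surjective, and conversely if a composite $\fr_m^n$ is injective then so is the first factor $\fr_m^{m+1}$, while if $\fk_m^n$ is surjective then so is the last factor). The key point connecting $\fr$ and $\fk$ is the perfectness of the pairing $\langle\;,\;\rangle_n$ together with the adjunction \eqref{e:rkpairing}: under the self-duality of $\fa_n\times\fb_n$ induced by $\langle\;,\;\rangle_n$, the relation $\langle a,\fr_m^n(b)\rangle_n=\langle\fk_m^n(a),b\rangle_m$ says precisely that $\fk_m^n$ (on the $\fa$-part, landing in $\fa_m$) is the Pontryagin adjoint of $\fr_m^n$ (on the $\fb$-part, from $\fb_m$), and symmetrically with the roles swapped via $\langle\fr_m^n(a),b\rangle_n=\langle a,\fk_m^n(b)\rangle_m$. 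Thus $\fr_m^n$ is injective on $\fa_m$ iff its adjoint $\fk_m^n$ is surjective onto $\fb_m$, and $\fr_m^n$ is injective on $\fb_m$ iff $\fk_m^n$ is surjective onto $\fa_m$ --- for finite abelian groups, injectivity of a map is dual to surjectivity of the adjoint. Combining the $\fa$-part and $\fb$-part statements gives the desired equivalence.

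Concretely I would argue: assume $\fr_m^n$ injective for all $n\geq m$. Restricting \eqref{e:rkpairing} to $b$ ranging over $\fb_m$ and $a$ over $\fa_n$, perfectness of $\langle\;,\;\rangle_m$ forces $\fk_m^n(\fa_n)$ to be all of $\fa_m$: if it were contained in a proper subgroup, there would be $0\neq b\in\fb_m$ pairing trivially with $\fk_m^n(\fa_n)$, hence $\langle a,\fr_m^n(b)\rangle_n=0$ for all $a\in\fa_n$, so by perfectness of $\langle\;,\;\rangle_n$ we get $\fr_m^n(b)=0$, contradicting injectivity. The other half (surjectivity of $\fk_m^n$ onto $\fb_m$) comes the same way from $\langle\fr_m^n(a),b\rangle_n=\langle a,\fk_m^n(b)\rangle_m$. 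The converse direction is the mirror image of this argument. I would phrase everything for the single-step maps and then invoke the system conditions to pass to general $n\geq m$, and then conclude the theorem by noting $\fa_n^0\times\fb_n^0=\bigcup_{n'\geq n}\Ker(\fr_n^{n'})=0$ iff all $\fr_n^{n'}$ are injective.

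The only mild subtlety --- and the step I expect to need the most care --- is bookkeeping the fact that $\fr_m^n$ is defined on the product $\fa_m\times\fb_m$ but by ($\Gamma$-2) preserves each factor, so ``$\fr_m^n$ injective'' really means injective on each of $\fa_m$ and $\fb_m$ separately, and one must make sure the pairing argument is applied to the correct factor each time (the adjoint of $\fr$ acting on $\fb_m$ is $\fk$ landing in $\fa_m$, not in $\fb_m$). Once this is tracked carefully, there is no real obstacle; the proof is a short diagram/pairing chase with no need for the finiteness hypotheses beyond the standard duality of finite abelian $p$-groups.
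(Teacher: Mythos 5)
Your proof is correct and spells out exactly what the paper's one-line proof (``The definition and the duality'') leaves implicit: the first equivalence is an immediate unwinding of the definition of $\fa_n^0\times\fb_n^0$ as $\bigcup_{n'\geq n}\Ker(\fr_n^{n'})$, and the second follows from the two adjunction identities in ($\Gamma$-4), which exhibit $\fk_m^n\vert_{\fa_n}$ and $\fk_m^n\vert_{\fb_n}$ as the Pontryagin adjoints of $\fr_m^n\vert_{\fb_m}$ and $\fr_m^n\vert_{\fa_m}$ respectively, so that injectivity of one is equivalent to surjectivity of the other for finite abelian groups. The detour through one-step maps is harmless but unnecessary, since the statement already quantifies over all pairs $n\geq m$; otherwise the bookkeeping of which factor the adjoint lands in is handled correctly.
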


\begin{proof} The definition and the duality. \end{proof}

\begin{lemma}\label{l:st}
Suppose $\fA$ is a $\Gamma$-system. Then the following holds:
\begin{enumerate}
\item the system $\fA'$ is strongly controlled;
\item if $\fA$ is pseudo-controlled, then $\fa\sim \fa'$ and $\fb\sim \fb'$.
\end{enumerate}
\end{lemma}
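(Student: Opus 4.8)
The plan is to prove both parts by unwinding the definitions of $\fA'$, $\fa^0$, $\fb^0$, and using the exact sequence \eqref{e:0'} together with Lemma \ref{l:e0f0}.

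\medskip

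\noindent\textbf{Proof.}
\emph{Part (1).} By Lemma \ref{l:eq} it suffices to show that in the system $\fA'$ the transition maps $\fr_m^n$ are injective, equivalently that $(\fa')^0_n\times(\fb')^0_n=0$ for all $n$; here $(\fa')^0_n$ denotes the analogue of $\fa^0_n$ computed inside $\fA'$, i.e.\ $\bigcup_{n'\geq n}\Ker\big(\fr_n^{n'}\colon\fa_n'\to\fa_{n'}'\big)$. First I would observe that $\fA'$ is built in two steps from $\fA$: one passes to $\{\fa_n/\fa_n^0,\fb_n^1\}$ and then takes the $\fC$-construction with $\fc_n=\fa_n'$. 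For the first step, note that by \eqref{e:dirlim0} the quotient $\fa_n/\fa_n^0$ injects into $\varinjlim_m\fa_m$, and the transition maps of $\{\fa_n/\fa_n^0\}$ are exactly those induced on the images inside $\varinjlim_m\fa_m$; hence if $\bar a\in\fa_n/\fa_n^0$ maps to $0$ in $\fa_{n'}/\fa_{n'}^0$ for some $n'\geq n$, then its image in $\varinjlim_m\fa_m$ is already $0$, which forces $\bar a=0$ by injectivity. Thus $\{\fa_n/\fa_n^0\}$ already has injective transition maps. The $\fC$-construction then only restricts to the submodules $\fa_n'\subset\fa_n/\fa_n^0$, and a restriction of an injective map is injective, so $\fr_m^n$ is injective on $\fa_n'$ as well. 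By the duality built into the $\fC$-construction (and Lemma \ref{l:eq}) this is equivalent to $\fA'$ being strongly controlled.

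\medskip

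\noindent\emph{Part (2).} Suppose now $\fA$ is pseudo-controlled, i.e.\ $\fa^0\times\fb^0$ is pseudo-null. The sequence \eqref{e:0'},
$$0\longrightarrow \fa^0\times\fb^0\longrightarrow \fa\times\fb\longrightarrow \fa'\times\fb'\longrightarrow 0,$$
splits into the two sequences $0\to\fa^0\to\fa\to\fa'\to0$ and $0\to\fb^0\to\fb\to\fb'\to0$, because the decomposition $\fa_n\times\fb_n$ into its two factors is respected by all the maps involved. Since $\fa^0$ and $\fb^0$ are subquotients of the pseudo-null module $\fa^0\times\fb^0$, they are themselves pseudo-null, so the natural surjections $\fa\twoheadrightarrow\fa'$ and $\fb\twoheadrightarrow\fb'$ are pseudo-isomorphisms by definition. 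This gives $\fa\sim\fa'$ and $\fb\sim\fb'$.

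\medskip

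I expect the only real subtlety to be the bookkeeping in Part (1): one must keep careful track of which limit each ``$0$'' and ``$1$'' superscript refers to, check that the transition maps on $\{\fa_n/\fa_n^0\}$ really are the ones induced inside $\varinjlim_m\fa_m$ (this is precisely the content of the right-hand equality in \eqref{e:dirlim0}), and confirm that the $\fC$-construction preserves injectivity and duality as recorded in \S\ref{su:de} and \S\ref{ss:a'}. Part (2) is then essentially immediate from \eqref{e:0'} once one notes that sub- and quotient modules of pseudo-null modules are pseudo-null, which follows from Lemma \ref{l:psn}. $\qed$
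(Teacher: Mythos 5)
Your proof is correct and follows the same route the paper intends: the paper's own proof of this lemma is the single sentence ``Statement (1) follows from the definition of $\fA'$ and (2) is immediate from the exact sequence \eqref{e:0'},'' and what you have done is fill in exactly those details.

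One small imprecision in Part (1) is worth fixing. After establishing that $\fr_m^n$ is injective on $\fa_m'$, you write that ``by the duality built into the $\fC$-construction (and Lemma~\ref{l:eq}) this is equivalent to $\fA'$ being strongly controlled.'' As written this is not quite right: by the compatibility $\langle\fr_m^n(a),b\rangle_n=\langle a,\fk_m^n(b)\rangle_m$, injectivity of $\fr_m^n$ on $\fa_m'$ is dual to \emph{surjectivity of $\fk_m^n$ on $\fb_n'$}, not to injectivity of $\fr_m^n$ on $\fb_m'$; and Lemma~\ref{l:eq} requires injectivity of $\fr_m^n$ on the full product $\fa_m'\times\fb_m'$. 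What closes the gap is not duality but \emph{symmetry}: the definition of $\fA'$ treats $\fa$ and $\fb$ identically (both $\fa_n'$ and $\fb_n'$ are defined as images of $\fa_n^1$, $\fb_n^1$ in $\fa_n/\fa_n^0$, $\fb_n/\fb_n^0$), so the argument you give for the $\fa$-side applies verbatim to $\fb_m'\subset\fb_m/\fb_m^0\hookrightarrow\varinjlim\fb_m$. With injectivity of $\fr_m^n$ on both factors in hand, Lemma~\ref{l:eq} gives strong control. Part (2) is fine: the two split sequences from \eqref{e:0'} have pseudo-null kernels because a direct summand of a pseudo-null module is pseudo-null (its annihilator contains that of the whole module), so $\fa\twoheadrightarrow\fa'$ and $\fb\twoheadrightarrow\fb'$ are pseudo-isomorphisms.
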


\begin{proof} Statement (1) follows from the definition of $\fA'$ and (2) is immediate from the exact sequence \eqref{e:0'}. \end{proof}

%\begin{lemma} \label{l:pcsuffic} Let $\fA$ be a pseudo-controlled $\Gamma$-system.
%Then the functional equations \eqref{e:basharp} and \eqref{e:chisharp} hold for $\fA$ if and only if they hold for $\fA'$. \end{lemma}

%\begin{proof} Obvious by Lemma \ref{l:st}(2). \end{proof}

\begin{lemma}\label{l:an} Suppose $\fA$ is strongly controlled. Then $\xi\cdot \fb=0$, for some $\xi\in\La$, if and only if $\xi^{\sharp}\cdot \fa=0$. \end{lemma}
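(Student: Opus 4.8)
The plan is to reduce the statement to the perfect pairings $\langle\;,\;\rangle_n$ together with the twist formula \eqref{e:twistpairing}, after translating strong control into surjectivity of the projections onto the finite layers. Since the statement is symmetric in $\fa$ and $\fb$ (swap $\xi$ and $\xi^\sharp$ and use that $\cdot^\sharp$ is an involution), it is enough to prove the implication $\xi\cdot\fb=0\Rightarrow\xi^\sharp\cdot\fa=0$.

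First I would record what strong control gives: by Lemma~\ref{l:eq} every transition map $\fk_m^n$ ($n\geq m$) is surjective — in particular surjective as a map $\fa_n\to\fa_m$ and as a map $\fb_n\to\fb_m$ (these two statements are equivalent by duality under the pairings, via \eqref{e:rkpairing}). Since all the groups are finite, the projective systems $\{\fa_n\}$ and $\{\fb_n\}$ satisfy the Mittag-Leffler condition, so surjectivity of the transition maps passes to the limit and the natural $\La$-linear projections $\fk_n\colon\fa\to\fa_n$ and $\fk_n\colon\fb\to\fb_n$ are onto.

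Now assume $\xi\cdot\fb=0$. Surjectivity and $\La$-linearity of $\fk_n$ give $\xi\cdot\fb_n=\fk_n(\xi\cdot\fb)=0$ for every $n$. Fix $n$ and any $a\in\fa_n$; for all $b\in\fb_n$ the $\Gamma$-invariance of $\langle\;,\;\rangle_n$ together with \eqref{e:twistpairing} yields
$$\langle\xi^\sharp\cdot a,\,b\rangle_n=\langle a,\,\xi\cdot b\rangle_n=0,$$
and perfectness of $\langle\;,\;\rangle_n$ forces $\xi^\sharp\cdot a=0$. Hence $\xi^\sharp\cdot\fa_n=0$ for all $n$, and since $\fa$ embeds into $\prod_n\fa_n$ with $\La$ acting componentwise, $\xi^\sharp\cdot\fa=0$. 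The converse is the same chain of identities with $\fa$ and $\fb$ interchanged, using the surjectivity of $\fk_n\colon\fa\to\fa_n$ and $(\xi^\sharp)^\sharp=\xi$.

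I do not expect any real obstacle here; the argument is essentially formal. The single step that actually consumes the hypothesis is the reduction $\xi\cdot\fb=0\Rightarrow\xi\cdot\fb_n=0$, which needs the surjectivity of $\fk_n$ and hence strong control — without it one only learns that $\xi$ annihilates the image $\fk_n(\fb)\subseteq\fb_n$, which need not be all of $\fb_n$, and the pairing argument breaks down. Everything else is manipulation of the perfect pairings.
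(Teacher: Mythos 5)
Your argument is essentially the paper's own proof, just written out in more detail: both deduce $\fb_n=\fk_n(\fb)$ from Lemma~\ref{l:eq} (the Mittag-Leffler remark is implicit there), push $\xi\cdot\fb=0$ down to each layer, and invoke the perfect pairing together with \eqref{e:twistpairing} to get $\xi^\sharp\cdot\fa_n=0$ and hence $\xi^\sharp\cdot\fa=0$. Your symmetry reduction at the start and the explicit verification via the pairing are fine, and the identification of the surjectivity of $\fk_n$ as the crucial use of strong control is exactly the right observation.
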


\begin{proof} By Lemma \ref{l:eq}, we have $\fb_n=\fk_n(\fb)$.  Thus $\xi\cdot \fb=0$ implies $\xi\cdot \fb_n=0$, and consequently, by the duality, $\xi^\sharp\cdot \fa_n=0$ for all $n$, yielding $\xi^\sharp\cdot \fa=0$. \end{proof}

\end{subsection}

%===================================================================

\subsection{Two maps} In this subsection we introduce the maps $\Phi$ and $\Psi$ which
plays a key role in our constructions.

For simplicity, in the following we shall use the notations $Q_n:=\QQ_p[\Gamma_n]$ and $\Lambda_n:=\ZZ_p[\Gamma_n]$. The projections $\pi_m^n\colon\Gamma_n\to\Gamma_m$ are canonically extended to ring morphisms $\colon\Lambda_n\to\Lambda_m$. Let
$$Q_\infty:=\varprojlim Q_n=\QQ_p[[\Gamma]]\,.$$
Thanks to the inclusions $\La_n\hookrightarrow Q_n$ we can see $\La$ as a subring of $Q_\infty$.

\subsubsection{The Fourier map} \label{ss:four} Let $\fA$ be a $\Gamma$-system as above. In this section, we construct a $\Lambda$-linear map
$$\Phi\colon\fa^\sharp\lr \Hom_{\Lambda}(\fb,Q_\infty/\La)\,.$$
First recall that the pairing in ($\Gamma$-4) induces for any $n$ an isomorphism of $\La$-modules
%\footnote{Here $\Hom_{\ZZ_p}(\fb_n,\QQ_p/\ZZ_p)$ is a $\La$-module via $(\gamma\cdot f)\colon x\mapsto f(\gamma x)$ for $\gamma\in\Gamma$.}
$$\fa_n^\sharp\simeq \Hom_{\ZZ_p}(\fb_n,\QQ_p/\ZZ_p),$$
the twist by the involution $\cdot^\sharp$ being due to \eqref{e:twistpairing}. Equality \eqref{e:rkpairing} shows that these isomorphisms form an isomorphism of projective systems, where the right hand side is endowed  with the transition maps induced by the direct system $(\fb_n,\fr_m^n)$.
Passing to the projective limit, we deduce a $\Lambda$-isomorphism
$$\fa^\sharp\simeq \lim_{\stackrel{\leftarrow}{n}}\Hom_{\ZZ_p}(\fb_n,\QQ_p/\ZZ_p).$$
Now the map $\Phi$ is obtained as the composed of this isomorphism and the following  $\Lambda$-linear maps:
\begin{enumerate}
\item[($\Phi$-1)] the homomorphism
$$\lim_{\stackrel{\leftarrow}{n}}\Hom_{\ZZ_p}(\fb_n,\QQ_p/\ZZ_p) \lr \lim_{\stackrel{\leftarrow}{n}} \Hom_\Lambda(\fb_n,Q_n/\La_n)$$
obtained by sending $(f_n)_n$ to $\big(\hat{f}_n\colon x\mapsto\sum_{\gamma\in\Gamma_n}f_n(\gamma^{-1}x)\gamma\big)_n\,$;
\item[($\Phi$-2)] the homomorphism
$$\varprojlim \Hom_\Lambda(\fb_n,Q_n/\La_n)\lr \varprojlim \Hom_{\Lambda}(\fb,Q_n/\La_n)$$
induced by $\fk_n\colon\fb\to \fb_n\,$;
\item[($\Phi$-3)] the canonical isomorphism
$$\lim_{\stackrel{\leftarrow}{n}} \Hom_{\Lambda}(\fb,Q_n/\La_n)\simeq  \Hom_{\Lambda}(\fb,\lim_{\stackrel{\leftarrow}{n}}Q_n/\La_n)$$
and the identification $\varprojlim Q_n/\La_n=Q_\infty/\La$ (since the maps $\La_n\rightarrow\La_m$ are surjective).
\end{enumerate}
Here as transition maps in $\varprojlim\Hom_\Lambda(\fb_n,Q_n/\La_n)$ we take (for $n\geq m$)
$$\Hom_\Lambda(\fb_n,Q_n/\La_n)\lr\Hom_\Lambda(\fb_m,Q_m/\La_m)$$
\begin{equation}\label{e:twistmap}\varphi \mapsto p^{-d(n-m)}(\pi_m^n\circ\varphi\circ\fr_m^n)\,.\end{equation}
We have to check that ($\Phi$-1) and ($\Phi$-2) define maps of projective systems. For ($\Phi$-1),
this means to verify that for any $n\geq m$ we have \begin{equation}\label{e:check1}\hat{f_m}=p^{-d(n-m)}(\pi_m^n\circ\hat{f_n}\circ\fr_m^n)\,, \end{equation}
where, by definition, $f_m=f_n\circ\fr_m^n$. For $x\in\fb_m$,
$$\pi_m^n(\hat{f_n}(\fr_m^nx))=\pi_m^n\big(\sum_{\gamma\in\Gamma_n}f_n(\gamma^{-1}(\fr_m^n x))\gamma\big)=\sum_{\gamma\in\Gamma_n}f_n(\gamma^{-1}\fr_m^n x)\pi_m^n(\gamma)$$
(using the fact that, by ($\Gamma$-2), $\fr^n_m$ is a $\Gamma$-morphism)
$$=\sum_{\gamma\in\Gamma_n}f_n(\fr_m^n(\gamma^{-1}x))\pi_m^n(\gamma)=\frac{|\Gamma_n|}{|\Gamma_m|}\sum_{\gamma\in\Gamma_m}f_n(\fr_m^n(\gamma^{-1}x))\gamma=p^{d(n-m)}\hat{f}_m(x)\,,$$
so \eqref{e:check1} holds. As for ($\Phi$-2), the transition map
$$\Hom_{\Lambda}(\fb,Q_n/\La_n)\lr \Hom_{\Lambda}(\fb,Q_m/\La_m)$$
is $\psi\mapsto\pi^n_m\circ\psi$ and the map defined in ($\Phi$-2) is $(\varphi_n)_n\mapsto (\varphi_n\circ\fk_n)_n\,$. By \eqref{e:twistmap},
$$\varphi_m\circ\fk_m=p^{-d(n-m)}(\pi_m^n\circ\varphi_n\circ\fr_m^n)\circ\fk_m=p^{-d(n-m)}(\pi_m^n\circ\varphi_n\circ\fr_m^n\circ\fk^n_m\circ\fk_n)=$$
(by property ($\Gamma$-3) of $\Gamma$-systems)
$$=p^{-d(n-m)}(\pi_m^n\circ\varphi_n\circ\Nm_{\Gamma_n/\Gamma_m}\circ \fk_n)=\pi_m^n\circ\varphi_n\circ\fk_n$$
(since $\varphi_n$, being a $\La$-morphism, commutes with $\Nm_{\Gamma_n/\Gamma_m}$ and $\pi_m^n\circ\Nm_{\Gamma_n/\Gamma_m}=p^{d(n-m)}\pi_m^n$).
So also ($\Phi$-2) is a map of projective systems.

\begin{remark}{\em Actually, one can also check that the maps $f_n\mapsto\hat{f_n}$ used in ($\Phi$-1) are isomorphisms. The inverse is $f\mapsto\delta_e\circ f$, where $\delta_e\colon Q_n/\La_n\to\QQ_p/\ZZ_p$ is the function sending $\sum_{\Gamma_n}a_\gamma\gamma$ to $a_e$ ($e$ being the neutral element in $\Gamma_n$).}
\end{remark}

If the $\Gamma$-system $\fA$ is strongly controlled then the map $\Phi$ is clearly injective (since $\fb$ maps onto $\fb_n$ for all $n$). In general, we have the following.

\begin{lemma} \label{l:kernPhi}
The kernel of $\Phi$ equals $(\fa^0)^\sharp$.
\end{lemma}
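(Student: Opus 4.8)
The plan is to track an element of $\fa^\sharp$ — which, via the isomorphism $\fa^\sharp\simeq\varprojlim_n\Hom_{\ZZ_p}(\fb_n,\QQ_p/\ZZ_p)$, is a compatible system $(f_n)_n$ — through the three maps ($\Phi$-1), ($\Phi$-2), ($\Phi$-3) and determine exactly when the image is zero. Since ($\Phi$-1) and ($\Phi$-3) are already observed to be isomorphisms (the remark above exhibits the inverse of the $f_n\mapsto\hat f_n$ construction, and ($\Phi$-3) is a canonical iso because the $\La_n\to\La_m$ are surjective), the kernel of $\Phi$ is the kernel of the map in ($\Phi$-2), i.e. of
$$\varprojlim_n\Hom_\Lambda(\fb_n,Q_n/\La_n)\lr\varprojlim_n\Hom_\Lambda(\fb,Q_n/\La_n),\qquad (\varphi_n)_n\mapsto(\varphi_n\circ\fk_n)_n.$$
So I must show: a compatible system $(\varphi_n)$ satisfies $\varphi_n\circ\fk_n=0$ for all $n$ if and only if the corresponding element of $\fa^\sharp$ lies in $(\fa^0)^\sharp$.

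First I would unwind what $(\fa^0)^\sharp$ is. By construction $\fa^0\times\fb^0=\varprojlim_m\bigcup_{n\geq m}\Ker(\fr_m^n)$; dualizing the defining exact sequence \eqref{e:exseqX} and passing to limits (as done in \S\ref{ss:a'}) identifies $\fa$ with the Pontryagin dual of $\varinjlim_n\fa_n$ and, under the pairings, $\fa^0$ with the annihilator inside $\fa$ of the image of $\fb=\fb^1=\varprojlim\fb_n^1$ in the relevant quotient — more concretely, $\fa^0$ consists of those $a=(a_n)\in\fa$ such that, writing $a_n$ as a functional on $\fb_n$ via $\langle\;,\;\rangle_n$, the restriction of $a_n$ to $\fk_n(\fb)$ is eventually (in a compatible sense) trivial. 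The cleanest route: the $n$-th component of $\Phi((f_n)_n)$, after applying ($\Phi$-1) and ($\Phi$-2), is the homomorphism $x\mapsto\sum_{\gamma\in\Gamma_n}f_n(\gamma^{-1}\fk_n(x))\gamma$ on $\fb$, and this vanishes for all $n$ precisely when $f_n$ vanishes on $\fk_n(\fb)$ for all $n$. Now $\fk_n(\fb)=\fk_n(\fb^1)$ is exactly the annihilator (under $\langle\;,\;\rangle_n$) of $\fa_n^0$ — this is essentially the content of the self-dual diagram in \S\ref{ss:a'} together with the fact that $\fb=\fb^1$, since $\fk_n(\fb)$ is the "coherent part" of $\fb_n$ and $\fa_n^0$ is by definition its orthogonal complement, being $\varinjlim$-torsion. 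Hence $f_n|_{\fk_n(\fb)}=0$ for all $n$ says precisely that the functional $f_n$ factors through $\fb_n/\fk_n(\fb)$, i.e. $a_n\in\fa_n^0$ (up to the $\sharp$-twist) for all $n$, i.e. $a\in\fa^0$, i.e. $(f_n)_n\in(\fa^0)^\sharp$.

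I would organize the argument as two inclusions. For $(\fa^0)^\sharp\subset\Ker\Phi$: if $a\in\fa^0$ then each $a_n\in\fa_n^0$, so $a_n$ kills $\fb_n^1$; since $\fk_n(\fb)=\fk_n(\fb^1)\subset\fb_n^1$, we get $f_n(\gamma^{-1}\fk_n(x))=0$ for all $\gamma,x$, hence $\Phi(a)=0$. For $\Ker\Phi\subset(\fa^0)^\sharp$: if $\Phi((f_n)_n)=0$ then for each $n$ the element $\hat f_n\circ\fk_n\in\Hom_\Lambda(\fb,Q_n/\La_n)$ is zero; applying $\delta_e$ (the coefficient-of-$e$ functional from the remark) gives $f_n\circ\fk_n=0$ on $\fb$, so $f_n$ kills $\fk_n(\fb)=\fk_n(\fb^1)$; since $\fb=\fb^1$ this image is all of $\fk_n(\fb^1)$, whose annihilator in $\fa_n$ is $\fa_n^0$, so $a_n\in\fa_n^0$ for every $n$ and thus $a\in\fa^0$. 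Finally, since $\cdot^\sharp$ is an exact additive involution (it is tensoring along an automorphism of $\La$), $(\fa^0)^\sharp$ is genuinely a submodule of $\fa^\sharp$ and the identification is $\La$-linear, completing the proof.

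\medskip\noindent\textbf{Main obstacle.} The delicate point is the duality bookkeeping identifying $\fk_n(\fb)$ with the exact annihilator of $\fa_n^0$ inside $\fb_n$ under $\langle\;,\;\rangle_n$ — in particular using $\fb=\fb^1$ so that no information is lost in passing to the limit, and making sure the $\sharp$-twists introduced by \eqref{e:twistpairing} are accounted for consistently at every stage. The rest (that ($\Phi$-1) and ($\Phi$-3) are isomorphisms, that ($\Phi$-2) is the only place a kernel can appear) is essentially read off from the construction and the remark.
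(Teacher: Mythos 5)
Your argument is correct and follows the same route as the paper: you compute that after ($\Phi$-1) and ($\Phi$-2) the image of $a=(a_n)_n$ is $b\mapsto\bigl(\sum_\gamma\langle a_n,\gamma^{-1}\fk_n(b)\rangle_n\gamma\bigr)_n$, observe that vanishing is equivalent to $\langle a_n,\cdot\rangle_n$ killing $\fk_n(\fb)$ for all $n$, and then invoke the duality between $\fa_n\to\varinjlim\fa_m$ and $\fb\to\fb_n$ to conclude that this happens exactly when $a_n\in\fa_n^0$ for all $n$. The only cosmetic difference is that the paper states the key duality fact as $\fa_n^0=\mathrm{Ann}(\fk_n(\fb))$ whereas you phrase it as $\fk_n(\fb)=\fb_n^1=\mathrm{Ann}(\fa_n^0)$; these are equivalent by double orthogonality for finite groups.
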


\begin{proof} The image of $a=(a_n)_n\in\fa$ in $\varprojlim\Hom_\Lambda(\fb_n,Q_n/\La_n)$ is the map
$$b=(b_n)_n\mapsto\big(\sum_{\gamma\in\Gamma_n}\langle a_n,\gamma^{-1}b_n\rangle_n\gamma\big)_n\,.$$
To conclude, observe that $\fa_n\to\varinjlim\fa_m$ is dual to $\fb\to\fb_n$. Hence $\langle a_n,b_n\rangle_n=0$, for every $b_n$ contained in the image of $\fb\rightarrow \fb_n$, if and only if $a_n\in\fa_n^0$.
\end{proof}

\subsubsection{} Let $\fb$ be a finitely generated torsion $\La$-module. In \S\ref{ss:Psi} below we shall construct a map
$$\Psi\colon\Hom_{\Lambda}(\fb, Q_\infty/\La)\to \Hom_{\Lambda}(\fb,Q(\Lambda)/\Lambda),$$
where $Q(\La)$ is the field of fractions of $\La$. The interest of having such a $\Psi$ comes from the following lemma.

\begin{lemma} \label{l:b&Iwadj}
For $\fb$ a finitely generated torsion $\La$-module, we have a pseudo-isomorphism
$$\fb\sim \Hom_{\Lambda}(\fb,Q(\Lambda)/\Lambda).$$
\end{lemma}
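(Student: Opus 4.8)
The plan is to reduce the statement to the structure theory of finitely generated torsion $\La$-modules recalled in \S\ref{sss:krul}. Since pseudo-isomorphism is an equivalence relation on finitely generated torsion $\La$-modules (Lemma \ref{l:pseudoisom}), and since the functor $N\mapsto\Hom_\La(N,Q(\La)/\La)$ is exact on torsion modules up to pseudo-null error terms (a pseudo-null module is annihilated by two coprime elements by Lemma \ref{l:psn}, and $\Hom_\La(-,Q(\La)/\La)$ of such a module, as well as its first $\Ext$, is again pseudo-null), it suffices to prove the pseudo-isomorphism for $\fb$ replaced by $[\fb]=\bigoplus_{i=1}^m\La/\xi_i^{r_i}\La$. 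Thus I would first establish the two reduction principles: (i) if $\fb\sim\fb'$ then $\Hom_\La(\fb,Q(\La)/\La)\sim\Hom_\La(\fb',Q(\La)/\La)$, and (ii) the construction commutes with finite direct sums up to isomorphism.

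Having reduced to a single cyclic module $\La/\xi^r\La$ with $\xi$ irreducible, the computation is explicit. Applying $\Hom_\La(-,Q(\La)/\La)$ to the short exact sequence $0\to\La\xrightarrow{\xi^r}\La\to\La/\xi^r\La\to 0$ gives
\begin{equation*}
0\to\Hom_\La(\La/\xi^r\La,Q(\La)/\La)\to Q(\La)/\La\xrightarrow{\xi^r}Q(\La)/\La,
\end{equation*}
so that $\Hom_\La(\La/\xi^r\La,Q(\La)/\La)\cong(Q(\La)/\La)[\xi^r]$, the $\xi^r$-torsion of $Q(\La)/\La$. Since $\La$ is a Krull domain (indeed a UFD) and $\xi$ is irreducible, localizing at the height one prime $(\xi)$ identifies this torsion submodule: an element of $Q(\La)/\La$ killed by $\xi^r$ is represented by $\La_{(\xi)}$-fractions with denominator dividing $\xi^r$, so $(Q(\La)/\La)[\xi^r]\cong \xi^{-r}\La/\La\cong\La/\xi^r\La$ as $\La$-modules. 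Hence $\Hom_\La(\La/\xi^r\La,Q(\La)/\La)\cong\La/\xi^r\La$, which is even an isomorphism, not merely a pseudo-isomorphism. Summing over $i$ gives $\Hom_\La([\fb],Q(\La)/\La)\cong[\fb]$, and combining with the reduction step yields $\fb\sim[\fb]\cong\Hom_\La([\fb],Q(\La)/\La)\sim\Hom_\La(\fb,Q(\La)/\La)$.

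The step I expect to require the most care is the exactness/pseudo-null-stability claim underlying reduction (i): given a pseudo-isomorphism $\fb'\to\fb$ with pseudo-null kernel $N_1$ and cokernel $N_2$, one breaks it into two short exact sequences and applies $\Hom_\La(-,Q(\La)/\La)$, producing long exact sequences involving $\Hom_\La(N_i,Q(\La)/\La)$ and $\Ext^1_\La(N_i,Q(\La)/\La)$. One must check these four modules are pseudo-null. For $\Hom$ this is clear since $N_i$ is torsion while $Q(\La)/\La$ has no nonzero element killed by a single $\La$-regular... more precisely, $\Hom_\La(N_i,Q(\La)/\La)$ is again annihilated by the two coprime annihilators of $N_i$, so it is pseudo-null by Lemma \ref{l:psn}; the same argument applies to $\Ext^1$, using that $\Ext^1_\La(N_i,-)$ is annihilated by $\mathrm{ann}(N_i)$. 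Once this is in hand the chase is routine. (Alternatively, one may simply invoke the map $\Psi$ of \S\ref{ss:Psi} once it is constructed, but the self-contained argument above is cleaner and does not rely on $\Gamma$-system structure.)
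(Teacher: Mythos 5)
Your proof is correct and takes essentially the same approach as the paper: reduce to $[\fb]$ by showing $\Hom_\La(-,Q(\La)/\La)$ carries pseudo-null kernels/cokernels to pseudo-null modules (via the annihilator argument and Lemma~\ref{l:psn}), then compute $\Hom_\La(\La/\xi^r\La,Q(\La)/\La)\cong(Q(\La)/\La)[\xi^r]=\xi^{-r}\La/\La\cong\La/\xi^r\La$. The only cosmetic difference is that the paper applies the Hom functor directly to the single exact sequence $0\to[\fb]\to\fb\to\fn\to 0$ (using that the structure map is injective since $[\fb]$ has no pseudo-null submodule), whereas you treat a general pseudo-isomorphism via two short exact sequences, and you derive the torsion identification from the presentation $0\to\La\xrightarrow{\xi^r}\La\to\La/\xi^r\La\to 0$ rather than stating it directly.
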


\begin{proof}
From the exact sequence
$$0\longrightarrow [\fb]\longrightarrow \fb\longrightarrow \fn\lr 0,$$
where $\fn$ is pseudo-null, we deduce the exact sequence
$$\Hom_{\Lambda}(\fn,Q(\La)/\Lambda)\hookrightarrow \Hom_{\Lambda}(\fb,Q(\La)/\La) \to \Hom_{\Lambda}([\fb],Q(\La)/\La)\to\text{Ext}_{\Lambda}^1(\fn,Q(\La)/\La)\,.$$
The annihilator of $\fn$ also kills $\Hom_{\Lambda}(\fn,\;)$ and its derived functors, so by Lemma \ref{l:psn}, it follows $\Hom_{\La}(\fb,Q(\Lambda)/\Lambda)\sim \Hom_{\La}([\fb],Q(\Lambda)/\Lambda)$, and we can assume that $\fb=[\fb]$. Write
$$\fb=\Lambda/(\xi_1)\oplus\dots\oplus\Lambda/(\xi_n)\,.$$
Then
$$ \Hom_{\Lambda}(\fb,Q(\Lambda)/\Lambda)=\bigoplus\Hom_{\Lambda}(\La/(\xi_i),Q(\Lambda)/\Lambda)=\bigoplus\Hom_{\Lambda}(\Lambda/(\xi_i),\xi_i^{-1}\Lambda/\Lambda)$$
because $(Q(\La)/\La)[\xi_i]=\xi_i^{-1}\La/\La.$
Since
$$\Hom_{\Lambda}(\Lambda/(\xi_i),\xi_i^{-1}\Lambda/\Lambda)=\Lambda/(\xi_i)\,,$$
we conclude that in this situation, $\Hom_{\Lambda}(\fb,Q(\Lambda)/\Lambda)=\fb\,.$
\end{proof}

\subsubsection{A theorem of Monsky} \label{su:monsky}
Let $\Gamma^\vee$ (resp. $\Gamma^\vee_n$ ) denote the group of continuous characters $\Gamma\rightarrow \boldsymbol\mu_{p^\infty}$ (resp. $\Gamma_n\rightarrow \boldsymbol\mu_{p^{\infty}}$); we view $\Gamma^\vee_n$ as a subgroup of $\Gamma^\vee$. For each $\omega\in \Gamma^\vee$, let $E_\omega:=\QQ_p(\boldsymbol\mu_{p^m})\subset \bar\QQ_p$ be the subfield generated by the image $\omega(\Gamma) =\boldsymbol\mu_{p^m}$, and write $\cO_\omega=\ZZ_p[\boldsymbol\mu_{p^m}]$. Then $ \omega$ induces a continuous ring homomorphism $\omega\colon\Lambda\rightarrow \cO_\omega\subset E_\omega$.\\

Let $\xi\in\La$: we say that $\omega$ is a zero of $\xi$, if and only if $\omega(\xi)=0$, and denote the zero set
\begin{equation} \label{e:Delta}
\triangle_{\xi}:=\{ \omega\in {\Gamma^\vee}\;\mid\; \omega(\xi)=0\}.
\end{equation}
Then we recall a theorem of Monsky (\cite[Lemma 1.5 and Theorem 2.6]{monsky}).

\begin{definition} \label{d:monsky}
A subset $\Xi\subset \Gamma^\vee$ is called a $\ZZ_p$-flat of codimension $k$, if there exists $\{\gamma_1,...,\gamma_k\}\subset \Gamma$ expandable to a $\ZZ_p$-basis of $\Gamma$ and $\zeta_1,...,\zeta_k\in\boldsymbol\mu_{p^{\infty}}$ such that
$$\Xi=\{ \omega \in \Gamma^\vee\;\mid\; { \omega}(\gamma_i)=\zeta_i,i=1,...,k\}.$$
\end{definition}

\noindent This definition is due to Monsky: in \cite[\S1]{monsky}, he proves that $\ZZ_p$-flats generate the closed sets of a certain (Noetherian) topology on $\Gamma^\vee$. It turns out that in this topology the sets $\triangle_\xi$ are closed, and they are proper subsets (possibly empty) if $\xi\neq0$ (\cite[Theorem 2.6]{monsky}). Hence

\begin{mytheorem}[Monsky] \label{t:monsky}
Suppose $\cO$ is a discrete valuation ring finite over $\ZZ_p$ and $\xi\in\cO[[\Gamma]]$ is non-zero. Then the zero set $\triangle_{\xi}$ is a proper subset of $\Gamma^\vee$ and is a finite union of $\ZZ_p$-flats.
\end{mytheorem}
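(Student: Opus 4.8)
The plan is to reduce the statement to the two cited results of Monsky (\cite[Lemma 1.5 and Theorem 2.6]{monsky}) by a base-change argument that absorbs the coefficient ring $\cO$ into the group. The point is that Monsky works over $\ZZ_p[[\Gamma]]$, while here we allow $\xi$ to have coefficients in a discrete valuation ring $\cO$ finite over $\ZZ_p$, so the first task is to explain why this extra generality costs nothing. Write $F$ for the fraction field of $\cO$ and let $\mathfrak{f}$ be the residue degree. First I would treat the unramified part of $\cO/\ZZ_p$: if $\cO$ is unramified, then $\cO=\ZZ_p[\mu_{q-1}]$ for a suitable prime power $q$, and any character $\omega\colon\Gamma\to\boldsymbol\mu_{p^\infty}$ extends (non-canonically, via a choice of embedding $\cO\hookrightarrow\bar\QQ_p$) to a ring homomorphism $\cO[[\Gamma]]\to\bar\QQ_p$; the vanishing locus $\triangle_\xi$ is then unchanged if we replace $\cO[[\Gamma]]$ by $\cO\otimes_{\ZZ_p}\ZZ_p[[\Gamma]]$ and note that $\cO\otimes_{\ZZ_p}\ZZ_p[[\Gamma]]$ is a finite free $\ZZ_p[[\Gamma]]$-algebra, so that $\xi$ satisfies a monic polynomial over $\ZZ_p[[\Gamma]]$; its norm $\Nm(\xi)\in\ZZ_p[[\Gamma]]$ is non-zero (as $\cO[[\Gamma]]$ is a domain and $\xi\neq 0$), and $\triangle_\xi\subseteq\triangle_{\Nm(\xi)}$. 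Thus it suffices to prove the theorem for $\xi\in\ZZ_p[[\Gamma]]$.

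Next I would dispose of the ramified part, or rather observe it can be folded into the previous reduction: in general $\cO$ is finite over $\ZZ_p$, hence free of some rank $e$, and $\cO[[\Gamma]]$ is again finite free over $\ZZ_p[[\Gamma]]$, so the same norm trick applies verbatim — $\Nm_{\cO[[\Gamma]]/\ZZ_p[[\Gamma]]}(\xi)$ is a non-zero element of $\ZZ_p[[\Gamma]]$ and $\triangle_\xi\subseteq\triangle_{\Nm(\xi)}$ as subsets of $\Gamma^\vee$ (a character killing $\xi$ kills the product of its conjugates). Since a subset of a proper subset is proper, and since a subset of a finite union of $\ZZ_p$-flats need not itself be such a union, I must be a little careful here: the inclusion $\triangle_\xi\subseteq\triangle_{\Nm(\xi)}$ gives properness for free, but to get that $\triangle_\xi$ itself is a finite union of $\ZZ_p$-flats I would instead argue directly that $\triangle_\xi$ is closed in Monsky's topology on $\Gamma^\vee$ (this is \cite[Lemma 1.5]{monsky}, which is stated for the coefficient ring being a complete DVR finite over $\ZZ_p$ — exactly our hypothesis on $\cO$), and then invoke \cite[Theorem 2.6]{monsky}, which asserts precisely that every closed set in this topology is a finite union of $\ZZ_p$-flats and that $\triangle_\xi$ is proper when $\xi\neq 0$. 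In other words, once one checks that the two lemmas of Monsky are already formulated for coefficients in an arbitrary DVR finite over $\ZZ_p$, there is essentially nothing left to do.

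So the structure of the write-up is: (i) recall Monsky's topology on $\Gamma^\vee$ generated by the $\ZZ_p$-flats of Definition \ref{d:monsky}; (ii) cite \cite[Lemma 1.5]{monsky} to the effect that for $\xi\in\cO[[\Gamma]]$ the zero set $\triangle_\xi$ is closed in this topology; (iii) cite \cite[Theorem 2.6]{monsky} that closed sets are finite unions of $\ZZ_p$-flats, and that $\triangle_\xi\subsetneq\Gamma^\vee$ when $\xi\neq 0$ (equivalently, $\Gamma^\vee$ is irreducible, or: a non-zero power series cannot vanish at every character, which for $d=1$ is the fact that $\xi$ has only finitely many zeros and for general $d$ follows by Monsky's induction on $d$); (iv) conclude. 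The main obstacle is not mathematical depth but bookkeeping about what exactly Monsky proves over which coefficient ring: if his results are only stated over $\ZZ_p$, then step (ii) and the properness in step (iii) require the norm-descent argument sketched above, and one must then supplement it with the observation that $\triangle_\xi$ being \emph{contained} in a finite union of flats is not enough — one needs closedness of $\triangle_\xi$ as an independent input to re-apply Theorem 2.6 to $\triangle_\xi$ itself. I expect that in Monsky's paper the relevant statements are in fact already given for $\cO$ a DVR finite over $\ZZ_p$, so that the honest content of our Theorem \ref{t:monsky} is a direct quotation, and the only thing to verify carefully is that $\triangle_\xi$ in \eqref{e:Delta} — defined via all of $\Gamma^\vee$, with each $\omega$ inducing $\omega\colon\Lambda\to\cO_\omega$ — matches Monsky's zero set under the identification of $\boldsymbol\mu_{p^\infty}$-valued characters with the points of his character space.
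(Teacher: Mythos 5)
Your proposal is correct and coincides with the paper's treatment: the paper offers no independent argument, only a direct quotation of \cite[Lemma 1.5]{monsky} (closedness of $\triangle_\xi$ in Monsky's topology on $\Gamma^\vee$) and \cite[Theorem 2.6]{monsky} (closed sets there are finite unions of $\ZZ_p$-flats, and $\triangle_\xi$ is proper for $\xi\neq 0$), with Monsky's results already formulated over a coefficient DVR finite over $\ZZ_p$, exactly as you anticipate at the end. Your norm-descent detour and its self-diagnosis — that $\triangle_\xi\subseteq\triangle_{\Nm(\xi)}$ yields properness but not the flat-decomposition, since a subset of a finite union of $\ZZ_p$-flats need not itself be one — are sound, but as you conclude they are ultimately unnecessary.
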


\subsubsection{Structure of $Q_\infty$} \label{ss:qorbit}
The group $\Gal(\bar\QQ_p/\QQ_p)$ acts on $\Gamma^\vee$ by $(\sigma\cdot\omega)(\gamma):=\sigma(\omega(\gamma))$. Let $[\omega]$ denote the $\Gal(\bar\QQ_p/\QQ_p)$-orbit of $ \omega$. Attached to any character $ \omega\in\Gamma^\vee_n$ there is an idempotent
\begin{equation} \label{e:idempotent} e_\omega:=\frac{1}{|\Gamma_n|}\sum_{\gamma\in\Gamma_n} \omega(\gamma^{-1})\gamma\in\bar\QQ_p[\Gamma_n]\,. \end{equation}
Accordingly, we get the decomposition
$$\QQ_p[\Gamma_n]=\bar\QQ_p[\Gamma_n]^{\Gal(\bar\QQ_p/\QQ_p)}=\big(\prod_{\omega\in\Gamma^\vee_n}e_\omega\bar\QQ_p[\Gamma_n]\big)^{\Gal(\bar\QQ_p/\QQ_p)}=\prod_{[ \omega]\subset \Gamma^\vee_n}  E_{[\omega]},$$
where $[\omega]$ runs through all the ${\Gal(\bar\QQ_p/\QQ_p)}$-orbits of $\Gamma^\vee_n$ and
$$E_{[\omega]}:=(\prod_{\chi\in [\omega]}e_\chi\bar\QQ_p[\Gamma_n])^{\Gal(\bar\QQ_p/\QQ_p)}.$$
Observe that the homomorphism $\omega\colon\QQ_p[\Gamma_n]\rightarrow\bar\QQ_p$ induces an isomorphism $E_{[\omega]}\simeq E_\omega$ (the inverse being given by $1\mapsto\sum_{\sigma\in Gal(E_\omega/\QQ_p)}\sigma(e_\omega)=(e_\chi)_{\chi\in [\omega]}$).

Since $\pi^n_m(e_\omega)$ equals $e_{\omega'}$ if $\omega=\omega'\circ\pi^n_m$ and is $0$ otherwise, we have the commutative diagram
\[\begin{CD}  \QQ_p[\Gamma_n] @>>> \prod_{[ \omega]\subset \Gamma^\vee_n}  E_{[\omega]}\\
   @VV{\pi_m^n}V @VVV \\
\QQ_p[\Gamma_m] @>>> \prod_{[ \omega]\subset \Gamma^\vee_m}  E_{[\omega]}  \end{CD}\]
where the right vertical arrow is the natural projection by the inclusion $\Gamma^\vee_n\hookrightarrow\Gamma^\vee_m$.
It follows that we have identities
\begin{equation} \label{e:decQinfty} Q_\infty=\lim_{\stackrel{\leftarrow}{n}}Q_n\simeq\prod_{[\omega]\subset\Gamma^\vee}E_{[\omega]} \end{equation}
so that
\begin{equation} \label{e:torsQinfty} Q_\infty[\lambda]=\prod_{[\omega]\subset \triangle_\lambda} E_{[\omega]}
\end{equation}
for all $\lambda\in\La$ (here $Q_\infty[\lambda]$ denotes the $\lambda$-torsion subgroup).

\subsubsection{The map $\Psi$} \label{ss:Psi}
Let $\fb$ be a finitely generated torsion $\La$-module: so is $ \Hom_{\Lambda}(\fb,Q_\infty/\La)$.
We assume that $\xi\cdot \fb=0$, for some non-zero $\xi\in\Lambda$. Let $\triangle_\xi^c:=\Gamma^\vee-\triangle_\xi$ denote the complement of $\triangle_\xi$. From \eqref{e:decQinfty} and \eqref{e:torsQinfty} one deduces the direct sum decomposition
$$Q_\infty=Q_\infty[\xi]\oplus Q_\infty^c\,,$$
where $Q_\infty^c=\prod_{[\omega]\subset \triangle_\xi^c} E_{[\omega]}$.
Let $\varpi\colon Q_\infty\rightarrow Q_\infty^c$ be the natural projection and put $\La^c:=\varpi(\La)$ (here $\La$ is thought of as a subset of $Q_\infty$ via the maps $\ZZ_p[\Gamma_n]\hookrightarrow\QQ_p[\Gamma_n]$).

\begin{lemma} \label{l:xitorsionHom} We have a $\Lambda$-isomorphism
$$\Hom_{\La}(\fb,Q_\infty^c/\La^c) \simeq \Hom_{\Lambda}(\fb,Q(\Lambda)/\Lambda).$$
\end{lemma}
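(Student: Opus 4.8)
The plan is to reduce both sides to $\Hom_{\La}(\fb,\La/\xi\La)$ by passing to $\xi$-torsion. The starting observation is that, since $\xi\fb=0$, for any $\La$-module $N$ every $\La$-homomorphism $f\colon\fb\to N$ has image in the $\xi$-torsion submodule $N[\xi]$ (apply $\La$-linearity: $\xi\cdot f(b)=f(\xi b)=0$), so $\Hom_{\La}(\fb,N)=\Hom_{\La}(\fb,N[\xi])$. Hence it suffices to produce $\La$-isomorphisms $(Q_\infty^c/\La^c)[\xi]\cong\La/\xi\La\cong(Q(\La)/\La)[\xi]$. The second of these is the identification already used in the proof of Lemma~\ref{l:b&Iwadj}: as $\La$ is a domain and $\xi\neq0$ one has $(Q(\La)/\La)[\xi]=\xi^{-1}\La/\La$, and multiplication by $\xi$ carries this isomorphically onto $\La/\xi\La$.

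For the first isomorphism I would extract two structural facts from the description $Q_\infty^c=\prod_{[\omega]\subset\triangle_\xi^c}E_{[\omega]}$ recorded just before the statement. First, the image of $\xi$ in $Q_\infty^c$ is a unit: in each factor $E_{[\omega]}\cong E_\omega$ it equals $\omega(\xi)$, which is nonzero because $[\omega]\subset\triangle_\xi^c$ and thus invertible in the field $E_\omega$. Second, $\varpi\colon\La\to\La^c$ is an isomorphism: it is onto by definition of $\La^c$, and its kernel is $\La\cap\ker(\varpi\colon Q_\infty\to Q_\infty^c)=\La\cap Q_\infty[\xi]$, i.e.\ the $\xi$-torsion of $\La$, which vanishes since $\La$ is a domain and $\xi\neq0$. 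Granting these, $\xi^{-1}\La^c$ makes sense inside $Q_\infty^c$ and $(Q_\infty^c/\La^c)[\xi]=\{x\in Q_\infty^c\mid\xi x\in\La^c\}/\La^c=\xi^{-1}\La^c/\La^c$; multiplication by the unit $\xi$ gives $\xi^{-1}\La^c/\La^c\cong\La^c/\xi\La^c$, and $\varpi$ identifies $\La^c/\xi\La^c$ with $\La/\xi\La$. Composing all of this with the identification of the previous paragraph yields the claimed isomorphism of Hom-modules.

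I do not expect a serious obstacle here; the points requiring attention are purely bookkeeping ones. One must check that every arrow in the chain is $\La$-linear rather than merely additive --- here $\varpi$ is a ring map intertwining the $\La$-action on $\La$ with the one on $\La^c\subset Q_\infty^c$ through which $\La$ acts on $Q_\infty^c/\La^c$, and multiplication by $\xi$ is $Q_\infty^c$-linear --- and one must verify the routine well-definedness claims (that $N[\xi]$ is a $\La$-submodule, that $\xi^{-1}\La^c\subset Q_\infty^c$). Beyond that, the lemma is just a consequence of the orbit decomposition of $Q_\infty$ together with torsion-freeness of the domain $\La$.
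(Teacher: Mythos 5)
Your proof is correct and follows essentially the same path as the paper's: both reduce to the $\xi$-torsion submodules, identify $(Q_\infty^c/\La^c)[\xi]=\xi^{-1}\La^c/\La^c$ using that $\varpi(\xi)$ is a unit in $Q_\infty^c$, and then show $\varpi|_\La\colon\La\to\La^c$ is an isomorphism. Your route to injectivity of $\varpi|_\La$ --- noting $\ker\varpi = Q_\infty[\xi]$ so $\La\cap\ker\varpi$ is the $\xi$-torsion of the domain $\La$, hence zero --- is exactly the alternative the paper flags in passing (``or, alternatively, the isomorphism \eqref{e:decQinfty}''); the extra reduction of both sides to $\La/\xi\La$ by multiplication by $\xi$ is harmless bookkeeping that the paper leaves implicit when it says the induced map $\xi^{-1}\La/\La\to\xi^{-1}\La^c/\La^c$ is an isomorphism.
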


\begin{proof}
Since $\fb$ is annihilated by $\xi$, the image of each $\eta\in\Hom_{\La}(\fb,Q_\infty^c/\La^c)$ is contained in $(Q_\infty^c/\La^c)[\xi]\,.$
Note that, since $\omega(\xi)\not=0$ for every $\omega\in\triangle_\xi^c$, the element $\varpi(\xi)$ is a unit in $Q_\infty^c$. Denote
$$\xi^{-1}\La^c:=\{x\in Q_\infty^c\;\mid\; \xi\cdot x\in\La^c\}.$$
Then
$$(Q_\infty^c/\La^c)[\xi]=\xi^{-1}\La^c/\La^c$$
and hence
$$\Hom_{\La}(\fb,Q_\infty^c/\La^c)=\Hom_{\La}(\fb,\xi^{-1}\La^c/\La^c).$$
Similarly,
$$\Hom_{\La}(\fb,Q(\La)/\La)=\Hom_{\La}(\fb,\xi^{-1}\La/\La).$$
To conclude the proof, it suffices to show that $\varpi\colon\La\rightarrow \La^c$ is an isomorphism, because then so is the induced map
$$\xi^{-1}\Lambda/\Lambda\,\lr \xi^{-1}\La^c/\La^c.$$
Since $\La^c=\varpi(\La)$ by definition, we just need to check injectivity. Suppose $\varpi(\epsilon)=0$ for some $\epsilon\in\Lambda$. Then $\omega(\epsilon)=0$ for every $\omega\not\in\triangle_{\xi}$, and hence $\omega(\xi\epsilon)=0$ for every $ \omega\in\Gamma^\vee$. Monsky's theorem (or, alternatively, the isomorphism \eqref{e:decQinfty}) implies that $\xi\epsilon=0$ and hence $\epsilon=0$.
\end{proof}

Let
\begin{equation} \label{e:upsilondef} \Upsilon\colon\Hom_{\La}(\fb,Q_\infty/\La)\,\lr\Hom_{\La}(\fb,Q_\infty^c/\La^c)\end{equation}
be the morphism induced from $\varpi$. By composition of the isomorphism of Lemma \ref{l:xitorsionHom} with $\Upsilon$, we deduce the $\Lambda$-morphism
$$\Psi\colon\Hom_{\La}(\fb,Q_\infty/\La)\,\lr\Hom_{\Lambda}(\fb,Q(\Lambda)/\Lambda).$$

%===================================================================

\begin{subsection}{Proof of the algebraic functional equation} \label{se:al}
In this section, we complete the proof of Theorem \ref{t:al} by proving each of (1), (2), (3), separately. To prove (3) we use (2) and (3) is used in the proof of Theorem \ref{p:idemsha}.

\subsubsection{Non-simple annihilator} \label{ss:simple}
We start the proof of case (1) of Theorem \ref{t:al} by reformulating our hypothesis
\vskip5pt
\noindent
({\bf NS}): {\em{$\xi$ is not divisible by any simple element}}.

\begin{lemma} \label{l:hypH}
Hypothesis {\em (}{\bf NS}{\em)} holds if and only if $\triangle_{\xi}$ contains no codimension one $\ZZ_p$-flat.
\end{lemma}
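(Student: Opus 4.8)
The plan is to unwind both sides of the claimed equivalence into statements about the zero sets $\triangle_\bullet$ and then invoke Monsky's description of these sets via $\ZZ_p$-flats. First I would recall the definition of a simple element: $f_{\gamma,\zeta}=\prod_{\sigma\in\Gal(\QQ_p(\zeta)/\QQ_p)}(\gamma-\sigma(\zeta))$, and observe (this is essentially \eqref{e:coprimecriter}) that for a character $\omega\in\Gamma^\vee$ one has $\omega(f_{\gamma,\zeta})=0$ if and only if $\omega(\gamma)=\sigma(\zeta)$ for some $\sigma$, i.e. $\omega$ lies in the set $\{\omega : \omega(\gamma)\in\Gal(\QQ_p(\zeta)/\QQ_p)\cdot\zeta\}$. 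After choosing a $\ZZ_p$-basis of $\Gamma$ with $\gamma$ as its first member, this set is precisely a finite union of codimension one $\ZZ_p$-flats (one flat $\{\omega:\omega(\gamma)=\sigma(\zeta)\}$ for each conjugate $\sigma(\zeta)$), so $\triangle_{f_{\gamma,\zeta}}$ is a codimension one $\ZZ_p$-flat in the sense that matters, and conversely every codimension one $\ZZ_p$-flat $\{\omega:\omega(\gamma)=\zeta\}$ is contained in (indeed equal to a $\Gal$-orbit piece of) some $\triangle_{f_{\gamma,\zeta}}$.

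Next I would prove the "only if" direction by contraposition: if $\triangle_\xi$ contains a codimension one $\ZZ_p$-flat $\Xi=\{\omega:\omega(\gamma)=\zeta\}$ with $\gamma$ expandable to a basis, then $\Xi\subseteq\triangle_\xi$ forces, after possibly enlarging to the full $\Gal(\bar\QQ_p/\QQ_p)$-orbit (note $\triangle_\xi$ is $\Gal$-stable since $\xi$ has coefficients in $\ZZ_p$), that $\triangle_{f_{\gamma,\zeta}}\subseteq\triangle_\xi$. One then argues that containment of zero sets of this shape implies divisibility: the quotient $\La/(f_{\gamma,\zeta})$ embeds in $\prod_{[\omega]\subset\triangle_{f_{\gamma,\zeta}}}E_{[\omega]}$ via \eqref{e:decQinfty}, and since $\xi$ vanishes on all these $\omega$, it maps to $0$, i.e. $\xi\in(f_{\gamma,\zeta})$; since $f_{\gamma,\zeta}$ is irreducible (stated in the excerpt just before \eqref{e:coprimecriter}) this is exactly divisibility by a simple element, contradicting (\textbf{NS}). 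For the "if" direction, again contrapositively: if $\xi=f_{\gamma,\zeta}\cdot\eta$ then $\triangle_\xi\supseteq\triangle_{f_{\gamma,\zeta}}$, which contains the codimension one $\ZZ_p$-flat $\{\omega:\omega(\gamma)=\zeta\}$.

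The main obstacle I anticipate is the bookkeeping around $\Gal(\bar\QQ_p/\QQ_p)$-orbits versus individual characters, and making precise the implication "$\triangle_g\subseteq\triangle_\xi$ $\Rightarrow$ $g\mid\xi$" for $g$ a simple element. The cleanest route is to use the product decomposition \eqref{e:decQinfty}: the image of $\xi$ under $\La\hookrightarrow Q_\infty\simeq\prod_{[\omega]}E_{[\omega]}$ has $[\omega]$-component $\omega(\xi)$ (up to the identification $E_{[\omega]}\simeq E_\omega$), so $\triangle_g\subseteq\triangle_\xi$ says $\xi$ and $g$ generate comparable "vanishing loci"; combined with Monsky's theorem (Theorem \ref{t:monsky}) guaranteeing $\triangle_\xi$ is a proper closed set cut out by flats, and with unique factorization in $\La$ together with the coprimality criterion \eqref{e:coprimecriter}, one concludes. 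I would be careful to note that one only needs the "codimension one part" of $\triangle_\xi$, so the higher-codimension flats in Monsky's decomposition are irrelevant here, which keeps the argument short.
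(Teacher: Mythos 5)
Your proof of the easy direction (simple factor implies codimension-one flat) matches the paper's and is fine. But the hard direction has a genuine gap. You claim that $\La/(f_{\gamma,\zeta})$ embeds in $\prod_{[\omega]\subset\triangle_{f_{\gamma,\zeta}}}E_{[\omega]}$ ``via \eqref{e:decQinfty}''. This is \emph{not} a consequence of \eqref{e:decQinfty}: that isomorphism only tells you $\La\hookrightarrow Q_\infty\simeq\prod_{[\omega]}E_{[\omega]}$. What you need is that the kernel of $\La\to\prod_{[\omega]\subset\triangle_{f_{\gamma,\zeta}}}E_{[\omega]}$, namely $\{\lambda\in\La\mid\omega(\lambda)=0\ \forall\,\omega\in\triangle_{f_{\gamma,\zeta}}\}$, is exactly the ideal $(f_{\gamma,\zeta})$. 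The containment $(f_{\gamma,\zeta})\subseteq\ker$ is trivial, but the reverse containment is precisely the statement ``if $f_{\gamma,\zeta}$ vanishes wherever $\lambda$ vanishes then $f_{\gamma,\zeta}\mid\lambda$'' --- which is the very thing you are trying to prove. So the argument is circular. Your closing remark about Monsky's theorem (``$\triangle_\xi$ is a proper closed set cut out by flats'') is invoked for the wrong purpose and does not fill this gap: properness and the flat decomposition of $\triangle_\xi$ say nothing directly about divisibility by $f_{\gamma,\zeta}$.

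The paper's proof fills exactly this hole by a change of ring, and you should note how the use of Monsky is targeted differently. Choose a $\ZZ_p$-basis of $\Gamma$ with $\gamma$ as first element, set $\Gamma'=\Gamma/\gamma^{\ZZ_p}$, and consider the ring surjection $\pi\colon\La\otimes_{\ZZ_p}\ZZ_p[\zeta]\to\ZZ_p[\zeta][[\Gamma]]/(\gamma-\zeta)=\ZZ_p[\zeta][[\Gamma']]$. Characters in the flat $\Xi=\{\omega\mid\omega(\gamma)=\zeta\}$ are exactly those factoring through $\pi$, and they biject with $(\Gamma')^\vee$. Since $\Xi\subseteq\triangle_\xi$, the image $\pi(\xi)$ vanishes at \emph{every} character of $\Gamma'$, i.e.\ $\triangle_{\pi(\xi)}=(\Gamma')^\vee$. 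Monsky's Theorem \ref{t:monsky} (applied to $\cO=\ZZ_p[\zeta]$ and the rank-$(d-1)$ group $\Gamma'$) says that for a nonzero element the zero set is a \emph{proper} subset; hence $\pi(\xi)=0$, i.e.\ $(\gamma-\zeta)\mid\xi$ in $\ZZ_p[\zeta][[\Gamma]]$. Galois descent (the conjugates $\gamma-\sigma(\zeta)$ are pairwise coprime, and $\xi$ has coefficients in $\ZZ_p$) then yields $f_{\gamma,\zeta}\mid\xi$ in $\La$. If you want to salvage your version, the correct statement underlying your embedding claim is precisely this consequence of Monsky in the quotient ring; you must prove it, not assume it.
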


\begin{proof}
If $\xi$ is divisible by a simple element $f=f_{\gamma,\zeta}$, then
$\triangle_{\xi}$ contains $\triangle_{f}$ which is a union of the codimension one $\ZZ_p$-flats
$$\{ \omega \in \Gamma^\vee\;\mid\; \omega(\gamma)=\sigma(\zeta)\},\;\; \sigma\in\Gal(\QQ_p(\zeta)/\QQ_p).$$
Conversely, assume that $\triangle_{\xi}$ contains the codimension one $\ZZ_p$-flat
$$\Xi=\{ \omega \in \Gamma^\vee\;\mid\; { \omega}(\gamma)=\zeta\}.$$
Each $\omega\in\Xi$ factors through
$$\pi\colon\Lambda \longrightarrow \ZZ_p[\zeta][[\Gamma]]/(\gamma-\zeta)=\ZZ_p[\zeta][[\Gamma']]\,,$$
where $\Gamma'$ is the quotient $\Gamma/\gamma^{\ZZ_p}$, and vice versa every continuous character of $\Gamma'$ can be uniquely lifted to a character in $\Xi$. Thus the zero set of $\pi(\xi)\in\ZZ_p[\zeta][[\Gamma']]$ equals $(\Gamma')^\vee$. Then Monsky's theorem implies that $\pi(\xi)=0$ and hence is divisible by $\gamma-\zeta$ in $\ZZ_p[\zeta][[\Gamma]]$.
This implies that $\xi$ is divisible by $f_{\gamma,\zeta}$ in $\Lambda$.
\end{proof}

By Monsky's theorem, we have either $\triangle_{\xi}=\emptyset$ or $\triangle_{\xi}=\cup_j\,\Xi_j\,$, with
$$\Xi_j=\{ \omega \in \Gamma^\vee\;\mid\; { \omega}(\gamma_i^{(j)})=\zeta_i^{(j)},i=1,...,k^{(j)}\}. $$
In the second case, for all $j$ let $G_j$ be the $\ZZ_p$-submodule of $\Gamma$ generated by the $\gamma_i^{(j)}$'s, $i=1,...,k^{(j)}$: if ({\bf NS}) holds, each $G_j$ has rank at least 2. Hence, since there is just a finite number of $j$, it is possible to choose $\{\sigma_1^{(j)},\sigma_2^{(j)}\}_j$ such that $\sigma_i^{(j)}\in G_j-\Gamma^p$ and each pair $(\sigma_i^{(j)},\sigma_{i'}^{(j')})$ consists of $\ZZ_p$-independent elements unless $(i,j)=(i',j')$. Let $\varepsilon_i^{(j)}$ denote the common value that all characters in $\Xi_j$ take on $\sigma_i^{(j)}$ and write
\begin{equation} \label{e:phi}
\varphi_1:=\prod_j f_{\sigma_1^{(j)},\varepsilon_1^{(j)}}, \qquad
\varphi_2:=\prod_j f_{\sigma_2^{(j)},\varepsilon_2^{(j)}}.\end{equation}
 Then the coprimality criterion \eqref{e:coprimecriter} ensures that $\varphi_1$ and $\varphi_2$ are relatively prime. Moreover $\omega(\varphi_i)=0$ for all $\omega\in\triangle_{\xi}$, that is $\triangle_{\xi}\subseteq\triangle_{\varphi_i}$. By \eqref{e:torsQinfty} it follows
\begin{equation} \label{e:triangles} \varphi_i\cdot Q_\infty[\xi] =0  \text{ for both } i. \end{equation}

\begin{remark}{\em The case $\Delta_\xi\neq\emptyset$ can actually occur. For example, let $\gamma_1,\gamma_2$ be two distinct elements of a $\ZZ_p$-basis of $\Gamma$ and consider
$$\xi=\gamma_1-1+p(\gamma_2-1)+p^2(\gamma_1-1)(\gamma_2-1).$$
Then $\Delta_\xi=\{ \omega \mid \omega(\gamma_1)=\omega(\gamma_2)=1\}$ as one easily sees comparing $p$-adic valuations of the three summands $\omega(\gamma_1-1)$, $\omega(p(\gamma_2-1))$ and $\omega(p^2(\gamma_1-1)(\gamma_2-1))$. }
\end{remark}

\begin{lemma} \label{l:Upsilonpsn}
Assume $\xi\fb=0$ for some $\xi\in\La$ satisfying hypothesis {\em (}{\bf NS}{\em)}. Then the restriction of $\Psi$ to $\Phi(\fa^\sharp)$ is pseudo-injective.
\end{lemma}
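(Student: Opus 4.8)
The plan is to compute $\ker\Psi$ explicitly and exhibit two relatively prime elements of $\La$ that annihilate it, so that Lemma \ref{l:psn} applies.

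First, since $\Psi$ is by construction the composite of $\Upsilon$ (see \eqref{e:upsilondef}) with the isomorphism of Lemma \ref{l:xitorsionHom}, we have $\ker\Psi=\ker\Upsilon$. As $\Upsilon$ is induced by postcomposition with the reduction $\bar\varpi\colon Q_\infty/\La\to Q_\infty^c/\La^c$ of the projection $\varpi$ (which descends because $\varpi(\La)=\La^c$), left exactness of $\Hom$ gives $\ker\Upsilon=\Hom_\La(\fb,\ker\bar\varpi)$, where $\ker\bar\varpi$ is viewed inside $Q_\infty/\La$. I would then identify $\ker\bar\varpi$ with $(Q_\infty[\xi]+\La)/\La$: for $x\in Q_\infty$ one has $\varpi(x)\in\La^c=\varpi(\La)$ if and only if $x-\lambda\in\ker\varpi=Q_\infty[\xi]$ for some $\lambda\in\La$. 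Since hypothesis (\textbf{NS}) forces $\xi\neq0$, the injectivity of $\varpi\colon\La\to\La^c$ established in the proof of Lemma \ref{l:xitorsionHom} (an application of Monsky's Theorem \ref{t:monsky}) gives $Q_\infty[\xi]\cap\La=0$, so $\ker\bar\varpi\simeq Q_\infty[\xi]$ as $\La$-modules.

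Next, by \eqref{e:triangles} both $\varphi_1$ and $\varphi_2$ of \eqref{e:phi} annihilate $Q_\infty[\xi]$, hence they annihilate the submodule $\ker\bar\varpi=(Q_\infty[\xi]+\La)/\La$ of $Q_\infty/\La$, and therefore they annihilate $\ker\Psi=\Hom_\La(\fb,\ker\bar\varpi)$, the $\La$-action on $\Hom$ being through the target. Because $\fa^\sharp$ is finitely generated over $\La$, so is $\Phi(\fa^\sharp)$, and hence its $\La$-submodule $\ker\big(\Psi|_{\Phi(\fa^\sharp)}\big)=\ker\Psi\cap\Phi(\fa^\sharp)$ is finitely generated; being killed by the relatively prime pair $\varphi_1,\varphi_2$ it is pseudo-null by Lemma \ref{l:psn}, which is exactly the assertion that $\Psi|_{\Phi(\fa^\sharp)}$ is pseudo-injective. (If $\triangle_\xi=\emptyset$ there is nothing to do: then $Q_\infty[\xi]=0$ and $\Psi$ is even injective.)

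The only substantive inputs are the appeal to Monsky's theorem for $Q_\infty[\xi]\cap\La=0$ and the prior construction in \S\ref{ss:simple} of the coprime pair $\varphi_1,\varphi_2$ out of the $\ZZ_p$-flat decomposition of $\triangle_\xi$ under (\textbf{NS}); granting these, the argument is purely formal, so I do not anticipate a genuine obstacle in this lemma.
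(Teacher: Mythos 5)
Your proof is correct and follows essentially the same route as the paper: both reduce to showing $\Ker(\Upsilon)=\Hom_\La(\fb,\Ker\bar\varpi)$ is killed by the coprime pair $\varphi_1,\varphi_2$ via \eqref{e:triangles}, and then restrict to the finitely generated module $\Phi(\fa^\sharp)$ so Lemma \ref{l:psn} applies. The only minor cosmetic difference is that you sharpen the paper's observation that $\Ker\bar\varpi$ is a quotient of $Q_\infty[\xi]$ into an isomorphism $\Ker\bar\varpi\simeq Q_\infty[\xi]$ using $Q_\infty[\xi]\cap\La=0$, which is true but not needed.
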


\begin{proof} We just need to control the kernel of the map $\Upsilon$ of \eqref{e:upsilondef}. If $\Delta_\xi$ is empty then $\Upsilon$ is the identity and we are done. If not, we show that kernel and cokernel of $\Upsilon$ are annihilated by both $\varphi_1$ and $\varphi_2$ (see \eqref{e:phi}). Consider the exact sequence
$$0\lr \Ker(\varpi) \lr Q_\infty/\La \lr Q_\infty^c/\La^c \lr 0$$
(where by abuse of notation we denote the map induced by $\varpi$ with the same symbol). This induces the exact sequence
$$\Hom_{\Lambda}(\fb,\Ker(\varpi)) \hookrightarrow \Hom_{\Lambda}(\fb,Q_\infty/\La) \to \Hom_{\Lambda}(\fb,Q_\infty^c/\La^c) \to\text{Ext}^1_{\Lambda}(\fb,\Ker(\varpi)).$$
Since $\Ker(\varpi)$ is a quotient of $Q_\infty[\xi]$, \eqref{e:triangles} yields $\varphi_i\cdot\Ker(\varpi)=0$. Therefore both $\Ker(\Upsilon)$ and $\Coker(\Upsilon)$ are annihilated by $\varphi_1$ and $\varphi_2$. (Note that we cannot say that $\Upsilon$ is a pseudo-isomorphism, because $\Hom_{\La}(\fb,Q_\infty/\La)$ is not a finitely generated $\La$-module: e.g., any group homomorphism $\fb\mapsto E_\omega$ for $\omega\in\Delta_\xi$ is also a $\La$-homomorphism.)
\end{proof}

Now we can complete the proof of Theorem \ref{t:al}.(1).

\begin{proof}[{\bf Proof of Theorem \ref{t:al}(1)}] To start with, assume $\xi\fb=0$. Then, by Lemmata \ref{l:kernPhi}, \ref{l:Upsilonpsn} and \ref{l:b&Iwadj}, we get a pseudo-injection $\fa^\sharp\rightarrow \fb$. Moreover, thanks to Lemma \ref{l:st}, we may assume that $\fA$ is strongly controlled. By Lemma \ref{l:an} this implies that $\fa$ is killed by $\xi^\sharp$, which is also not divisible by simple elements. Exchanging the role of $\fa$ and $\fb$, we deduce a pseudo-injection  $\fb^\sharp\rightarrow \fa$ and therefore a pseudo-injection $\fb\rightarrow \fa^\sharp$. The Theorem now follows from Lemma \ref{l:pseudoinj}.

In the general case when $\xi\fb$ is pseudo-null but not 0, we can still assume that $\fA$ is strongly controlled. Let $\ff_n$ be the kernel of the morphism $\fb_n\rightarrow\fb_n$, $b\mapsto\xi b$, and construct two derived systems as in \S\ref{su:de} (but with $\ff_n$ playing the role of $\fc_n$). We get again the two exact sequences \eqref{e:c} and \eqref{e:e}. By hypothesis $\fd=\xi\fb\sim0$ and then Lemma \ref{l:e0f0} implies $\fc\sim0$. Hence
$$\fb\sim\ff\sim\fe^\sharp\sim\fa^\sharp$$
(where the central pseudo-isomorphism holds because $\xi\ff=0$).
\end{proof}

\subsubsection{The non-simple part}\label{ss:nons} Let $\fA'$ be the derived system in \S\ref{ss:a'}.
\begin{corollary}\label{c:nons}
For any $\Gamma$-system $\fA$, we have
$$[\fa']_{ns}^\sharp= [\fb']_{ns}.$$
\end{corollary}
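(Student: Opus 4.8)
The plan is to reduce the general statement to Theorem \ref{t:al}(1) by replacing $\fA'$ with a suitable quotient system whose annihilator carries only the non-simple information, while simultaneously discarding the simple part. First I would recall from Lemma \ref{l:st}(1) that $\fA'$ is strongly controlled; this is the crucial structural fact, since for strongly controlled systems Lemma \ref{l:eq} gives $\fb'_n=\fk_n(\fb')$ and $\fr_m^n$ injective, so that the constructions of \S\ref{su:de} behave well. Choose $\xi\in\La$ with $\xi\cdot[\fb']_{ns}=0$ and such that $\xi$ is \emph{not} divisible by any simple element — this is possible because $[\fb']_{ns}$ is by definition the sum of components $\La/\xi_i^{r_i}\La$ with $\xi_i$ non-simple, so one may take $\xi=\prod \xi_i^{r_i}$ over the non-simple $\xi_i$. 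Then $\xi$ satisfies hypothesis ({\bf NS}).

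Next I would build, inside $\fA'$, the derived system cutting out the non-simple part. Apply the $\fC$- and $\fE$-constructions of \S\ref{su:de}: let $\fc_n\subset\fb'_n$ be chosen so that the projective limit of the quotients $\fb'_n/\fc_n$ is pseudo-isomorphic to $[\fb']_{si}$ and the limit of the $\fc_n$ is pseudo-isomorphic to $[\fb']_{ns}$ (the existence of transition-compatible choices follows as in Example \ref{eg:morf}, e.g. by taking $\fc_n$ generated by a simple annihilator $\varphi$ of $[\fb']_{si}$, so $\fc_n=\varphi\cdot\fb'_n$ after passing to a pseudo-isomorphic system; note $\varphi^\sharp$ generates the same ideal by \eqref{e:fs}, so the dual side is handled symmetrically). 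This produces a $\Gamma$-system, call it $\fB$, with $\fb(\fB)\sim[\fb']_{ns}$ and $\fa(\fB)\sim[\fa']_{ns}$ — the latter because passing to the non-simple part commutes with Pontryagin duality on each finite layer, the simple ideals being $\sharp$-stable by \eqref{e:fld}. Since $\fA'$ is strongly controlled and the constructions of \S\ref{su:de} preserve surjectivity of the $\fk_m^n$, the system $\fB$ is again strongly controlled, hence in particular pseudo-controlled.

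Now $\fB$ is a pseudo-controlled $\Gamma$-system whose $\fb$-side is annihilated (up to pseudo-null) by $\xi$ satisfying ({\bf NS}), so Theorem \ref{t:al}(1) applies and yields $\fa(\fB)^\sharp\sim\fb(\fB)$, i.e. $[\fa']_{ns}^\sharp\sim[\fb']_{ns}$. Finally I would upgrade the pseudo-isomorphism to an equality: both sides are of the form $[\,-\,]$, i.e. direct sums $\bigoplus\La/\eta_i^{s_i}\La$ with no nontrivial pseudo-null submodule, and a pseudo-isomorphism between two such modules forces equality of the associated elementary divisors (this is the uniqueness part of the structure theorem recalled in \S\ref{sss:krul}), together with Lemma \ref{l:phi[]chi} applied to $\alpha={}^\sharp$ to identify $[\fa']_{ns}^\sharp$ with $[\,(\fa')^\sharp\,]$ restricted to its non-simple part. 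Hence $[\fa']_{ns}^\sharp=[\fb']_{ns}$.

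The main obstacle I anticipate is the second paragraph: arranging the derived system $\fB$ so that its limits are genuinely the non-simple parts and not merely pseudo-isomorphic to something close, while keeping the transition maps $\fr_m^n,\fk_m^n$ compatible and strong control intact. One must check that choosing $\fc_n=\varphi\fb'_n$ (for $\varphi$ a simple annihilator of $[\fb']_{si}$) does give, in the limit, exactly the splitting $[\fb']=[\fb']_{si}\oplus[\fb']_{ns}$ up to pseudo-isomorphism — this uses that $\varphi$ and $\xi$ are coprime and Lemma \ref{l:psn} — and that the dual submodules $\ff_n\subset\fa'_n$ are compatible with the maps, which follows from \eqref{e:rkpairing} exactly as in \S\ref{su:de}. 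Everything else is either a direct citation of Theorem \ref{t:al}(1) or the routine uniqueness statement for elementary divisors.
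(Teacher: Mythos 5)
Your overall strategy is the same as the paper's: pass to the strongly controlled system $\fA'$, construct a derived subsystem (via \S\ref{su:de}) that isolates the non-simple parts, and apply Theorem \ref{t:al}(1) to it. That much is sound, and your use of the uniqueness of elementary divisors to upgrade a pseudo-isomorphism of modules of the form $[\,\cdot\,]$ to an isomorphism is fine. However, there is a genuine gap in the central step.

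The gap is the assertion that $\fa(\fB)\sim[\fa']_{ns}$. You take $\fc_n=\varphi\fb'_n$ with $\varphi$ a simple annihilator of $[\fb']_{si}$. Unwinding \S\ref{su:de}, the dual-side module is $\fd_n=\fa'_n/\fa'_n[\varphi^\sharp]\cong\varphi^\sharp\fa'_n$, so $\fa(\fB)$ is (up to the usual control issues) $\varphi^\sharp\fa'$. To get $\varphi^\sharp\fa'\sim[\fa']_{ns}$ you need to know that $\varphi^\sharp$ annihilates $[\fa']_{si}$, i.e.\ that a simple annihilator of the $\fb$-side simple part also kills the $\fa$-side simple part. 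This is not a priori true: at the outset there is no relation between $[\fa']_{si}$ and $[\fb']_{si}$, and proving such a relation is close in spirit to what the corollary itself asserts. Your parenthetical justification — that ``passing to the non-simple part commutes with Pontryagin duality on each finite layer, the simple ideals being $\sharp$-stable'' — does not address this: the finite modules $\fa'_n$, $\fb'_n$ have no simple/non-simple decomposition (that is a feature of the limit as a $\La$-module), and $\sharp$-stability of simple ideals says $\varphi^\sharp$ and $\varphi$ generate the same ideal, not that $\varphi$ treats $[\fa']_{si}$ and $[\fb']_{si}$ alike.

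The paper closes exactly this gap by a mechanism you never invoke: Lemma \ref{l:an}, which for strongly controlled systems converts an exact annihilator of $\fb$ into an exact annihilator of $\fa$. To exploit it they must first manufacture an \emph{exact} annihilator of $\fb$, so they introduce an auxiliary $\eta_1$ killing the pseudo-null quotient $\fb/[\fb]$ and work with $\lambda\mu\eta_1\cdot\fb=0$; Lemma \ref{l:an} then gives $(\lambda\mu\eta_1)^\sharp\fa=0$, and since $\lambda$ is non-simple, $\lambda^\sharp$ acts injectively on $[\fa]_{si}$, which forces $(\mu\eta_1)^\sharp[\fa]_{si}=0$. That is the statement you are missing. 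They also need a further element $\eta_3$ (coprime to $\lambda\eta_1$, with $\eta_3^\sharp\fa\subset[\fa]$) to show the relevant cokernel is pseudo-null, another point your sketch passes over. Replacing your $\varphi$ by $\mu\eta_1$ (i.e.\ including the correction for the pseudo-null part) and then invoking Lemma \ref{l:an} would put your argument back on track, but as written the claim $\fa(\fB)\sim[\fa']_{ns}$ is unsupported.
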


\begin{proof}
By Lemma \ref{l:st}(1) we can lighten notation and assume that $\fA$ is strongly controlled (replacing $\fA$ by $\fA'$ if necessary).
Write $\chi(\fb)=(\lambda\mu)$, with $\chi([\fb]_{ns})=(\lambda)$ and $\chi([\fb]_{si})=(\mu)$. Since $\fb/[\fb]$ is pseudo-null, there are
$\eta_1,\eta_2\in \La$, coprime to each other and both coprime to $\chi(\fb)$, such that $\eta_1\cdot (\fb/ [\fb])=\eta_2\cdot (\fb/ [\fb])=0$ .
Then $\lambda\mu\eta_1\cdot \fb=\lambda\mu\eta_2\cdot \fb=0$. By Lemma \ref{l:an}
\begin{equation}\label{e:918}
(\lambda\mu\eta_1)^\sharp\cdot \fa=(\lambda\mu\eta_2)^\sharp\cdot \fa=0.
\end{equation}
This shows that $\chi(\fa)$ divides sufficiently high powers of both $(\lambda\mu\eta_1)^\sharp$ and $(\lambda\mu\eta_2)^\sharp$.
But since $\eta_1^{\sharp}$ and $\eta_2^\sharp$ are coprime, they must be both coprime to $\chi(\fa)$.

Set $\fc=(\mu\eta_1)^\sharp\cdot \fa$, $\fc_n=\fk_n(\fc)$ for each $n$, and form the $\Gamma$-systems $\fC$, $\fE$ by the construction in \S\ref{su:de}.
Let $\fd$, $\fe$, and $\ff$ be as in \eqref{e:c} and \eqref{e:e}.
Since $\fk_n(\fb)=\fb_n$, we have $\fk_n(\fd)=\fd_n$, and hence, by Lemma \ref{l:eq}, $\fC$ is also strongly controlled.
By \eqref{e:918}, $\lambda^\sharp\cdot \fc=0$, whence $\lambda\cdot\fd=0$ thanks to Lemma \ref{l:an}. Then case (1) of Theorem \ref{t:al} says $\fc^\sharp\sim \fd$.
To complete the proof it is sufficient to show that
$$\xymatrix{[\fb]_{ns}\times [\fa]_{ns}\ar[r]^-{\varphi\times\psi} & \fd\times \fc}$$
(where $\varphi$ and $\psi$ are respectively the restrictions to $[\fb]_{ns}$ and $[\fa]_{ns}$ of the projection $\fb\rightarrow \fd=\fb/\ff$ and of the multiplication by $(\mu\eta_1)^\sharp$ on $\fa$) is a pseudo-isomorphism.

The inclusion $\mu\eta_1\cdot \fb\subset \mu\cdot [\fb]\subset[\fb]_{ns}$ implies $\mu\eta_1\cdot \Coker ( \varphi)=0$. Furthermore, since $\lambda\cdot \Coker (\varphi)$ is a quotient of $\lambda\cdot\fd=0$, it must be trivial. Thus, by Lemma \ref{l:psn}, $\Coker (\varphi)$, being annihilated by coprime $\lambda$ and $\mu\eta_1$, is pseudo-null.
Next, we observe that $(\mu\eta_1)^\sharp\cdot \fe=(\mu\eta_1)^\sharp\cdot \fa/\fc=0$ yields $(\mu\eta_1)^\sharp\cdot \fe_n=0$. The duality implies that each $\ff_n$ is annihilated by $\mu\eta_1$, and by taking the projective limit we see that $\ff$ is also annihilated by $\mu\eta_1$.
It follows that $\Ker (\varphi)=[\fb]_{ns}\cap \ff=0$ since no nontrivial element of $[\fb]_{ns}$ is annihilated by $\mu\eta_1$ (because $\eta_1$ is coprime to $\chi(\fb)$ while $\mu$ is a product of simple elements). Similarly, $\Ker(\psi)=0$ since no nontrivial element of $[\fa]_{ns}$ is annihilated by $(\mu\eta_1)^\sharp$.
To show that $\Coker (\psi)$ is pseudo-null, we choose an $\eta_3\in\La$, coprime to $\lambda\eta_1$, such that $\eta_3^\sharp\cdot \fa\subset [\fa]$. Then \eqref{e:918} together with the fact that $\lambda$ is non-simple imply that $(\mu\eta_1\eta_3)^\sharp \cdot \fa\subset (\mu\eta_1)^\sharp \cdot [\fa]\subset[\fa]_{ns}$. This implies $(\mu\eta_1\eta_3)^\sharp \cdot \Coker (\psi)=0$.
Since $\lambda^\sharp\cdot\Coker (\psi)$, being a quotient of $\lambda^\sharp\cdot \fc=0$, is trivial and $\lambda^\sharp$, $(\mu\eta_1\eta_3)^\sharp$ are coprime, the proof is completed.
\end{proof}

\subsubsection{Twists of $\Gamma$-systems} \label{ss:twistgam}
Recall that associated to a continuous group homomorphism $\phi\colon\Gamma\rightarrow\ZZ_p^\times$, there is the ring isomorphism $\phi^*\colon\La\rightarrow\La$ defined in \S\ref{ss:iwH1}. Given such a $\phi$ and a $\Gamma$-system $\fA$, we can form
$$\fA(\phi):=\{\fa_n(\phi^{-1}),\fb_n(\phi),\langle\;,\;\rangle^\phi_n,\fr(\phi)_m^n,\fk(\phi)_m^n\;\mid\; n,m\in\NN\cup\{0\},\; n\geq m\}\,,$$
where $\fa_n(\phi^{-1})$ and $\fb_n(\phi)$ are twists as defined in \eqref{e:twistbyphi},
$$\langle x\otimes a_n, y\otimes b_n\rangle^\phi_n:=\langle \phi^*(x) a_n, ({\phi^{-1}})^*(y) b_n\rangle_n$$
and $\fr(\phi)_m^n$, $\fk(\phi)_m^n$ are respectively the maps induced by $1\otimes\fr_m^n$ and, $1\otimes\fk_m^n$. In general $\fA(\phi)$ won't be a $\Gamma$-system, because the action of $\Gamma$ on $\fa_n(\phi^{-1})$, $\fb_n(\phi)$ does not factor through $\Gamma_n$. However if we take $\fA$ twistable of order $k$ and $\phi$ such that
\begin{equation}\label{e:phicondition}
\phi(\Gamma)\subseteq 1+p^k\ZZ_p,
\end{equation}
then both $\fa_n(\phi^{-1})$ and $\fb_n(\phi)$ are still $\Gamma_n$-modules, because $\phi(\Gamma^{(n)})\subset 1+p^{n+k}\ZZ_p$ by \eqref{e:phicondition} and $p^{n+k}\fa_n=0$.

\begin{lemma}\label{l:nonsimpletwist}
For any $k\in\NN$ and $\xi\in\La-\{0\}$, there exists a continuous group homomorphism $\phi\colon\Gamma\rightarrow\ZZ_p^\times$ such that \eqref{e:phicondition} holds and both $\phi^*(\xi)$ and $({\phi^{-1}})^*(\xi)$ are not divisible by simple elements.
\end{lemma}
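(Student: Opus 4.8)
The goal is to find a continuous homomorphism $\phi\colon\Gamma\to\ZZ_p^\times$ with $\phi(\Gamma)\subseteq 1+p^k\ZZ_p$ such that neither $\triangle_{\phi^*(\xi)}$ nor $\triangle_{({\phi^{-1}})^*(\xi)}$ contains a codimension one $\ZZ_p$-flat, by Lemma \ref{l:hypH}. The plan is to translate the divisibility condition into a statement about zero sets and then use Monsky's theorem (Theorem \ref{t:monsky}) together with a counting/density argument over the (large) family of available twists $\phi$.

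\begin{proof}[Proof sketch]
First I would record how twisting by $\phi^*$ moves zero sets around. Since $\phi^*(\gamma)=\phi(\gamma)^{-1}\gamma$ on $\Gamma$, for a character $\omega\in\Gamma^\vee$ one has $\omega(\phi^*(\xi)) = (\omega\cdot\phi^{-1})(\xi)$, where $\omega\cdot\phi^{-1}$ is the character $\gamma\mapsto \omega(\gamma)\phi(\gamma)^{-1}$; note this makes sense because $\phi$ has values in $1+p^k\ZZ_p$, so $\phi$ is ``topologically a root of unity'' direction only in a $p$-adically small neighbourhood and $\omega\cdot\phi^{-1}$ need not have finite order — so more carefully one works with $\omega$ ranging over $\Gamma^\vee$ and observes $\triangle_{\phi^*(\xi)}$ is the translate of a suitable zero locus. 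The cleanest formulation: $\triangle_{\phi^*(\xi)}$ contains a codimension one flat $\Xi=\{\omega:\omega(\gamma)=\zeta\}$ exactly when, after the substitution, $\phi^*(\xi)$ becomes divisible by $f_{\gamma,\zeta}$ in $\La$, equivalently $\xi$ is divisible by $f_{\gamma,\zeta'}$ for the ``untwisted'' value $\zeta'$ determined by $\zeta$ and $\phi(\gamma)$. Thus twisting by $\phi$ only changes \emph{which} roots of unity appear as the relevant eigenvalues along a fixed procyclic direction $\gamma^{\ZZ_p}$; it permutes/shifts them.

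Next, by Monsky's theorem applied to $\xi$ itself, the set $\triangle_\xi$ is a finite union of $\ZZ_p$-flats; in particular only finitely many procyclic directions $\gamma^{\ZZ_p}\subset\Gamma$ can occur as the direction of a codimension one flat inside $\triangle_\xi$ (a codimension one flat inside a higher-codimension flat forces its direction to lie in the finitely many rank-one ``faces'', and codimension one flats that are themselves components give finitely many directions). Since $\xi\neq 0$, $\triangle_\xi\neq\Gamma^\vee$, so for all but finitely many procyclic directions there is \emph{no} codimension one flat in $\triangle_\xi$ along that direction, and along the finitely many bad directions the finitely many bad eigenvalues $\zeta$ form a finite set. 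The key point is then: for a fixed direction $\gamma$, the map $\zeta\mapsto$ (twisted eigenvalue) is, as $\phi$ varies over homomorphisms with $\phi(\Gamma)\subseteq 1+p^k\ZZ_p$, flexible enough to avoid the finite bad set — concretely, choosing $\phi(\gamma)$ to be a unit $u\in 1+p^k\ZZ_p$ of infinite multiplicative order, the twisted flat along $\gamma$ has eigenvalue parameter moved by $u$, and since the bad set is finite while $\{u^m\}$ has infinitely many values, one can dodge all of them simultaneously for all finitely many bad directions. A pigeonhole over the finitely many constraints coming from both $\phi^*$ and $({\phi^{-1}})^*$ (i.e. we must also avoid the bad eigenvalues for the inverse twist, another finite set) then produces a single $\phi$ working for both.

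The main obstacle I anticipate is making the bookkeeping of ``finitely many bad directions and eigenvalues'' fully rigorous: one needs that a divisor of $\xi$ by a \emph{simple} element $f_{\gamma,\zeta}$ imposes, up to the equivalence \eqref{e:coprimecriter}, only finitely many possibilities, and that Monsky's decomposition of $\triangle_\xi$ genuinely controls which $(\gamma^{\ZZ_p},\zeta)$ can appear after an arbitrary twist in the allowed family — not just for a fixed $\phi$. A clean way to sidestep subtleties is to argue in the ring $\ZZ_p[\zeta][[\Gamma']]$ as in the proof of Lemma \ref{l:hypH}: $\phi^*(\xi)$ is divisible by $f_{\gamma,\zeta}$ iff $\pi(\phi^*(\xi))=0$ where $\pi$ quotients by $(\gamma-\zeta)$; expanding, $\pi(\phi^*(\xi))$ is obtained from $\pi_0(\xi)$ (the analogous projection of $\xi$) by a substitution depending only on $\phi(\gamma)$ and the image of $\phi$ in $(\Gamma')^\vee$-directions, and since $\pi_0(\xi)\neq 0$ for all but finitely many $(\gamma,\zeta)$ by Monsky, one shows the vanishing locus in $\phi$ is a proper closed condition that can be avoided. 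Once the combinatorial core is in place the rest is routine, and the lemma follows.
\end{proof}
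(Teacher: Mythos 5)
Your strategy is the right one and, in outline, is the same as the paper's: observe that twisting by $\phi^*$ moves the eigenvalue parameter by multiplication by $\phi(\gamma)$, enumerate the finitely many "bad'' pairs $(\gamma^{\ZZ_p},\text{eigenvalue})$ coming from factors of $\xi$, and then pick $\phi(\gamma)$ in $1+p^k\ZZ_p$ to dodge the finitely many constraints. However, your justification for the finiteness step is the wrong one, and in a way that actually matters.

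The gap is this: you control the bad pairs via Monsky's theorem applied to the zero set $\triangle_\xi\subset\Gamma^\vee$, and via Lemma~\ref{l:hypH}. But $\Gamma^\vee$ only contains characters valued in $\boldsymbol\mu_{p^\infty}$, so $\triangle_\xi$ only detects factors $f_{\gamma,\zeta}$ with $\zeta$ a root of unity. A factor $f_{\gamma,\beta}$ of $\xi$ with $\beta$ a unit in $\ZZ_p[\boldsymbol\mu_{p^m}]$ but \emph{not} a root of unity (what the paper calls a ``simploid'' that is not simple) has $\triangle_{f_{\gamma,\beta}}=\emptyset$, since $\omega(\gamma)\in\boldsymbol\mu_{p^\infty}$ can never equal $\sigma(\beta)$. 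Such a factor is therefore completely invisible to $\triangle_\xi$, to Lemma~\ref{l:hypH}, and to Monsky's decomposition — yet twisting by a suitable $\phi$ can turn it into a simple element of $\phi^*(\xi)$, precisely because $\phi^*(f_{\gamma,\beta})$ is a unit times $f_{\gamma,\phi(\gamma)\beta}$ and $\phi(\gamma)\beta$ may well be a root of unity even when $\beta$ is not. So your reformulation ``$\phi^*(\xi)$ divisible by $f_{\gamma,\zeta}$ iff $\xi$ divisible by $f_{\gamma,\zeta'}$ for the untwisted $\zeta'$'' is exactly right, but the $\zeta'=\phi(\gamma)^{-1}\zeta$ that appears is not a root of unity, so the resulting condition on $\xi$ lies outside the reach of Monsky's zero-set theorem. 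The same problem infects your last paragraph: ``$\pi_0(\xi)\neq 0$ for all but finitely many $(\gamma,\zeta)$ by Monsky'' is unjustified when the relevant quotient is by $(\gamma-\beta)$ with $\beta$ not a root of unity.

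The fix — and this is precisely what the paper's proof supplies — is to replace Monsky with plain unique factorization in the UFD $\La$. The paper defines a \emph{simploid} to be a unit times $f_{\gamma,\beta}$ with $\beta$ any unit in a finite Galois extension of $\QQ_p$, checks these are irreducible, so that the principal ideal $(\xi\xi^\sharp)$ has only finitely many coprime simploid divisors $f_{\gamma_i,\beta_i}$, and observes that the set of simploids is stable under $\phi^*$ with $(\phi^*(\lambda))_s=\phi^*((\lambda)_s)$. Then it suffices to choose $\phi$ so that no $\phi(\gamma_i)\beta_i$ is a root of unity. (The paper also streamlines the bookkeeping by first noting that controlling $\phi^*(\xi)$ and $(\phi^{-1})^*(\xi)$ simultaneously is equivalent to controlling the single product $\phi^*(\xi\xi^\sharp)$.) If you swap out the appeal to Monsky for this unique-factorization argument, your sketch becomes essentially the paper's proof.
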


\begin{proof} First of all, note that $({\phi^{-1}})^*(\xi)$ is not divisible by any simple element if and only if the same holds for $({\phi^{-1}})^*(\xi)^\sharp=\phi^*(\xi^\sharp)$. So we just need to find $\phi$ such that $\phi^*(\xi\xi^\sharp)$ has no simple factor.
An abstract proof of the existence of such $\phi$ can be obtained by the Baire category theorem, observing that if $\lambda\in\La-\{0\}$ then $\Hom(\Gamma,\ZZ_p^\times)$ cannot be contained in $\cup_\omega\Ker\big(\phi\mapsto\omega(\phi^*(\lambda))\big)$, since all these kernels have empty interior. A more concrete approach is the following.

Call an element $\lambda\in\La$ a simploid if it has the form $\lambda=u\cdot f_{\gamma,\beta}$ where $u\in\Lambda^\times$ and
$$f_{\gamma,\beta}:=\prod_{\sigma\in \Gal(\QQ_p(\beta)/\QQ_p)} (\gamma-\sigma(\beta))$$
with $\gamma\in\Gamma-\Gamma^p$ and $\beta$ a unit in some finite Galois extension of $\QQ_p$. Simploids are easily seen to be irreducible, so by unique factorization any principal ideal $(\lambda)\subset\Lambda$ can be written as $(\lambda)=(\lambda)_s(\lambda)_n$ with no simploid dividing $(\lambda)_n$. Moreover, given any $\phi\colon\Gamma\rightarrow\ZZ_p^\times$, the equality
$$\phi^*(f_{\gamma,\beta})=\phi(\gamma)^{-[\QQ_p(\beta):\QQ_p]}\cdot f_{\gamma,\phi(\gamma)\beta}$$
shows that the set of simploids is stable under the action of $\phi$ and $(\phi^*(\lambda))_s=\phi^*((\lambda)_s)$.
Thus, if $f_{\gamma_1,\beta_1},...,f_{\gamma_l,\beta_l}$ is a maximal set of coprime simploid factors of $\xi\xi^\sharp$ and if $\phi$ is chosen such that no $\phi(\gamma_i)\beta_i$, $i=1,...,l$, is a root of unit, then $\phi^*(\xi\xi^\sharp)$ is not divisible by any simple element.
\end{proof}

\begin{proof}[{\bf Proof of Theorem \ref{t:al}(2)}]
Let $\xi$ be a generator of $\chi(\fa)\chi(\fb)$ and let $\phi$ be as in Lemma \ref{l:nonsimpletwist}.
Then $\fA(\phi)$ also form a pseudo-controlled $\Gamma$-system with $\fa(\phi^{-1})=\varprojlim_n\fa_n(\phi^{-1})$ and $\fb(\phi)=\varprojlim_n\fb_n(\phi)$. By Lemma \ref{l:phi[]chi}, both $\chi(\fa(\phi^{-1}))$ and $\chi(\fb(\phi))$ are not divisible by simple elements, and hence $[\fa(\phi^{-1})]^\sharp=[\fa(\phi^{-1})]_{ns}^\sharp$ and $[\fb(\phi)]=[\fb(\phi)]_{ns}$. Therefore,
$$[\fa]^{\sharp}=[\fa(\phi^{-1})](\phi)^{\sharp}=[\fa(\phi^{-1})]^\sharp(\phi^{-1})=[\fb(\phi)](\phi^{-1})=[\fb],$$
where the first and the last equality are consequence of Lemma \ref{l:phi[]chi} and the third follows from Theorem \ref{t:al}(1) applied to $\fA(\phi)$.
\end{proof}

\end{subsection}

\subsubsection{Complete $\Gamma$-systems}
Now we assume that our original $\fA$ is just a part of a complete $\Gamma$-system which we still denote by $\fA$. The original $\fA$ is pseudo-controlled if and only if so is its complete system. Also, if the original $\fA$ is strongly controlled, then by replacing $\fa_F\times \fb_F$ by $\mathfrak{k}_F(\fa\times \fb)$ we can make the complete system strongly controlled without altering $\fa$ and $\fb$. So we shall assume that $\fA$ is strongly controlled.\\

First we assume that $\fa$ is annihilated by a simple element $\xi=f_{\gamma_1,\zeta}$ and extend $\gamma_1$ to a basis $\gamma_1,...,\gamma_d$ of $\Gamma$ over $\ZZ_p$.
Let $\Psi$ and $\Gamma'$ be the subgroups of $\Gamma$ with topological generators respectively $\gamma_1$ and $\{\gamma_2,...,\gamma_d\}$. Note that for $H\subset\Gamma$ a closed subgroup we shall write $H^{(n)}$ for $H^{p^n}$. Let $K_{n',n}$ denote the fixed field of the subgroup $\Psi^{(n)}\oplus(\Gamma')^{(n')}$ and write $\fa_{\infty,n}:=\varprojlim_{n'} \fa_{n',n}$, $\fb_{\infty,n}:=\varprojlim_{n'} \fb_{n',n}$ with the obvious meaning of indexes. They are $\Lambda$-modules. Let $K_{\infty,n}$ denote the subfield of $L$ fixed by  $\Psi^{(n)}$. Then the restriction of Galois action gives rise to a natural isomorphism $\Gamma'\simeq\Gal(K_{\infty,n}/K_{0,n})$. Write $\La':=\La(\Gamma')$. We shall view $\La'$ as a subring of $\La$.

Since $\fA$ is strongly controlled, $\fa_{\infty,n}=\fk_{\infty,n}(\fa)$ and $\fb_{\infty,n}=\fk_{\infty,n}(\fb)$ are finitely generated over $\La$, and hence finitely generated over $\La'$, because they are fixed by $\Psi^{(n)}$.

\begin{proposition}\label{p:twistable} Suppose $\fA$ is a strongly controlled complete $\Gamma$-system such that \begin{enumerate}
\item $\fa$ and $\fb$ are annihilated by the simple element $\xi=f_{\gamma_1,\zeta}$ defined above, with $\zeta$ of order $p^l$;
\item $\fa_{\infty,m}$ and  $\fb_{\infty,m}$ are torsion over $\La'$ for some $m\geq l$. \end{enumerate}
Then there exists some non-trivial $\eta\in\Lambda'$ such that $\eta\cdot \fA$ is twistable.
\end{proposition}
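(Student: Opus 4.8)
The plan is to bound the exponents of the finite groups $\fa_n$ after multiplying the system by a suitably chosen $\eta\in\La'\setminus\{0\}$, showing that these exponents grow at most like $p^{n+k}$. Since $\zeta$ has order $p^l$, the simple element $\xi=f_{\gamma_1,\zeta}$ is the $p^l$-th cyclotomic polynomial evaluated at $\gamma_1$, so $\gamma_1^{p^l}=1$ on $\fa$; hence $\fa$, and likewise each $\fa_{\infty,n}$ with $n\ge l$, is a finitely generated module over the regular local ring $R:=\La/\xi\La\cong\cO_\zeta[[\Gamma']]$ (with $\cO_\zeta=\ZZ_p[\zeta]$ and $\gamma_1\mapsto\zeta$), which is finite and free over $\La'$. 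Because $K_n=K_{n,n}$ and $\fA$ is strongly controlled, the surjection $\fa\twoheadrightarrow\fa_n$ factors through $\fa_{\infty,n}$ and $\fa_n$ is a quotient of the $(\Gamma')^{(n)}$-coinvariants $(\fa_{\infty,n})_{(\Gamma')^{(n)}}$; thus $\eta\fa_n$ is a quotient of $(\eta\cdot\fa_{\infty,n})_{(\Gamma')^{(n)}}$, and it is enough to produce $\eta$ and $k$ with $p^{n+k}\cdot(\eta\cdot\fa_{\infty,n})_{(\Gamma')^{(n)}}=0$ for all $n\ge l$ (the finitely many layers $n<l$ only change $k$, and the definition of \emph{twistable} involves only the $\fa$-side).

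For the choice of $\eta$, I would first exploit the $\Psi$-tower $\{\fa_{\infty,n}\}_{n\ge l}$: there $\fr\circ\fk=\Nm$ and $\fk\circ\fr=p^{n-m}$, and $\Nm$ acts as multiplication by a power of $p$ because $\gamma_1$ has order $\le p^l$, so each transition map $\fa_{\infty,n}\to\fa_{\infty,m}$ ($n\ge m$) is surjective with $p$-power-torsion kernel. Combined with hypothesis (2) this shows every $\fa_{\infty,n}$ ($n\ge m$) is finitely generated torsion over $\La'$, with $\chi_{\La'}(\fa_{\infty,n})$ differing from $\chi_{\La'}(\fa_{\infty,m})$ only by a power of $p$. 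I would then take $\eta$ to be a generator of $\chi_{\La'}(\fa_{\infty,m})$, enlarged by a high power of $p$ and by suitable simple factors $f_{\gamma_i,\beta}\in\La'$, so that over $R$ the element $\eta$ annihilates $[\fa_{\infty,m}]$ (over $R$) and no simple element of $R$ lies in the radical of the annihilator of $\eta\cdot\fa_{\infty,m}$; this is possible since $\chi_{\La'}(\fa_{\infty,m})=\Nm_{R/\La'}(\chi_R(\fa_{\infty,m}))$ and the $R/\La'$-norm of a simple element of $R$ is a simple element of $\La'$ still divisible, in $R$, by the original one. Finiteness of all the $\fa_n$ is what forces the zero-locus of the annihilator of $\eta\cdot\fa_{\infty,m}$ in $(\Gamma')^\vee$ to be empty after this enlargement: otherwise some $\gamma_i^{p^n}$ would act trivially on a nonzero subquotient for $n\gg 0$, producing an infinite $\fa_n$.

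The heart of the matter is then the estimate: a finitely generated $R$-module $M$ whose annihilator contains no simple element of $R$ and whose zero-locus in $(\Gamma')^\vee$ is empty satisfies $p^{n+k_M}\cdot M_{(\Gamma')^{(n)}}=0$ for some $k_M$. I would deduce this from the structure theory over $R$ and Monsky's theorem (Theorem \ref{t:monsky}, applied over $\cO=\cO_\zeta$): after a pseudo-isomorphism and a dévissage one reduces to $M=R/\mathfrak q$, identifies $M_{(\Gamma')^{(n)}}$ with $\cO_\zeta[\Gamma'/(\Gamma')^{(n)}]/\bar{\mathfrak q}$, and bounds its $p$-exponent by $\max_{\chi}\lceil\ord_p(\chi(g))\rceil$, taken over characters $\chi$ of $\Gamma'$ of order dividing $p^n$ and elements $g\in\mathfrak q$, plus the defect of $\cO_\zeta[\Gamma'/(\Gamma')^{(n)}]$ inside $\prod_{[\chi]}\cO_\chi$; emptiness of the zero-locus together with the absence of simple factors forces each contribution to be $O(n)$ with linear coefficient at most $1$, by elementary estimates for $\ord_p(x^{p^n}-1)$ when $x$ is a unit that is not a root of unity. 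Applying this with $M=\eta\cdot\fa_{\infty,m}$, and absorbing into the constant the extra $p^{n-m}$-torsion distinguishing $\fa_{\infty,n}$ from $\fa_{\infty,m}$, yields $p^{n+k}\cdot(\eta\cdot\fa_{\infty,n})_{(\Gamma')^{(n)}}=0$ for all $n$, hence the twistability of $\eta\cdot\fA$.

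I expect the principal difficulty to be this last step — converting the qualitative property "no simple factor / empty zero-locus" into the quantitative bound $p^{n+k}$ rather than $p^{cn+k}$ with $c>1$, the latter being useless for twistability — and, hand in hand with it, engineering $\eta$ in the previous paragraph so that $\eta\cdot\fa_{\infty,m}$ is genuinely free of simple part over $R$; Monsky's theorem and explicit $p$-adic valuation estimates are the decisive tools. A subsidiary technical point is the bookkeeping of $\La'$-torsionness and of characteristic ideals along the $\Psi$-tower $\{\fa_{\infty,n}\}$.
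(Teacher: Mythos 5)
You have correctly identified all the ingredients the proof needs --- $\xi$ is the $p^l$-th cyclotomic polynomial evaluated at $\gamma_1$, so $\gamma_1^{p^l}$ acts trivially on $\fa$ and its quotients; the $\Psi$-direction transition maps satisfy $\fr\circ\fk=\Nm$, and $\Nm$ collapses to multiplication by $p^{n-m}$ once $\gamma_1^{p^m}$ acts trivially; hypothesis (2) supplies torsion over $\La'$ --- but you then walk past the punchline into a long detour through $R$-module theory, Monsky's theorem and coinvariant estimates, none of which the paper uses. The paper simply chooses one nonzero $\eta\in\La'$ annihilating \emph{all of} $\fa_{\infty,m}$ and $\fb_{\infty,m}$ (such $\eta$ exists precisely because both are finitely generated torsion over the Noetherian domain $\La'$). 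Strong control makes the $\fk$-maps surjective and $\La'$-linear, so $\eta\,\fa_{n',m}=0$ for every $n'$. Then for all $n\ge m$ and all $n'$,
\[
p^{\,n-m}\,\eta\,\fa_{n',n}
=\Nm_{\Gal(K_{n',n}/K_{n',m})}\!\bigl(\eta\,\fa_{n',n}\bigr)
=\fr^{n',n}_{n',m}\!\bigl(\fk^{n',n}_{n',m}(\eta\,\fa_{n',n})\bigr)
\subset\fr^{n',n}_{n',m}\!\bigl(\eta\,\fa_{n',m}\bigr)=0\,,
\]
since $\Gal(K_{n',n}/K_{n',m})\cong\Psi^{(m)}/\Psi^{(n)}$ is generated by $\gamma_1^{p^m}$ and hence acts trivially. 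Specializing to $n'=n$ (where $K_{n,n}=K_n$) yields $p^{\,n-m}\eta\,\fa_n=0$ for $n\ge m$, and enlarging $k$ to kill the finitely many $\fa_n$ with $n<m$ finishes the proof.

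Your own route, beyond being far more complicated, has a concrete gap at its last step: ``absorbing into the constant the extra $p^{n-m}$-torsion distinguishing $\fa_{\infty,n}$ from $\fa_{\infty,m}$'' is not legitimate, because $p^{n-m}$ is not a constant --- it grows with $n$. Naively combining it with a putative bound $p^{n+k_M}$ on $(\eta\fa_{\infty,m})_{(\Gamma')^{(n)}}$ only gives $p^{\,2n+O(1)}$, which is useless for twistability. The auxiliary lemma you posit (linear $p$-exponent growth for $\Gamma'$-coinvariants of an $R$-module with no simple annihilator and empty zero-locus) is also left unproven and genuinely delicate --- already $\cO_\zeta[[T]]/(T-p)^r$ requires a nontrivial elementary-divisor computation, and you would need uniform control over all the $\fa_{\infty,n}$, not just $\fa_{\infty,m}$. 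But the deeper issue is that the whole coinvariant estimate is never needed: once $\eta$ kills $\fa_{\infty,m}$, the inclusion $\eta\,\fa_{\infty,n}\subset\ker(\fk^{\infty,n}_{\infty,m})$ together with $\fr\circ\fk=\Nm=p^{n-m}$ already forces $p^{\,n-m}\eta\,\fa_{\infty,n}=0$, and $\fa_n$ is a quotient of $\fa_{\infty,n}$.
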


Here $\eta\cdot\fA$ is the complete $\Gamma$-system as defined in Example \ref{eg:morf}. It is also strongly controlled if so is $\fA$.

\begin{proof} Since $\zeta$ is of order $p^l$, the action of $\gamma_1^{p^l}$ is trivial on both $\fa_{\infty,n}$ and $\fb_{\infty,n}$  for all $n$. Assume that $m\geq l$ and suppose both $\fa_{\infty,m}$ and $\fb_{\infty,m}$ are annihilated by some non-zero $\eta\in\La'$. Then $\eta\cdot \fa_{n',m}=0$ and $\eta\cdot \fb_{n',m}=0$ for all $n'$. Hence for $n\geq m$,
$$p^{n-m}\eta\fa_{n',n}=\fr^{n',n}_{n',m}(\fk^{n',n}_{n',m}(\eta\fa_{n',n}))=0$$
since $\gamma_1^{p^n}$ acts trivially on $\fa_{n',n}$. In particular, $p^{n-m}\eta\cdot\fa_n=0$ and by similar argument
$p^{n-m}\eta\cdot\fb_n=0$. Then choose $k$ such that $p^k\fa_i=p^k\fb_i=0$ for each $1\leq i< m$.
\end{proof}

\begin{corollary}\label{c:twistable}
Suppose $\fA$ satisfies the condition of {\em Proposition \ref{p:twistable}}. Then
$$\fa^\sharp\sim \fb.$$
\end{corollary}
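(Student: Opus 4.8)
The plan is to reduce the corollary to Theorem \ref{t:al}(2). By Proposition \ref{p:twistable} there is a non-trivial $\eta\in\Lambda'$ with $\eta\cdot\fA$ twistable; since $\fA$ is strongly controlled, so is $\eta\cdot\fA$ (as noted right after that proposition), hence $\eta\cdot\fA$ is pseudo-controlled. It therefore suffices to show that $\fA$ is pseudo-isomorphic to $\eta\cdot\fA$: Theorem \ref{t:al}(2), applied to $\fA$, then gives $\fa^\sharp\sim\fb$.

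To exhibit such a pseudo-isomorphism I would take the canonical morphism $\fA\to\eta\cdot\fA$ of Example \ref{eg:morf}, induced by the surjections $\fa_n\twoheadrightarrow\eta\fa_n$ and the inclusions $\eta^\sharp\fb_n\hookrightarrow\fb_n$, and check that the two resulting maps of $\Lambda$-modules $\fa\to\varprojlim_n\eta\fa_n$ and $\varprojlim_n\eta^\sharp\fb_n\to\fb$ are pseudo-isomorphisms. Since the $\fa_n$ and $\fb_n$ are finite, $\varprojlim$ is exact on the sequences $0\to\fa_n[\eta]\to\fa_n\to\eta\fa_n\to0$ and $0\to\eta^\sharp\fb_n\to\fb_n\to\fb_n/\eta^\sharp\fb_n\to0$, so the first map is onto with kernel $\varprojlim_n\fa_n[\eta]$ and the second is injective with cokernel $\varprojlim_n(\fb_n/\eta^\sharp\fb_n)$; both are finitely generated over the Noetherian ring $\Lambda$, being respectively a submodule of $\fa$ and a quotient of $\fb$. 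The kernel is annihilated by $\eta$ and by $\xi$ (which kills $\fa$ by hypothesis (1) of Proposition \ref{p:twistable}); the cokernel is annihilated by $\eta^\sharp$ and by $\xi$ (which kills $\fb$). So, provided $\xi$ is coprime to both $\eta$ and $\eta^\sharp$ — which amounts to the same thing, since $(\xi)^\sharp=(\xi)$ by \eqref{e:fs} — Lemma \ref{l:psn} makes both pseudo-null, whence $\fA\to\eta\cdot\fA$ is a pseudo-isomorphism of $\Gamma$-systems.

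The one genuinely new point, and the step I expect to need the most care, is this coprimality of $\xi$ with $\eta$. As $\xi=f_{\gamma_1,\zeta}$ is a simple element it is irreducible, so it suffices to check that $\xi$ divides no non-zero element of $\Lambda'$. Identifying $\Lambda\simeq\Lambda'[[T]]$ via $\gamma_1\mapsto 1+T$, the element $\xi=\prod_{\sigma}\bigl(T-(\sigma(\zeta)-1)\bigr)$ is a distinguished polynomial in $T$ of positive degree $[\QQ_p(\zeta):\QQ_p]$ over $\ZZ_p\subset\Lambda'$; by Weierstrass division $\Lambda/(\xi)$ is free over $\Lambda'$ on the images of $1,T,\dots,T^{[\QQ_p(\zeta):\QQ_p]-1}$, so a non-zero element of $\Lambda'$ has non-zero image there and is not a multiple of $\xi$. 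Alternatively one may argue as in Lemma \ref{l:hypH}: if $\xi\mid\eta$ then $\triangle_\eta$ contains the codimension-one flat $\{\omega:\omega(\gamma_1)=\zeta\}$, which restriction carries onto $(\Gamma')^\vee$; since $\omega(\eta)$ depends only on $\omega|_{\Gamma'}$, Monsky's theorem forces $\eta=0$, a contradiction. With coprimality in hand everything else is bookkeeping: exactness of $\varprojlim$ on the sequences above is the Mittag--Leffler condition for finite groups, and surjectivity (resp. injectivity) of the maps on limits follows from the same finiteness.
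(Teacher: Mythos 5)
Your proposal is correct and follows essentially the same route as the paper: the paper also argues that the morphism $\fA\to\eta\cdot\fA$ of Example \ref{eg:morf} is a pseudo-isomorphism because $\fa[\eta]$ and the corresponding quotient of $\fb$ are killed both by $f_{\gamma_1,\zeta}$ and by $\eta$ (resp.\ $\eta^\sharp$), then invokes Theorem \ref{t:al}(2). The one step you flag as needing care — coprimality of $\xi=f_{\gamma_1,\zeta}$ with $\eta\in\Lambda'$ — is left implicit in the paper, and your Weierstrass (or Monsky) verification of it is a useful bit of explicitness, but it does not change the argument's structure.
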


\begin{proof} The morphism $\fA\rightarrow\eta\cdot\fA$ of Example \ref{eg:morf} in this case is a pseudo-isomorphism, because $\fa[\eta]$ and $\fb/\eta^\sharp\fb$ are both killed by $f_{\gamma_1,\zeta}$ and either $\eta$ or $\eta^\sharp$. Now apply Theorem \ref{t:al}(2). \end{proof}

\begin{proof}[{\bf Proof of Theorem \ref{t:al}(3)}] We may assume that $\fA$ is strongly controlled. Suppose $\fa$ is annihilated by $\xi\in\Lambda$, and hence $\fb$ is annihilated by $\xi^\sharp$. Write $\xi=\xi_1^{s_1}\cdot \cdots \cdot \xi_k^{s_k}$, where each $\xi_i$ is irreducible and $s_i$ is a positive integer. The proof is by induction on $k$.

First assume $k=1$. If $\xi$ is non-simple, then the theorem has been proved. Thus, we may assume that $\xi_1$ is simple and we proceed by induction on $s_1$. The case $s_1=1$ is Corollary \ref{c:twistable}.
If $s_1>1$ let $\fc_F:=\xi_1\cdot \fa_F$ and form the derived systems $\fC$ and $\fE$ as in \S\ref{su:de}. Note that both enjoy property ({\bf T}), as immediate from the sequences  \eqref{e:c} and \eqref{e:e}. Besides $\fC$ is strongly controlled and $\fc$ is annihilated by $\xi_1^{s_1-1}$, whence (as $\xi_1$ is simple) $[\fc]=[\fc]^\sharp=[\fd]$
by the induction hypothesis. We still have $\ff^0=0$, but we don't know if $\fe^0=0$. However, induction tells us that $[\fe/\fe^0]=[\ff]$, or equivalently, there is an injection $[\ff]\hookrightarrow [\fe]$. This actually implies an inclusion $[\fb]\hookrightarrow[\fa]$: to see it, write
$$[\fa]=(\La/\xi_1\La)^{a_1}\oplus (\La/\xi_1^2\La)^{a_2}\oplus \cdots \oplus (\La/\xi_1^{s_1}\La)^{a_{s_1}},$$
and
$$[\fb]=(\La/\xi_1\La)^{b_1}\oplus (\La/\xi_1^2\La)^{b_2}\oplus \cdots \oplus (\Lambda/\xi_1^{s_1}\La)^{b_{s_1}}.$$
Then
$$[\fc]= (\Lambda/\xi_1\La)^{a_2}\oplus \cdots \oplus (\Lambda/\xi_1^{s_1-1}\La)^{a_{s_1}},$$
and
$$[\fd]=(\Lambda/\xi_1\La)^{b_2}\oplus \cdots \oplus (\Lambda/\xi_1^{s_1-1}\La)^{b_{s_1}},$$
while
$$[\fe]= (\Lambda/\xi_1\La)^{a_1+a_2+\cdots +a_{s_1}},\; \;[\ff]= (\Lambda/\xi_1\La)^{b_1+b_2+\cdots +b_{s_1}}. $$
Thus, we have $a_1\geq b_1$ and $a_i=b_i$ for $1<i\leq s_1$. Then by symmetry, we also have $[\fa]\hookrightarrow[\fb]$, whence $[\fa]=[\fb]$ as desired. This proves the $k=1$ case.

For $k>1$, form again $\fC$ and $\fE$, this time setting $\fc_F:=\xi_1^{s_1}\fa_F$. Then induction yields $[\fc]^\sharp=[\fd]$ and $[\fe]^\sharp=[\ff]$. To conclude, use the decompositions $[\fa]=[\fc]\oplus[\fe]$, $[\fb]=[\fd]\oplus[\ff]$ which hold because in the sequences \eqref{e:c}, \eqref{e:e} the extremes have coprime annihilators.
\end{proof}

\end{section}

%SECTION 4++++++++++++++++++++++++++++++++++++++++++++++++++++++++++++++++++++

\begin{section}{Cassels-Tate systems of abelian varieties}\label{s:ctsys}
From now on, $K$ will be a global field, $L/K$
a $\ZZ_p^d$-extension with a finite ramification locus denoted by $S$, and $\Gamma=\Gal(L/K)$.
Let $A/K$ an abelian variety which has \textit{potentially ordinary} reduction at every place in $S$.

In this section we construct from Selmer groups of abelian varieties
over global fields Cassels-Tate systems to which we apply the theory of
Pontryagin duality for Iwasawa modules given earlier.

\begin{subsection}{The Selmer groups}\label{se:se}
Let $i\colon A_{p^n}\hookrightarrow A$ be the group scheme of $p^n$-torsion of $A$.
The $p^n$-Selmer group $\Sel_{p^n}(A/K)$ is defined to be the kernel of the composition
\begin{equation}\label{e:nselmer} \begin{CD} \coh^1_{\mathrm{fl}}(K,A_{p^n}) @>{i^*}>> \coh^1_{\mathrm{fl}}(K,A)@>{loc_K}>> \bigoplus_v \coh^1_{\mathrm{fl}}(K_v,A)\,, \end{CD}\end{equation}
where $\coh_{\mathrm{fl}}^\bullet$ denotes the flat cohomology and $loc_K$ is the localization map to the direct sum of local cohomology groups over all places of $K$.
The same definition works over any finite extension $F/K$. Taking the direct limit as $n\rightarrow\infty$, we get
\begin{equation}\label{e:selmer} \Sel_{p^\infty}(A/F):=\Ker\big(\coh^1_{\mathrm{fl}}(F,A_{p^\infty})\lr \bigoplus_{\text{all}\ v}\coh^1_{\mathrm{fl}}(F_v,A)\big) \end{equation}
where $A_{p^{\infty}}$ is the $p$-divisible group associated with $A$.
The Selmer group sits in an exact sequence
\begin{equation} \label{e:selmersha} 0 \lra \QQ_p /\ZZ_p\otimes_{\ZZ}A(F) \lra \Sel_{p^\infty}(A/F) \lra \Sha_{p^\infty}(A/F) \lra 0 , \end{equation}
where $\Sha_{p^\infty}(A/F)$ denote the $p$-primary part of the Tate-Shafarevich group
$$\Sha(A/F) := \Ker\big( \coh^1_{\mathrm{fl}}(F,A)\lr\bigoplus_v \coh^1_{\mathrm{fl}} (F_v,A)\big) .$$
Also, let $\Sel_{p^{\infty}}(A/F)_{div}$ denote the $p$-divisible part of $\Sel_{p^{\infty}}(A/F)$
and write $\M(A/F)$ for $\QQ_p/\ZZ_p\otimes_{\ZZ}\,A(F)$.
We have $\M(A/F)\subset \Sel_{p^{\infty}}(A/F)_{div}\subset \Sel_{p^{\infty}}(A/F)$.
\begin{definition}\label{d:xyz} Define
$$\Sel_{p^\infty}(A/L):=\varinjlim_{F}\Sel_{p^{\infty}}(A/F),$$
and
$$\Sel_{div}(A/L):=\varinjlim_{F}\Sel_{p^{\infty}}(A/F)_{div}.$$
%and
%$$\M(A/L):=\varinjlim_F \M(A/L).$$
Let $X_p(A/L)$ and $Y_p(A/L)$ %, and $Z_p(A/L)$
denote the Pontryagin dual of $\Sel_{p^\infty}(A/L)$ and $\Sel_{div}(A/L)$.
%and $\M(A/L)$.
\end{definition}

The Galois group $\Gamma$ acts on the above modules turning them into $\La$-modules.
We point out that $X_p(A/L)$
is finitely generated over $\La$, and hence so is $Y_p(A/L)$. % and $Z_p(A/L)$.
In the case where $A$ has ordinary reduction at all places in $S$, this is \cite{tan10b}, Proposition 1.2.1 and Corollary 2.4.2.
To pass from potentially ordinary reduction to ordinary reduction, one can argue as in \cite[Lemma 2.1]{ot09}.

\subsubsection{$Y_p(A/L)$}
The following theorem was originally proved in \cite{tan10b} under the assumption of ordinary reduction. Here
we prove a much more general version: Theorem \ref{t:general}.
\begin{mytheorem}[Tan] \label{t:flat}
Suppose $X_p(A/L)$ is a torsion $\La$-module. Then there exist relatively prime simple elements $f_1,...,f_m$  {\em (}$m\geq 1${\em )} such that
$$f_1\cdots f_m\cdot \Sel_{div}(A/L)=0.$$
\end{mytheorem}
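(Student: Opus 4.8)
The plan is to reduce first to the case of good ordinary reduction along $S$, and then to determine the characteristic ideal of the Pontryagin dual $Y:=Y_p(A/L)$ of $\Sel_{div}(A/L)$ by combining the $p$-divisibility of $\Sel_{div}(A/L)$, the control theorem, and Monsky's Theorem \ref{t:monsky}. By a base-change argument of the type used in \cite[Lemma 2.1]{ot09} one enlarges $K$ to a finite extension over which $A$ acquires good ordinary reduction at the places above $S$ and for which $LK'/K'$ is still a $\ZZ_p^d$-extension with finite ramification locus, and then descends, using that $\Sel_{div}(A/L)$ injects $\Gamma$-compatibly into $\Sel_{div}(A/LK')$ and that $f_{\gamma^{p^k},\zeta}$ is a product of simple elements of $\La$ for every $k\ge0$ by \eqref{e:coprimecriter}; so assume henceforth that $A$ is ordinary along $S$. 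Now $\Sel_{div}(A/L)=\varinjlim_F\Sel_{p^\infty}(A/F)_{div}$ is a direct limit of $p$-divisible groups, hence $p$-divisible, so $Y$ is $\ZZ_p$-torsion-free; and as a quotient of the torsion module $X_p(A/L)$ it is a finitely generated torsion $\La$-module. Since $(f^\sharp)=(f)$ for every simple element $f$ by \eqref{e:fs}, the theorem is equivalent to the statement that $Y$ is killed by a product of pairwise relatively prime simple elements, and by the structure theory recalled in \S\ref{sss:krul} this amounts to showing that $Y$ is \emph{reduced}: that $Y=[Y]$ and that every elementary divisor of $[Y]$ is a simple element occurring with multiplicity one.

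\emph{Control.} Apply the control theorem for $\Sel_{div}$ (\cite{gr03} over number fields, \cite{bl09}, \cite{tan10a} over function fields, in the form of \cite{tan10b}): the natural maps $\Sel_{div}(A/K_n)\to\Sel_{div}(A/L)^{\Gamma^{(n)}}=\big(Y/I_nY\big)^\vee$, where $I_n$ is the augmentation ideal of $\Gamma^{(n)}$, have kernel and cokernel of order bounded independently of $n$, the cokernel being controlled by the local cohomology groups at the places of $S$. On the one hand, each $\Sel_{div}(A/K_n)$ is $p$-divisible; if $[Y]$ had a repeated elementary divisor $f^r$ with $r\ge2$, or if $Y\ne[Y]$, then $Y/I_nY$ would acquire a $\ZZ_p$-torsion subquotient of order growing with $n$, so that $\Sel_{div}(A/L)^{\Gamma^{(n)}}$ would fail $p$-divisibility by an unbounded amount, contradicting the control theorem. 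This forces $Y$ to be reduced. On the other hand, dualizing the control sequence and passing to the limit shows that a generator $\xi$ of $\chi(Y)$ divides a product of the characteristic elements of the $\La$-modules attached to the local cohomology at the places of $S$. It therefore remains to prove that each such local module is killed by a simple element.

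\emph{The local computation.} Fix $v\in S$ and $w\mid v$ in $L$, with decomposition group $\Gamma_v=\Gal(L_w/K_v)$ and nonzero inertia $I_v\subseteq\Gamma_v$. Since $A$ is ordinary at $v$, over $\cO_{L_w}$ there is a connected--\'etale exact sequence $0\to\F^+\to A_{p^\infty}\to\F^-\to0$ with $\F^-$ unramified and $\F^+$ the Cartier dual of an unramified $p$-divisible group, i.e. a twist of $\boldsymbol\mu_{p^\infty}^{\dim A}$ by an unramified $\ZZ_p$-lattice. By the theory of the ordinary local condition over deeply ramified fields, the local term at $v$ entering $\Sel_{div}$ is, up to a finite group, the image of $\coh^1_{\mathrm{fl}}(L_w,\F^+)$; its $\La$-linear Pontryagin dual is finitely generated over $\ZZ_p[[\Gamma_v]]$, and the cyclotomic twist built into $\F^+$ turns the Frobenius/unit-root eigenvalue relevant over the tower into a root of unity, so that this dual is supported on a single codimension-one $\ZZ_p$-flat in $\Gamma^\vee$. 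By Monsky's Theorem \ref{t:monsky}, as in the proof of Lemma \ref{l:hypH}, it is then killed by a simple element $f_{\gamma_v,\zeta_v}$, with $\gamma_v\in\Gamma$ spanning a $\ZZ_p$-direction adapted to $\Gamma_v$ (for instance inside $I_v$, or a Frobenius lift) and $\zeta_v\in\boldsymbol\mu_{p^\infty}$ recording $\boldsymbol\mu_{p^\infty}\cap L_w$. Feeding the finitely many $f_{\gamma_v,\zeta_v}$, $v\in S$, back into the previous paragraph, every irreducible factor of $\xi$ divides one of these simple elements and is therefore itself simple (unique factorization and the irreducibility of simple elements); the distinct ones $f_1,\dots,f_m$ are pairwise relatively prime by \eqref{e:coprimecriter}, and since $Y$ is reduced we get $f_1\cdots f_m\cdot Y=0$, hence $f_1\cdots f_m\cdot\Sel_{div}(A/L)=0$ (with $m\ge1$ as $S\ne\emptyset$).

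\emph{Main obstacle.} The crux is the local computation together with the precise bookkeeping in the control step: one must identify the local conditions correctly over the deeply ramified completions $L_w$, check that after the cyclotomic twist their Iwasawa duals are annihilated by genuine \emph{simple} elements (roots of unity) rather than merely by ``simploids'' in the sense of Lemma \ref{l:nonsimpletwist}, and use the $\ZZ_p$-torsion-freeness coming from divisibility to clear the bounded discrepancies in the control theorem and deduce that $Y$ is reduced. This is exactly why, as noted in the introduction, the statement is not a formal consequence of the known control theorems.
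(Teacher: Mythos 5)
Your approach is genuinely different from the paper's, and it contains a real gap at the step you yourself flag as ``the crux.''

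\textbf{What the paper actually does.} The paper deduces Theorem~\ref{t:flat} from a purely algebraic statement, Theorem~\ref{t:general}: for \emph{any} cofinitely generated torsion $\La$-module $M$, there is a single product $f=f_1\cdots f_m$ of pairwise coprime simple elements killing $(M^{\Gal(L/F)})_{div}$ for every finite intermediate $F$. The proof is short: $M$ is killed by some $\xi\neq0$; fix $F$ with $G=\Gal(F/K)$, base-change to $\cO$ containing $\boldsymbol\mu_{|G|}$ and decompose $N=\cO\otimes M^{\Gal(L/F)}$ (up to $|G|$-torsion) into character eigenspaces $e_\omega'N$, $\omega\in G^\vee$. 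On $e_\omega'N$, $\La$ acts through $\omega$; if $\omega\in\Delta_\xi$ then $f$ kills $e_\omega'N$ by construction of $f$, and if $\omega\notin\Delta_\xi$ then $e_\omega'N$ is a cofinitely generated module over the finite ring $\omega(\La)/(\omega(\xi))$, hence finite. So $f\cdot N$ is finite, and being divisible, $f\cdot N_{div}=0$. The existence of $f$ as a product of genuine simple elements is exactly Monsky's Theorem~\ref{t:monsky} applied to $\Delta_\xi$. One then applies this with $M=\Sel_{p^\infty}(A/L)$, noting that each $\Sel_{p^\infty}(A/F)_{div}$ maps into $(M^{\Gal(L/F)})_{div}$, so $f$ kills their union $\Sel_{div}(A/L)$. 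No reduction to the good ordinary case, no control theorem, no identification of local conditions, no heights.

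\textbf{The gap in your argument.} Your ``local computation'' paragraph is the load-bearing step, and it is asserted rather than proved. You claim that the Iwasawa dual of the local term at a ramified $v$ is supported on a single codimension-one $\ZZ_p$-flat because ``the cyclotomic twist built into $\F^+$ turns the Frobenius/unit-root eigenvalue relevant over the tower into a root of unity.'' This is precisely the distinction the paper is careful about: an arbitrary unit eigenvalue produces a \emph{simploid} $f_{\gamma,\beta}$ in the sense of Lemma~\ref{l:nonsimpletwist}, which is irreducible but not simple unless $\beta\in\boldsymbol\mu_{p^\infty}$. Nothing in the ordinary structure theory forces the relevant eigenvalue to be a root of unity for a general abelian variety over a general global field; it is a special feature that has to be extracted (and this is not how the paper extracts it --- the paper never looks at the local terms at all). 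Your own ``Main obstacle'' paragraph concedes that this is what must be checked, without checking it. Secondary issues: (i) the claim that the cokernel of the control map is ``bounded independently of $n$'' is not available for $d\ge2$, where (cf.\ Lemma~\ref{l:control}) the local contribution has $\ZZ_p$-corank of order $p^{n(d-1)}$ because the number of places above each $v\in S$ grows with $n$; and (ii) reducing the theorem to ``$Y$ is reduced'' is not quite the statement needed, since $f_1\cdots f_m\cdot Y=0$ also requires killing the (not necessarily split-off) pseudo-null quotient $Y/[Y]$, which your squarefree-elementary-divisor criterion does not directly control.

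\textbf{Comparison.} Your route would, if completed, give finer arithmetic information (an explicit list of simple elements indexed by ramified places), but it is much longer, requires good-ordinary reduction and strong control statements, and hinges on an unproved ``root of unity'' claim. The paper's route buys uniformity: Theorem~\ref{t:general} applies to an arbitrary cofinitely generated torsion $\La$-module, with Monsky's theorem supplying the simple elements for free from the structure of the zero set $\Delta_\xi$, so no local analysis is needed.
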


\begin{mytheorem}\label{t:general}
Let $M$ be a cofinitely generated torsion $\La$-module. Then there exist relatively prime simple elements $f_1,...,f_m$  {\em (}$m\geq 1${\em )} such that
$$f_1\cdots f_m\cdot (M^{\Gal(L/F)})_{div}=0$$
for any finite intermediate extension $K\subset F\subset L$.
\end{mytheorem}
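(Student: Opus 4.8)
The plan is to reduce the statement to a purely module-theoretic property of cofinitely generated torsion $\La$-modules and then invoke Monsky's theorem (Theorem \ref{t:monsky}) together with the orbit decomposition of $Q_\infty$ from \S\ref{ss:qorbit}. Write $N:=M^\vee$, a finitely generated torsion $\La$-module, and let $\xi\in\La$ be a generator of $\chi(N)$, so $\xi$ annihilates $[N]$ and, since $N/[N]$ is pseudo-null, some multiple $\xi\eta$ (with $\eta$ coprime to $\xi$) annihilates $N$; in fact it suffices to pick any non-zero $\xi$ with $\xi N=0$. Dualizing, $\xi\cdot M=0$. For a finite intermediate extension $F$ with $H:=\Gal(L/F)$, the divisible part $(M^H)_{\mathrm{div}}$ is a cofree $\ZZ_p$-module on which $H$ acts trivially, so its Pontryagin dual is a finitely generated $\ZZ_p[[\Gamma/H]]$-module, i.e.\ a module over the finite-level ring $\La_n=\ZZ_p[\Gamma_n]$ when $H=\Gamma^{(n)}$. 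The key point to extract is: the characters $\omega$ of $\Gamma$ that can ``appear'' in some $(M^H)_{\mathrm{div}}$ are exactly the zeros of $\xi$, i.e.\ they lie in $\triangle_\xi$.

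Concretely, I would argue as follows. Tensoring $M$ with $\bar\QQ_p$ over $\ZZ_p$ and using the decomposition of \S\ref{ss:qorbit}, the ``divisible-at-finite-level'' information of $M$ is detected by the $\bar\QQ_p$-vector space $\Hom_{\ZZ_p}((M^H)_{\mathrm{div}},\bar\QQ_p)\otimes_{\ZZ_p}\bar\QQ_p$, which decomposes under the $\Gamma_n$-action ($H=\Gamma^{(n)}$) into $\omega$-isotypic pieces for $\omega\in\Gamma_n^\vee$. If the $\omega$-part is non-zero then $\omega(\xi)$ must annihilate a non-zero $\bar\QQ_p$-vector space coming from $\xi\cdot M=0$, forcing $\omega(\xi)=0$, i.e.\ $\omega\in\triangle_\xi$. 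Hence for every $n$ the characters occurring in $(M^{\Gamma^{(n)}})_{\mathrm{div}}$ lie in $\triangle_\xi\cap\Gamma_n^\vee$. By Monsky's theorem, $\triangle_\xi$ is a finite union of $\ZZ_p$-flats $\Xi_1,\dots,\Xi_r$, and since $\xi\neq0$ each $\Xi_j$ has codimension $\geq 1$; write $G_j\subseteq\Gamma$ for the subgroup generated by the defining elements $\gamma_i^{(j)}$ of $\Xi_j$. Exactly as in \S\ref{ss:simple} (the construction preceding \eqref{e:phi}), pick for each $j$ an element $\gamma^{(j)}\in G_j-\Gamma^p$ so that the $\gamma^{(j)}$ can be chosen pairwise $\ZZ_p$-independent (using that there are finitely many $j$ and each $G_j$ is non-trivial — if some $G_j$ has rank $\geq 2$ this is automatic; rank-one flats are handled by the same device, choosing distinct generators), let $\varepsilon^{(j)}$ be the common value taken on $\gamma^{(j)}$ by the characters in $\Xi_j$, and set $f_j:=f_{\gamma^{(j)},\varepsilon^{(j)}}$. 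Then the coprimality criterion \eqref{e:coprimecriter} gives that $f_1,\dots,f_m$ ($m=r$) are relatively prime simple elements, and by construction $\triangle_\xi\subseteq\triangle_{f_j}$ for every $j$, hence $\triangle_\xi\subseteq\bigcap_j\triangle_{f_j}=\triangle_{f_1\cdots f_m}$.

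It remains to see that $\triangle_\xi\subseteq\triangle_g$, with $g:=f_1\cdots f_m$, implies $g\cdot(M^H)_{\mathrm{div}}=0$ for all $F$. Fix $H=\Gamma^{(n)}$ and work over $\bar\QQ_p$: the $\omega$-isotypic component of $(M^H)_{\mathrm{div}}\otimes\bar\QQ_p$ is non-zero only for $\omega\in\triangle_\xi\cap\Gamma_n^\vee$, and on such a component $g$ acts by multiplication by $\omega(g)=0$ because $\omega\in\triangle_g$. Since $(M^H)_{\mathrm{div}}$ is a divisible, hence torsion-free-after-dualizing, $\ZZ_p$-module, multiplication by $g$ is already zero integrally once it is zero after $\otimes\bar\QQ_p$; therefore $g\cdot(M^H)_{\mathrm{div}}=0$. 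As $m=r\geq1$ whenever $\triangle_\xi\neq\emptyset$, and when $\triangle_\xi=\emptyset$ one checks directly that $(M^H)_{\mathrm{div}}=0$ for all $F$ (any non-zero such component would supply a zero of $\xi$), so in that degenerate case any single simple element works; either way the statement holds.

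The main obstacle I anticipate is the careful bookkeeping in the step ``non-zero $\omega$-component forces $\omega\in\triangle_\xi$'': one must handle the passage between $M$ itself, its $H$-invariants, the divisible part, and the $\bar\QQ_p$-linearization, checking that the $\Gamma_n$-semisimple decomposition is compatible with all of these and that $\xi M=0$ really does kill every isotypic piece outside $\triangle_\xi$. A secondary subtlety is the choice of the $\gamma^{(j)}$ ensuring the $f_j$ are pairwise coprime when some defining flats share a common sub-direction; this is exactly the combinatorial device already used in \S\ref{ss:simple}, so it should transfer, but it needs to be spelled out to guarantee $m\geq1$ and relative primality simultaneously.
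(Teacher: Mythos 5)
Your strategy is essentially the one in the paper: decompose $M^{\Gal(L/F)}$ into $\omega$-isotypic pieces for characters $\omega$ of $\Gal(F/K)$, observe that a piece carrying divisible content forces $\omega\in\triangle_\xi$, and then use Monsky's theorem to produce a product of coprime simple elements vanishing on $\triangle_\xi$. The paper executes the first step integrally rather than rationally: it works with $N=\cO\otimes_{\ZZ_p}M^{\Gal(L/F)}$, uses the scaled idempotents $e'_\omega$ so that $|G|\cdot N=\sum e'_\omega N$, and shows $e'_\omega N$ is \emph{finite} (not merely zero after rationalization) for $\omega\notin\triangle_\xi$, because it is cofinitely generated over the finite ring $\omega(\La)/(\omega(\xi))$; this gives that $f\cdot N$ is finite, and a finite divisible group is trivial. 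Both routes work, but the integral one sidesteps the dualize-then-tensor bookkeeping you flag as a worry.

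Two points in your version need repair. First, $(M^H)_{div}$ is a cofinitely generated divisible $p$-primary group, hence isomorphic to $(\QQ_p/\ZZ_p)^r$, so both $\Hom_{\ZZ_p}\bigl((M^H)_{div},\bar\QQ_p\bigr)$ and $(M^H)_{div}\otimes_{\ZZ_p}\bar\QQ_p$ are identically zero; the object you want to linearize is the Pontryagin dual $\bigl((M^H)_{div}\bigr)^\vee$, a free $\ZZ_p$-module of rank $r$, tensored with $\bar\QQ_p$. You are clearly aware of this (``torsion-free-after-dualizing''), but as written the central spaces in your argument are zero and the isotypic analysis is vacuous. Second, your coprimality device --- choosing the $\gamma^{(j)}\in G_j-\Gamma^p$ pairwise $\ZZ_p$-independent --- cannot be carried out in general: if two of Monsky's flats $T_j$, $T_{j'}$ both have codimension one and are cut out by conditions on the same $\gamma_1$, then every simple element vanishing on either one has the form $f_{\gamma_1,\cdot}$ and there is no freedom to decorrelate. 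The construction in \S\ref{ss:simple} is available there only because hypothesis ({\bf NS}) forces every $G_j$ to have rank at least two, which is precisely what fails here. The correct (and simpler) remedy is the paper's: for each flat $T_j$ pick \emph{any} simple element $f_j$ vanishing on it (possible by definition of $\ZZ_p$-flat), and note that by \eqref{e:coprimecriter} two such $f_j$, $f_{j'}$ either generate the same ideal (in which case discard one) or are coprime; after merging duplicates the remaining $f_j$ are pairwise coprime and their product still kills $\triangle_\xi$.
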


\begin{proof}
By hypothesis, there is some $\xi\in\La-\{0\}$ annihilating $M$. Let $f\in\La$ be such that $\omega(f)=0$ for all $\omega\in\Delta_\xi\,$. Fix a finite intermediate extensions $F/K$ and, to lighten notation, put $G:=\Gal(F/K)$ and $\cO$ the ring of integers of $\QQ(\boldsymbol\mu_{p^d})$, where $p^d$ is the exponent of $G$. Also, let $N:=\cO\otimes_{\ZZ_p}M^{\Gal(L/F)}$. Then defining, as in \eqref{e:idempotent},  $e_\omega':=\sum_{g\in G}\omega(g^{-1})g$ for each $\omega\in G^\vee$, one finds $|G|\cdot N=\sum e_\omega'N$ and $\La$ acts on $e_\omega'N$ by $\lambda\cdot n=\omega(\lambda)n$. In particular, one has $f\cdot e_\omega'N=0$ for all $\omega\in\Delta_\xi$. On the other hand, if $\omega\notin\Delta_\xi$ then $e_\omega'N$ is finite because it is a cofinitely generated module over the finite ring $\omega(\La)/(\omega(\xi))$. It follows that $f\cdot N$ is finite. Since a finite divisible group must be trivial, this proves that $f\cdot N_{div}=0$, and hence $f\cdot (M^{\Gal(L/F)})_{div}=0$.

It remains to prove that one can find a product of distinct simple elements which is killed by $\Delta_\xi$. This is a consequence of Monsky's theorem: one has $\Delta_\xi=\cup T_j$ where the $T_j$'s are $\ZZ_p$-flats, and by definition for each $T_j$ there is at least one simple element vanishing on it.
\end{proof}

\begin{corollary}\label{c:y} If $X_p(A/L)$ is torsion, then there is a pseudo-isomorphism
$$Y_p(A/L)\sim \bigoplus_{i=1}^m (\La/f_i\La)^{r_i},$$
for some non-negative integers $r_i$, and hence
$$[Y_p(A/L)]=[Y_p(A/L)]_{si}=[Y_p(A/L)]_{si}^\sharp=[Y_p(A/L)]^\sharp.$$
\end{corollary}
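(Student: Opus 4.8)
The plan is to combine Theorem \ref{t:flat} with the structure theory recalled in \S\ref{sss:krul}. Since $X_p(A/L)$ is finitely generated over $\La$ (as noted in \S\ref{se:se}), so is $Y_p(A/L)$, and the hypothesis that $X_p(A/L)$ is torsion makes $Y_p(A/L)$ a finitely generated torsion $\La$-module. By Theorem \ref{t:flat} there are pairwise coprime simple elements $f_1,\dots,f_m$ with $f_1\cdots f_m\cdot\Sel_{div}(A/L)=0$. Dualizing the $\La$-action, the element $f_1^\sharp\cdots f_m^\sharp$ annihilates $Y_p(A/L)$; since each $f_i$ is simple, $(f_i^\sharp)=(f_i)$ by \eqref{e:fs}, so the annihilator of $Y_p(A/L)$ contains $\xi:=f_1\cdots f_m$, and hence also the submodule $[Y_p(A/L)]$ is killed by $\xi$.

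Next I would invoke the decomposition $[Y_p(A/L)]=\bigoplus_j\La/\eta_j^{s_j}\La$ with each $\eta_j$ irreducible. Because $\xi$ kills each summand, $\eta_j^{s_j}$ divides $\xi$; but $\xi$ is a product of the pairwise non-associate irreducibles $f_1,\dots,f_m$, so unique factorization in $\La$ forces $s_j=1$ and $\eta_j$ associate to one of the $f_i$. Collecting summands according to which $f_i$ occurs gives
$$[Y_p(A/L)]=\bigoplus_{i=1}^m(\La/f_i\La)^{r_i}$$
for non-negative integers $r_i$, and the pseudo-isomorphism $Y_p(A/L)\sim[Y_p(A/L)]$ of \S\ref{sss:krul} yields the displayed pseudo-isomorphism of the corollary.

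For the chain of identities: since every $f_i$ is simple, the definition of the simple part in \S\ref{su:twistmodule} gives $[Y_p(A/L)]_{ns}=0$, hence $[Y_p(A/L)]=[Y_p(A/L)]_{si}$; moreover $(\La/f_i\La)^\sharp=\La/f_i^\sharp\La=\La/f_i\La$ by \eqref{e:fs} (equivalently, apply Lemma \ref{l:phi[]chi} with $\alpha={}^\sharp$, or use \eqref{e:fld}), so $[Y_p(A/L)]^\sharp=[Y_p(A/L)]$ as well. This is exactly the asserted string $[Y_p(A/L)]=[Y_p(A/L)]_{si}=[Y_p(A/L)]_{si}^\sharp=[Y_p(A/L)]^\sharp$.

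There is no serious obstacle here: all the depth sits in Theorem \ref{t:flat}. The only points needing a little care are keeping track of the $\sharp$-twist produced by Pontryagin duality (harmless because simple elements satisfy $(f)^\sharp=(f)$) and observing that taking the $f_i$ \emph{relatively prime} makes $\xi$ squarefree as a product of irreducibles, which is what forces the exponents $s_j$ in the structure theorem to equal $1$.
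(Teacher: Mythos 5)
Your proof is correct and matches the paper's (very terse) proof in substance: Theorem \ref{t:flat} gives the squarefree annihilator by simple elements, the structure theory forces the decomposition with exponents one, and \eqref{e:fs}/\eqref{e:fld} gives the $\sharp$-invariance. The care you take with the $\sharp$-twist from Pontryagin duality and with the uniqueness-of-factorization argument is exactly what the paper leaves implicit.
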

\begin{proof}
The second equality is by \eqref{e:fld}.
\end{proof}
\end{subsection}

\begin{subsection}{The Cassels-Tate system}\label{su:cts}
\subsubsection{The Cassels-Tate pairing} \label{CTpairing}
%The main reference of this section is \cite[II.2 and II.5]{mil86a}.
For an abelian variety $A$ defined over the global field $K$  let $A^t$ its dual abelian variety.
Let
$$\langle\;,\;\rangle_{A/K}\colon \Sha(A/K)\times \Sha(A^t/K)\longrightarrow \QQ/\ZZ$$
denote the Cassels-Tate pairing (\cite[II.5.7 (a)]{mil86}). %(We shall recall its construction below.)

The next proposition follows from \cite[I, Theorem 7.3]{mil86}.
\begin{proposition}\label{p:1}
Let $A$, $B$ be abelian varieties defined over the global field $K$.
Suppose $\phi\colon A\rightarrow B$ is an isogeny and $\phi^t\colon B^t\rightarrow A^t$ is its dual.
Then we have the commutative diagram:
\[ \begin{CD} \langle\;,\;\rangle_{A/K}\colon & \Sha(A/K) &\,\times & \,\Sha(A^t/K)  @>>> \QQ/\ZZ \\
 &  @VV{\phi_*}V @AA{\phi^t_*}A @| \\
\langle\;,\;\rangle_{B/K}\colon & \Sha(B/K)&\,\times  & \,\Sha(B^t/K)  @>>> \QQ/\ZZ.
\end{CD} \]
\end{proposition}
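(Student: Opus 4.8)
The plan is to peel back the definition of the Cassels--Tate pairing and reduce the asserted commutativity to the one structural fact that makes $\phi^{t}$ the \emph{dual} isogeny of $\phi$: that $\phi^{t}$ is adjoint to $\phi$ with respect to the Weil pairings. Once that adjunction is in hand, the commutativity of the diagram is pure functoriality of the cohomological data out of which the pairing is built.

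First I would recall the cohomological construction of $\langle\;,\;\rangle_{A/K}$ following \cite[I, \S6]{mil86}. Given $a\in\Sha(A/K)$ and $a'\in\Sha(A^{t}/K)$, both are torsion, so fix $n$ with $na=na'=0$; since multiplication by $n$ is surjective on the fppf sheaf $A$, the classes $a$ and $a'$ lift to $\tilde a\in\coh^{1}_{\mathrm{fl}}(K,A_{n})$ and $\tilde a'\in\coh^{1}_{\mathrm{fl}}(K,A^{t}_{n})$. Using the Weil pairing $e_{A,n}\colon A_{n}\times A^{t}_{n}\to\boldsymbol\mu_{n}\hookrightarrow\GG_m$ one forms the cup product, pushes the localizations into $\coh^{2}_{\mathrm{fl}}(K_{v},\GG_m)$, and sums the local invariants; the hypothesis that $a$ and $a'$ die locally everywhere makes the resulting element of $\QQ/\ZZ$ well defined and independent of the auxiliary choices. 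The point to retain is that every ingredient here --- the Kummer sequences, the cup products $\coh^{1}\times\coh^{1}\to\coh^{2}$, the localization maps, and the invariant maps --- is natural in the coefficient group scheme.

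Next I would bring in the compatibility of Weil pairings with isogenies: for $\phi\colon A\to B$ with dual $\phi^{t}\colon B^{t}\to A^{t}$ one has $e_{B,n}(\phi(x),y)=e_{A,n}(x,\phi^{t}(y))$ for $x\in A_{n}$, $y\in B^{t}_{n}$; this adjunction --- equivalently, the compatibility of the Poincar\'e biextensions of $A\times A^{t}$ and $B\times B^{t}$ under $\phi\times\mathrm{id}$ and $\mathrm{id}\times\phi^{t}$ --- is one of the standard characterizations of $\phi^{t}$. Combined with the naturality recorded above, it makes $\phi_{*}\colon\coh^{\bullet}_{\mathrm{fl}}(-,A)\to\coh^{\bullet}_{\mathrm{fl}}(-,B)$ and $\phi^{t}_{*}\colon\coh^{\bullet}_{\mathrm{fl}}(-,B^{t})\to\coh^{\bullet}_{\mathrm{fl}}(-,A^{t})$ adjoint with respect to the $\GG_m$-valued pairings, both globally and at each place $v$; feeding this into the construction of the previous paragraph yields $\langle\phi_{*}a,a'\rangle_{B/K}=\langle a,\phi^{t}_{*}a'\rangle_{A/K}$. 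One also checks that $\phi_{*}$ and $\phi^{t}_{*}$ carry the Tate--Shafarevich groups into one another --- immediate, since $\Sha$ is the kernel of $loc_{K}$ and localization commutes with $\phi_{*}$ --- so the diagram makes sense as stated. A convention-free way to run the same argument is to use the exact sequences $0\to\Ker\phi\to A\to B\to 0$ and $0\to\Ker\phi^{t}\to B^{t}\to A^{t}\to 0$ together with the perfect Cartier pairing $\Ker\phi\times\Ker\phi^{t}\to\GG_m$, and to compare the resulting Poitou--Tate nine-term sequences; this is in essence \cite[I, Theorem 7.3]{mil86}.

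The only genuinely delicate step --- and the reason the statement is quoted rather than reproved here --- is the bookkeeping with signs and duality normalizations: one must verify that the chosen normalization of $\langle\;,\;\rangle_{A/K}$ makes the operative adjunction exactly $e_{B}(\phi x,y)=e_{A}(x,\phi^{t}y)$ (and not its inverse), and that the local lifts and local cochains used in the definition of the pairing for $A$ and for $B$ can be chosen compatibly under $\phi_{*}$. Presenting the pairing through the Poincar\'e biextension rather than a fixed $A_{n}$ eliminates these choices and makes the compatibility a formal consequence of functoriality, which is precisely the route I would take.
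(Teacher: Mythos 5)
Your proposal is correct and follows essentially the same route as the paper, which simply observes that the statement is \cite[I, Theorem 7.3]{mil86}; your expanded sketch of the cup-product construction and the Weil-pairing adjunction $e_{B}(\phi x,y)=e_{A}(x,\phi^{t}y)$ is exactly the mechanism behind that citation. Nothing further is needed.
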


%=============================================================================

\subsubsection{The Cassels-Tate system}\label{sb:cts}

Let $A$ be an abelian variety defined over the global field $K$.
Put
\begin{equation} \label{e:ansha} \fa_n=\Sha_{p^\infty}(A^t/K_n)/\Sha_{p^\infty}(A^t/K_n)_{div}=\Sel_{p^{\infty}}(A^t/K_n)/\Sel_{p^{\infty}}(A^t/K_n)_{div} \,, \end{equation}

\begin{equation} \label{e:bnabvar} \fb_n=\Sel_{p^{\infty}}(A/K_n)/\Sel_{p^{\infty}}(A/K_n)_{div}
=\Sel_{p^{\infty}}(A/K_n)/\Sel_{p^{\infty}}(A/K_n)_{div}. \end{equation}
Let
\begin{equation} \label{e:perfectCT}
\langle\;,\;\rangle_n\colon \fa_n\times\fb_n \lr \QQ_p/\ZZ_p \end{equation}
be the perfect pairing
induced from the Cassels-Tate pairing on $\Sha(A^t/K_n)\times \Sha(A/K_n)$.
Let $\fr_m^n$ and $\fk_m^n$ be the morphisms induced respectively from the restriction
$$\coh^1(K_m,A^t\times A)\longrightarrow \coh^1(K_n,A^t\times A)$$
and the co-restriction
$$\coh^1(K_n,A^t\times A)\longrightarrow \coh^1(K_m,A^t\times A).$$
 Let $$\fA =\{\fa_n,\fb_n,\langle\,,\rangle_n,\fr_m^n,\fk_m^n\}.$$
  We call $\fA$  the {\em{Cassels-Tate}} system of $A$.
  As before we write
  $$\fa:=\varprojlim_{n}\fa_n\;\;
\text{and}\;\;\fb:=\varprojlim_{n}\fb_n\,.$$
It is clear that $\fA$ satisfies axioms ($\Gamma$-1)-($\Gamma$-4).
For the rest of this paper we shall use the above notations.

Theorem \ref{t:ct} says that $\fA$ is a $\Gamma$-system,
but for the time being, we do not need it.
As in Section \ref{s:con}, write $\fa$ and $\fb$ for the projective limits of $\{\fa_n\}_n$ and $\{\fb_n\}_n$. We have the exact sequence
\begin{equation}\label{e:axy}  0\lr \fa\lr X_p(A/L)\lr Y_p(A/L)\lr 0. \end{equation}
\begin{lemma}\label{l:axsharp} If $X_p(A/L)$ is torsion, then $[X_p(A/L)]_{ns}=[\fa]_{ns}$.
\end{lemma}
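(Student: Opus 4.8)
The plan is to localise the exact sequence \eqref{e:axy} at the height-one primes of $\La$ and to read off the non-simple parts from these localisations. Throughout write $X:=X_p(A/L)$ and $Y:=Y_p(A/L)$. By the hypothesis and the finite generation of $X_p(A/L)$ over $\La$ recalled in \S\ref{se:se}, all three of $\fa$, $X$, $Y$ are finitely generated torsion $\La$-modules, so $[\fa]$, $[X]$, $[Y]$ and their simple/non-simple decompositions are defined; and Corollary \ref{c:y} tells us $[Y]_{ns}=0$, equivalently that $Y$ is pseudo-isomorphic to a finite direct sum of modules $\La/f\La$ with $f$ simple.

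The key computation is the following. Fix a height-one prime $\fp=(\xi)$ of $\La$ with $\xi$ not a simple element, so that $\La_{\fp}$ is a discrete valuation ring. Every simple element is coprime to $\xi$, hence a unit in $\La_{\fp}$, and a pseudo-null module is killed by two coprime elements (Lemma \ref{l:psn}), one of which is then a unit in $\La_{\fp}$, so pseudo-null modules localise to $0$ at $\fp$; combining these two facts with Corollary \ref{c:y} gives $Y_{\fp}=0$, and localising \eqref{e:axy} at $\fp$ yields an isomorphism $\fa_{\fp}\cong X_{\fp}$. On the other hand, for any finitely generated torsion $\La$-module $M$ the embedding $[M]\hookrightarrow M$ of \eqref{e:structureM} has pseudo-null cokernel, so $M_{\fp}\cong[M]_{\fp}$; and since $[M]_{si}$ is a direct sum of modules $\La/f^r\La$ with $f$ simple (hence a unit in $\La_{\fp}$), this equals $([M]_{ns})_{\fp}$. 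Applying this to $M=X$ and to $M=\fa$ and using $\fa_{\fp}\cong X_{\fp}$ gives $([X]_{ns})_{\fp}\cong([\fa]_{ns})_{\fp}$. At a height-one prime $\fq=(f)$ with $f$ simple, both $([X]_{ns})_{\fq}$ and $([\fa]_{ns})_{\fq}$ vanish, since $[X]_{ns}$ and $[\fa]_{ns}$ are direct sums of modules $\La/\xi_i^{r_i}\La$ with each $\xi_i$ a non-simple irreducible, so $\xi_i\notin\fq$.

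I would then conclude by the structure theory of \S\ref{sss:krul}. Both $[X]_{ns}$ and $[\fa]_{ns}$ are elementary modules, i.e. of the form $\bigoplus_i\La/\xi_i^{r_i}\La$, and such a module is determined up to isomorphism by the collection of its localisations at the height-one primes (the localisation at $(\xi_i)$, over the discrete valuation ring $\La_{(\xi_i)}$, recovers the multiset of exponents attached to $\xi_i$). Since these localisations agree for $[X]_{ns}$ and $[\fa]_{ns}$ at every height-one prime, we obtain $[X_p(A/L)]_{ns}\cong[\fa]_{ns}$, which is the asserted equality in the sense of \S\ref{sss:krul}.

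I expect the only delicate point to be this last step: upgrading the obvious coincidence of characteristic ideals — which follows at once from multiplicativity of $\chi$ applied to \eqref{e:axy} together with $[Y]_{ns}=0$ — to an honest isomorphism of modules, since in general two modules with the same characteristic ideal, or even two pseudo-isomorphic modules, need not be isomorphic. What makes it go through is that $[X]_{ns}$ and $[\fa]_{ns}$ are already in elementary form, so that localising at each non-simple height-one prime detects the whole partition of exponents there. A variant that avoids localisation altogether: the composite $[\fa]_{ns}\hookrightarrow\fa\hookrightarrow X\twoheadrightarrow X/[X]_{si}$ is injective, because its kernel $[\fa]_{ns}\cap[X]_{si}$ is annihilated by the coprime ideals $\chi([\fa]_{ns})$ and $\chi([X]_{si})$, hence is pseudo-null, hence $0$ as $[\fa]_{ns}$ has no non-zero pseudo-null submodule; and it has pseudo-null cokernel, since $\chi(X/[X]_{si})=\chi([X]_{ns})=\chi([\fa]_{ns})$. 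As $[X]_{ns}\hookrightarrow X/[X]_{si}$ is a pseudo-isomorphism as well, Lemma \ref{l:pseudoisom} gives $[\fa]_{ns}\sim[X]_{ns}$, and pseudo-isomorphic elementary modules are isomorphic.
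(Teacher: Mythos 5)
Your argument is correct and fills in exactly what the paper's one-line proof (``Corollary~\ref{c:y} and Equation~\eqref{e:axy}'') leaves implicit: localising \eqref{e:axy} at any non-simple height-one prime kills $Y_p(A/L)$ by Corollary~\ref{c:y}, hence identifies $\fa_{\fp}$ with $X_p(A/L)_{\fp}$, and the non-simple elementary part of a torsion $\La$-module is determined by precisely these localisations. Both your localisation argument and the closing pseudo-isomorphism variant are valid ways to turn those two cited facts into the stated identity.
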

\begin{proof}
Corollary \ref{c:y} and Equation \eqref{e:axy}.
\end{proof}

\subsubsection{The module $\fa^{00}$}\label{ss:ddiv} To study $\fa_n^0$, we first consider the small piece $\fa^{00}_n$ which is the image of
$$\fs_n^{00}:=\Ker\!\big(\Sel_{p^{\infty}}(A^t/K_n)\,\lr \Sel_{p^{\infty}}(A^t/L)\big)$$
under the projection $\Sel_{p^{\infty}}(A^t/K_n)\twoheadrightarrow \fa_n$. Obviously, $\fs_n^{00}$ is a $\Gamma_n$-submodule of
$$\fs_n^0:=\Ker\big(\coh_{\mathrm{fl}}^1(K_n,A^t_{p^{\infty}})\,\lr \coh_{\mathrm{fl}}^1(L,A^t_{p^{\infty}})\big)=\coh^1(\Gamma^{(n)}, A^t_{p^{\infty}}(L))\,.$$

\begin{lemma} \label{l:h1d}
All the groups $\coh^1\!\big(\Gamma^{(n)},A^t_{p^{\infty}}(L)\big)$ and $\coh^2\!\big(\Gamma^{(n)},A^t_{p^{\infty}}(L)\big)$ are finite.
\end{lemma}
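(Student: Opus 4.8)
The plan is to prove more than the statement asks, namely that $\coh^i(\Gamma^{(n)},A^t_{p^\infty}(L))$ is finite for \emph{every} $i\ge 0$; the cases $i=1,2$ are all we need. Write $M:=A^t_{p^\infty}(L)=\varinjlim_k A^t_{p^k}(L)$. As a subgroup of $A^t_{p^\infty}(\bar K)\cong(\QQ_p/\ZZ_p)^{r}$ (with $r\le 2\dim A$, or $r$ the $p$-rank in the function field case), $M$ is a cofinitely generated $\ZZ_p$-module with a continuous action of $\Gamma^{(n)}=\Gal(L/K_n)\cong\ZZ_p^d$. The only genuinely arithmetic input I would use is that
$$\coh^0(\Gamma^{(n)},M)=A^t_{p^\infty}(K_n)$$
is finite, which holds because $A^t(K_n)$ is a finitely generated abelian group (Mordell--Weil in the number field case, Lang--N\'eron in the function field case), so its torsion subgroup is finite. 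No hypothesis on the reduction of $A$ enters here.

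Next I would reduce to the case of a $\QQ_p$-vector space. The maximal divisible subgroup $V:=M_{div}\cong(\QQ_p/\ZZ_p)^{r'}$ is $\Gamma^{(n)}$-stable and $M/V$ is finite; since $\Gamma^{(n)}\cong\ZZ_p^d$ admits a finite free resolution of $\ZZ_p$ over $\ZZ_p[[\Gamma^{(n)}]]$ (the Koszul complex on $\gamma_1-1,\dots,\gamma_d-1$ for a topological basis $\gamma_1,\dots,\gamma_d$), all $\coh^i(\Gamma^{(n)},M/V)$ are finite, and the long exact sequence reduces us to proving $\coh^i(\Gamma^{(n)},V)$ finite. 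Put $T:=\varprojlim_k V[p^k]$, a free $\ZZ_p$-module of rank $r'$ with continuous $\Gamma^{(n)}$-action, and $V_0:=T\otimes_{\ZZ_p}\QQ_p$, so that $0\to T\to V_0\to V\to 0$ is a $\Gamma^{(n)}$-equivariant exact sequence. I claim $V_0^{\Gamma^{(n)}}=0$: a nonzero invariant vector, scaled by the powers $p^{-k}$, would produce infinitely many elements of $V^{\Gamma^{(n)}}\subseteq M^{\Gamma^{(n)}}$, contradicting finiteness of the latter. Granting the sub-claim below that $\coh^\bullet(\Gamma^{(n)},V_0)=0$, the exact sequence gives isomorphisms $\coh^i(\Gamma^{(n)},V)\cong\coh^{i+1}(\Gamma^{(n)},T)$; and $\coh^{i+1}(\Gamma^{(n)},T)$ is finitely generated over $\ZZ_p$ (again from the finite Koszul resolution) while $\coh^{i+1}(\Gamma^{(n)},T)\otimes_{\ZZ_p}\QQ_p=\coh^{i+1}(\Gamma^{(n)},V_0)=0$, so it is $\ZZ_p$-torsion, hence finite.

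It remains to prove the sub-claim: if $G\cong\ZZ_p^d$ acts continuously on a finite dimensional $\QQ_p$-vector space $V_0$ with $V_0^G=0$, then $\coh^i(G,V_0)=0$ for all $i$. Fix a topological basis $\gamma_1,\dots,\gamma_d$ of $G$, set $a_j:=\gamma_j-1$, and let $R$ be the image of $\QQ_p[G]$ in $\mathrm{End}_{\QQ_p}(V_0)$: it is a commutative finite dimensional, hence Artinian, $\QQ_p$-algebra, and $V_0$ is a faithful $R$-module. Now $V_0^G$ is exactly the submodule of $V_0$ killed by the ideal $J:=Ra_1+\dots+Ra_d$. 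If $J\ne R$, it lies in some maximal ideal $\mathfrak m$ of $R$; the $\mathfrak m$-primary component $V_0(\mathfrak m)$ is nonzero (by faithfulness), and its socle is a nonzero subspace killed by $\mathfrak m$, hence by $J$, hence contained in $V_0^G$ — a contradiction. Therefore $J=R$. Finally, $\coh^\bullet(G,V_0)$ is computed by the Koszul cochain complex on $a_1,\dots,a_d$, and every Koszul cohomology group is annihilated by the ideal $(a_1,\dots,a_d)=R$; so $\coh^i(G,V_0)=0$ for all $i$, as claimed.

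The only geometric ingredient is the finiteness of $A^t_{p^\infty}(K_n)$, which is immediate from finite generation of the Mordell--Weil group; the rest is cohomological bookkeeping. Its one delicate point is the higher cohomology of $\Gamma^{(n)}\cong\ZZ_p^d$: a naive d\'evissage along a sub-$\ZZ_p^{d-1}$ fails, since finiteness of invariants is not inherited by a subgroup, and I expect the Koszul-complex/augmentation-ideal argument of the last paragraph to be the technical heart of the proof.
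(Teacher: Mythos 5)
Your proof is correct, and in fact proves more than the lemma asks (finiteness of $\coh^i$ for all $i$, not just $i=1,2$). It takes a genuinely different route from the paper. The paper's proof of this lemma delegates the general case $d\ge 1$ to Greenberg's Proposition~3.3 in \cite{gr03}, and only spells out a direct argument when $d=1$: there one writes $D=A^t_{p^\infty}(L)$, observes that $\gamma^{p^n}-1$ has finite kernel on $D$ (because $D^{\Gamma^{(n)}}=A^t_{p^\infty}(K_n)$ is finite by Mordell--Weil), hence is surjective on the divisible part $D_{div}$, so $\coh^1(\Gamma^{(n)},D_{div})=0$; then the finite quotient $D/D_{div}$ contributes only a bounded error. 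You instead replace the citation by a self-contained homological argument: reduce via the finite quotient $M/V$ and the integral Tate module $T$ to the rational representation $V_0=T\otimes\QQ_p$, show $V_0^{\Gamma^{(n)}}=0$ from the same Mordell--Weil input, and then prove $\coh^\bullet(\Gamma^{(n)},V_0)=0$ by the Artinian-ring/Koszul observation that $(\gamma_1-1,\dots,\gamma_d-1)$ must be the unit ideal in the image $R$ of $\QQ_p[\Gamma^{(n)}]$ in $\mathrm{End}(V_0)$. The arithmetic input (finiteness of $A^t_{p^\infty}(K_n)$) is identical in both proofs; the gain of your approach is that it works uniformly in $d$ and sidesteps the d\'evissage problem that you correctly identify as the obstacle to a naive $\ZZ_p^{d-1}$ induction. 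The small cost is that you implicitly rely on the comparison between continuous group cohomology and $\mathrm{Ext}$ over $\ZZ_p[[\Gamma^{(n)}]]$ computed by the Koszul resolution, for $T$ compact, $V_0$ locally compact and $V$ discrete, and on exactness of $-\otimes_{\ZZ_p}\QQ_p$ through the long exact sequence — all standard in Iwasawa theory, but worth flagging as the places where topology enters. One further remark: the paper's $d=1$ computation is retained explicitly there because the resulting bound $|\coh^1(\Gamma^{(n)},D)|\le|D/D_{div}|$ is reused later (Lemma~\ref{l:a00}); your argument, while cleaner, does not directly yield such an explicit uniform bound, so the paper's hands-on $d=1$ computation is still doing independent work elsewhere.
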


\begin{proof} It follows from \cite[Proposition 3.3]{gr03}. Here we partially prove the $d=1$ case, because some ingredient of it will be applied later. Write
\begin{equation}\label{e:ALtors} D:=A^t_{p^{\infty}}(L)=A^t(L)[p^{\infty}] \end{equation}
and let $D_{div}$ be its $p$-divisible part. We have an exact sequence
\begin{equation}\label{e:hddd}
(D/D_{div})^{\Gamma^{(n)}} \lr \coh^1(\Gamma^{(n)},D_{div})\,\lr \coh^1(\Gamma^{(n)},D) \,\lr \coh^1(\Gamma^{(n)}, D/D_{div})\,.
\end{equation}
If $d=1$ and $\Gamma=\gamma^{\ZZ_p}$, then we observe that
$$(\gamma^{p^n}-1)D_{div}=D_{div},$$
since $\Ker(D\stackrel{\gamma^{p^n}-1}{\lr} D)=D^{\Gamma^{(n)}}$ is finite. Therefore, $\coh^1(\Gamma^{(n)},D_{div})=0$
and hence, since $D/D_{div}$ is finite, from the exact sequence \eqref{e:hddd} we deduce (see e.g.\! \cite[Lemma 4.1]{bl09})
\begin{equation}\label{e:ddd}
|\coh^1(\Gamma^{(n)},D)|\leq |D/D_{div}|.
\end{equation}
\end{proof}

\begin{lemma}\label{l:a00}
The projective limit
$$\fa^{00}:=\varprojlim\fa_n^{00}$$
is pseudo-null.
\end{lemma}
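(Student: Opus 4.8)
The plan is to prove that $\fa^{00}$ is finite; this is enough because $\fa^{00}=\varprojlim_n\fa_n^{00}$ is a submodule of $\fa^0$, hence of $\fa$ by \eqref{e:0'}, and $\fa$ is a submodule of the finitely generated $\La$-module $X_p(A/L)$ by \eqref{e:axy}, so $\fa^{00}$ is a finitely generated module over the Noetherian ring $\La$, and a finite $\La$-module is pseudo-null when $d\geq 1$: it is annihilated by $p^N$ and by $\gamma^{p^m}-1$ for a generator $\gamma$ of a $\ZZ_p$-summand of $\Gamma$ and suitable $N,m$, and these two elements are relatively prime because $\gamma^{p^m}-1$ is not divisible by $p$, so Lemma~\ref{l:psn} applies. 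To get hold of $\fa^{00}$, note that by inflation--restriction $\fs_n^{00}=\Sel_{p^\infty}(A^t/K_n)\cap\coh^1(\Gamma^{(n)},D)$ inside $\coh^1_{\mathrm{fl}}(K_n,A^t_{p^\infty})$, where $D:=A^t_{p^\infty}(L)$; this group is finite by Lemma~\ref{l:h1d}, and $\fa_n^{00}$ is its image in $\fa_n$. Since all groups in sight are finite the inverse limit is exact, so $\fa^{00}$ is a quotient of a submodule of $\varprojlim_n\coh^1(\Gamma^{(n)},D)$, and it suffices to control this limit together with the Selmer conditions that cut out the $\fs_n^{00}$.

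Next I would use the local description of the Selmer condition: a class in $\coh^1(\Gamma^{(n)},D)\subseteq\coh^1_{\mathrm{fl}}(K_n,A^t_{p^\infty})$ lies in $\fs_n^{00}$ if and only if its localization at every place $v$ of $K_n$ falls into the image of $A^t(K_{n,v})\otimes\QQ_p/\ZZ_p$; but such a class is inflated from $\Gal(L_w/K_{n,v})$ (with $w\mid v$ in $L$), hence is unramified at each $v\notin S$, and at all but finitely many $v$ the local condition is then automatic. Thus $\fs_n^{00}$ is the kernel of a localization map
\[\coh^1(\Gamma^{(n)},D)\lr\bigoplus_{v\in S'}\coh^1\!\big(\Gal(L_w/K_{n,v}),A^t(L_w)[p^\infty]\big)\big/(\text{local condition at }v),\]
for a finite set $S'$ of places containing $S$ and the places of bad reduction; here the hypothesis that $A$ has potentially ordinary reduction at the places of $S$ is exactly what is needed in order to analyze the local terms at those places, as in Greenberg's treatment of the ordinary local condition in \cite{gr03}. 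All groups occurring, global and local, are finite (the local ones because $A^t(K_{n,v})[p^\infty]$ is finite) and are compatible with corestriction. Feeding the exact sequence of $\Gamma$-modules $0\to D_{div}\to D\to D/D_{div}\to 0$ (with $D/D_{div}$ finite) into cohomology, and observing that once $n$ is large enough that $\Gamma^{(n)}$ acts trivially on $D/D_{div}$ the corestriction maps on $\coh^i(\Gamma^{(n)},D/D_{div})$, $i=0,1$, become multiplication by $[\Gamma^{(m)}:\Gamma^{(n)}]=p^{d(n-m)}$ (directly on $\coh^0$, and on $\coh^1=\Hom(\Gamma^{(n)},D/D_{div})$ via precomposition with the transfer $\Gamma^{(m)}\to\Gamma^{(n)}$) and hence vanish in the limit, the whole problem reduces to the divisible part $D_{div}$.

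The heart of the argument is then to show that $\varprojlim_n\fs_n^{00}$ is finite, equivalently that the limit of the Selmer-constrained subgroups of $\coh^1(\Gamma^{(n)},D_{div})$ is finite. I would argue by induction on $d$, applying the Hochschild--Serre spectral sequence to a quotient $\Gamma^{(n)}\twoheadrightarrow\ZZ_p$ with kernel $\cong\ZZ_p^{d-1}$ so as to descend to the case $d=1$, where $\coh^1(\Gamma^{(n)},D_{div})=0$ because it is a finite quotient of the divisible group $D_{div}$ --- this is the mechanism behind the uniform bound $|\coh^1(\Gamma^{(n)},D)|\leq|D/D_{div}|$ of Lemma~\ref{l:h1d}. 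The main obstacle is that for $d\geq 2$ the orders $|\coh^1(\Gamma^{(n)},D_{div})|$ need no longer be bounded in $n$ --- this already fails for abelian varieties with complex multiplication over suitable $\ZZ^d_p$-extensions --- so finiteness of the limit genuinely relies on the collapse of the corestriction maps acting in concert with the local Selmer conditions at $S'$, and carrying the Hochschild--Serre induction through requires handling the intermediate modules whose invariants under the kernel need not be finite. In short, the lemma is not a formal consequence of finiteness layer by layer but a control-type assertion in the spirit of \cite{gr03}.
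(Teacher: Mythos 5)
Your reduction of pseudo-nullity to finiteness of $\fa^{00}$ is sound as far as it goes (a finite $\La$-module is killed by the coprime elements $p^N$ and $\gamma^{p^m}-1$, so Lemma~\ref{l:psn} applies), and for $d=1$ your argument essentially coincides with the paper's: the uniform bound $|\coh^1(\Gamma^{(n)},D)|\leq |D/D_{div}|$ from Lemma~\ref{l:h1d} makes $\fa^{00}$ finite. But for $d\geq 2$ the proposal has a genuine gap, which you yourself flag in your last paragraph: the orders $|\coh^1(\Gamma^{(n)},D_{div})|$ need not be bounded, and the Hochschild--Serre induction you outline is never carried out --- you list the obstacles (intermediate modules whose invariants are not finite, the need to bring in the local Selmer conditions at $S'$) without overcoming them. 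As written, the case $d\geq 2$ is not proved. Moreover, finiteness of $\fa^{00}$ is stronger than what is needed and stronger than what the paper establishes; aiming at it makes the problem unnecessarily hard.

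The paper's proof sidesteps finiteness entirely when $d\geq 2$. Since $\fs_n^{00}\subseteq\fs_n^0=\coh^1(\Gamma^{(n)},D)$ and any element of $\La$ annihilating $D$ also annihilates this cohomology group, it suffices to produce, for each member $\gamma_i$ of a $\ZZ_p$-basis of $\Gamma$, an annihilator of $D$ involving only $\gamma_i$: one takes $\delta_i=(\gamma_i^{p^{m_0}}-1)f_{\gamma_i}(\gamma_i)$, where $f_{\gamma_i}$ is the characteristic polynomial of $\gamma_i$ acting on $D_{div}$ through the representation $\rho$ of \eqref{e:ro} (so $f_{\gamma_i}(\gamma_i)$ kills $D_{div}$ by Cayley--Hamilton) and $m_0$ is chosen so that $\gamma_i^{p^{m_0}}-1$ kills the finite group $D/D_{div}$. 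Each $\delta_i$ then annihilates every $\fs_n^{00}$ for $n\geq m_0$, hence every $\fa_n^{00}$ and the limit $\fa^{00}$; under the identification $\La\simeq\ZZ_p[[T_1,\dots,T_d]]$ with $\gamma_i-1=T_i$, the elements $\delta_1,\dots,\delta_d$ lie in distinct subrings $\ZZ_p[T_i]$ and are therefore relatively prime, so Lemma~\ref{l:psn} gives pseudo-nullity directly, with no control theorem, no local analysis, and no induction on $d$. If you want to salvage your approach, the cleanest fix is to drop the claim of finiteness and instead exhibit coprime annihilators in this way.
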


\begin{proof} If $d=1$, $\fa^{00}$ is pseudo-null since it is a finite set: inequality \eqref{e:ddd} together with the surjection $\fs^{00}_n\twoheadrightarrow\fa^{00}_n$ gives a bound on its cardinality.

Let $D$ be as in \eqref{e:ALtors} and let $r$ the $\ZZ_p$-rank of its Pontryagin dual $D^\vee$.
Then the action of $\Gamma$ gives rise to a representation
\begin{equation} \label{e:ro} \rho\colon\Gamma\lr \Aut(D_{div})\simeq \GL(r,\ZZ_p)\,. \end{equation}
For each $\gamma$, let $f_{\gamma}(x)$ be the characteristic polynomial of $\rho(\gamma)$. Then $f_{\gamma}(\gamma)$ is an element in $\La$ which annihilates $D_{div}$. Let $m_0$ be large enough such that $A^t(K_{m_0})[p^\infty]$ generates $D/D_{div}$. Then for every $\gamma$, $(\gamma^{p^{m_0}}-1)f_{\gamma}(\gamma)$ annihilates both $\fs_n^{00}$ and $\fa_n^{00}$ for $n\geq m_0$, since we have
$$(\gamma^{p^{m_0}}-1)f_{\gamma}(\gamma)\cdot D=0.$$
If $d\geq 2$ and $\Gamma=\bigoplus_{i=1}^d\gamma_i^{\ZZ_p}$, then each $\delta_i:=(\gamma_i^{p^{m_0}}-1)f_{\gamma_i}(\gamma_i)$ lives in a different $\ZZ_p[T_i]$ under the identification $\Lambda=\ZZ_p[[T_1, ...,T_d]]$,  $\gamma_i-1=T_i$. Therefore $\delta_1,...,\delta_d$ are relatively prime and $\fa^{00}$ is pseudo-null by Lemma \ref{l:psn}.
\end{proof}

Put ${\bar \fa}_n^0:=\fa_n^0/\fa_n^{00}$. By construction we have an exact sequence
\begin{equation}\label{e:aabar}
0\longrightarrow \fa^{00}\longrightarrow \fa^0\longrightarrow {\bar \fa}^0:=\varprojlim_n{\bar \fa}_n^0\longrightarrow 0,
\end{equation}
and hence $\fa^0\sim {\bar \fa}^0$.
Applying the snake lemma to the diagram $$\begin{CD}
0 @>>> \Sel_{p^{\infty}}(A^t/K_n)_{div} @>>> \Sel_{p^{\infty}}(A^t/K_n) @>>> \fa_n @>>> 0\\
&& @VVV @VVV @VVV\\
0 @>>> \Sel_{div}(A^t/L) @>>> \Sel_{p^{\infty}}(A^t/L) @>>> \varinjlim\fa_n @>>> 0\end{CD}$$
we find an injection
\begin{equation}\label{e:subset}
{\bar \fa}_n^0\hookrightarrow \Sel_{div}(A^t/L)/\Sel_{p^{\infty}}(A^t/K_n)_{div}\,.
\end{equation}

\subsection{The proof of Theorem \ref{t:xaat}}\label{sb:fleq}
First we consider the case where $X_p(A/L)$ is a torsion $\La$-module. %Then so are $\fa$ and, by the following lemma, $\fb$.

\begin{lemma} \label{l:isogAtA}
The $\La$-module $X_p(A^t/L)$ is torsion if and only if so is $X_p(A/L)$.
\end{lemma}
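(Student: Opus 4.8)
The plan is to transport the torsion property across a $K$-rational isogeny between $A$ and $A^{t}$. Fix an isogeny $\phi\colon A\to A^{t}$ defined over $K$ --- for instance a polarization, which exists because $A$ carries an ample line bundle defined over $K$ --- and choose a complementary isogeny $\psi\colon A^{t}\to A$ over $K$, that is, one satisfying $\psi\circ\phi=[N]_{A}$ and $\phi\circ\psi=[N]_{A^{t}}$ for some integer $N$ with $\ker\phi\subset A[N]$. Write $N=p^{k}m$ with $m$ prime to $p$. We also note that $A^{t}$ has potentially ordinary reduction at every place of $S$ (this property is preserved under isogeny), so that the finite generation of $X_{p}(-/L)$ over $\La$ recalled after Definition \ref{d:xyz} is available for $A^{t}$ as well as for $A$.

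First I would use functoriality of Selmer groups. Since all the cohomology groups and localization maps in \eqref{e:selmer} are functorial in the abelian variety, $\phi$ induces for every finite intermediate extension $F$ of $L/K$ a homomorphism $\phi_{*}\colon\Sel_{p^{\infty}}(A/F)\to\Sel_{p^{\infty}}(A^{t}/F)$, compatibly with the restriction maps; passing to the colimit over $F$ yields $\phi_{*}\colon\Sel_{p^{\infty}}(A/L)\to\Sel_{p^{\infty}}(A^{t}/L)$, and similarly $\psi_{*}$ in the opposite direction. These satisfy $\psi_{*}\circ\phi_{*}=[N]_{*}$ and $\phi_{*}\circ\psi_{*}=[N]_{*}$, and since multiplication by $m$ is an automorphism of every $p$-primary torsion group --- in particular of $\Sel_{p^{\infty}}(A/L)$ and of $\Sel_{p^{\infty}}(A^{t}/L)$ --- we get that $\ker(\phi_{*})$ is killed by $p^{k}$ (from $\phi_{*}y=0\Rightarrow Ny=\psi_{*}\phi_{*}y=0\Rightarrow p^{k}y=0$) and that $\coker(\phi_{*})$ is killed by $p^{k}$ (for $y\in\Sel_{p^{\infty}}(A^{t}/L)$ one has $Ny=\phi_{*}\psi_{*}y$, hence $p^{k}y=\phi_{*}(m^{-1}\psi_{*}y)\in\image(\phi_{*})$). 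Applying Pontryagin duality, the induced map $X_{p}(A^{t}/L)\to X_{p}(A/L)$ has kernel and cokernel killed by $p^{k}$, and tensoring with $\QQ_{p}$ over $\ZZ_{p}$ therefore gives an isomorphism
$$X_{p}(A/L)\otimes_{\ZZ_{p}}\QQ_{p}\;\simeq\;X_{p}(A^{t}/L)\otimes_{\ZZ_{p}}\QQ_{p}$$
of modules over $\La\otimes_{\ZZ_{p}}\QQ_{p}$.

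To conclude, I would invoke the elementary fact that, for a finitely generated $\La$-module $X$, being $\La$-torsion is equivalent to $X\otimes_{\ZZ_{p}}\QQ_{p}$ being torsion over the Noetherian domain $\La\otimes_{\ZZ_{p}}\QQ_{p}$: one implication is immediate, and conversely, if a nonzero $g$ annihilates the finitely generated module $X\otimes_{\ZZ_{p}}\QQ_{p}$, clearing $p$-power denominators produces a nonzero $f\in\La$ with $fX$ annihilated by a power of $p$; since $fX$ is finitely generated over $\La$ it is killed by a single $p^{j}$, so $p^{j}f\neq 0$ annihilates $X$. Applying this to $X_{p}(A/L)$ and to $X_{p}(A^{t}/L)$ --- both finitely generated over $\La$ by the remark in the first paragraph --- and combining it with the displayed isomorphism yields Lemma \ref{l:isogAtA}. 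The only ingredient here that is not pure module theory is the functoriality under $\phi$ and $\psi$ of the flat-cohomological Selmer groups, together with its compatibility with the transition maps defining $\Sel_{p^{\infty}}(-/L)$; carrying this out with \emph{flat} cohomology, so as to cover the function-field case uniformly, is the step I would expect to demand the most attention, everything else being formal.
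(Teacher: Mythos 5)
Your argument is correct and is exactly the paper's route: both proofs fix a $K$-rational isogeny $A\to A^t$, use functoriality of Selmer groups and Pontryagin duality to obtain a $\La$-map between $X_p(A^t/L)$ and $X_p(A/L)$ whose kernel and cokernel are killed by a fixed integer (the degree, or equivalently its $p$-part), and conclude that one is torsion iff the other is. Your write-up simply spells out the steps --- the complementary isogeny, the $p$-part reduction, and the passage through $\otimes_{\ZZ_p}\QQ_p$ --- that the paper's two-line proof leaves implicit.
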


\begin{proof}
Any isogeny $\varphi\colon A\rightarrow A^t$ defined over $K$ gives rise to a homomorphism of $\La$-modules
$\varphi_* \colon X_p(A^t/L)\rightarrow X_p(A/L)$ with kernel and co-kernel annihilated by $\deg(\varphi)$.
\end{proof}

\begin{corollary}\label{c:isogAtA} If $X_p(A^t/L)$ is torsion over $\La$, then $[X_p(A/L)]_{si}=[X_p(A^t/L)]_{si}$
and $[Y_p(A/L)]=[Y_p(A^t/L)]$.
\end{corollary}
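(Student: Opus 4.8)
The plan is to reduce everything to the existence of a single $K$-rational isogeny $\varphi\colon A\to A^t$ together with a localization argument at the height one primes of $\La$ generated by simple elements.

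\emph{The isogeny.} Fix a $K$-rational isogeny $\varphi\colon A\to A^t$ (e.g.\ a polarization, which exists since $A/K$ is projective), set $m:=\deg(\varphi)$, and choose a complementary isogeny $\psi\colon A^t\to A$ with $\psi\circ\varphi=[m]_A$ and $\varphi\circ\psi=[m]_{A^t}$. Applying the (covariant in the abelian variety) construction of Selmer groups, $\varphi$ and $\psi$ induce $\La$-homomorphisms $\varphi_*\colon\Sel_{p^\infty}(A/L)\to\Sel_{p^\infty}(A^t/L)$ and $\psi_*$ in the opposite direction, with $\psi_*\circ\varphi_*$ and $\varphi_*\circ\psi_*$ equal to multiplication by $m$; since an isogeny carries a maximal divisible subgroup into a maximal divisible subgroup, the same holds after restriction to $\Sel_{div}(-/L)$. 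Passing to Pontryagin duals we obtain $\La$-homomorphisms
$$\varphi_*\colon X_p(A^t/L)\to X_p(A/L),\qquad \varphi_*\colon Y_p(A^t/L)\to Y_p(A/L)$$
(the first being the map of Lemma~\ref{l:isogAtA}), and in both cases the relations $\psi_*\varphi_*=\varphi_*\psi_*=m\cdot\mathrm{id}$ force kernel and cokernel to be annihilated by $m$. By Lemma~\ref{l:isogAtA}, $X_p(A/L)$ is torsion as well, so Corollary~\ref{c:y} applies to both $Y_p(A/L)$ and $Y_p(A^t/L)$.

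\emph{Localization.} The key elementary point is that $m$, being a nonzero rational integer, is coprime in $\La$ to every simple element: writing $m=p^{a}u$ with $u\in\La^{\times}$, it suffices to note that $p$ divides no $f_{\gamma,\zeta}$, since modulo $p$ one has $f_{\gamma,\zeta}\equiv(\gamma-1)^{[\QQ_p(\zeta):\QQ_p]}\not\equiv 0$, so $(p)$ is distinct from each height one prime $(f_{\gamma,\zeta})$. Hence, for every height one prime $\fp=(f)$ with $f$ a simple element, $m$ is a unit in the discrete valuation ring $\La_{\fp}$. Writing $M_{\fp}:=M\otimes_{\La}\La_{\fp}$, localization is exact and kills any module annihilated by $m$, so the two maps above become isomorphisms $X_p(A^t/L)_{\fp}\iso X_p(A/L)_{\fp}$ and $Y_p(A^t/L)_{\fp}\iso Y_p(A/L)_{\fp}$. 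On the other hand, for any finitely generated torsion $\La$-module $M$ the embedding \eqref{e:structureM} has pseudo-null cokernel, so $M_{\fp}\cong[M]_{\fp}$; and $[M]_{\fp}=([M]_{si})_{\fp}$, because the irreducible factors occurring in $[M]_{ns}$ are non-simple, hence coprime to $f$ and invertible in $\La_{\fp}$. Over the DVR $\La_{\fp}$ the finite length module $([M]_{si})_{\fp}$ is determined up to isomorphism by its elementary divisors, namely by the multiset $\{\,r_i : (\xi_i)=\fp\,\}$ read off from $[M]_{si}=\bigoplus_{\xi_i\ \mathrm{simple}}\La/\xi_i^{r_i}\La$.

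\emph{Conclusion.} Applying this to $M=X_p(A/L)$ and $M=X_p(A^t/L)$, the isomorphism $X_p(A^t/L)_{\fp}\cong X_p(A/L)_{\fp}$ gives $([X_p(A^t/L)]_{si})_{\fp}\cong([X_p(A/L)]_{si})_{\fp}$ for every simple $\fp$; since both $[\,\cdot\,]_{si}$ are supported only at simple primes, their localizations agree at all height one primes, so the structure theory over the Krull domain $\La$ yields $[X_p(A/L)]_{si}=[X_p(A^t/L)]_{si}$. The identical argument for $Y_p$ gives $[Y_p(A/L)]_{si}=[Y_p(A^t/L)]_{si}$; combining with Corollary~\ref{c:y}, which says $[Y_p(A/L)]=[Y_p(A/L)]_{si}$ and $[Y_p(A^t/L)]=[Y_p(A^t/L)]_{si}$, we get $[Y_p(A/L)]=[Y_p(A^t/L)]$. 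There is no serious obstacle; the only points requiring care are the formal bookkeeping of the first step — checking that $\varphi$ really induces a map on $Y_p$ (not merely on $X_p$) with kernel and cokernel killed by $\deg\varphi$ — and the remark that a rational integer is coprime to every simple element, which is exactly what makes the localized maps isomorphisms at the primes governing the simple part.
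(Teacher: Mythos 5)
Your proposal is correct, and it follows a genuinely different route from the paper's. Both arguments hinge on the same key fact — that $\deg\varphi=p^a u$ (with $u\in\La^\times$, since an integer prime to $p$ is a unit in the local ring $\La$) is coprime to every simple element $f_{\gamma,\zeta}$, because modulo $p$ the latter reduces to a power of $\gamma-1$. But the way this fact is deployed differs. The paper builds the composite
$$\alpha\colon [X_p(A/L)]_{si}\hookrightarrow X_p(A/L)\xrightarrow{\varphi_*}X_p(A^t/L)\to[X_p(A^t/L)]\twoheadrightarrow[X_p(A^t/L)]_{si}\,,$$
observes that the kernel of each factor is killed by an element coprime to the product $f$ of simple factors annihilating $[X_p(A/L)]_{si}$, invokes Lemma \ref{l:psn} to conclude $\ker\alpha$ is pseudo-null, and then uses a two-sided pseudo-injection argument (Lemma \ref{l:pseudoinj}) to upgrade to pseudo-isomorphism and hence equality of simple parts. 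You instead localize at each height-one prime $\fp=(f)$ with $f$ simple: $m=\deg\varphi$ becomes a unit in $\La_\fp$, so the isogeny-induced maps become honest isomorphisms after localization, and since $M_\fp\cong([M]_{si})_\fp$ for such $\fp$, you read off equality of elementary divisors directly. The localization argument is slightly slicker in that it produces the isomorphism in one pass without the pseudo-injection bookkeeping, while the paper's version stays entirely inside its established pseudo-isomorphism toolkit. Both are valid; the only detail worth spelling out a bit more carefully in your write-up is that the induced map $\Sel_{div}(A/L)\to\Sel_{div}(A^t/L)$ has cokernel (not merely kernel) killed by $m$, which follows from $\varphi_*\psi_*=m\cdot\mathrm{id}$ applied to $\Sel_{div}(A^t/L)$, and that Lemma \ref{l:isogAtA} guarantees $X_p(A/L)$ is torsion so Corollary \ref{c:y} applies to both sides.
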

\begin{proof}We claim that the composition
$$\alpha:\xymatrix{[X_p(A/L)]_{si}\ar@{^{(}->}[r] & X_p(A/L) \ar[r]^-{\alpha_1} & X_p(A^t/L) \ar[r]^-{\alpha_2} & [X_p(A^t/L)] \ar[r]^-{\alpha_3} & [X_p(A^t/L)]_{si},}$$
where $\alpha_1=\varphi_*$, $\alpha_2$ is a pseudo-isomorphism and $\alpha_3$ is the projection, is a pseudo-injection. By symmetry, there is also
a pseudo injection from $[X_p(A^t/L)]_{si}$ to $[X_p(A/L)]_{si}$. Then the first equality follows from
Lemma \ref{l:pseudoisom}. Suppose $[X_p(A/L)]_{si}$ is annihilated by $f\in\La$, which is a product of simple elements.
To prove the claim, we note that the kernel of each $\alpha_i$ is annihilated by certain element $g_i\in \La$ relatively
prime to $f$. Thus the kernel of $\alpha$ is annihilated by both $f$ and $g_1g_2g_3$. Then apply Lemma \ref{l:psn}.

The second equality can be proved similarly, since $\varphi_*$ sends $Y_p(A/L)$ to $Y_p(A^t/L)$ and
$[Y_p(A/L)]_{si}=[Y_p(A/L)]$, $[Y_p(A^t/L)]_{si}=[Y_p(A^t/L)]$ by Corollary \ref{c:y}.

\end{proof}

\begin{lemma}\label{l:flat} If $X_p(A/L)$ is torsion over $\La$, then
$$[\fa]_{ns}^\sharp=[\fb]_{ns}.$$
\end{lemma}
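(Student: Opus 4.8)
The plan is to reduce the statement to Corollary \ref{c:nons}, which for any $\Gamma$-system $\fA$ already yields $[\fa']_{ns}^\sharp=[\fb']_{ns}$ for the derived system $\fA'$ of \S\ref{ss:a'}; what then has to be checked is that the passage from $\fA$ to $\fA'$ does not alter the non-simple parts. First I would verify that, under the hypothesis that $X_p(A/L)$ is torsion, the Cassels-Tate system $\fA$ really is a $\Gamma$-system: axioms ($\Gamma$-1)--($\Gamma$-4) hold for every Cassels-Tate system, and $\fa$, $\fb$ are finitely generated over $\La$ because \eqref{e:axy} and its analogue for $A^t$ realise them as submodules of the noetherian modules $X_p(A/L)$ and $X_p(A^t/L)$, which are moreover torsion (the second by Lemma \ref{l:isogAtA}). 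So Corollary \ref{c:nons} applies and it remains to prove $[\fa]_{ns}=[\fa']_{ns}$ and $[\fb]_{ns}=[\fb']_{ns}$.

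By \eqref{e:0'} one has $\fa'=\fa/\fa^0$ and $\fb'=\fb/\fb^0$, so I would show that $(\fa^0)_\fp=0$ and $(\fb^0)_\fp=0$ for every non-simple height-one prime $\fp$ of $\La$. Localising \eqref{e:0'} at such a $\fp$ then gives $\fa_\fp\cong\fa'_\fp$ and $\fb_\fp\cong\fb'_\fp$; and since for any finitely generated torsion $\La$-module $M$ the module $[M]_{ns}$ is, up to isomorphism, the unique finite direct sum of cyclic modules $\La/\xi_i^{r_i}\La$ with $\xi_i$ non-simple irreducible that localises to $M_\fp$ at every non-simple height-one prime $\fp$ and to $0$ at every simple one (because $[M]\hookrightarrow M$ has pseudo-null cokernel while $([M]_{si})_\fp=0$ for $\fp$ non-simple), this forces $[\fa]_{ns}\cong[\fa']_{ns}$ and $[\fb]_{ns}\cong[\fb']_{ns}$.

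To prove $(\fa^0)_\fp=0$: by \eqref{e:aabar} together with Lemma \ref{l:a00}, the natural surjection $\fa^0\to\bar\fa^0$ has pseudo-null kernel, hence is an isomorphism after localising at $\fp$; and by \eqref{e:subset} each $\bar\fa_n^0$ embeds into the quotient $\Sel_{div}(A^t/L)/\Sel_{p^{\infty}}(A^t/K_n)_{div}$ of $\Sel_{div}(A^t/L)$, which by Theorem \ref{t:flat} (applicable since $X_p(A^t/L)$ is torsion) is annihilated by a product $f_1\cdots f_m$ of simple elements; so $f_1\cdots f_m$ kills $\bar\fa^0$, and as no $f_i$ lies in $\fp$ we get $(\bar\fa^0)_\fp=0$, whence $(\fa^0)_\fp=0$. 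The vanishing of $(\fb^0)_\fp$ is the same statement applied to the Cassels-Tate system of $A^t$, which by the symmetry of the Cassels-Tate pairing is just $\fA$ with the families $\fa_\bullet$ and $\fb_\bullet$ interchanged; its "$\fa^0$'' is our $\fb^0$, and one repeats the argument, this time invoking Theorem \ref{t:flat} for $A$ itself, which is the standing hypothesis.

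Collecting the pieces gives $[\fa]_{ns}^\sharp=[\fa']_{ns}^\sharp=[\fb']_{ns}=[\fb]_{ns}$. The one genuinely non-formal input is Theorem \ref{t:flat}: the fact that $\Sel_{div}$ is annihilated by a product of simple elements is precisely what renders the defect modules $\fa^0$ and $\fb^0$ invisible to the non-simple part. The main point to be careful about — and the reason Lemma \ref{l:st}(2) cannot simply be quoted — is that $\fA$ is not known to be pseudo-controlled, so $\fa$ and $\fb$ need not be pseudo-isomorphic to $\fa'$ and $\fb'$; everything is controlled only after localisation at non-simple height-one primes, and the argument has to be organised around that.
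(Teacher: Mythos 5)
Your proof is correct and follows essentially the same route as the paper's: the key inputs (Theorem \ref{t:flat} applied to $A^t$ and to $A$, the embedding \eqref{e:subset}, Lemma \ref{l:a00} with \eqref{e:aabar}, the exact sequence \eqref{e:0'}, and finally Corollary \ref{c:nons}) are identical, and your localization at non-simple height-one primes is just an equivalent reformulation of the paper's statement $[\fa^0]=[\fa^0]_{si}\Rightarrow[\fa]_{ns}=[\fa']_{ns}$. The remark that Lemma \ref{l:st}(2) cannot be invoked because $\fA$ is not known to be pseudo-controlled is a correct and worthwhile observation that the paper leaves implicit.
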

\begin{proof}
By \eqref{e:axy}, $\fa$ is torsion, and similarly so is $\fb$.
Let $f_1,...,f_m$ be those simple elements in Theorem \ref{t:flat} (applied to $A^t$).
By \eqref{e:subset}, we have $f_1\cdots f_m\cdot \bar{\fa}^0=0$, and hence $[\bar{\fa}^0]=[\bar{\fa}^0]_{si}$.
Then Lemma \ref{l:a00} and \eqref{e:aabar} together imply that $[\fa^0]=[\fa^0]_{si}$.
Hence, by \eqref{e:0'}, we have $[\fa]_{ns}=[\fa']_{ns}$. Similarly, $[\fb]_{ns}=[\fb']_{ns}$.
Then apply Corollary \ref{c:nons}.

\end{proof}

\begin{proposition}\label{p:xtor}
If $X_p(A/L)$ is torsion over $\La$, then
$$[X_p(A/L)]^\sharp=[ X_p(A^t/L)]=[ X_p(A/L)]= [X_p(A^t/L)]^\sharp.$$
\end{proposition}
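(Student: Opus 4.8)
By Lemma~\ref{l:isogAtA} the hypothesis also makes $X_p(A^t/L)$ torsion, so all four bracketed modules are defined and each splits as a simple part $\oplus$ a non-simple part. The plan is to compare the four modules part by part, the two parts needing rather different arguments.

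For the simple parts: Corollary~\ref{c:isogAtA} gives $[X_p(A/L)]_{si}=[X_p(A^t/L)]_{si}$, and by \eqref{e:fld} this common module is fixed by $\cdot^{\sharp}$, so it contributes identically to all four terms. For the non-simple parts I would chain: Lemma~\ref{l:axsharp} applied to $A$ gives $[X_p(A/L)]_{ns}=[\fa]_{ns}$; the same lemma applied to $A^{t}$ gives $[X_p(A^t/L)]_{ns}=[\fb]_{ns}$, since by \eqref{e:ansha}--\eqref{e:bnabvar} and $(A^{t})^{t}=A$ the $\fa$-module of the Cassels-Tate system of $A^{t}$ is exactly $\fb=\varprojlim_n\fb_n$; and Lemma~\ref{l:flat} gives $[\fa]_{ns}^{\sharp}=[\fb]_{ns}$. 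Putting these together, $[X_p(A/L)]_{ns}^{\sharp}=[X_p(A^t/L)]_{ns}$, and combining with the simple part I obtain $[X_p(A/L)]^{\sharp}=[X_p(A^t/L)]$; applying $\cdot^{\sharp}$ then yields $[X_p(A/L)]=[X_p(A^t/L)]^{\sharp}$ for free.

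It remains to prove the middle equality $[X_p(A/L)]=[X_p(A^t/L)]$, equivalently $[\fa]_{ns}=[\fb]_{ns}$, and here I would argue one height-one prime at a time. Fix a $K$-isogeny $\varphi\colon A\to A^{t}$ as in Lemma~\ref{l:isogAtA}; the induced $\varphi_*\colon X_p(A^t/L)\to X_p(A/L)$ has kernel and cokernel killed by the integer $\deg\varphi$, and $(p)$ is the only height-one prime of $\La$ containing $\deg\varphi$. Hence localizing at any height-one prime $\pi\ne(p)$ turns $\varphi_*$ into an isomorphism, so $[X_p(A/L)]$ and $[X_p(A^t/L)]$, and thus $[\fa]_{ns}$ and $[\fb]_{ns}$, have isomorphic localizations away from $(p)$. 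At $(p)$ I would instead use the identity $[\fa]_{ns}^{\sharp}=[\fb]_{ns}$ just obtained: the $(p)$-primary part of $[\fa]_{ns}$ is a sum of modules $\La/(p^{s})$, which by Lemma~\ref{l:phi[]chi} and $p^{\sharp}=p$ is left unchanged by $\cdot^{\sharp}$, so that identity forces the $(p)$-primary parts of $[\fa]_{ns}$ and $[\fb]_{ns}$ to agree. Since a module of the form $\bigoplus_i\La/(\xi_i^{r_i})$ with the $\xi_i$ irreducible is recovered from the collection of its localizations at height-one primes, we conclude $[\fa]_{ns}=[\fb]_{ns}$, hence $[X_p(A/L)]=[X_p(A^t/L)]$ and the whole chain.

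The step I expect to be the main obstacle is exactly the $(p)$-primary part of the non-simple piece: the isogeny $\varphi_*$ controls $[X_p(A/L)]$ only away from $(p)$ (its kernel and cokernel are killed by a power of $p$ times a unit, not by coprime elements), so it cannot on its own match the possible $\La/(p^{s})$-summands of the two duals; one is forced to feed in the $\sharp$-functional equation for the non-simple part, together with the cheap but crucial fact that $\cdot^{\sharp}$ fixes every $(p)$-primary module. The rest is bookkeeping with the simple/non-simple decomposition and the multiplicativity of characteristic ideals along \eqref{e:axy}, already packaged in Lemma~\ref{l:axsharp}.
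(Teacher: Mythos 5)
Your proof is correct and follows essentially the same path as the paper's: the outer equalities come from the simple/non-simple split using Lemmata~\ref{l:axsharp}, \ref{l:flat}, Corollary~\ref{c:isogAtA} and \eqref{e:fld}, and the middle one from comparing at height-one primes, with the isogeny $\varphi_*$ handling all primes other than $(p)$ and the $\sharp$-invariance of $(p)$-primary summands feeding the outer equality back in at $(p)$. The paper phrases this last step via the explicit decomposition $[M]=[M]_p\oplus[M]_{np}$ rather than in localization language, but it is the identical argument.
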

\begin{proof}
Lemma \ref{l:axsharp} and Lemma \ref{l:flat} imply $[X_p(A/L)]_{ns}^\sharp =[X_p(A/L)]_{ns}$,
while Corollary \ref{c:isogAtA} says $[X_p(A/L)]_{si}^\sharp =[X_p(A/L)]_{si}$.
Thus, the first equality is proved, and the third one is proved by a similar argument.
To proceed, for a finitely generated torsion $\La$-module $M$,
we define a decomposition in
\textit{$p$ part} and \textit{non-$p$ part}
$$[M]=[M]_{p}\oplus [M]_{np},$$
in the following way: if $[M]$ is a direct sum of components $\La/\xi_i^{r_i}\La$, we define $[M]_{p}$ as the sum over those with
$\xi_i=(p)$ and
$[M]_{np}$ as its complement. We have $[M]_p^\sharp=[M]_p$.
This and the first equality imply
$$[X_p(A/L)]_p=[X_p(A/L)]_p^\sharp=[X_p(A^t/L)]_p.$$

Let $\varphi$ be the isogeny in the proof of Lemma \ref{l:isogAtA} and write $\deg(\varphi)=p^n\cdot u$ with $u$ relatively prime to $p$.
Then the kernel and the cokernel of $\varphi_*$ are annihilated by $p^n$. This then leads to
$$[X_p(A/L)]_{np}=[X_p(A^t/L)]_p.$$

\end{proof}

\noindent

\begin{proof}[{\bf Proof of Theorem \ref{t:xaat}}]
 The proof of Thm is divided into two parts.
Firstly if $X_p(A/L)$ is torsion,
 hence $\fa\times \fb$ is torsion and so
the theorem is a consequence of Proposition
\ref{p:xtor}. Secondly if $X_p(A/L)$ is non-torsion, then the theorem holds trivially, since all items in the equation equal $0$.
\end{proof}

\noindent Remark.
The proof of Theorem \ref{t:xaat} does not use Theorem \ref{t:ct}.

\end{subsection}

%################################################

\subsection{$\fA$ is a $\Gamma$-system}\label{ss:gamma} Now we prove Theorem \ref{t:ct}. Denote
$I_n:=\Ker(\xymatrix{\La\ar@{->>}[r] & \ZZ_p[\Gamma_n]})$.

\begin{lemma}\label{l:longhi} If $\eta\in\La$ is a nonzero element, then
$$\rank_{\ZZ_P} \La/(I_n+(\eta))=\textrm{O}(p^{n(d-1)}).  $$
\end{lemma}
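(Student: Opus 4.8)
The plan is to turn the $\ZZ_p$-rank in question into a count of characters of $\Gamma_n$ killing $\eta$, and then to bound that count by means of Monsky's theorem.

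First I would observe that $\La/(I_n+(\eta))=R_n/\bar\eta R_n$, where $R_n:=\La/I_n=\ZZ_p[\Gamma_n]$ is $\ZZ_p$-free of rank $p^{nd}$ and $\bar\eta$ is the image of $\eta$. Since for a finitely generated $\ZZ_p$-module $N$ one has $\rank_{\ZZ_p}N=\dim_{\QQ_p}(N\otimes_{\ZZ_p}\QQ_p)$ and $(R_n/\bar\eta R_n)\otimes_{\ZZ_p}\QQ_p=\QQ_p[\Gamma_n]/\bar\eta\,\QQ_p[\Gamma_n]$, it suffices to compute the latter dimension. Here I would invoke the semisimple decomposition $\QQ_p[\Gamma_n]\simeq\prod_{[\omega]\subset\Gamma_n^\vee}E_{[\omega]}$ recalled in \S\ref{ss:qorbit}, under which $\bar\eta$ corresponds to the tuple $(\omega(\eta))_{[\omega]}$ and $\dim_{\QQ_p}E_{[\omega]}=\#[\omega]$. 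As each $E_{[\omega]}$ is a field, the factor $E_{[\omega]}/\omega(\eta)E_{[\omega]}$ is $0$ when $\omega(\eta)\neq0$ and is all of $E_{[\omega]}$ when $\omega(\eta)=0$; since $\omega(\eta)=0$ is a $\Gal(\bar\QQ_p/\QQ_p)$-stable condition, summing over orbits gives
$$\rank_{\ZZ_p}\La/(I_n+(\eta))=\#\{\omega\in\Gamma_n^\vee\mid\omega(\eta)=0\}=\#\big(\triangle_\eta\cap\Gamma_n^\vee\big).$$

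Next I would bound this cardinality. A choice of $\ZZ_p$-basis $\gamma_1,\dots,\gamma_d$ of $\Gamma$ identifies $\Gamma_n^\vee$ with $\boldsymbol\mu_{p^n}^d$ via $\omega\mapsto(\omega(\gamma_i))_i$; in particular $\#\Gamma_n^\vee=p^{nd}$. If $\Xi$ is a $\ZZ_p$-flat of codimension $k$, cut out by conditions $\omega(\delta_i)=\zeta_i$, $i=1,\dots,k$, with $\delta_1,\dots,\delta_k$ part of a $\ZZ_p$-basis, then a character of $\Gamma_n$ lying in $\Xi$ is determined on $\delta_1,\dots,\delta_k$ and free to take arbitrary values in $\boldsymbol\mu_{p^n}$ on a complementary set of basis vectors, so $\#(\Xi\cap\Gamma_n^\vee)\leq p^{n(d-k)}$ (and it is $0$ unless every $\zeta_i$ lies in $\boldsymbol\mu_{p^n}$). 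By Monsky's theorem (Theorem \ref{t:monsky}), since $\eta\neq0$ the set $\triangle_\eta$ is a proper subset of $\Gamma^\vee$ and a finite union $\triangle_\eta=\Xi_1\cup\dots\cup\Xi_N$ of $\ZZ_p$-flats; properness forces each $\Xi_j$ to have codimension $k_j\geq1$, since the only codimension-$0$ flat is $\Gamma^\vee$ itself. Hence $\#(\triangle_\eta\cap\Gamma_n^\vee)\leq\sum_{j=1}^N p^{n(d-k_j)}\leq N\cdot p^{n(d-1)}=\textrm{O}(p^{n(d-1)})$, as desired.

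Everything here is routine once Monsky's theorem is available; the only real ingredients are the passage from a rank to a character count via the semisimplicity of $\QQ_p[\Gamma_n]$ and the elementary estimate for the number of order-$p^n$ characters lying on a $\ZZ_p$-flat. The one point needing a moment's care — and I do not expect it to be a genuine obstacle — is verifying that properness of $\triangle_\eta$ rules out codimension-$0$ flats, so that the exponent $d-k_j$ is at most $d-1$ for every $j$.
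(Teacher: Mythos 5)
Your proof is correct, and it takes a genuinely different route from the paper's. The paper dispatches this lemma in two lines: it cites \cite[Lemma~1.9.1]{tan10b} for the case of $\eta$ irreducible, and for a reducible $\eta=\eta_1\eta_2$ it uses the exact sequence $\La/(I_n+(\eta_1))\xrightarrow{\eta_2}\La/(I_n+(\eta))\twoheadrightarrow\La/(I_n+(\eta_2))$ to bound the middle term by the sum of the outer two, reducing by induction to the irreducible case. Your argument instead is entirely self-contained within the present paper: you convert the $\ZZ_p$-rank into an exact character count, $\rank_{\ZZ_p}\La/(I_n+(\eta))=\#(\triangle_\eta\cap\Gamma_n^\vee)$, via the semisimple decomposition $\QQ_p[\Gamma_n]\simeq\prod_{[\omega]}E_{[\omega]}$ of \S\ref{ss:qorbit}, and then bound the count by $N\cdot p^{n(d-1)}$ using Monsky's description of $\triangle_\eta$ as a finite union of $\ZZ_p$-flats of codimension at least $1$. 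What your approach buys: it avoids the outside citation, it needs no reduction to the irreducible case (the factorization device is unnecessary), and it in fact yields the sharper statement that the rank equals the number of characters of $\Gamma_n$ annihilating $\eta$ — the $O(p^{n(d-1)})$ bound is then an immediate corollary. What the paper's route buys is brevity, provided the reader is willing to chase the reference. All the steps in your argument check out: the identification of $R_n/\bar\eta R_n\otimes\QQ_p$ with $\prod_{[\omega]\subset\triangle_\eta\cap\Gamma_n^\vee}E_{[\omega]}$, the count $\#(\Xi\cap\Gamma_n^\vee)\leq p^{n(d-k)}$ for a codimension-$k$ flat, and the observation that properness of $\triangle_\eta$ excludes the unique codimension-$0$ flat $\Gamma^\vee$ are all correct.
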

\begin{proof}It follows from \cite[Lemma 1.9.1]{tan10b}, if $\eta$ is irreducible. If $\eta=\eta_1\eta_2$, use the sequence
$$\xymatrix{\La/(I_n+(\eta_1))\ar[r]^{\eta_2} & \La/(I_n+(\eta)) \ar@{->>}[r] & \La/(I_n+(\eta_2))}.$$
\end{proof}

\begin{lemma}\label{l:locfinite} Let $\K$ be a local field of finite residue field. Let $B/\K$ be an abelian variety
and $\K'/\K$ be a finite Galois extension with $G:=\Gal(\K'/\K)$. Then
$\coh^1(G, B(\K'))$ is finite.
\end{lemma}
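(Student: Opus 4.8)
The plan is to handle the number-field and function-field cases separately. Suppose first that $\K$ has characteristic $0$, say $\K$ is a finite extension of $\QQ_\ell$ for some prime $\ell$. The inflation--restriction sequence for $\Gal(\bar\K/\K')\trianglelefteq\Gal(\bar\K/\K)$ gives an exact sequence
$$0\to\coh^1(G,B(\K'))\to\coh^1(\K,B)\to\coh^1(\K',B),$$
whose last map is restriction, so $\coh^1(G,B(\K'))=\Ker(\mathrm{res})$ is annihilated by $\mathrm{cor}\circ\mathrm{res}=|G|$. By Tate local duality (\cite{mil86}) one has $\coh^1(\K,B)\simeq B^t(\K)^\vee$, and by Mattuck's theorem $B^t(\K)$ is an extension of a finite group by a finitely generated $\ZZ_\ell$-module; hence $\coh^1(\K,B)$ is cofinitely generated over $\ZZ_\ell$, so $\coh^1(\K,B)[\,|G|\,]$ --- and therefore $\coh^1(G,B(\K'))$ --- is finite.

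Suppose now that $\K$ has characteristic $p>0$. Here $\coh^1(\K,B)$ is no longer cofinitely generated and the previous argument fails, so I would descend to the formal group. Write $\cO_\K,\cO_{\K'}$ for the rings of integers and $\kappa,\kappa'$ for the (finite) residue fields, take a N\'eron model $\mathcal B/\cO_\K$ of $B$ (so that $B(\K')=\mathcal B(\cO_{\K'})$), and put $\widehat B(\fm_{\K'}):=\Ker\big(\mathcal B(\cO_{\K'})\to\mathcal B(\kappa')\big)$, the $G$-stable group of $\fm_{\K'}$-points of the formal group of $\mathcal B$ along the identity section. Since $\mathcal B(\kappa')$ is finite, the long exact sequence reduces the claim to the finiteness of $\coh^1(G,\widehat B(\fm_{\K'}))$. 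I would attack this through the $G$-stable filtration $\{\widehat B(\fm_{\K'}^n)\}_{n\geq1}$, with finite graded pieces $\cong(\fm_{\K'}^n/\fm_{\K'}^{n+1})^g$ (where $g=\dim B$), using the identity $\widehat B(\fm_{\K'})=\varprojlim_n\widehat B(\fm_{\K'})/\widehat B(\fm_{\K'}^n)$ and the fact that $\coh^i(G,-)$ commutes with this inverse limit (all groups being finite, the $\varprojlim^1$ term vanishes). Inflation--restriction along the inertia subgroup $G_0\leq G$ then reduces the problem to the case $\K'/\K$ unramified (cohomology of $G/G_0=\Gal(\kappa'/\kappa)$) and the case $\K'/\K$ totally ramified (cohomology of $G_0$).

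In the unramified case the graded pieces are $\cong(\kappa')^g$ with $G$ acting through its natural action on $\kappa'$, which is a free $\kappa[G]$-module by the normal basis theorem, hence cohomologically trivial; so $\coh^1(G,\widehat B(\fm_{\K'}))=0$. The totally --- possibly wildly --- ramified case is the heart of the matter: now the graded pieces can have \emph{infinite} cohomology and d\'evissage term by term is of no use, so one must bring in the finer structure of $\mathcal B_{\kappa'}$, whose identity component is an extension of an abelian variety by a torus by a unipotent group over $\kappa'$. Accordingly $\widehat B(\fm_{\K'})$ is built by finitely many extensions out of $\widehat{\GG}_a(\fm_{\K'})=\fm_{\K'}$, out of $\widehat{\GG}_m(\fm_{\K'})=1+\fm_{\K'}$, and out of $\widehat{\overline B}(\fm_{\K'})$ for an abelian variety $\overline B/\kappa'$, and it suffices to see that each of these has finite $G$-cohomology.

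For $\fm_{\K'}$ --- and likewise each $\fm_{\K'}^n$ --- this holds because it is a finitely generated $\cO_\K$-module with $\fm_{\K'}\otimes_{\cO_\K}\K\simeq\K'\simeq\K[G]$ cohomologically trivial, so $\coh^i(G,\fm_{\K'})$ is finitely generated and $\cO_\K$-torsion for $i\geq1$, hence finite (as $\kappa$ is finite); for $1+\fm_{\K'}$ it follows from the sequence $1\to1+\fm_{\K'}\to\cO_{\K'}^{\times}\to\kappa'^{\times}\to1$ together with the classical finiteness of the groups $\coh^i(G,\cO_{\K'}^{\times})$; and for $\widehat{\overline B}(\fm_{\K'})$ by the analogous analysis of the $p$-divisible formal group --- its slope decomposition reduces one to the formal multiplicative group and to the formal groups of slope in $(0,1)$, whose $G$-cohomology is again finite by Herbrand-quotient computations (or one may simply invoke the standard computation of the local cohomology of abelian varieties, \cite{mil86}). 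The step I expect to be the main obstacle is precisely this last one --- the equal-characteristic, wildly ramified computation: there $B(\K')$ is very far from finitely generated over $\ZZ_p$, the plain filtration argument collapses, and it is only the finer N\'eron-special-fibre structure, together with the module-theoretic finiteness inputs above, that rescues the proof.
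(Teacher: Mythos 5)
Your characteristic-zero argument is sound (and different from the paper's --- you invoke Mattuck to make $\coh^1(\K,B)$ cofinitely generated over $\ZZ_\ell$, which the paper never needs), but the equal-characteristic, totally ramified case --- which you correctly identify as the crux --- has a genuine gap. The purported decomposition of $\widehat B(\fm_{\K'})$ into pieces built from $\fm_{\K'}$, $1+\fm_{\K'}$ and $\widehat{\overline B}(\fm_{\K'})$ does not exist: the filtration of the identity component of the N\'eron special fibre by a unipotent group, a torus and an abelian variety lives over the residue field $\kappa'$, and does not lift to a filtration of the formal group $\widehat{\mathcal B}$ over $\cO_\K$ (indeed $\widehat{\overline B}(\fm_{\K'})$ already strains to make sense, since $\overline B$ is defined over $\kappa'$, not over $\cO_{\K'}$). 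Without such a filtration the d\'evissage never starts. The slope decomposition you appeal to next is only an isogeny decomposition over $\bar\kappa$, again not a filtration over $\cO_\K$, and Herbrand quotients compute a ratio of orders, so they give finiteness of $\coh^1$ only once one already knows finiteness of $\widehat\coh^0$ --- which is precisely what is to be proved.

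The paper proceeds quite differently, in a way that is both uniform in the characteristic and free of any structure theory of the formal group. By local Tate duality it suffices to show $B(\K)/\Nm_G(B(\K'))$ is finite. Writing $\fF$ for the formal group of a N\'eron model, $e$ for the ramification index and $\fm^a$ for the different of $\K'/\K$, one has for $k\geq 2a/e+1$ that $\tr_G(\fm^{ek-a})=\fp^k$ and $\fm^{2(ek-a)}\subset\fp^{k+1}\cO_{\K'}$; since the group law agrees with addition to first order, $\Nm_G$ maps $\fF(\fm^{ek-a})$ onto $\fF(\fp^k)$ modulo $\fF(\fp^{k+1})$, and a successive-approximation argument gives $\fF(\fp^k)\subset\Nm_G(\fF(\fm^{ek-a}))\subset\Nm_G(B(\K'))$. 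As $\fF(\fp^k)$ is open of finite index in the compact group $B(\K)$, the norm quotient is finite. This is Serre's classical trace/different argument transplanted to formal groups; it is what rescues the wild totally ramified case, and it replaces both your Mattuck step and your (unavailable) decomposition of $\widehat B$.
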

\begin{proof} We need to show that $B(\K)/\Nm_{G}(B(\K'))$ is finite, because by the local duality of Tate it is
the dual group of $\coh^1(G, B(\K'))$ (see
\cite[Corollary 2.3.3]{tan10a}).

Let $\mathfrak{F}$ denote the formal group law associated to $B/\K$.
Let $\mathfrak{m}$ and $\mathfrak{p}$ denote the maximal ideals of $\K'$ and $\K$.
Let $e$ denotes the ramification index and $\fm^a$ the {\em{different}} of the extension $\K'/\K$.
Suppose $k\geq \frac{2a}{e}+1$, $k\in\ZZ_+$.
Then $\fm^{ek-a}\cdot \fm^{ek-a}$ is contained in the ideal generated by $\fp^{k+1}$.
Also, since $ek-a>0$,
we have $\tr_{G}(\fm^{ek-a})=\fp^k$. Hence by the formal group law,
for each $x\in \fF(\fp^k)$, there exists some
$y_k\in \fF(\fm^{ek-a})$ such that the difference $x_{k+1}:=x-\Nm_{G}(y_k)$ is contained in $\fF(\fp^{k+1})$.
By the same reason, there exists $y_{k+1}$ in $\fF(\fm^{e(k+1)-a})$ such that $x_{k+2}:=x_{k+1}-\Nm_{G}(y_{k+1})$ is contained in $\fF(\fp^{k+2})$. Repeating the argument, we find $y_{k+i}\in \fF(\fm^{e(k+i)-a})$, for each $i$,
so that $x=\Nm_{G}(y)$ for
$y=\sum_{i=0}^\infty y_{k+i}$. This shows $\fF(\fp^k)\subset \Nm_{G}(\fF(\fm^{ek-a}))$.

Let $\bf{B}$ and $\bf{B}'$ be the N$\acute{\text{e}}$ron model of $B$ over $\cO$ and $\cO'$,
the ring of integers of $\K$ and $\K'$ respectively.
Then the identity map on $B$ extends to a unique homomorphism $\bf{B}\longrightarrow \bf{B}'$ that respect the actions of $G$.
The above result implies
$$\fF(\fp^k)\subset \Nm_{G}({\bf{B}}(\cO'))\subset \Nm_{G}({\bf{B}}'(\cO'))=\Nm_{G}(B(\K')).$$
Then the lemma follows, since $B(\K)/\fF(\fp^k)$ is finite.

\end{proof}

For each $n$, denote $\mathcal{H}_n:=\bigoplus_{\text{all}\; w} \coh^1(\Gamma_{n\; w}, A(L_w))$,
the direct sum over all places of $K_n$.

\begin{lemma}\label{l:control} $\mathcal{H}_n$
is cofinitely generated over $\ZZ_p$, and
$$\corank_{\ZZ_p} \mathcal{H}_n= \textrm{O}(p^{n(d-1)}),\;\;\text{as}\;\; n\rightarrow \infty.$$
\end{lemma}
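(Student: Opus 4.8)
The plan is to cut the sum down to a finite set of places of $K$ and then estimate. Let $\Sigma$ be the finite set of places of $K$ consisting of $S$ together with the places where $A$ has bad reduction. I claim first that $\coh^1(\Gamma_{n,w},A(L_w))=0$ whenever $w$ lies over a place $v\notin\Sigma$. Indeed such a $v$ is unramified in $L$, so $\Gamma_{n,w}$ is procyclic (and we may assume it is nontrivial); writing the Néron filtration $0\to \hat{A}(\fm_{L_w})\to A(L_w)\to \tilde A(k_{L_w})\to 0$, in which $\tilde A$ is an abelian variety since $v\notin\Sigma$, one has $\coh^1(\Gamma_{n,w},\tilde A(k_{L_w}))=0$ by Lang's theorem via inflation-restriction from $\coh^1(\widehat{\ZZ},\tilde A(\overline{k_v}))=0$, and $\coh^1(\Gamma_{n,w},\hat A(\fm_{L_w}))=0$ because the formal group is cohomologically trivial for the pro-$p$ group $\Gamma_{n,w}$ when the residue characteristic is not $p$ (its module is then pro-$\ell$, $\ell\neq p$), and when the residue characteristic is $p$ because each graded piece of the $\fm$-adic filtration is a line over $k_{L_w}$ with $\coh^1(\Gamma_{n,w},k_{L_w})=0$ (normal basis theorem along the residue extension), the vanishing propagating to the complete module and $\coh^{\geq2}$ vanishing by cohomological dimension. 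Hence $\mathcal H_n=\bigoplus_{v\in\Sigma}\bigoplus_{w\mid v}\coh^1(\Gamma_{n,w},A(L_w))$, a \emph{finite} direct sum.

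Next I would count the places. Since $\Gamma$ is abelian, all $w\mid v$ share the same decomposition group $\Gamma_w\subseteq\Gamma\cong\ZZ_p^d$, a closed subgroup, hence $\Gamma_w\cong\ZZ_p^{c_v}$ for some $c_v$. When $c_v=0$ the group $\Gamma_{n,w}$ is trivial and the corresponding terms vanish, so for all the surviving terms one has $c_v\geq1$ (in particular this holds for every $v\in S$, whose inertia is nontrivial); in that case the number of places $w\mid v$ in $K_n$ is $p^{nd}$ divided by the order of the decomposition group of $w$ in $\Gal(K_n/K)$, which is $\asymp p^{nc_v}$, hence it is $\textrm{O}(p^{n(d-c_v)})=\textrm{O}(p^{n(d-1)})$.

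It then remains to bound $\corank_{\ZZ_p}\coh^1(\Gamma_{n,w},A(L_w))$ by a constant $B=B(A)$ independent of $n$ and of $w$, for $v\in\Sigma$ with $c_v\geq1$. For $v$ of residue characteristic $\neq p$ the formal group part is again cohomologically trivial, so $\coh^1(\Gamma_{n,w},A(L_w))$ embeds into $\coh^1(\Gamma_{n,w},\tilde A_{\mathrm{sm}}(k_{L_w}))$; since the $p$-primary part of the points of the special fibre over $\overline{k_v}$ has corank $\leq 2\dim A$ and $\operatorname{cd}_p\Gamma_{n,w}\leq d$, this group is cofinitely generated over $\ZZ_p$ of corank $\leq 2d\dim A$. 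For $v$ above $p$ — the delicate case — I would use the hypothesis that $A$ has potentially ordinary reduction at $v$: after a finite base change over which $A$ acquires ordinary reduction (as in \cite[Lemma 2.1]{ot09}), the ordinary part of the formal group gives, up to contributions by finite groups controlled at each finite layer by Lemma \ref{l:locfinite}, a description of $\coh^1(\Gamma_{n,w},A(L_w))$ in terms of the cohomology of the unramified quotient, which is cofinitely generated over $\ZZ_p$ of corank $\leq d\dim A$; descending along the finite extension alters coranks only by a bounded amount.

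Putting the three steps together, $\mathcal H_n$ is a finite direct sum of $\ZZ_p$-cofinitely generated modules, hence cofinitely generated over $\ZZ_p$, and
$$\corank_{\ZZ_p}\mathcal H_n=\sum_{v\in\Sigma}\ \sum_{w\mid v}\corank_{\ZZ_p}\coh^1(\Gamma_{n,w},A(L_w))\;\leq\;|\Sigma|\cdot B\cdot\textrm{O}(p^{n(d-1)})=\textrm{O}(p^{n(d-1)}).$$
The main obstacle is the last step at the places above $p$: without potentially ordinary reduction the formal group there contributes uncontrolled $p$-divisible torsion, and even granting the hypothesis one still needs the local Iwasawa structure of the ordinary part (or the reduction to the ordinary case) to keep the corank bounded uniformly in $n$.
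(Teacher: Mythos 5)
Your overall strategy matches the paper's: reduce to finitely many places of $K$, count places of $K_n$ lying over each, and bound the $\ZZ_p$-corank of each local cohomology group uniformly in $n$. You differ in some details (the paper reduces to $v\in S$ and cites \cite[Lemma 3.2.1]{tan10b} for finiteness of the contribution from $w\nmid S$, whereas you enlarge to $\Sigma=S\cup\{\text{bad reduction}\}$ and prove vanishing directly outside $\Sigma$), but the skeleton is the same.

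The real gap is in your third step, the uniform corank bound at the places in $S$. The hypothesis ``potentially ordinary reduction at ramified places'' in this paper is deliberately read to include \emph{potentially multiplicative} reduction, and this is the case that actually contributes a nonzero corank. Your argument only engages with the potentially \emph{good} ordinary case (where, as the paper notes, $\coh^1(\Gamma_{n,w},A(L_w))$ is in fact \emph{finite} by \cite[Theorem 3]{tan10a} together with Lemma \ref{l:locfinite}, so a corank bound is moot). For split multiplicative reduction one needs the Tate parametrization: the paper invokes \cite[Lemma 3.4.2]{tan10b} and its proof to get
$$\corank_{\ZZ_p}\coh^1(\Gamma_{n,w},A(L_w))\le \rank_{\ZZ_p}\bigl(K_{n,w}^\times/\Nm_{L_w/K_{n,w}}(L_w^\times)\bigr)^{g}=g\cdot\rank_{\ZZ_p}\Gamma_{n,w}\le g\cdot d,$$
and then passes from potentially multiplicative to split multiplicative over a finite extension, again using Lemma \ref{l:locfinite}. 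Nothing in your sketch supplies this bound, and your closing sentence essentially concedes the point. A secondary (and smaller) issue: since you enlarge $S$ to $\Sigma$, the extra places of bad reduction lying outside $S$ (hence unramified in $L$) that have residue characteristic $p$ are not covered by the potentially ordinary hypothesis, and your residue-characteristic-$\ne p$ dichotomy is vacuous when $K$ is a function field of characteristic $p$; you would still need to argue directly (or cite \cite[Lemma 3.2.1]{tan10b}) that those local terms are finite.
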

\begin{proof} Write $w\mid S$, if $w$ sits above some place $v\in S$; otherwise, write $w\nmid S$.
Then $\bigoplus_{ w\nmid S} \coh^1(\Gamma_{n\; w}, A(L_w))$ is finite,
by \cite[Lemma 3.2.1]{tan10b}, .

Suppose $w\mid S$. If $A$ has good ordinary reduction at $w$, then $\coh^1(\Gamma_{n\; w}, A(L_w))$ is finite
\cite[Theorem 3]{tan10a}. The same holds, if $A$ becomes to have good ordinary reduction under a finite Galois extension $\K'/K_{n\;w}$,
because then $\coh^1(\K'L_w/\K', A(\K'L_w))$ is finite {\textit{op. cit.}} and the kernel of the composition
$$\xymatrix{\coh^1(\Gamma_{n\; w}, A(L_w))\ar@{^{(}->}[r] & \coh^1(\K'L_w//K_{n\;w},A(\K'L_w))\ar[r] & \coh^1(\K'L_w//\K',A(\K'L_w))}$$
is contained in $\coh^1(\K'/K_{n\;w}, A(\K'))$ that is finite, by Lemma \ref{l:locfinite}.

If $A$ has split multiplicative reduction at some $w$. By \cite[Lemma 3.4.2]{tan10b} as well as its proof, if $g=\dim A$,
we have
$$\corank_{\ZZ_p} \coh^1(\Gamma_{n\; w}, A(L_w))\leq \rank_{\ZZ_p} (K_{n\;w}^\times/\Nm_{L_w/K_{n\;w}}(L_w^\times))^g
=g\cdot \rank_{\ZZ_p}\Gamma_{n\;w} \leq g\cdot d.$$
By applying Lemma \ref{l:locfinite} and the above argument, the same inequality also holds if $A$ has potential multiplicative reduction at $w$.
Then we note that for  $v\in S$, because the decomposition subgroup $\Gamma_v$ is of positive rank, the
number of place of $w$ of $K_n$ sitting over $v$ is of order $O(p^{n(d-1))}$, as $n\rightarrow \infty$.
\end{proof}

%theorem 3 ++++++++++++++++++++++++++++++++++++++++++++++++++++++++++++++++++++
We state our next theorem in the notations of
\ref{sb:cts}.
\begin{mytheorem}\label{t:ct}
Let $K$  be a global field, $L/K$
a $\ZZ_p^d$-extension with a finite ramification locus, and let $A$ be an abelian variety over $K$ which has potentially ordinary reduction over the ramification locus
of $L/K$.
Let $\fA$ be the
Cassels-Tate system of $A$. Then
$\fa$ and $\fb$ are finitely generated torsion
$\La$-modules and
$\fA$  is  a $\Gamma$-system.
\end{mytheorem}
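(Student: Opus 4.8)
By \S\ref{sb:cts} the Cassels--Tate system $\fA$ already satisfies ($\Gamma$-1)--($\Gamma$-4); hence the whole content of the theorem is that the two projective limits $\fa$ and $\fb$ are finitely generated torsion $\La$-modules.

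\emph{Finite generation.} This follows from the exact sequence \eqref{e:axy}, $0\to\fa\to X_p(A/L)\to Y_p(A/L)\to 0$, and its analogue for the dual abelian variety: dualising $0\to\Sel_{div}(A^t/L)\to\Sel_{p^\infty}(A^t/L)\to\varinjlim_n\fa_n\to 0$ realises $\fb=(\varinjlim_n\fa_n)^\vee$ as a submodule of $X_p(A^t/L)$. Since $X_p(A/L)$ and $X_p(A^t/L)$ are finitely generated over the Noetherian ring $\La$ (\cite{tan10b}; for the reduction from potentially ordinary to ordinary reduction argue as in \cite[Lemma~2.1]{ot09}), the submodules $\fa,\fb$ and the quotients $Y_p(A/L),Y_p(A^t/L)$ are finitely generated over $\La$ as well.

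\emph{Torsion.} It is enough to treat $\fa$, the case of $\fb$ being the same with $A^t$ in place of $A$. Put $\fS:=\Sel_{p^\infty}(A/L)$ and $\fS_{div}:=\Sel_{div}(A/L)$; exactness of direct limits gives $\varinjlim_n\fb_n=\fS/\fS_{div}$, so $\fa$ is, up to the harmless involution $\sharp$, the Pontryagin dual of $\fS/\fS_{div}$. Since Pontryagin duality interchanges $\Gamma^{(n)}$-coinvariants with $\Gamma^{(n)}$-invariants, and since a finitely generated $\La$-module of positive rank has $\Gamma^{(n)}$-coinvariants of $\ZZ_p$-rank $\sim p^{nd}$ while the torsion part contributes only $\mathrm{O}(p^{n(d-1)})$ by Lemma~\ref{l:longhi}, it suffices to prove
\[
\corank_{\ZZ_p}\bigl(\fS/\fS_{div}\bigr)^{\Gamma^{(n)}}=\mathrm{O}\bigl(p^{n(d-1)}\bigr)\qquad(n\to\infty).
\]
Taking $\Gamma^{(n)}$-cohomology of $0\to\fS_{div}\to\fS\to\fS/\fS_{div}\to 0$ bounds the left side by $\bigl(\corank_{\ZZ_p}\fS^{\Gamma^{(n)}}-\corank_{\ZZ_p}\fS_{div}^{\Gamma^{(n)}}\bigr)+\corank_{\ZZ_p}\coh^1(\Gamma^{(n)},\fS_{div})$. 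For the parenthesised difference I would feed the classical control diagram for $\Sel_{p^\infty}(A/K_n)\to\fS^{\Gamma^{(n)}}$ into Lemmas~\ref{l:h1d} and \ref{l:control}: its kernel is finite (Lemma~\ref{l:h1d} for $A$) and the $\ZZ_p$-corank of its cokernel is at most $\corank_{\ZZ_p}\mathcal{H}_n$ plus a finite quantity, hence $\mathrm{O}(p^{n(d-1)})$, so $\corank_{\ZZ_p}\fS^{\Gamma^{(n)}}\le\corank_{\ZZ_p}\Sel_{p^\infty}(A/K_n)+\mathrm{O}(p^{n(d-1)})$; since $\Sel_{p^\infty}(A/K_n)_{div}$ injects as a $\Gamma^{(n)}$-fixed subgroup of $\fS_{div}$ (a finite divisible group vanishes), its corank, equal to $\corank_{\ZZ_p}\Sel_{p^\infty}(A/K_n)$, is a lower bound for $\corank_{\ZZ_p}\fS_{div}^{\Gamma^{(n)}}$, and the difference comes out $\mathrm{O}(p^{n(d-1)})$. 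For the remaining term one uses that $\fS_{div}^\vee=Y_p(A/L)$ is finitely generated over $\La$ together with the following variant of Lemma~\ref{l:longhi}, provable through the structure theorem and Monsky's bounds on zero sets: for every finitely generated $\La$-module $N$ the Koszul homology $\coh_1(\Gamma^{(n)},N)$ has $\ZZ_p$-rank $\mathrm{O}(p^{n(d-1)})$ — the free part of $N$ contributing nothing, because $\gamma_1^{p^n}-1,\dots,\gamma_d^{p^n}-1$ is a regular sequence in $\La$.

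I expect the main difficulty to be precisely this last control-theoretic bookkeeping: verifying that each error term is genuinely $\mathrm{O}(p^{n(d-1)})$ — above all $\coh^1(\Gamma^{(n)},\Sel_{div}(A/L))$, where $\Sel_{div}(A/L)$ is only a direct limit of divisible groups and need not be cofinitely generated over $\ZZ_p$ — and that all the modules in play have well-defined additive $\ZZ_p$-coranks. This is the point at which the finite generation of $X_p(A/L)$ over $\La$ is indispensable and where, as the introduction cautions, the argument becomes intricate.
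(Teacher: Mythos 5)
Your proof is essentially correct and rests on the same three pillars as the paper's: the embedding $\fa\hookrightarrow X_p(A/L)$, the Monsky bound of Lemma~\ref{l:longhi}, and the control-theorem estimates of Lemmas~\ref{l:h1d} and~\ref{l:control}. What differs is the bookkeeping. The paper never takes the full long exact sequence in $\Gamma^{(n)}$-cohomology: instead, starting from the assumption that $\fa$ has positive $\La$-rank, it chooses explicit free submodules $\fa'\subset\fa$ and $\fa'\subset X'\subset X_p(A/L)$ with $X_p(A/L)/X'$ torsion, and then works \emph{character by character}. For each $\omega\notin\Delta_\eta$ it shows that $(\La/\ker_\omega)\otimes\fa'\to(\La/\ker_\omega)\otimes X_p(A/L)$ is injective by observing that $\Tor_\La(\La/\ker_\omega,X_p(A/L)/\fa')$ is finite (here the DVR structure of $\cO_\omega$ is used), and then sums over $\omega\in\Gamma_n^\vee$ to lower-bound the rank of the image of $\fa\to\La_n\otimes X_p(A/L)$ by $p^{dn}-O(p^{n(d-1)})$; this is what dualises to the statement about the image of $\Sel_{p^\infty}(A/L)^{\Gamma^{(n)}}$ in $\Sel_{p^\infty}(A/L)/\Sel_{div}(A/L)$ and collides with Lemma~\ref{l:control}. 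You, by contrast, bound $\rank_{\ZZ_p}\fa_{\Gamma^{(n)}}$ from above directly by applying $\Gamma^{(n)}$-cohomology to $0\to\fS_{div}\to\fS\to\fS/\fS_{div}\to 0$, and this forces you to control $\coh^1(\Gamma^{(n)},\fS_{div})^\vee\cong\Tor_1^\La(\La_n,Y_p(A/L))$ for a module $Y_p(A/L)$ which may have \emph{positive} $\La$-rank. That is the only substantive new ingredient you need beyond what is in the paper: a ``variant of Lemma~\ref{l:longhi}'' asserting $\rank_{\ZZ_p}\Tor_1^\La(\La_n,N)=O(p^{n(d-1)})$ for every finitely generated $\La$-module $N$. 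This is true --- split $N$ as free plus torsion up to a cokernel, note that $\gamma_1^{p^n}-1,\dots,\gamma_d^{p^n}-1$ is a $\La$-regular sequence so the free part contributes nothing, and for torsion $T$ killed by $\eta$ tensor with $\QQ_p$ and use $\QQ_p\La_n\simeq\prod_{\omega\in\Gamma_n^\vee}E_\omega$ to see that only the $O(p^{n(d-1)})$ characters in $\Delta_\eta$ can contribute, each a bounded amount --- but it is not stated in the paper, and you should prove it rather than gesture at it. The paper's character-by-character route is designed precisely to sidestep having to state and prove such a general $\Tor$ bound: by working with free $\fa'$, the only $\Tor$ it needs is the finiteness of $\Tor_\La(\La/\ker_\omega,X_p(A/L)/\fa')$ for a \emph{single} $\omega$, which follows cheaply from the DVR observation. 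So: same skeleton and same inputs, but your reorganisation trades the paper's ad hoc DVR trick for a cleaner but slightly stronger auxiliary lemma, and you've correctly flagged that lemma as the crux. Your finite-generation paragraph is fine, modulo the harmless remark that $(\varinjlim\fa_n)^\vee$ is $\fb^\sharp$ rather than $\fb$ on the nose.
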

\begin{proof}%[{\bf Proof of Theorem \ref{t:ct}}]
Recall that $Q(\Lambda)$ denotes the fraction field of $\La$. Suppose $\fa$ were non-torsion. Let $r$ and $s$ denote respectively the dimensions over $Q(\La)$ of the vector spaces $Q(\La)\fa$ and $Q(\La)X_p(A/L)$: by \eqref{e:axy}, the former is contained in the latter. Let $e_1,...,e_r,...,e_s\in X_p(A/L)$ form a basis of $Q(\La)X_p(A/L)$ such that $e_1,...,e_r$ are in $\fa$.
Write $\fa'=\Lambda\cdot e_1+\cdots+\Lambda\cdot e_r\subset \fa$ and  $X'=\Lambda\cdot e_1+\cdots+\Lambda\cdot e_s\subset X_p(A/L)$. Then $X_p(A/L)/X'$ is torsion over $\Lambda$, and hence is annihilated by some nonzero $\eta\in \Lambda$.

Let $\omega\in \Gamma^\vee$ be a character not contained in $\Delta_\eta$
(see (\ref{e:Delta})). Extend it
as in \S\ref{su:monsky} to a ring homomorphism $\omega\colon\La \twoheadrightarrow \cO_\omega$ whose kernel we denote by $\ker_\omega$. Then we have the exact sequences
\begin{equation}\label{e:tor1}
\begin{CD} \Tor_{\La} (\La/\ker_\omega, X_p(A/L)/\fa' ) @>>> (\Lambda/\ker_\omega)\otimes_{\La} \fa'@>{id_\omega\otimes i}>> (\La/\ker_\omega)\otimes_{\La} X_p(A/L) , \end{CD}
\end{equation}
where $i:\fa'\longrightarrow X_p(A/L)$ is the inclusion, and
$$\begin{CD} 0 @>>>  X'/\fa'  @>>>  X_p(A/L)/\fa'  @>>> X_p(A/L)/X' @>>> 0 \end{CD}\,.$$
The fact that $X'/\fa'$ is free over $\La$ implies that the natural map
\begin{equation*}\label{e:tor2} \Tor_{\La} (\La/\ker_\omega, X_p(A/L)/\fa' )\, \lr  \Tor_{\La} (\La/\ker_\omega, X_p(A/L)/X' ) \end{equation*}
is an injection. Then the group $\Tor_{\La}(\La/\ker_\omega, X_p(A/L)/\fa' )$ must be finite,
because the quotient $\La/\ker_\omega\simeq \cO_\omega$ is a discrete valuation ring
and $\Tor_{\Lambda} (\La/\ker_\omega, X_p(A/L)/X' )$, being annihilated by
the non-zero residue class of $\eta$ in $\La/\ker_\omega$, is finite. Thus, the homomorphism $id_\omega\otimes i$ in \eqref{e:tor1} must be injective, because $(\Lambda/\ker_\omega)\otimes_{\La} \fa'$ is a free $\cO_\omega$-module.
Hence its image is free of positive rank over $\cO_\omega$.

Now assume that $\omega\in\Gamma_n^\vee\subset \Gamma^\vee$.
Denote $E_n=\QQ_p(\mu_{p^n})$ and $V_n=E_n \otimes_{\ZZ_p}\La/I_n$.
Then $\cO_\omega\subset E_n$ and $\omega$ extends to a ring homomorphism $\omega: V_n\longrightarrow E_n$.
We have
$$V_n=\oplus_{\omega\in\Gamma_n^\vee} V_n^{(\omega)} $$
with $V_n^{(\omega)}=E_n$ and the projection $V_n\longrightarrow V_n^{(\omega)}=E_n$ given by $\omega$.
The above discussion shows if $\omega\not\in \Delta_\eta$, then the image of $id_\omega\otimes i:V_n^{(\omega)}\otimes_{\ZZ_p}\fa'
\longrightarrow V_n^{(\omega)}\otimes_{\ZZ_p} X_p(A/L)$ is a positive dimensional vector space of $E_n$.
Since $\omega\in \Delta_\eta$ if and only if $\omega$ factors through $\La/(I_n+(\eta))$,
in view of Lemma \ref{l:longhi}, we conclude that as $n\rightarrow\infty$, the $\ZZ_p$-rank of the image of
$$\fa' \longrightarrow \La/I_n\otimes_{\La} \fa'\longrightarrow \La/I_n\otimes_{\La} X_p(A/L)$$
is at least of order $p^{dn}+O(p^{n(d-1)})$.
Then the same holds for the image of
$$ \fa\longrightarrow \La/I_n\otimes_{\La} X_p(A/L),$$
since $\fa'\longrightarrow X_p(A/L)$ factors through $\fa'\longrightarrow \fa$. By the duality,
the image of
$$\xymatrix{ \Sel_{p^\infty}(A/L)^{\Gamma_n} \ar[r] & \Sel_{p^\infty}(A/L)/\Sel_{div}(A/L)  }$$
has $\ZZ_p$-corank at least of order $p^{dn}+O(p^{n(d-1)})$. Let $\mathcal{S}(L/K_n)$ denote the preimage
of $\Sel_{p^\infty}(A/L)^{\Gamma_n}$ under the restriction $\coh^1(K_n,A_{p^\infty})\longrightarrow \coh^1(L,A_{p^\infty})$.
Since the composition
$$\xymatrix{\mathcal{S}(L/K_n)\ar[r] & \Sel_{p^\infty}(A/L)^{\Gamma_n} \ar[r] & \Sel_{p^\infty}(A/L)/\Sel_{div}(A/L)  }$$
factors through $\mathcal{S}(L/K_n) \longrightarrow \mathcal{S}(L/K_n)/\Sel_{p^\infty}(A/K_n)_{div}$, while
by Hochschild-Serre spectral sequence and Lemma \ref{l:h1d}, the left morphism has finite cokernel, the
$\ZZ_p$-corank of $\mathcal{S}(L/K_n)/\Sel_{p^\infty}(A/K_n)_{div}$ is at least of order $p^{dn}+O(p^{n(d-1)})$.
Then
the same holds for the
$\ZZ_p$ corank of
$\mathcal{S}(L/K_n)/\Sel_{p^\infty}(A/K_n)$, and hence for that of $\mathcal{H}_n$,
because of the exact sequence
$$0\longrightarrow \Sel_{p^\infty}(A/K_n) \longrightarrow \mathcal{S}(L/K_n) \longrightarrow  \mathcal{H}_n$$
due to the localization map. But Lemma \ref{l:control} says this is absurd.
The proof for $\fb$ just replaces $A$ with $A^t$.
\end{proof}

Theorem \ref{t:ct} actually means  $\fa\times \fb$ is torsion.
In fact it says  $\fa$ is always torsion no matter what happens
to $X_p(A/L)$.
It is worthwhile to mention that Theorem \ref{t:ct} also implies that $\fA$ enjoys property ({\bf T}) (just replace $L/K$ with an arbitrary $\ZZ_p^{d-1}$-subextension $L'/F$).

\subsection{Is $\fA$ pseudo-controlled?} We answer in
Propositions \ref{p:pscontrl} and \ref{p:gdordpscontrl} below.

\begin{lemma} \label{l:zerocoker}
For $n\geq m$ the restriction map
$$\Sel_{p^\infty}(A/K_m)_{div} \longrightarrow (\Sel_{p^\infty}(A/K_n)^{\Gamma^{(m)}})_{div}$$
is surjective.
\end{lemma}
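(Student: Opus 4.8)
The plan is to reduce the statement to the finiteness of a cokernel and then apply an elementary fact about divisible subgroups. Write $G:=\Gamma^{(m)}/\Gamma^{(n)}=\Gal(K_n/K_m)$, a finite $p$-group; since $\Gamma^{(n)}$ acts trivially on classes rational over $K_n$, one has $\Sel_{p^\infty}(A/K_n)^{\Gamma^{(m)}}=\Sel_{p^\infty}(A/K_n)^G$. Thus the map in question is induced by the restriction map $s\colon M:=\Sel_{p^\infty}(A/K_m)\to N:=\Sel_{p^\infty}(A/K_n)^G$, and we must show $s(M_{div})=N_{div}$. Both $M$ and $N$ are cofinitely generated over $\ZZ_p$ ($N$ being a submodule of $\Sel_{p^\infty}(A/K_n)$). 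I would then invoke the following: for a homomorphism $f\colon M\to N$ of cofinitely generated $\ZZ_p$-modules whose cokernel has trivial maximal divisible subgroup, one has $f(M_{div})=N_{div}$. Indeed the image of $N_{div}$ in $\Coker(f)$ is divisible, hence $0$, so $N_{div}\subseteq f(M)$; setting $M_1:=f^{-1}(N_{div})$ one has $(M_1)_{div}=M_{div}$ and $f$ maps $M_1$ onto $N_{div}$, and since $N_{div}/f(M_{div})$ is both a quotient of the finite group $M_1/(M_1)_{div}$ and divisible, it is trivial. So it is enough to prove that $\Coker(s)$ is finite.

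For this I would run the standard control-theorem diagram chase. Put $\Sel_{p^\infty}(A/K_m)$ as the kernel of the localization $g_m\colon\coh^1_{\mathrm{fl}}(K_m,A_{p^\infty})\to\bigoplus_v\coh^1_{\mathrm{fl}}(K_{m,v},A)$, do likewise over $K_n$ and take $G$-invariants, with vertical maps given by restriction: $r$ on the middle terms and $h$ on the local terms. Applying the snake lemma (after replacing the right-hand terms by the images of $g_m$ and $g_n$) gives an exact sequence $\bigl(\image(g_m)\cap\Ker(h)\bigr)\to\Coker(s)\to\Coker(r)$. By the Hochschild--Serre (inflation--restriction) sequence, $\Coker(r)$ embeds into $\coh^2(G,A_{p^\infty}(K_n))$, which is finite because it is annihilated by $|G|$ and is cofinitely generated over $\ZZ_p$. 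It remains to bound $\Ker(h)$.

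By Shapiro's lemma and inflation, $\Ker(h)=\bigoplus_v\coh^1\bigl(\Gal(K_{n,w}/K_{m,v}),A(K_{n,w})\bigr)$ (one place $w\mid v$ for each $v$), the $v$-component being the kernel of restriction $\coh^1_{\mathrm{fl}}(K_{m,v},A)\to\coh^1_{\mathrm{fl}}(K_{n,w},A)$. For $v\notin S$ at which $A$ has good reduction, $K_{n,w}/K_{m,v}$ is unramified and this term vanishes: the formal-group part of $A(K_{n,w})$ is cohomologically trivial (since $\cO_{K_{n,w}}$ is free over $\cO_{K_{m,v}}[\Gal(K_{n,w}/K_{m,v})]$, so every graded piece of the formal-group filtration is an induced module) and the reduction part has vanishing $\coh^1$ by Lang's theorem for the connected special fibre over a finite field. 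For the finitely many remaining $v$ --- those in $S$ together with those at which $A$ has bad reduction --- the group $\coh^1\bigl(\Gal(K_{n,w}/K_{m,v}),A(K_{n,w})\bigr)$ is finite by Lemma \ref{l:locfinite}, and each such $v$ has only finitely many places above it. Hence $\Ker(h)$ is finite, so $\Coker(s)$ is finite, and the first paragraph concludes the proof. The main obstacle is precisely this local analysis --- the vanishing at the good places outside $S$ and the reduction to Lemma \ref{l:locfinite} at the bad ones; everything else is a routine diagram chase.
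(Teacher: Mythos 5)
Your proof is correct, and the central diagram chase --- snake lemma on the localization diagram, $\Coker(r)$ bounded by $\coh^2(G,A_{p^\infty}(K_n))$, and analysis of $\Ker(h)$ --- is the same as the paper's. The genuine differences are in packaging. First, the reduction: you encapsulate the needed claim in a self-contained statement about cofinitely generated $\ZZ_p$-modules (if $\Coker(f)$ has trivial maximal divisible part then $f(M_{div})=N_{div}$), which cleanly bypasses the auxiliary commutative diagram the paper builds with the third column $\fb_m\rightarrow\fb_n^{\Gamma^{(m)}}$ and the finiteness of $\Ker(\fr_m^n)\subset\fb_m$. Second, and more substantially, the estimate on $\Coker(s)$: you aim to prove it is \emph{finite}, which forces you to show $\Ker(h)$ is finite, and this is where Lang's theorem at good unramified places and Lemma \ref{l:locfinite} at the finitely many ramified or bad places enter. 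The paper instead proves only that $\Coker(i)$ is annihilated by $[K_n:K_m]^2$; this is immediate, since both $\Ker(h)$ and $\Coker(r)$ consist of group cohomology of groups of order dividing $|G|=[K_n:K_m]$, and this weaker conclusion already suffices for its reduction. Note that your own elementary lemma really only needs the cokernel to have trivial maximal divisible subgroup, and for a cofinitely generated $\ZZ_p$-module this follows already from annihilation by a power of $p$; so you could have concluded with the paper's cheaper estimate and no appeal to Lemma \ref{l:locfinite}. Both routes are valid --- yours proves the stronger control-type statement that $\Coker(s)$ is finite, at the cost of the extra local input.
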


\begin{proof}
The commutative diagram of exact sequences
$$\xymatrix{  \Sel_{p^\infty}(A/K_m)_{div}=\Sel_{p^\infty}(A/K_m)_{div} \ar@{^{(}->}[r]\ar@<-9ex>[d]^{j'} \ar@<9ex>[d]^j & \Sel_{p^\infty}(A/K_m) \ar@{->>}[r]\ar[d]^i & \fb_m \ar[d]^{\fr_m^n}\\
(\Sel_{p^\infty}(A/K_n)^{\Gamma^{(m)}})_{div} \subset (\Sel_{p^\infty}(A/K_n)_{div})^{\Gamma^{(m)}} \ar@{^{(}->}[r] & \Sel_{p^\infty}(A/K_n)^{\Gamma^{(m)}} \ar[r] & \fb_n^{\Gamma^{(m)}} } $$
induces the exact sequence
$$\Ker (\fr_m^n) \longrightarrow \Coker (j)\longrightarrow \Coker (i).$$
Since $\Ker (\fr_m^n)$ is finite while $\Coker (j')$ is $p$-divisible, it is sufficient to show that $\Coker (i)$ is annihilated by some positive integer. Consider the commutative diagram of exact sequences
$$\xymatrix{ \Sel_{p^\infty}(A/K_m) \ar@{^{(}->}[r]\ar[d]^i &  \coh^1_{\mathrm{fl}}(K_m,A_{p^\infty})\ar[r]^-{loc_m} \ar[d]^{res_m^n} & \prod_{\text{all}\;v} \coh^1({K_m}_v, A) \ar[d]^{r_m^n}\\
\Sel_{p^\infty}(A/K_n)^{\Gamma^{(m)}} \ar@{^{(}->}[r] & \coh^1_{\mathrm{fl}}(K_n,A_{p^\infty})^{\Gamma^{(m)}} \ar[r]^-{loc_n} &  \prod_{\text{all}\;w} \coh^1({K_n}_w, A)^{\Gamma_w^{(m)}}}  $$
that induces the exact sequence
$$\Ker \big(\xymatrix{\image (loc_m)\ar[r]^{r_m^n} & \image (loc_n)}\big)\lr \Coker (i)\lr \Coker (res_m^n) \,.$$
By the Hochschild-Serre spectral sequence, the right-hand term $\Coker (res_m^n)$ is a subgroup of $\coh^2(K_n/K_m, A_{p^\infty}(K_n))$, and hence is annihilated by $p^{d(n-m)}=[K_n:K_m]$. Similarly, the left-hand term, being a subgroup of $\prod_v \coh^1({K_n}_v/{K_m}_v, A({K_n}_v))$, is also annihilated by $[K_n:K_m]$.
\end{proof}

\begin{lemma}\label{l:fingen}
If $L/K$ is a $\ZZ_p$-extension and $X_p(A/L)$ is a torsion $\La$-module, then there exists some $N$ such that
$$\Sel_{p^\infty}(A/K_n)_{div}=(\Sel_{p^\infty}(A/K_n)_{div})^{\Gamma^{(m)}}=(\Sel_{p^\infty}(A/K_n)^{\Gamma^{(m)}})_{div}$$
holds for all $n\geq m\geq N$.
\end{lemma}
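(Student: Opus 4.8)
The plan is to extract from the torsion hypothesis the one quantitative fact we need — boundedness of the $\ZZ_p$-coranks of the divisible Selmer groups at finite levels — and then combine it with Lemma \ref{l:zerocoker} and an elementary argument about divisible groups. Write $r_n:=\corank_{\ZZ_p}\Sel_{p^\infty}(A/K_n)_{div}$. Since $X_p(A/L)$ is $\La$-torsion, Corollary \ref{c:y} gives a pseudo-isomorphism $Y_p(A/L)\sim\bigoplus_i(\La/f_i\La)^{r_i}$ with each $f_i$ a simple element; for $d=1$ each $f_i=f_{\gamma,\zeta}$ is, up to a unit of $\La$, a distinguished polynomial in $\ZZ_p[[\gamma-1]]$, so $\La/f_i\La$ is $\ZZ_p$-free of finite rank, and since a pseudo-null $\La$-module is finite when $d=1$, $Y_p(A/L)$ is finitely generated over $\ZZ_p$. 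Hence $\Sel_{div}(A/L)=Y_p(A/L)^\vee$ is cofinitely generated over $\ZZ_p$, and $C:=\corank_{\ZZ_p}\Sel_{div}(A/L)<\infty$. The restriction maps $\Sel_{p^\infty}(A/K_m)_{div}\to\Sel_{p^\infty}(A/K_n)_{div}$ (for $n\ge m$) and $\Sel_{p^\infty}(A/K_n)_{div}\to\Sel_{div}(A/L)$ have finite kernel — each kernel embeds into a group of the shape $\coh^1(\Gal(K_n/K_m),A_{p^\infty}(K_n))$, which is finite because it is annihilated by $[K_n:K_m]$ and is cofinitely generated over $\ZZ_p$ — so corank is preserved; therefore $r_n\le C$ for all $n$ and $(r_n)_n$ is non-decreasing. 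A non-decreasing sequence of integers bounded by $C$ is eventually constant, so there is an $N$ with $r_n=r$ for all $n\ge N$.

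Now fix $n\ge m\ge N$. By Lemma \ref{l:zerocoker}, restriction gives a surjection $\Sel_{p^\infty}(A/K_m)_{div}\twoheadrightarrow(\Sel_{p^\infty}(A/K_n)^{\Gamma^{(m)}})_{div}$, and its kernel is finite by the same argument as above; hence the target has $\ZZ_p$-corank equal to $r_m=r=r_n$. On the other hand $(\Sel_{p^\infty}(A/K_n)^{\Gamma^{(m)}})_{div}$ is a divisible subgroup of $\Sel_{p^\infty}(A/K_n)_{div}\simeq(\QQ_p/\ZZ_p)^{r}$, and a divisible subgroup of a cofinitely generated $\ZZ_p$-module that has the same corank as the ambient module must be the whole module (the quotient is divisible of corank $0$, hence trivial). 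Therefore
$$(\Sel_{p^\infty}(A/K_n)^{\Gamma^{(m)}})_{div}=\Sel_{p^\infty}(A/K_n)_{div}.$$
Since the left-hand side is contained in $\Sel_{p^\infty}(A/K_n)^{\Gamma^{(m)}}$, this shows that $\Gamma^{(m)}$ acts trivially on $\Sel_{p^\infty}(A/K_n)_{div}$, i.e. $\Sel_{p^\infty}(A/K_n)_{div}=(\Sel_{p^\infty}(A/K_n)_{div})^{\Gamma^{(m)}}$; and, using the displayed identity together with $(\Sel_{p^\infty}(A/K_n)_{div})^{\Gamma^{(m)}}=\Sel_{p^\infty}(A/K_n)_{div}\cap\Sel_{p^\infty}(A/K_n)^{\Gamma^{(m)}}=(\Sel_{p^\infty}(A/K_n)^{\Gamma^{(m)}})_{div}$, all three groups in the statement coincide.

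The only genuinely substantive point is the boundedness of the coranks $r_n$: this is where the torsion hypothesis on $X_p(A/L)$ is used, through Corollary \ref{c:y} and the fact that simple elements cut out $\ZZ_p$-finitely generated quotients in the $d=1$ case. After that the proof is formal; the one technical care needed is to run all comparisons on the $p$-divisible parts, so that the error terms (kernels of restriction maps) are genuinely finite rather than merely of bounded corank.
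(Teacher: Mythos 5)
Your proof is correct in outline, but it takes a genuinely different route from the paper's. The paper argues directly with annihilators: since $d=1$, every simple element divides some $\gamma^{p^m}-1$, so Theorem \ref{t:flat} produces a single element $\gamma^{p^N}-1$ killing $\Sel_{div}(A/L)$; by Lemma \ref{l:h1d} the restriction $\Sel_{p^\infty}(A/K_n)_{div}\to\Sel_{div}(A/L)^{\Gamma^{(n)}}$ has finite kernel, so $(\gamma^{p^N}-1)\cdot\Sel_{p^\infty}(A/K_n)_{div}$ is both finite and $p$-divisible, hence zero; the second equality is then a formal consequence of the first. You instead convert Corollary \ref{c:y} into the statement that $Y_p(A/L)$ is finitely generated over $\ZZ_p$, get a uniform bound on the coranks $r_n$, show the sequence $(r_n)$ is non-decreasing and hence stabilizes, and then combine Lemma \ref{l:zerocoker} with a corank count to force $(\Sel_{p^\infty}(A/K_n)^{\Gamma^{(m)}})_{div}=\Sel_{p^\infty}(A/K_n)_{div}$. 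Your route invokes Lemma \ref{l:zerocoker}, which the paper's proof of this lemma does not need, and it does not produce an explicit uniform annihilator; in exchange, the corank-stabilization picture is perhaps more transparent and it derives all three equalities at once rather than deducing the second from the first.

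One justificatory slip to fix. You write that every kernel in sight embeds into a group ``of the shape'' $\coh^1(\Gal(K_n/K_m),A_{p^\infty}(K_n))$, finite because it is annihilated by $[K_n:K_m]$. That reasoning is fine for the maps between two finite layers, but the inequality $r_n\le C$ depends on the finiteness of the kernel of $\Sel_{p^\infty}(A/K_n)_{div}\to\Sel_{div}(A/L)$, which sits inside $\coh^1(\Gamma^{(n)},A_{p^\infty}(L))$ with $\Gamma^{(n)}\cong\ZZ_p$ \emph{infinite}, so ``annihilated by the group order'' does not apply. The finiteness of $\coh^1(\Gamma^{(n)},A_{p^\infty}(L))$ is precisely Lemma \ref{l:h1d} (a nontrivial input, via \cite{gr03}); you should cite it explicitly at that step, as the paper does.
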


\begin{proof}
The second equality is an easy consequence of the first one.
The assumption $\Gamma=\gamma^{\ZZ_p}$ implies that if $f\in\La$ is simple then $f$ divides $\gamma^{p^m}-1$ for some $m$. Therefore, by Theorem \ref{t:flat}, there exists an integer $N$ such that $(\gamma^{p^N}-1)\Sel_{div}(A/L)^{\Gamma^{(n)}}=0$ for every $n$. By Lemma \ref{l:h1d} the kernel of the map $\Sel_{p^\infty}(A/K_n)_{div} \rightarrow \Sel_{div}(A/L)^{\Gamma^{(n)}}$ is finite.
This implies that $(\gamma^{p^N}-1)\Sel_{p^\infty}(A/K_n)_{div}$ must be trivial, since it is both finite and $p$-divisible.
\end{proof}

\begin{proposition}\label{p:pscontrl}
If $L/K$ is a $\ZZ_p$-extension and $X_p(A/L)$ is a torsion $\La$-module, then $\fA$ is pseudo-controlled.
\end{proposition}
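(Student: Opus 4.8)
The plan is to prove separately that $\fa^0$ and $\fb^0$ are pseudo-null; then $\fa^0\times\fb^0$ is pseudo-null, which is exactly the assertion that $\fA$ is pseudo-controlled. For $\fa^0$, by the exact sequence \eqref{e:aabar} together with Lemma \ref{l:a00} (which says $\fa^{00}$ is pseudo-null) we have $\fa^0\sim\bar\fa^0=\varprojlim_n\bar\fa_n^0$, so it suffices to show $\bar\fa^0=0$. For $\fb^0$ I would rerun the whole argument for the Cassels-Tate system of $A^t$: its $\fa$-part is the $\fb$-part of the Cassels-Tate system of $A$ (the pairing being, up to sign, the transpose of the original Cassels-Tate pairing), and $X_p(A^t/L)$ is torsion by Lemma \ref{l:isogAtA}, so the hypotheses of the Proposition hold for $A^t$ and the conclusion $\fb^0\sim 0$ follows.

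To show $\bar\fa^0=0$ I would combine the two control lemmas. Since $X_p(A^t/L)$ is torsion, Lemma \ref{l:fingen} applied to $A^t$ gives an $N$ with
$$\big(\Sel_{p^\infty}(A^t/K_n)^{\Gamma^{(m)}}\big)_{div}=\Sel_{p^\infty}(A^t/K_n)_{div}\qquad\text{for all }n\geq m\geq N.$$
Feeding this into Lemma \ref{l:zerocoker} (for $A^t$), the restriction map $\Sel_{p^\infty}(A^t/K_m)_{div}\to\Sel_{p^\infty}(A^t/K_n)_{div}$ is surjective for all $n\geq m\geq N$. Since $L/K$ is a $\ZZ_p$-extension, the layers $K_n$ are cofinal among finite subextensions, so $\Sel_{div}(A^t/L)=\varinjlim_n\Sel_{p^\infty}(A^t/K_n)_{div}$ along these (eventually surjective) transition maps; hence $\Sel_{p^\infty}(A^t/K_n)_{div}\to\Sel_{div}(A^t/L)$ is onto, i.e. $\Sel_{div}(A^t/L)/\Sel_{p^\infty}(A^t/K_n)_{div}=0$, for every $n\geq N$. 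The injection \eqref{e:subset} then forces $\bar\fa_n^0=0$ for all $n\geq N$, whence $\bar\fa^0=\varprojlim_n\bar\fa_n^0=0$.

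I do not anticipate a serious obstacle: after the reduction of the first paragraph everything is bookkeeping, resting on Lemmata \ref{l:a00}, \ref{l:zerocoker} and \ref{l:fingen}. The hypothesis that $L/K$ is a $\ZZ_p$-extension enters only through Lemma \ref{l:fingen} (via Theorem \ref{t:flat} and the fact that every simple element of $\La=\ZZ_p[[\Gamma]]$ divides $\gamma^{p^m}-1$ for some $m$), and is genuinely used there. The two points needing mild care are the cofinality remark identifying $\Sel_{div}(A^t/L)$ with the direct limit over the $K_n$, and the symmetry of the constructions $\fa^{00}$, $\bar\fa^0$ and of \eqref{e:subset} under $A\leftrightarrow A^t$ — the latter because swapping $A$ and $A^t$ interchanges the pairs $(\fa_n,\fb_n)$. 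Finiteness of all the groups $\fa_n$, $\fa_n^0$, $\fa_n^{00}$, $\bar\fa_n^0$ (they are cotorsion quotients of cofinitely generated $\ZZ_p$-modules) makes every inverse limit appearing above exact.
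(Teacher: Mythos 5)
Your proof is correct and follows essentially the same route as the paper's: both rest on combining Lemma \ref{l:zerocoker} with Lemma \ref{l:fingen} to produce surjectivity of the restriction maps on divisible parts for levels $\geq N$, then invoking Lemma \ref{l:a00}. The only cosmetic differences are that you phrase the conclusion $\fa_n^0=\fa_n^{00}$ via the injection \eqref{e:subset} rather than the element chase written in the paper, and that you make explicit the symmetric treatment of $\fb^0$ (via the Cassels--Tate system of $A^t$) that the paper leaves implicit.
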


\begin{proof} We apply Lemmata \ref{l:zerocoker} and \ref{l:fingen}.
If we are given an element $x\in\Sel_{p^\infty}(A^t/K_n)$ with $res_n^l(x)\in\Sel_{p^\infty}(A^t/K_l)_{div}$ for some $l\geq n\geq N$, then we can find $y\in \Sel_{p^\infty}(A^t/K_N)_{div}$ such that $res_N^l(y)=res_n^l(x)$. Then $x-res_N^n(y)\in\Ker(res_n^l)\subset \fs_n^{00}$. This actually shows that $\fa_n^0=\fa_n^{00}$. Then apply Lemma \ref{l:a00}.
\end{proof}

\begin{proposition}\label{p:gdordpscontrl}
If $L/K$ is a $\ZZ_p^d$-extension ramified only at good ordinary places and $X_p(A/L)$ is a torsion $\La$-module, then $\fA$ is pseudo-controlled.
\end{proposition}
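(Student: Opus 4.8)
The plan is to follow the template of the $\ZZ_p$-case (Proposition~\ref{p:pscontrl}), feeding in at the crucial point the extra strength of the good ordinary hypothesis: it makes the local cohomology groups at the ramified places \emph{finite}, not merely cofinitely generated.

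First I would reduce to proving that $\bar\fa^0$ and $\bar\fb^0$ are pseudo-null. By Lemma~\ref{l:a00} the module $\fa^{00}$ is pseudo-null, so the exact sequence \eqref{e:aabar} gives $\fa^0\sim\bar\fa^0$, and symmetrically $\fb^0\sim\bar\fb^0$. Since $X_p(A/L)$ is torsion, so is $X_p(A^t/L)$ by Lemma~\ref{l:isogAtA}, and Theorem~\ref{t:flat} applied to $A^t$ (resp.\ to $A$) produces relatively prime simple elements $f_1,\dots,f_m$ with $f_1\cdots f_m\cdot\Sel_{div}(A^t/L)=0$ (resp.\ the analogue for $A$). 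The injection \eqref{e:subset} shows that this product annihilates each $\bar\fa_n^0$, hence $\bar\fa^0$, and likewise $\bar\fb^0$. Thus $\bar\fa^0$ is a finitely generated torsion $\La$-module whose characteristic ideal is a product $\prod(f_i^{r_i})$ of powers of distinct simple ideals, and by Lemma~\ref{l:psn} and \S\ref{sss:krul} it is enough to show that for every $i$ the module $\bar\fa^0$ is annihilated by some element coprime to $f_i$ — which forces all $r_i=0$.

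The good ordinary hypothesis enters exactly as in the proof of Lemma~\ref{l:control}: since every place of $S$ has good ordinary reduction, each local term $\coh^1(\Gamma_{n,w},A^t(L_w))$ with $w\mid S$ is finite, and those with $w\nmid S$ are finite as well, so $\mathcal{H}_n$ is finite for every $n$. Combined with the finiteness of $\coh^1(\Gamma^{(n)},A^t_{p^\infty}(L))$ and $\coh^2(\Gamma^{(n)},A^t_{p^\infty}(L))$ (Lemma~\ref{l:h1d}), the Hochschild--Serre sequence and the localization square show that the Mazur control map $\Sel_{p^\infty}(A^t/K_n)\to\Sel_{p^\infty}(A^t/L)^{\Gamma^{(n)}}$ has finite kernel and cokernel for every $n$; passing to divisible parts and using Lemma~\ref{l:zerocoker} to compare $\Sel_{p^\infty}(A^t/K_n)_{div}$ with $\Sel_{div}(A^t/L)^{\Gamma^{(n)}}$, one deduces that each $\bar\fa_n^0$ is finite. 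To conclude I would then show that the inverse limit $\bar\fa^0$ of these finite groups cannot contain a copy of any $\La/(f_i)$: fix a simple factor $f=f_{\gamma,\zeta}$, complete $\gamma$ to a $\ZZ_p$-basis $\gamma=\gamma_1,\dots,\gamma_d$ of $\Gamma$, set $M:=L^{\overline{\langle\gamma_1\rangle}}$ (so $M/K$ is a $\ZZ_p^{d-1}$-extension, $L/M$ the $\ZZ_p$-extension cut out by $\gamma_1$, still with good ordinary ramification and, by the $(\mathbf{T})$ property from Theorem~\ref{t:ct}, with torsion dual Selmer group over $\ZZ_p[[\overline{\langle\gamma_1\rangle}]]$), and apply Proposition~\ref{p:pscontrl} to $L/M$; this pseudo-nullity over $\ZZ_p[[\overline{\langle\gamma_1\rangle}]]$, together with Monsky's description of $\triangle_f$ (the union of $\ZZ_p$-flats where $\omega(\gamma_1)$ is conjugate to $\zeta$), translates into the vanishing of $\bar\fa^0$ localized at $(f)$, i.e.\ $r_i=0$ for the index with $f_i=f$. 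Running this over all $i$ gives $\bar\fa^0\sim 0$, and the same argument for $A$ gives $\bar\fb^0\sim 0$, so $\fA$ is pseudo-controlled.

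I expect the genuine difficulty to be this last step — moving from layerwise finiteness of $\bar\fa_n^0$ to pseudo-nullity of $\bar\fa^0$. Over a $\ZZ_p$-extension this was automatic because a simple element divides some $\gamma^{p^N}-1$, so the divisible Selmer groups of the layers stabilize (Lemma~\ref{l:fingen}); there is no such one-variable collapse over $\ZZ_p^d$, and indeed a naive count only shows $|\bar\fa_n^0|=p^{O(p^{n(d-1)})}$, which is exactly the growth rate of the finite quotients of $\La/(f_i)$, so no contradiction is visible at that level. Making the reduction to $L/M$ and Proposition~\ref{p:pscontrl} precise — matching the $p$-divisible parts of the Selmer groups under the change of base field $K\rightsquigarrow M$, relating our $\bar\fa^0$ to the $L/M$-analogue, and handling the bookkeeping around Monsky's theorem (Theorem~\ref{t:monsky}) — is where the real work lies; an alternative, possibly lighter, route is to prove directly that the assembled inverse limit of the local cohomology cokernels is a pseudo-null $\La$-module, using that in the good ordinary case the corestriction (norm) maps on the local groups are contracting.
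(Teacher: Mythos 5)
Your overall strategy is aligned with the paper's: reduce via Lemma \ref{l:a00} and \eqref{e:aabar} to $\bar\fa^0$, use Theorem \ref{t:flat} and \eqref{e:subset} to see that $\bar\fa^0$ is killed by the product $f_1\cdots f_m$ of distinct simple elements, and then try to produce, for each $i$, a second annihilator coprime to $f_i$. But the step that actually produces that second annihilator is not carried out, and the route you propose for it does not work as stated. Proposition \ref{p:pscontrl} is a statement about a $\ZZ_p$-extension of a \emph{global} field, built on Selmer groups, control theorems and Cassels--Tate pairings over global fields and their finite extensions; your base field $M=L^{\overline{\langle\gamma_1\rangle}}$ is an infinite extension of $K$, so the proposition simply does not apply to $L/M$. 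Even if one reformulates over the finite layers (i.e.\ for $\ZZ_p$-extensions of $K_n$ inside $L$), the hypothesis of Proposition \ref{p:pscontrl} --- torsionness of the dual Selmer group over the \emph{one-variable} Iwasawa algebra --- neither follows from property ({\bf T}) (which concerns the modules $\fa_{L'/F}$, $\fb_{L'/F}$ for $\ZZ_p^{d-1}$-subextensions, not the full $X_p$, and not one-variable subalgebras) nor is it true in general: already $\La/(f_{\gamma_1,\zeta})$ fails to be finitely generated over $\ZZ_p[[\overline{\langle\gamma_1\rangle}]]$. Finally, the asserted ``translation'' of one-variable pseudo-nullity into vanishing of $\bar\fa^0$ at $(f)$ is exactly the missing content, as you acknowledge. (A side remark: layerwise finiteness of $\bar\fa_n^0$ is automatic, since $\fa_n$ itself is finite by construction; so the discussion of $\mathcal{H}_n$ buys nothing at that point of your argument.)

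For comparison, the paper's proof produces the coprime annihilators explicitly. Setting $g_i:=f_1\cdots f_m/f_i$, Theorem \ref{t:flat} gives $g_i\cdot\Sel_{div}(A^t/L)\subset\Sel_{p^\infty}(A^t/L)^{\Psi_i^{(r_i)}}$, where $\Psi_i$ is the $\ZZ_p$-subgroup generated by the $\gamma_i$ with $f_i\mid\gamma_i^{p^{r_i}}-1$. Given $a=(a_n)\in\fa^0$, one lifts $a_l$ to $\xi_l\in\Sel_{p^\infty}(A^t/K_l)$ for $l\gg n$, corestricts partway down through the $\ZZ_p$-extension $L^{(i)}/K_n$ (a $\ZZ_p$-extension of the \emph{finite} layer $K_n$, not of an infinite base), and uses the good-ordinary control theorem \cite[Theorem 4]{tan10a} --- which gives finiteness of the cokernel of $\Sel_{p^\infty}(A^t/K_n)_{div}\to\Sel_{div}(A^t/L)^{\Gamma^{(n)}}$ --- to conclude that $g_i\cdot a_n$ is the image of an element of $\fs_n^0=\coh^1(\Gamma^{(n)},A^t_{p^\infty}(L))$. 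The latter is killed by the elements $\delta_j$ of Lemma \ref{l:a00}, so $\delta_jg_i\cdot\fa^0=0$ for all $i,j$, and these elements are coprime, giving pseudo-nullity by Lemma \ref{l:psn}. This corestriction--restriction mechanism, anchored at the finite layers and fed by the inclusion $g_i\cdot\Sel_{div}(A^t/L)\subset\Sel_{p^\infty}(A^t/L)^{\Psi_i^{(r_i)}}$, is the idea your proposal is missing.
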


\begin{proof} Let $f_1,...,f_m$ be those simple element described in Theorem \ref{t:flat}. and write $g_i:=f_i^{-1} f_1\dots f_m$.
Since $f_i$ divides $\gamma_i^{p^{r_i}}-1$, for some $\gamma_i$ and $r_i$, by Theorem \ref{t:flat} we get
\begin{equation}\label{e:tem01}
g_i\cdot \Sel_{div}(A^t/L)\subset \Sel_{p^\infty}(A^t/L)^{\Psi_i^{(r_i)}},
\end{equation}
where $\Psi_i\subset \Gamma$ is the closed subgroup topologically generated by $\gamma_i$ and we use the notation $H^{(i)}:=H^{p^{r_i}}$ for any subgroup $H<\Gamma$. In view of Proposition \ref{p:pscontrl}, we may assume that $d\geq 2$. Then we can find $\delta_1,...,\delta_d$ as in the proof of Lemma \ref{l:a00} and such that the elements $\delta_jg_i$, $i=1,...,m$, $j=1,...,d$, are coprime. By construction each $\delta_i$ annihilates $\fs_n^{0}$ for all $n$. We are going to show that if $a=(a_n)_n$, $a_n\in \fa_n^0$, is an element in $\fa^0$, then for $n\geq r_i$,
\begin{equation}\label{e:tem02} \delta_jg_i\cdot a_n=0,\;  j=1,...,d. \end{equation}
Then it follows that $\delta_jg_i\cdot \fa^0=0$ for every $i$ and $j$, and hence $\fa^0$ is pseudo-null.

Fix $n\geq r_i$. Choose closed subgroups $\Phi_i\subset\Gamma$, $i=1,...,m$, isomorphic to $\ZZ_p^{d-1}$ such that $\Gamma=\Psi_i\oplus\Phi_i$, and then set $L^{(i)}=L^{\Phi_i^{(n)}}$ and let $L_l^{(i)}$ denote the $l$th layer of the $\ZZ_p$-extension $L^{(i)}/K_n$, so that $\Gal(L^{(i)}/L_l^{(i)})$ is canonically isomorphic to $\Psi_i^{(n)}$.

For each $l\geq n$, let $\xi_l$ be a pre-image of $a_l$ under
$$\xymatrix{\Sel_{p^\infty}(A^t/K_l)\ar[r]^-{\pi_l} & \fa_l}$$
and denote by $\xi'_l$ the image of $\xi_l$ under the corestriction map $\Sel_{p^\infty}(A^t/K_l)\rightarrow \Sel_{p^\infty}(A^t/L^{(i)}_{l-n})$.
Note that $K_l$ is an extension of $L^{(i)}_{l-n}$ because $\Gamma_l\subset \Psi_i^{(l)}\Phi_i^{(n)}=\Gal(L/L^{(i)}_{l-n})$.
Thus, the restriction map sends $\xi'_l$ to an element $\theta_l\in \Sel_{div}(A^t/L)^{\Psi_i^{(l)}\Phi_i^{(n)}}$. Then by \eqref{e:tem01}
\begin{equation}\label{e:tem03} g_i\cdot\theta_l\in \Sel_{div}(A^t/L)^{\Gamma^{(n)}}. \end{equation}
By the control theorem \cite[Theorem 4]{tan10a} the cokernel of the restriction map
$$\xymatrix{\Sel_{p^\infty}(A^t/K_n)_{div}\ar[r]^{res_n} & \Sel_{div}(A^t/L)^{\Gamma^{(n)}}}$$
is finite: we denote by $p^e$ its order. Choose $l\geq n+e$ and choose $\xi_n$ to be the image of $\xi_l$ under the corestriction map $\Sel_{p^\infty}(A^t/K_l)\rightarrow \Sel_{p^\infty}(A^t/K_n)$. Then \eqref{e:tem03} implies that $g_i\cdot \theta_n=p^eg_i\cdot\theta_l$, which shows that $g_i\cdot \theta_n=res_n(\theta_n')$, for some $\theta_n'\in \Sel_{div}(A^t/K_n)$. Then $\pi_n(g_i\cdot \xi_n-\theta'_n)=g_i\cdot a_n$. Since $g_i\cdot \xi_n-\theta'_n\in \fs_n^0$ which is annihilated by $\delta_j$, we have $\delta_j g_i\cdot a_n=0$ as desired.
\end{proof}

\end{section}

%@@@@@@@@@@@@@@@@@@@@@@@@@@@@@@@@@@@@@@@@@@@

\begin{section}{Central Idempotents of the endomorphism ring of $A$} \label{s:al}
Let $\E$ denote the ring of endomorphisms of $A/K$ and write $\ZZ_p\,\E:=\ZZ_p\otimes_{\ZZ}\E$. We assume that there exists a non-trivial idempotent $e_1$ contained in the center of $\ZZ_p\,\E$. Set $e_2:=1-e_1$. Then we have the decomposition:
\begin{equation} \label{e:abstractdec} \ZZ_p\,\E=e_1\ZZ_p\,\E\times e_2\ZZ_p\,\E. \end{equation}

\subsection{The endomorphism rings}\label{ss:idempotents}

Let $\E^t$ denote the endomorphism ring of $A^t/K$. Since the assignment $\psi\mapsto \psi^t$ sending an endomorphism $\psi\in\E$ to its dual endomorphism can be uniquely extended to a $\ZZ_p$-algebra anti-isomorphism $\cdot^t\colon\ZZ_p\,\E\rightarrow \ZZ_p\,\E^t$, we find idempotents $e_1^t,e_2^t$ and the analogue of \eqref{e:abstractdec}.
If $\E$ and $\E^t$ act respectively on $p$-primary abelian groups $M$ and $N$, then these actions can be extended to those of $\ZZ_p\,\E$ and $\ZZ_p\,\E^t$. We have the following $\ZZ_p$-version of Proposition \ref{p:1}.

\begin{lemma}\label{l:psha} For every $a\in\fa_n$, $b\in\fb_n$ and $\psi\in\ZZ_p\,\E$ we have
\begin{equation} \label{e:ctgeneralized} \langle a,\psi_*(b)\rangle_n=\langle \psi_*^t(a), b\rangle_n\,. \end{equation}
\end{lemma}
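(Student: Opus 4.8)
The plan is to reduce the identity \eqref{e:ctgeneralized} to the commutativity statement of Proposition \ref{p:1}, in three steps: first the case of an isogeny, then a general $\psi\in\E$, then $\psi\in\ZZ_p\,\E$. Suppose first that $\psi\in\E$ is an isogeny. Then $\psi^t\in\E^t$ is an isogeny of the same degree, and under the canonical biduality identification $(A^t)^t=A$ its dual $(\psi^t)^t$ is $\psi$. Applying Proposition \ref{p:1} to the isogeny $\psi^t\colon A^t\to A^t$ over $K_n$ therefore yields, for all $x\in\Sha(A^t/K_n)$ and $y\in\Sha(A/K_n)$,
$$\langle \psi^t_*(x),y\rangle_{A^t/K_n}=\langle x,\psi_*(y)\rangle_{A^t/K_n}.$$
Now $\psi_*$ carries $\Sha_{p^\infty}(A/K_n)$ and its divisible part into themselves, hence descends to $\fb_n$; likewise $\psi^t_*$ descends to $\fa_n$, both compatibly with the defining projections. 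Choosing lifts $\tilde a\in\Sha_{p^\infty}(A^t/K_n)$ of $a$ and $\tilde b\in\Sha_{p^\infty}(A/K_n)$ of $b$ — so that $\psi^t_*(\tilde a)$ and $\psi_*(\tilde b)$ lift $\psi^t_*(a)$ and $\psi_*(b)$ — and recalling that $\langle\;,\;\rangle_n$ is by construction the Cassels--Tate pairing evaluated on such lifts, the displayed equality becomes exactly \eqref{e:ctgeneralized}.

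Next I would drop the isogeny hypothesis. Both sides of \eqref{e:ctgeneralized} are additive in $\psi$, since $(\psi_1+\psi_2)_*=\psi_{1*}+\psi_{2*}$, $(\psi_1+\psi_2)^t=\psi_1^t+\psi_2^t$, and $\langle\;,\;\rangle_n$ is biadditive; thus $\psi\mapsto\langle a,\psi_*(b)\rangle_n-\langle\psi^t_*(a),b\rangle_n$ is additive on $\E$. Given any $\psi\in\E$, choose a nonzero integer $n_0$ such that $\psi+[n_0]$ is an isogeny — this holds for all but the finitely many $n_0$ for which $-n_0$ is a root of the characteristic polynomial of $\psi$ on a Tate module of $A$ — and note that $[n_0]$ is an isogeny with $[n_0]^t=[n_0]$ and $(\psi+[n_0])^t=\psi^t+[n_0]$. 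The isogeny case, applied to $\psi+[n_0]$ and to $[n_0]$, then gives \eqref{e:ctgeneralized} for their difference $\psi$.

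Finally I would pass to $\psi\in\ZZ_p\,\E$. As $\fa_n$ and $\fb_n$ are finite, both are annihilated by some $p^k$, so the actions of $\ZZ_p\,\E$ on $\fb_n$ and of $\ZZ_p\,\E^t$ on $\fa_n$ factor through $(\ZZ_p\,\E)/p^k(\ZZ_p\,\E)=\E/p^k\E$ and $(\ZZ_p\,\E^t)/p^k(\ZZ_p\,\E^t)=\E^t/p^k\E^t$. Pick $\psi_0\in\E$ with $\psi-\psi_0\in p^k(\ZZ_p\,\E)$; since $\cdot^t$ is $\ZZ_p$-linear, also $\psi^t-\psi_0^t\in p^k(\ZZ_p\,\E^t)$, hence $\psi_*=\psi_{0*}$ on $\fb_n$ and $\psi^t_*=\psi_{0*}^t$ on $\fa_n$, so \eqref{e:ctgeneralized} for $\psi$ follows from the case $\psi_0\in\E$. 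The one point demanding real care is the bookkeeping in the isogeny step: that $\psi_*$ and $\psi^t_*$ genuinely descend to $\fb_n$ and $\fa_n$, that they match the chosen lifts, and, above all, that biduality intertwines $(\psi^t)^t$ with $\psi$, so that Proposition \ref{p:1} is invoked with the correct pair of maps; the rest is formal.
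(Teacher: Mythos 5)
Your proof is correct and follows essentially the same route as the paper's: reduce to the isogeny case via Proposition \ref{p:1}, extend to all of $\E$ by writing $\psi$ as a difference of isogenies $(\psi+[n_0])-[n_0]$, then pass to $\ZZ_p\,\E$ by approximating $\psi$ modulo a power of $p$ annihilating $\fa_n$ and $\fb_n$. Your version is somewhat more explicit than the paper's — you justify the existence of $n_0$, track the biduality identification $(\psi^t)^t=\psi$, and note that $\psi_*$, $\psi^t_*$ descend to the quotients $\fb_n$, $\fa_n$ — but the underlying argument is the same.
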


\begin{proof} First note that any $\psi\in\E$ can be obtained as a sum of two isogenies (e.g., because $k+\psi$ is an isogeny for some $k\in\ZZ$). Thus Proposition \ref{p:1} and linearity of the Cassels-Tate pairing imply that \eqref{e:ctgeneralized} holds for such $\psi$.

In the general case, since $\ZZ_p\,\E$ is the $p$-completion of $\E$, for each positive integer $m$ there exists $\varphi_m\in\E$
such that $\psi-\varphi_m\in p^m\ZZ_p\,\E$. Choose $m$ such that $p^ma=p^mb=0$. Then
$$\langle a,\psi_*(b)\rangle_n=\langle a,{\varphi_m}_*(b)\rangle_n=\langle {\varphi_m^t}_*(a), b\rangle_n=\langle \psi_*^t(a), b\rangle_n.$$
\end{proof}

If $M$ and $N$ are respectively $\ZZ_p\,\E$ and $\ZZ_p\,\E^t$ modules, write $M^{(i)}$ for $e_i\cdot M $ and $N^{(i)}$ for $e_i^t\cdot N$. Then \eqref{e:abstractdec} implies $M=M^{(1)}\oplus M^{(2)}$ and $N=N^{(1)}\oplus N^{(2)}$. In particular,
$$\fa=\fa^{(1)}\oplus \fa^{(2)} \text{ and }\fb=\fb^{(1)}\oplus \fb^{(2)}\,,$$
with $\fa^{(i)}=\varprojlim_{n}\fa^{(i)}_n$ and $\fb^{(i)}=\varprojlim_{n}\fb^{(i)}_n$.

\begin{corollary}\label{c:psha} For every $n$, we have a perfect duality between $\fa_n^{(1)}$, $\fb_n^{(1)}$ and one between $\fa_n^{(2)}$, $\fb_n^{(2)}$.\end{corollary}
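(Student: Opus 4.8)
The plan is to show that the perfect pairing $\langle\;,\;\rangle_n$ of \eqref{e:perfectCT} is ``block diagonal'' with respect to the idempotent decompositions: it pairs $\fa_n^{(i)}$ trivially against $\fb_n^{(j)}$ whenever $i\neq j$. Granting this, the two assertions follow at once, since a perfect pairing of finite abelian groups which splits as an orthogonal direct sum of two pairings forces each summand pairing to be perfect. So the real content is the vanishing of the cross terms, and that will come directly from Lemma \ref{l:psha}.

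First I would record the elementary bookkeeping about the dual idempotents. Since $\cdot^t\colon\ZZ_p\,\E\rightarrow\ZZ_p\,\E^t$ is an algebra anti-isomorphism and $e_1,e_2$ are central orthogonal idempotents of $\ZZ_p\,\E$ with $e_1+e_2=1$, the elements $e_1^t,e_2^t$ are orthogonal idempotents of $\ZZ_p\,\E^t$ with $e_1^t+e_2^t=1$ and $e_2^te_1^t=e_1^te_2^t=(e_1e_2)^t=0$; moreover $\fa_n=\fa_n^{(1)}\oplus\fa_n^{(2)}$ with $\fa_n^{(i)}=e_i^t\cdot\fa_n$, while $\fb_n=\fb_n^{(1)}\oplus\fb_n^{(2)}$ with $\fb_n^{(i)}=e_i\cdot\fb_n$. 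Then for $a\in\fa_n^{(1)}$ and $b\in\fb_n^{(2)}$ I would use $a=e_1^t\cdot a$, $b=e_2\cdot b$, and apply \eqref{e:ctgeneralized} with $\psi=e_2$ to get
$$\langle a,b\rangle_n=\langle e_1^t\cdot a,\,e_2\cdot b\rangle_n=\langle e_2^te_1^t\cdot a,\,b\rangle_n=0,$$
and symmetrically $\langle\fa_n^{(2)},\fb_n^{(1)}\rangle_n=0$.

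To finish, I would run the linear-algebra step: the perfect pairing $\langle\;,\;\rangle_n$ identifies $\fa_n$ with $\fb_n^\vee=(\fb_n^{(1)})^\vee\oplus(\fb_n^{(2)})^\vee$, and by the vanishing of the cross terms this identification carries $\fa_n^{(i)}$ into $(\fb_n^{(i)})^\vee$ for $i=1,2$. Being an isomorphism of the ambient direct sums that respects both decompositions, it must restrict to isomorphisms $\fa_n^{(i)}\simeq(\fb_n^{(i)})^\vee$, which is precisely the perfect duality claimed. I do not expect a genuine obstacle here: the statement is a formal consequence of Lemma \ref{l:psha}. The only point that needs a little care is keeping the anti-isomorphism $\cdot^t$ straight, so that the product appearing is $e_2^te_1^t$ (both orders being $0$ in any case) and so that the decomposition of $\fa_n$ is recognised as the one induced by the $e_i^t$ rather than the $e_i$.
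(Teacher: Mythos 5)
Your argument is correct and matches the paper's proof in essence: both reduce to showing the cross terms $\langle\fa_n^{(1)},\fb_n^{(2)}\rangle_n$ and $\langle\fa_n^{(2)},\fb_n^{(1)}\rangle_n$ vanish via Lemma \ref{l:psha} and the orthogonality of the idempotents (the paper moves $e_1^t$ across to combine with $e_2$ on the right, while you move $e_2$ across to combine with $e_1^t$ on the left, a cosmetic difference). The linear-algebra closing step is exactly as you describe and the paper treats it as immediate.
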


\begin{proof}
We just need to check  $\langle \fa_n^{(1)},  \fb_n^{(2)}\rangle_n=\langle \fa_n^{(2)},  \fb_n^{(1)}\rangle_n=0$.
If $a\in \fa_n^{(1)}$ and $b\in \fb_n^{(2)}$, then
$\langle a,  b\rangle_n=\langle e_1^t\cdot a,  e_2\cdot b \rangle_n=\langle  a,  e_1e_2\cdot b\rangle_n=\langle  a, 0\rangle_n=0$.
\end{proof}

Then $\fA^{(i)}:=\{\fa_n^{(i)},\fb_n^{(i)}, \langle\,,\,\rangle,\fr_m^n,\fk_m^n\}$ satisfies conditions ($\Gamma$-1) to ($\Gamma$-4),
and hence by Theorem \ref{t:ct}, are $\Gamma$-systems. They are pseudo-controlled if and only if so is $\fA$. By Theorem \ref{t:al}(3), since $\fA$ enjoys property $(T)$,
and by Proposition \ref{p:gdordpscontrl}, we have the following.

\begin{mytheorem}\label{p:idemsha} If  $\fA$ is pseudo-controlled, then
\begin{equation}\label{e:fapseudo}
[\fa^{(1)}]=[\fb^{(1)}]^{\sharp}\,\;\;\text{and}\;\; \,[\fa^{(2)}]=[\fb^{(2)}]^{\sharp}.
\end{equation}
\end{mytheorem}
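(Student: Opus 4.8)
The plan is to verify that each $\fA^{(i)}$ ($i=1,2$) is a pseudo-controlled $\Gamma$-system which is part of a $\textbf{T}$-system, and then to apply Theorem \ref{t:al}(3) to it. Most of the axiom-checking has been recorded in the discussion preceding the statement; the one point that is not quite automatic, and which I would spell out, is that the transition maps preserve the idempotent components. Since $\fr_m^n$ and $\fk_m^n$ are induced by the restriction and corestriction maps in cohomology, they are functorial in the abelian variety and therefore commute with the action of $\ZZ_p\,\E$ on $\fb_n$ (resp.\ of $\ZZ_p\,\E^t$ on $\fa_n$); hence they carry $e_i\fb_n$ into $e_i\fb_m$ and $e_i^t\fa_n$ into $e_i^t\fa_m$, and the relations ($\Gamma$-2), ($\Gamma$-3) for $\fA$ restrict to the corresponding relations for $\fA^{(i)}$. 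Corollary \ref{c:psha} provides the perfect pairing in ($\Gamma$-4), the $\Gamma$-invariance and the $\fr$--$\fk$ compatibilities being inherited from $\fA$; and $\fa^{(i)}$, $\fb^{(i)}$ are finitely generated torsion $\Lambda$-modules because they are direct summands of $\fa$, $\fb$, which are finitely generated torsion by Theorem \ref{t:ct}. Thus $\fA^{(i)}$ is a $\Gamma$-system.

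Next I would check that $\fA^{(i)}$ inherits the hypotheses of Theorem \ref{t:al}(3). For pseudo-control: the kernels of the $\fr_m^n$ split according to the idempotents (again by functoriality), so $(\fa^{(i)})^0\times(\fb^{(i)})^0$ is a direct summand of the pseudo-null module $\fa^0\times\fb^0$, hence is itself pseudo-null; this is precisely the assertion that $\fA^{(i)}$ is pseudo-controlled whenever $\fA$ is. For property $(\textbf{T})$: the entire construction goes through over each finite intermediate extension $F$, so the idempotents split the complete Cassels-Tate $\Gamma$-system of $A$ into two complete $\Gamma$-systems $\fA^{(i)}$; and since $\fA$ enjoys $(\textbf{T})$ --- this follows from Theorem \ref{t:ct} applied to the intermediate $\ZZ_p^{d-1}$-extensions $L'/F$ --- so does each $\fA^{(i)}$, its modules $\fa^{(i)}_{L'/F}$, $\fb^{(i)}_{L'/F}$ being direct summands of those for $\fA$, hence finitely generated and torsion over $\La_{L'/F}$. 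Therefore $\fA^{(i)}$ is part of a $\textbf{T}$-system.

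With these verifications in place, Theorem \ref{t:al}(3) gives a pseudo-isomorphism $(\fa^{(i)})^\sharp\sim\fb^{(i)}$ for $i=1,2$. Finally I would convert this into \eqref{e:fapseudo}: the associated elementary module $[\,\cdot\,]$ is a pseudo-isomorphism invariant of finitely generated torsion $\Lambda$-modules (the elementary divisors are read off after localizing at each height-one prime), so $[(\fa^{(i)})^\sharp]=[\fb^{(i)}]$, while Lemma \ref{l:phi[]chi} applied to $\alpha={}^\sharp$ gives $[(\fa^{(i)})^\sharp]=[\fa^{(i)}]^\sharp$. Combining, $[\fa^{(i)}]^\sharp=[\fb^{(i)}]$, and applying the involution ${}^\sharp$ once more yields $[\fa^{(i)}]=[\fb^{(i)}]^\sharp$, which is the claim.

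Given that Theorems \ref{t:al}(3) and \ref{t:ct} are already available, the proof is essentially a bookkeeping reduction; the only step requiring a moment's thought is the compatibility of the structure maps with the central idempotents, and one should also note that there is no circularity here, since the proof of Theorem \ref{t:al}(3) invokes only parts (1) and (2) of that theorem.
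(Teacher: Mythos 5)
Your proposal is correct and follows essentially the same route as the paper: decompose $\fA$ into $\fA^{(1)}$ and $\fA^{(2)}$ via the central idempotents, check (via functoriality of restriction/corestriction in the abelian variety) that each $\fA^{(i)}$ is a pseudo-controlled $\Gamma$-system satisfying $(\textbf{T})$, and invoke Theorem~\ref{t:al}(3). The paper states these verifications very tersely in the paragraph preceding Theorem~\ref{p:idemsha}; you have merely spelled them out, including the (correct) conversion from the pseudo-isomorphism $(\fa^{(i)})^\sharp\sim\fb^{(i)}$ to the equality $[\fa^{(i)}]=[\fb^{(i)}]^\sharp$ via Lemma~\ref{l:phi[]chi}, and the non-circularity observation matches the paper's own remark at the start of \S\ref{se:al}.
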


\begin{corollary}\label{c:idemsha}
If $L/K$ is a $\ZZ_p^d$-extension ramified only at good ordinary places and $X_p(A/L)$ is a torsion $\La$-module, then
\eqref{e:fapseudo} holds.
\end{corollary}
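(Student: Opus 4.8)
The plan is to obtain the corollary as an immediate specialization of the two preceding results. The two standing hypotheses of Corollary~\ref{c:idemsha} --- that $L/K$ is a $\ZZ_p^d$-extension ramified only at good ordinary places, and that $X_p(A/L)$ is a torsion $\La$-module --- are word for word the hypotheses of Proposition~\ref{p:gdordpscontrl}. So the first step is simply to invoke Proposition~\ref{p:gdordpscontrl} to conclude that the Cassels-Tate system $\fA$ of $A$ is pseudo-controlled. Once that is known, the hypothesis of Theorem~\ref{p:idemsha} is met, its conclusion is exactly \eqref{e:fapseudo}, and the corollary follows with no further work.

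For the record I would recall why Theorem~\ref{p:idemsha} applies in this setting. The idempotent components $\fA^{(i)}=\{\fa_n^{(i)},\fb_n^{(i)},\langle\,,\,\rangle_n,\fr_m^n,\fk_m^n\}$ are $\Gamma$-systems: their projective limits $\fa^{(i)}$ and $\fb^{(i)}$ are direct summands of the finitely generated torsion $\La$-modules $\fa$ and $\fb$ supplied by Theorem~\ref{t:ct}, and axioms ($\Gamma$-1) to ($\Gamma$-4) for $\fA^{(i)}$ follow from Corollary~\ref{c:psha} together with the fact that the central idempotents $e_1,e_2$ commute with the $\Gamma$-action and with the restriction and corestriction maps defining $\fr_m^n$ and $\fk_m^n$. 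Since $\fa^0\times\fb^0$ splits under $e_1,e_2$, each $\fA^{(i)}$ is pseudo-controlled once $\fA$ is; and since $\fA$ enjoys property $(\textbf{T})$ by the remark following Theorem~\ref{t:ct}, while an idempotent summand remains finitely generated and torsion over every $\La_{L'/F}$, each $\fA^{(i)}$ is part of a \textbf{T}-system. Hence Theorem~\ref{t:al}(3) gives a pseudo-isomorphism $(\fa^{(i)})^\sharp\sim\fb^{(i)}$; passing to the modules $[\,\cdot\,]$ and applying Lemma~\ref{l:phi[]chi} with the automorphism $\sharp$ turns this into $[\fa^{(i)}]^\sharp=[\fb^{(i)}]$, equivalently $[\fa^{(i)}]=[\fb^{(i)}]^\sharp$ since $\sharp$ is an involution, which is \eqref{e:fapseudo}.

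I do not anticipate a real obstacle here. All of the substantive input has already been established upstream: the algebraic functional equation for pseudo-controlled \textbf{T}-systems (Theorem~\ref{t:al}(3)), the compatibility of the Cassels-Tate pairing with central idempotents of $\ZZ_p\,\E$ (Lemma~\ref{l:psha} and Corollary~\ref{c:psha}), the fact that Cassels-Tate systems are always $\Gamma$-systems with property $(\textbf{T})$ (Theorem~\ref{t:ct}), and the control-theoretic verification that $\fA$ is pseudo-controlled under good ordinary ramification (Proposition~\ref{p:gdordpscontrl}). The only thing left to check for the corollary itself is that these hypotheses line up, which they do by construction; the proof is therefore a one-line deduction.
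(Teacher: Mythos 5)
Your proof is correct and takes essentially the same route as the paper: Corollary~\ref{c:idemsha} is obtained by applying Proposition~\ref{p:gdordpscontrl} to conclude that $\fA$ is pseudo-controlled and then invoking Theorem~\ref{p:idemsha}, exactly as you describe. The additional remarks you include about the idempotent summands $\fA^{(i)}$ being pseudo-controlled $\Gamma$-systems with property \textbf{T} reproduce the discussion the paper places immediately before Theorem~\ref{p:idemsha}.
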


\subsection{The height pairing} \label{su:d}

Applying $e_i$ to the exact sequence \eqref{e:axy} we get
\begin{equation}\label{e:axy{(1)}} 0\lr \fa^{(i)}\lr X_p(A/L)^{(i)}\lr Y_p(A/L)^{(i)}\lr 0 \,. \end{equation}
Unfortunately in general we are unable to compare either $X_p(A/L)^{(i)}$ with $X_p(A^t/L)^{(i)}$ or $Y_p(A/L)^{(i)}$ with $Y_p(A^t/L)^{(i)}$. However, we can get some partial result as follow.

\subsubsection{The  N\'eron-Tate height pairing}
First we briefly recall the definition of the N\'eron-Tate height pairing
\begin{equation} \label{e:nthp} \tilde h_{A/K}\colon A(K)\times A^t(K)\lr \RR\,. \end{equation}
For details, see \cite[V, \S4]{lan83}. Let
$$P_A\longrightarrow A\times A^t$$
denote the Poincar\'e line bundle: then $\tilde h_{A/K}$ is the canonical height on $A\times A^t$ associated with the divisor class corresponding to $P_A$.

\begin{proposition}\label{p:2}
Let $A$, $B$ be abelian varieties defined over the global field $K$.
Let $\phi\colon A\rightarrow B$ and $\phi^t\colon B^t\rightarrow A^t$ be an isogeny and its dual.
Then the following diagram is commutative:
\[ \begin{CD} \tilde h_{A/K}\colon & A(K) &\,\times & \,A^t(K)  @>>>  \RR\\
 &  @VV{\phi}V @AA{\phi^t}A @| \\
\tilde h_{B/K}\colon & B(K)&\,\times  & \,B^t(K)  @>>> \RR.
\end{CD} \]
\end{proposition}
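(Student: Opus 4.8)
The plan is to deduce the commutativity of the diagram from two standard facts about canonical heights on abelian varieties: their functoriality under homomorphisms, and the compatibility of the Poincar\'e bundle with duality of isogenies. This is the N\'eron--Tate analogue of Proposition \ref{p:1}, and the argument will in fact be cleaner, since there is no need to pass to a $p$-adic completion.

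First I would fix the setup. For an abelian variety $X/K$ and a divisor class $c$ on $X$, write $\hat h_{X,c}\colon X(K)\to\RR$ for the associated canonical (N\'eron--Tate) height; by construction $\tilde h_{A/K}$ is the restriction to $(A\times A^t)(K)$ of $\hat h_{A\times A^t,\,[P_A]}$, where $[P_A]$ denotes the class of the Poincar\'e bundle (see \cite[V, \S4]{lan83}). The first ingredient I would use is \emph{functoriality}: if $f\colon X\to Y$ is a homomorphism of abelian varieties defined over $K$ and $c$ is a divisor class on $Y$, then
\[ \hat h_{Y,c}\circ f=\hat h_{X,\,f^*c}. \]
This is immediate from the characterisation of a canonical height as the unique quadratic-plus-linear function differing from a Weil height of its class by a bounded amount, together with the functoriality of Weil heights: $\hat h_{Y,c}\circ f$ is quadratic-plus-linear on $X(K)$ and differs from a Weil height of $f^*c$ by $\mathrm{O}(1)$, hence coincides with $\hat h_{X,\,f^*c}$.

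The second ingredient --- and the one step I expect to require genuine care --- is the identity of divisor classes on $A\times B^t$
\[ (\phi\times\mathrm{id}_{B^t})^*[P_B]=(\mathrm{id}_A\times\phi^t)^*[P_A]. \]
This is essentially the defining property of the dual isogeny: $\phi^t$ is the morphism $\mathrm{Pic}^0(\phi)$ induced by $\phi$ on Picard schemes, and the displayed equality expresses exactly that $(\phi\times\mathrm{id})^*P_B$ is the line bundle on $A\times B^t$ classified by $\phi^t$; I would cite \cite[V, \S4]{lan83} for this. Granting it, the proof is a short diagram chase, which I would carry out as follows. For $P\in A(K)$ and $Q\in B^t(K)$, applying functoriality to $\phi\times\mathrm{id}_{B^t}\colon A\times B^t\to B\times B^t$ with $c=[P_B]$ gives
\[ \tilde h_{B/K}(\phi(P),Q)=\hat h_{B\times B^t,\,[P_B]}\big((\phi\times\mathrm{id})(P,Q)\big)=\hat h_{A\times B^t,\,(\phi\times\mathrm{id})^*[P_B]}(P,Q). \]
By the Poincar\'e identity the right-hand side equals $\hat h_{A\times B^t,\,(\mathrm{id}\times\phi^t)^*[P_A]}(P,Q)$, and a second application of functoriality, now to $\mathrm{id}_A\times\phi^t\colon A\times B^t\to A\times A^t$, rewrites this as $\hat h_{A\times A^t,\,[P_A]}\big(P,\phi^t(Q)\big)=\tilde h_{A/K}(P,\phi^t(Q))$, which is precisely the asserted commutativity. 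Finally I would note, as a sanity check, that restricting $P_A$ to $A\times\{0\}$ and $\{0\}\times A^t$ gives trivial bundles, so $\tilde h_{A/K}$ is genuinely bilinear and the statement makes sense as an equality of pairings.
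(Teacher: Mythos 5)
Your proof is correct and follows essentially the same route as the paper: both arguments combine functoriality of the canonical height with the Poincar\'e-bundle identity $(\phi\times\mathrm{id})^*P_B\simeq(\mathrm{id}\times\phi^t)^*P_A$ (the paper cites Mumford, \emph{Abelian Varieties}, \S13, for this isomorphism, and Lang, Proposition V.3.3, for the functoriality, but the logical content is the same as yours).
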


\begin{proof} By definition of the N\'eron-Tate pairing and functorial properties of the height (\cite[Proposition V.3.3]{lan83}), $\tilde h_{A/K}(\cdot,\phi^t(\cdot))$ and $\tilde h_{B/K}(\phi(\cdot),\cdot)$ are the canonical heights on $A\times B^t$ associated with the divisor classes corresponding respectively to ${(1\times\phi^t)^*(P_A)}$ and $(\phi\times 1)^*(P_B)$. But the theorem in \cite[\S13]{mum74} implies
\begin{equation}\label{e:i}
(1\times \phi^t)^*(P_A)\simeq (\phi\times 1)^*(P_B)
\end{equation}
(see \cite[p. 130]{mum74}).
\end{proof}

\subsubsection{The $p$-adic height pairing} We extend \eqref{e:nthp} to a pairing of $\ZZ_p$-modules.

\begin{lemma}\label{l:pmw}
Let $A$ be an abelian variety defined over the global field $K$.
For every finite extension $F/K$ there exists a $p$-adic height pairing
\begin{equation} \label{e:ntpairing} h_{A/F}\colon \big( \ZZ_p\otimes A(F)\big) \times \big(\ZZ_p \otimes A^t(F)\big) \lr E_F,\end{equation}
where $E_F$ is a finite extension of $\QQ_p$, with the left and right kernels equal to the torsion parts of $\ZZ_p\otimes A(F)$ and $\ZZ_p \otimes A^t(F)$. If $char(K)=p$ one can choose $E_F=\QQ_p$.
\end{lemma}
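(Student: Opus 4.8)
The pairing will be the $p$-adic height attached to a choice of continuous id\`ele--class character, built from the Poincar\'e biextension in the style of Mazur--Tate, Schneider and Perrin-Riou; the plan is to assemble $h_{A/F}$ from local symbols, verify $\ZZ_p$--bilinearity and triviality on torsion, and then prove non--degeneracy, which is the only serious point. Concretely, let $\cP\to A\times A^{t}$ be the Poincar\'e biextension by $\GG_m$; for each place $v$ of $F$ the pullback $(P,Q)^{*}\cP$ together with its biextension structure yields a local symbol
$$\langle\,,\,\rangle_v\colon A(F_v)\times A^{t}(F_v)\lra \QQ\otimes_\ZZ F_v^{\times},$$
well defined once one fixes, at the finitely many places of bad reduction, a splitting on the group of connected components of the N\'eron model --- this is the N\'eron local height pairing, cf.\ \cite[V]{lan83}, and $\langle P,Q\rangle_v$ is a unit when $P$ and $Q$ reduce into the identity component. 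Now fix a finite extension $E_F/\QQ_p$ and a continuous homomorphism $\ell=(\ell_v)_v\colon\AA_F^{\times}/F^{\times}\to E_F$, and set
$$h_{A/F}(P,Q):=\sum_{v}\ell_v\big(\langle P,Q\rangle_v\big),\qquad P\in A(F),\ Q\in A^{t}(F),$$
a finite sum. Independence of the auxiliary splittings is exactly the product formula $\sum_v\ell_v|_{F^{\times}}=0$, i.e.\ Artin reciprocity for $\ell$; bilinearity is inherited from the $\langle\,,\,\rangle_v$; and since $A(F)$, $A^{t}(F)$ are finitely generated (Mordell--Weil) and $E_F$ is a $\QQ_p$--vector space, $h_{A/F}$ extends uniquely and $\ZZ_p$--bilinearly to $\big(\ZZ_p\otimes A(F)\big)\times\big(\ZZ_p\otimes A^{t}(F)\big)$.

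If $P$ is torsion then $\langle P,Q\rangle_v$ is a torsion element of $\QQ\otimes F_v^{\times}$, hence zero, for every $v$ and every $Q$; thus torsion lies in the radical on both sides, and $h_{A/F}$ descends to a pairing of the free $\ZZ_p$--modules $\big(\ZZ_p\otimes A(F)\big)/\mathrm{tors}$ and $\big(\ZZ_p\otimes A^{t}(F)\big)/\mathrm{tors}$, which have the same rank $r$ because $A$ and $A^{t}$ are isogenous. It therefore remains to choose $\ell$ so that this induced pairing is non--degenerate --- this is the main obstacle.

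In characteristic $p$ there are no archimedean places, every local symbol is a unit outside a finite set, and the local heights are rational multiples of $\log q$ (intersection numbers on a model of $A$ over the base curve); taking $\ell$ to be $(\log q)^{-1}$ times the degree character then gives $E_F=\QQ_p$ and $h_{A/F}=(\log q)^{-1}\,\tilde h_{A/F}$, a nonzero rational scalar times the N\'eron--Tate pairing \eqref{e:nthp}. Hence the Gram matrix of $h_{A/F}$ in $\ZZ$--bases of the free parts of $A(F)$, $A^{t}(F)$ is a rational matrix of nonzero determinant --- because $\tilde h_{A/F}$ is non--degenerate modulo torsion (\cite[V]{lan83}) --- and so stays invertible over $\QQ_p$. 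In characteristic $0$ the local heights at the archimedean and $p$--adic places are transcendental, so this rationality is unavailable; here one takes for $\ell$ a component of the universal continuous $\QQ_p$--valued character of $\AA_F^{\times}/F^{\times}$ unramified outside $p$, valued in a possibly larger finite extension $E_F/\QQ_p$, and the point becomes to show that the resulting $h_{A/F}$ is non--degenerate modulo torsion. I would obtain this by a genericity (Baire category) argument in the finite--dimensional $\QQ_p$--space of admissible $\ell$'s --- in the spirit of Lemma~\ref{l:nonsimpletwist} --- which reduces the claim to exhibiting a single $\ell$ with non--vanishing $p$--adic regulator; this step, and not the formal construction, is the crux, and it is also why in characteristic $0$ one cannot in general keep $E_F=\QQ_p$.
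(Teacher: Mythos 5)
Your construction via local symbols and an id\`ele--class character is the standard Mazur--Tate/Schneider one, and in characteristic $p$ it matches the paper in substance: both arguments exploit the rationality of the N\'eron--Tate heights (intersection numbers) to make $h_{A/F}$ a nonzero rational multiple of $\tilde h_{A/F}$, whose Gram matrix is thus invertible over $\QQ$ and hence over $\QQ_p$.

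In characteristic $0$, however, your argument has a genuine gap, and you concede it yourself: the Baire-category step only reduces the claim to exhibiting a \emph{single} admissible $\ell$ with non-vanishing $p$-adic regulator, and that non-vanishing is not a theorem --- it is Schneider's conjecture (or its anticyclotomic variants), open in general. Genericity can tell you the degenerate $\ell$'s form a thin set only \emph{after} you know the set of non-degenerate $\ell$'s is non-empty; it cannot manufacture the witness. So the route through genuinely $p$-adic local pairings cannot close the number-field case without importing an unproved conjecture, and the ``crux'' you flag is exactly where the proof stops.

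The paper sidesteps the $p$-adic regulator entirely by \emph{not} building a canonical $p$-adic height. It takes the classical real-valued pairing $\tilde h_{A/F}$, observes via Mordell--Weil that its values generate a finitely generated field extension $E'_F$ of $\QQ$ inside $\RR$, and picks an abstract field embedding $E'_F\hookrightarrow E_F$ into a finite extension of $\QQ_p$ (possible since $\QQ_p$ has infinite transcendence degree over $\QQ$). Then $h_{A/F}$ is simply $\tilde h_{A/F}$ composed with that embedding, extended $\ZZ_p$-bilinearly. Non-degeneracy is immediate: the Gram determinant of $h_{A/F}$ on $\ZZ$-bases of the free parts is the image, under a field embedding, of the nonzero real Gram determinant of $\tilde h_{A/F}$, hence nonzero in $E_F$. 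The resulting $h_{A/F}$ is non-canonical and lacks the interpolation properties of a genuine $p$-adic height, but it delivers exactly what the lemma needs, with no appeal to any non-vanishing conjecture. This is the key trick you would need to adopt to make the characteristic-$0$ case go through.
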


\begin{proof}
If $char(K)=p$, then after scaling by a factor $\log(p)$, the pairing $\tilde h_{A/F}$ takes values in $\QQ$ (see for example \cite[\S3]{sch82}): in this case we define $h_{A/F}$ by $\tilde h_{A/F}=-\log(p)h_{A/F}$ and extend it to get \eqref{e:ntpairing}. In general, the image of the N\'eron-Tate height $\tilde{h}_{A/F}$ generates a subfield $E_F'\subset \RR$. By the Mordell-Weil Theorem, $E_F'$ is finitely generated over $\QQ$, and hence can be embedded into a finite extension $E_F$ of $\QQ_p$. Then we have the pairing
$$\tilde{h}_{A/F}\colon A(F)\times A^t(F) \lr E_F'\subset E_F\,,$$
which is obviously continuous on the $p$-adic topology, and thus can be extended to a pairing $h_{A/F}$ as required.
Since the left and right kernels of $\tilde{h}_{A/F}$ are the torsion parts of $A(F)$ and $A^t(F)$, if $x_1,...,x_r$ and $y_1,...,y_r$ are respectively $\ZZ$-basis of the free parts of $A(F)$ and $A^t(F)$, then
$${\det}_{i,j}(h_{A/F}(x_i,y_j))={\det}_{i,j}(\tilde{h}_{A/F}(x_i,y_j))\not=0,$$
which actually means that $h_{A/F}$ is non-degenerate on the free part of its domain.
\end{proof}

\subsubsection{}\label{sss:padhpairidem}
For each finite extension $F/K$ write $\M(A/F):=\ZZ_p\otimes A(F)$ and let $h_{A/F}$ be the $p$-adic height pairing established in Lemma \ref{l:pmw}. The action of $\E$ on $A(F)$ extends to that $\ZZ_p\,\E$ on $\M(A/F)$ and the following results are proven by the same reasoning as in the proofs of Lemma \ref{l:psha} and Corollary \ref{c:psha}.

\begin{lemma}\label{l:pmw2}  For every $x\in \M(A/F)$, $y\in \M(A^t/F)$ and $\psi\in\ZZ_p\,\E$ we have
\begin{equation}\label{e:p2mw}
h_{A/F}(\psi(x),y)=h_{A/F}(x,\psi^t(y))\, .
\end{equation}
\end{lemma}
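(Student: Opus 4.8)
The plan is to transcribe the proof of Lemma~\ref{l:psha}, making two substitutions: the Cassels--Tate pairing and Proposition~\ref{p:1} are replaced by the N\'eron--Tate height pairing and Proposition~\ref{p:2}, while the reduction modulo $p^m$ used there (which works because $\fa_n$, $\fb_n$ are finite) is replaced by a density and continuity argument, since $\M(A/F)$ and $\M(A^t/F)$ are in general not torsion.

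First I would settle the case where $\psi\in\E$ is an isogeny. Setting $B=A$ and $\phi=\psi$ in Proposition~\ref{p:2}, commutativity of the displayed diagram yields $\tilde h_{A/F}(\psi(x),y)=\tilde h_{A/F}(x,\psi^t(y))$ for all $x\in A(F)$ and $y\in A^t(F)$. By the construction in Lemma~\ref{l:pmw}, the restriction of $h_{A/F}$ to $A(F)\times A^t(F)$ differs from $\tilde h_{A/F}$ only by a fixed scaling and a field embedding, so the same identity holds for $h_{A/F}$ on $A(F)\times A^t(F)$. Since $\M(A/F)$ (resp.\ $\M(A^t/F)$) is generated as a $\ZZ_p$-module by the image of $A(F)$ (resp.\ $A^t(F)$), and both $h_{A/F}$ and the action of $\psi$ are $\ZZ_p$-linear in the relevant variable, \eqref{e:p2mw} then follows on all of $\M(A/F)\times\M(A^t/F)$ for $\psi$ an isogeny.

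Next, for an arbitrary $\psi\in\E$ I would write $\psi=(k+\psi)-k\cdot\mathrm{id}_A$ with $k\in\ZZ$, $k\neq 0$, chosen so that both $k+\psi$ and $k\cdot\mathrm{id}_A$ are isogenies; this is possible because $n\mapsto\deg(n+\psi)$ is a monic polynomial in $n$ of degree $2\dim A$, hence vanishes for only finitely many $n\in\ZZ$. Then $\psi^t=(k+\psi)^t-k\cdot\mathrm{id}_{A^t}$, so \eqref{e:p2mw} for $\psi$ follows from the isogeny case together with the bilinearity of $h_{A/F}$. Finally, to reach all $\psi\in\ZZ_p\,\E$ I would use that $\E$ is dense in its $p$-adic completion $\ZZ_p\,\E$: choosing $\varphi_m\in\E$ with $\psi-\varphi_m\in p^m\ZZ_p\,\E$, one has $\varphi_m(x)\to\psi(x)$ and $\varphi_m^t(y)\to\psi^t(y)$ $p$-adically, and passing to the limit in $h_{A/F}(\varphi_m(x),y)=h_{A/F}(x,\varphi_m^t(y))$ gives \eqref{e:p2mw}.

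The only genuinely new ingredient compared with the proof of Lemma~\ref{l:psha} --- and the step I expect to require the most care --- is the continuity of $h_{A/F}$ for the $p$-adic topologies, which is what legitimizes the final passage to the limit. This is immediate from Lemma~\ref{l:pmw}: $h_{A/F}$ is obtained by extending $\ZZ_p$-bilinearly a pairing defined on the finitely generated groups $A(F)$, $A^t(F)$ with values in the finite-dimensional $\QQ_p$-vector space $E_F$, and any $\ZZ_p$-bilinear map between finitely generated $\ZZ_p$-modules with values in such a space is continuous (on the free parts it is given by a matrix over $E_F$, and it kills torsion). Continuity of the $\ZZ_p\,\E$-action on $\M(A/F)$ and of $\cdot^t$ is automatic, both being obtained by $p$-adic completion. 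With these facts in place the remaining verifications are entirely routine, exactly as in the proof of Lemma~\ref{l:psha}.
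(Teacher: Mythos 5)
Your proposal is correct and fills in exactly what the paper leaves implicit when it says Lemma~\ref{l:pmw2} is ``proven by the same reasoning as'' Lemma~\ref{l:psha}: you have rightly identified that the torsion trick there (choosing $m$ with $p^m a = p^m b = 0$) is unavailable here, and the $p$-adic continuity of $h_{A/F}$, which you justify cleanly, is a sound substitute. A minor remark: the density-and-limit step can be dispensed with entirely, since $\E$ is a free $\ZZ$-module of finite rank and hence $\ZZ_p\,\E=\ZZ_p\otimes_{\ZZ}\E$ is already spanned over $\ZZ_p$ by (the image of) $\E$; writing $\psi=\sum_k e_k\psi_k$ with $e_k\in\ZZ_p$ and $\psi_k\in\E$, the $\ZZ_p$-bilinearity of $h_{A/F}$, which you invoke in your first paragraph, carries the identity from $\E$ to all of $\ZZ_p\,\E$ by a finite linear combination with no passage to the limit.
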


\begin{corollary} \label{c:hdualidemp}
For   $i=1,2$, $\QQ_p\otimes_{\ZZ_p}\M(A/F)^{(i)}$ and $\QQ_p\otimes_{\ZZ_p}\M(A^t/F)^{(i)}$,
 have equal corank over $\ZZ_p$.
\end{corollary}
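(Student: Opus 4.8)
The plan is to follow the strategy of Lemma~\ref{l:psha} and Corollary~\ref{c:psha}, with the $p$-adic height pairing $h_{A/F}$ of Lemma~\ref{l:pmw} in place of the Cassels--Tate pairing. Tensor everything with $\QQ_p$ and set $V:=\QQ_p\otimes_{\ZZ_p}\M(A/F)$, $W:=\QQ_p\otimes_{\ZZ_p}\M(A^t/F)$; these are finite-dimensional $\QQ_p$-vector spaces, the idempotents $e_i$ and $e_i^t$ cut out the summands $V^{(i)}:=\QQ_p\otimes_{\ZZ_p}\M(A/F)^{(i)}$ and $W^{(i)}:=\QQ_p\otimes_{\ZZ_p}\M(A^t/F)^{(i)}$, and the two quantities to be compared are $\dim_{\QQ_p}V^{(i)}$ and $\dim_{\QQ_p}W^{(i)}$.

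The first input is that $\dim_{\QQ_p}V=\dim_{\QQ_p}W$: since $A$ is isogenous to $A^t$ over $K$, any $K$-isogeny induces maps on Mordell--Weil groups with finite kernel in both directions, so $\rank_\ZZ A(F)=\rank_\ZZ A^t(F)$; call this common value $r$. The second input is the orthogonality that is the exact analogue of Corollary~\ref{c:psha}: for $x\in\M(A/F)^{(1)}$ and $y\in\M(A^t/F)^{(2)}$, Lemma~\ref{l:pmw2} yields
\[
h_{A/F}(x,y)=h_{A/F}(e_1x,y)=h_{A/F}(x,e_1^ty)=h_{A/F}\big(x,(e_1^te_2^t)y\big)=0,
\]
because $\cdot^t$ is an anti-homomorphism and $e_1e_2=e_2e_1=0$, so $e_1^te_2^t=(e_2e_1)^t=0$; symmetrically $h_{A/F}\big(\M(A/F)^{(2)},\M(A^t/F)^{(1)}\big)=0$. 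Hence, extending $\QQ_p$-bilinearly, $h_{A/F}$ is the orthogonal sum of two pairings $h^{(i)}\colon V^{(i)}\times W^{(i)}\to E_F$.

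I would then conclude by elementary linear algebra over $E_F$. Choosing a $\QQ_p$-basis of $V$ adapted to $V=V^{(1)}\oplus V^{(2)}$ and one of $W$ adapted to $W=W^{(1)}\oplus W^{(2)}$, the Gram matrix of $h_{A/F}$ in these bases is an $r\times r$ matrix $M$ over $E_F$ which, by the orthogonality just proved, is block-diagonal, $M=\operatorname{diag}(M_1,M_2)$, with $M_i$ of size $\dim_{\QQ_p}V^{(i)}\times\dim_{\QQ_p}W^{(i)}$. On the other hand, the computation in the proof of Lemma~\ref{l:pmw} shows that the Gram determinant of $h_{A/F}$ taken on $\ZZ$-bases of the free parts of $A(F)$ and $A^t(F)$ is, up to a nonzero scalar, $\det_{i,j}\big(\tilde h_{A/F}(x_i,y_j)\big)$, which is nonzero by the non-degeneracy of the N\'eron--Tate height; passing to the adapted bases only multiplies this by the nonzero determinants of two base changes in $\GL_r(\QQ_p)$, so $\det_{E_F}M\ne0$, i.e.\ $M\in\GL_r(E_F)$. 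A block-diagonal invertible matrix has square invertible diagonal blocks; thus each $M_i$ is invertible over $E_F$ (so each $h^{(i)}$ is perfect, the genuine analogue of Corollary~\ref{c:psha}) and in particular square, which gives $\dim_{\QQ_p}V^{(i)}=\dim_{\QQ_p}W^{(i)}$ for $i=1,2$.

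The only part that requires real input, the rest being the same bookkeeping as in Corollary~\ref{c:psha}, is the non-vanishing of the Gram determinant, i.e.\ the non-degeneracy of the N\'eron--Tate height on the free parts --- and that is already contained in Lemma~\ref{l:pmw}. The one mild subtlety is that $h_{A/F}$ may take values in an $E_F$ strictly larger than $\QQ_p$, so equality of dimensions cannot be read off from non-degeneracy of $h^{(i)}$ alone and must be obtained from invertibility of the full square matrix $M$; over a function field one may take $E_F=\QQ_p$ and this subtlety evaporates.
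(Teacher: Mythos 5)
Your proof is correct and follows exactly the route the paper intends (the paper only says Corollary~\ref{c:hdualidemp} is ``proven by the same reasoning as'' Lemma~\ref{l:psha} and Corollary~\ref{c:psha}): orthogonality of the idempotent pieces from Lemma~\ref{l:pmw2}, then non-degeneracy of $h_{A/F}$ from Lemma~\ref{l:pmw}, plus the isogeny to equate total ranks. You improve on the paper's one-line gesture by correctly noticing that since $h_{A/F}$ takes values in $E_F$, which may strictly contain $\QQ_p$, ``trivial kernel on each block'' would not by itself yield $\dim_{\QQ_p}V^{(i)}=\dim_{\QQ_p}W^{(i)}$; the block-diagonal Gram-determinant argument is the right way to close this genuine gap, and it is needed, not merely cosmetic.
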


\subsubsection{} \label{sss:idemZ} Now we assume that all Tate-Shafarevich groups are finite. Hence $\M(A/F)=\Sel_{p^\infty}(A/F)_{div}$
and $\M(A/L):=\varinjlim_{F} \M(A/F)=\Sel_{div}(A/K)$.

The action of $\ZZ_p\,\E$ on $\M(A/L)$ extends to its dual as $(e\cdot\varphi)(x):=\varphi(ex)$.
Write $Y_p(A/L)^{(i)}=(\M(A/L )^{(i)})^\vee$. Then we have
\begin{equation}\label{e:z12} Y_p(A/L)=Y_p(A/L)^{(1)}\oplus Y_p(A/L)^{(2)}. \end{equation}

\begin{mytheorem}\label{p:idemmw} If $X_p(A/L)$ is a torsion $\La$-module, $L/K$ is ramified only at good ordinary places
and $\Sha_{p^\infty}(A/F)$ is finite for every  finite intermediate extension of $L/K$,
then
$$[Y_p(A/L)^{(1)}]=[Y_p(A^t/L)^{(1)}]^\sharp\;\; \text{and}\;\;  [Y_p(A/L)^{(2)}]=[Y_p(A^t/L)^{(2)}]^\sharp.$$
\end{mytheorem}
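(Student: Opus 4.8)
The plan is to reduce the statement, by a sequence of formal steps, to an equality of Mordell--Weil ranks that is furnished by the $p$-adic height pairing. Since $X_p(A/L)$ is torsion, so is $X_p(A^t/L)$ by Lemma~\ref{l:isogAtA}; applying Theorem~\ref{t:flat} to $A$ and to $A^t$ and letting $g$ be the product of all the distinct simple elements so obtained, one gets a squarefree $g$ with $g\cdot\Sel_{div}(A/L)=g\cdot\Sel_{div}(A^t/L)=0$, hence $g$ annihilates $Y_p(A/L)$, $Y_p(A^t/L)$ and all of their $\La$-direct summands $Y_p(-/L)^{(i)}$. For a finitely generated torsion $\La$-module $M$ killed by a squarefree product of simple elements one checks that $[M]$ is a direct sum of modules $\La/f'\La$ with $f'$ a simple divisor of that product, so $M$ is determined up to pseudo-isomorphism by $\chi(M)$ and satisfies $[M]^\sharp=[M]$ by~\eqref{e:fld}; in particular $[Y_p(A^t/L)^{(i)}]^\sharp=[Y_p(A^t/L)^{(i)}]$, so it suffices to prove $\chi(Y_p(A/L)^{(i)})=\chi(Y_p(A^t/L)^{(i)})$. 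Finally $\chi(Y_p(A/L))=\chi(Y_p(A^t/L))$ by Corollary~\ref{c:isogAtA}, and this ideal is the product over $i=1,2$ of the two ideals in question, so the case $i=2$ follows from the case $i=1$ by cancellation; we fix $i$ from now on.

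The next step is to compute the multiplicities in $\chi(Y_p(A/L)^{(i)})$ in Selmer-theoretic terms. For a simple $f=f_{\gamma,\zeta}$ with $\zeta$ of order $p^l$ one has $\La/f\La\cong\cO[[\Gamma']]$ with $\cO=\ZZ_p[\zeta]$ and $\Gamma'=\Gamma/\gamma^{\ZZ_p}$. Localizing $\chi(Y_p(A/L)^{(i)})$ at the height-one prime $(f)$ gives, for the exponent $c$ of $(f)$, the formula $c=\rank_{\cO[[\Gamma']]}\bigl(Y_p(A/L)^{(i)}/fY_p(A/L)^{(i)}\bigr)$, and dualizing $0\to\Sel_{div}(A/L)^{(i)}[f]\to\Sel_{div}(A/L)^{(i)}\xrightarrow{f}\Sel_{div}(A/L)^{(i)}$ identifies this with $\corank_{\cO[[\Gamma']]}\Sel_{div}(A/L)^{(i)}[f]$, the $f$-torsion being a cofinitely generated $\cO[[\Gamma']]$-module on which $\gamma$ acts through $\ZZ_p[\gamma]/(f(\gamma))\cong\cO$. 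Thus the whole theorem reduces to showing, for each simple $f$,
$$\corank_{\cO[[\Gamma']]}\Sel_{div}(A/L)^{(i)}[f]=\corank_{\cO[[\Gamma']]}\Sel_{div}(A^t/L)^{(i)}[f].$$

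To evaluate these coranks I would pass to the sub-extension $L^{H}/K$, where $H=\gamma^{p^l\ZZ_p}$ (so $\Gal(L^H/K)$ is an extension of $\Gamma'$ by a finite group, i.e.\ $L^H/K$ is a $\ZZ_p^{d-1}$-extension up to a finite layer). Since $f$ divides $\gamma^{p^l}-1$, the $f$-torsion of $\Sel_{div}(A/L)^{(i)}$ is fixed by $H$. Using the control theorems for divisible Selmer groups in the good ordinary case (\cite{tan10a}, \cite{bl09}; cf.\ Lemmata~\ref{l:zerocoker}, \ref{l:fingen} and the proof of Proposition~\ref{p:gdordpscontrl}), together with $\Sel_{div}(A/F)^{(i)}=(\QQ_p/\ZZ_p\otimes_\ZZ A(F))^{(i)}$ (from~\eqref{e:selmersha} and finiteness of $\Sha_{p^\infty}(A/F)$), one identifies $\Sel_{div}(A/L)^{(i)}[f]$, modulo pseudo-null $\cO[[\Gamma']]$-modules, with the $\zeta$-isotypic part for the $\gamma$-action of $\varinjlim_F\cO\otimes_{\ZZ_p}(\QQ_p/\ZZ_p\otimes_\ZZ A(F))^{(i)}$, $F$ ranging over the finite layers of $L^H/K$; its $\cO[[\Gamma']]$-corank is then the leading coefficient, in $p^{n(d-1)}$, of the function sending the $n$-th layer $F$ to $\dim_{\QQ_p}$ of the $\zeta$-isotypic part of $(\QQ_p\otimes_\ZZ A(F))^{(i)}$. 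Finally, the $p$-adic height pairing $h_{A/F}$ of Lemma~\ref{l:pmw} is $\ZZ_p\,\E$-/$\ZZ_p\,\E^t$-equivariant (Lemma~\ref{l:pmw2}), $\Gal(F/K)$-invariant, and non-degenerate modulo torsion; as the set of primitive $p^l$-th roots of unity is stable under inversion, this yields, exactly as in Corollary~\ref{c:hdualidemp}, the equality of the $\zeta$-isotypic dimensions of $(\QQ_p\otimes A(F))^{(i)}$ and $(\QQ_p\otimes A^t(F))^{(i)}$ for every such $F$. Substituting, the two coranks above agree, and the theorem follows.

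I expect the main obstacle to be the third paragraph: making the passage from the corank of the $f$-torsion of the limit Selmer $e_i$-part to the $\gamma$-isotypic Mordell--Weil data along the $\ZZ_p^{d-1}$-tower $L^{H}/K$ precise \emph{up to pseudo-null $\cO[[\Gamma']]$-modules}, rather than merely up to finite error at each finite layer. This forces one to run the control theorems in the multivariable setting and to combine them with Monsky-type bookkeeping of the $\gamma$-eigenspace decomposition (Theorem~\ref{t:monsky}); by contrast the reductions in the first two paragraphs and the numerical input extracted from the height pairing are essentially formal.
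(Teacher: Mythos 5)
Your overall blueprint matches the paper's: use Theorem~\ref{t:flat} to force $Y_p(-/L)^{(i)}$ to be killed by a squarefree product of simple elements, so that $[\,\cdot\,]^\sharp=[\,\cdot\,]$ by~\eqref{e:fs} and the assertion reduces to $\chi(Y_p(A/L)^{(i)})=\chi(Y_p(A^t/L)^{(i)})$; then feed in the $e_i$-equivariance of the $p$-adic height pairing (Lemma~\ref{l:pmw2}, Corollary~\ref{c:hdualidemp}) as the arithmetic input. Your cancellation trick to reduce from $i=2$ to $i=1$ via Corollary~\ref{c:isogAtA} is correct but unnecessary; the paper simply proves the two cases in parallel.

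Where you diverge is in how the multiplicity of each simple factor is computed, and this is precisely where your argument has a genuine gap --- one you flag yourself. You propose to read off the exponent of $(f_\nu)$ in $\chi(Y_p(A/L)^{(i)})$ as a $\cO[[\Gamma']]$-corank of the $f_\nu$-torsion of $\Sel_{div}(A/L)^{(i)}$, then to identify that corank as the leading coefficient in $p^{n(d-1)}$ of $\ZZ_p$-corank growth along the $\ZZ_p^{d-1}$-subtower $L^H/K$, and finally to compare $A$ and $A^t$ layer by layer. Turning ``finite error at each finite layer'' coming from the control theorems into a pseudo-null error over $\cO[[\Gamma']]$, uniformly along the whole tower, is a multivariable growth-rate statement (a kind of $\mu=0$ plus Monsky bookkeeping) that you have not supplied; as written the third paragraph is a plan rather than a proof.

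The paper avoids the tower asymptotics entirely. Since the $f_\nu$'s are mutually coprime and coprime to annihilators $\eta_1,\eta_2$ of the pseudo-null parts, it chooses a \emph{single} character $\omega\in\Gamma^\vee$ with $\omega(f_\nu)=0$ and $\omega(\eta_1\eta_2 f f_\nu^{-1})\neq0$. Writing $E$ for a finite extension of $\QQ_p$ containing the values of $\omega$ and $W_\omega(A):=E\otimes Y_p(A/L)_{\Gamma^\omega}^{(i)}$, one gets $r_\nu=\dim_E W_\omega(A)^{(\omega)}$ exactly, not just up to pseudo-null error. The control theorem \cite[Theorem 4]{tan10a} applied at the single finite layer $K_\omega=L^{\Gamma^\omega}$, plus finiteness of $\Sha$ and Corollary~\ref{c:hdualidemp}, gives $\rank_{\ZZ_p}Y_p(A/L)_{\Gamma^\omega}^{(i)}=\rank_{\ZZ_p}Y_p(A^t/L)_{\Gamma^\omega}^{(i)}$; running the same at $K_{\omega^p}$ and using the short exact sequence attached to $\Gamma_\omega^\vee=[\omega]\sqcup\Gamma_{\omega^p}^\vee$ isolates the $\omega$-eigenspace and yields $\dim_E W_\omega(A)^{(\omega)}=\dim_E W_\omega(A^t)^{(\omega)}$. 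In short, your ingredients are right, but the mechanism you propose for the multiplicity comparison requires a nontrivial asymptotic result that you have not established, whereas the paper's character-evaluation at two finite levels sidesteps it.
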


\begin{proof} Fix $i\in\{1,2\}$. By Theorem \ref{t:flat}, we write
$$[Y_p(A/L)^{(i)}]=\bigoplus_{\nu=1}^m (\La/(f_{\nu}))^{r_{\nu}},\;\;\;[Y_p(A^t/L)^{(i)}]=\bigoplus_{\nu=1}^m (\La/(f_{\nu}))^{s_{\nu}},$$
where $f_1,...,f_m$ are coprime simple elements and $r_{\nu}$, $s_{\nu}$ are nonnegative integers.
We need to show that $r_{\nu}=s_\nu$ for every $\nu$, since $\La/(f_\nu)=(\La/(f_\nu))^\sharp$.

Let $P_1$ denote the quotient $Y_p(A/L)^{(i)}/[Y_p(A/L)^{(i)}]$ and $P_2$ the analogue for $A^t$. Since $P_1$, $P_2$ are pseudo-null $\La$-modules, there are $\eta_1,\eta_2\in\La$ coprime to $f:=f_1 \cdots f_m$ such that $\eta_jP_j=0$. Then $f_\nu$ is coprime to $\eta_1\eta_2ff_\nu^{-1}$
for each $\nu$. We choose $\omega\in \Gamma^\vee$ such that $\omega(f_\nu)=0$ and $\omega(\eta_1\eta_2ff_\nu^{-1}) \not=0$. Let $E$ be a finite extension of $\QQ_p$ containing the values of $\omega$. Write $EM:=E\otimes_{\ZZ_p} M$ for any $M$ over $\ZZ_p$. We see $E$ as a module over $E\La$ via the ring epimorphism $E\La\rightarrow E$ induced by $\omega$. The exact sequence
$$0=\Tor^1_{E\La}(E,EP_1)\lr E\otimes_{E\La} E[Y_p(A/L)^{(i)}] \lr  E\otimes_{E\La} EY_p(A/L)^{(i)} \lr E\otimes_{E\La} EP_1= 0$$
yields
$$r_{\nu}=\dim_{E} (E\otimes_{E\La} EY_p(A/L)^{(i)})\,.$$
Let $\Gamma^\omega\subset\Gamma$ denote the kernel of $\omega$ and write $W_\omega(A)$ for the coinvariants $EY_p(A/L)_{\Gamma^\omega}^{(i)}$. The isomorphisms
$$E\otimes_{E\La} EY_p(A/L)^{(i)}\simeq E\otimes_{E\La} EY_p(A/L)_{\Gamma^\omega}^{(i)}\simeq (EY_p(A/L)_{\Gamma^\omega}^{(i)})^{(\omega)}$$
show that $r_{\nu}=\dim_E W_\omega(A)^{(\omega)}$. A similar argument proves $s_{\nu}=\dim_E W_\omega(A^t)^{(\omega)}$.

Write $K_\omega:=L^{\Gamma^\omega}$ and $\Gamma_{\omega}:=\Gal(K_\omega/K)$. Since
$\Sha_{p^\infty}(A/K_\omega)$ is finite,
the control theorem \cite[Theorem 4]{tan10a} implies that the restriction map $\M(A/K_\omega)\rightarrow \M(A/L)^{\Gamma^\omega}$ has finite kernel and cokernel. Thus we find
$$\rank_{\ZZ_p}\big(\M(A/K_\omega)^{(i)}\big)^\vee=\rank_{\ZZ_p}\big((\M(A/L)^{\Gamma^\omega})^{(i)}\big)^\vee=\rank_{\ZZ_p} Y_p(A/L)_{\Gamma^\omega}^{(i)}.$$
This and Corollary \ref{c:hdualidemp} yield
$$\rank_{\ZZ_p} Y_p(A/L)_{\Gamma^\omega}^{(i)}=\rank_{\ZZ_p} Y_p(A^t/L)_{\Gamma^\omega}^{(i)}.$$
Similarly, for the character $\varpi=\omega^p$, we have
$$\rank_{\ZZ_p} Y_p(A/L)_{\Gamma^{\varpi}}^{(i)}=\rank_{\ZZ_p} Y_p(A^t/L)_{\Gamma^{\varpi}}^{(i)}.$$
As in \S\ref{ss:qorbit}, let $[\omega]$ denote the $\Gal({\bar{\QQ}}_p/\QQ_p)$-orbit of $\omega$. By $\Gamma_\omega^\vee=[\omega]\sqcup\Gamma_{\omega^p}^\vee$ we get the exact sequence of $E[\Gamma_\omega]$-modules:
$$\xymatrix{0 \ar[r] & \prod_{\chi\in [\omega]} (W_\omega(A))^{(\chi)} \ar[r] & W_\omega(A) \ar[r] & W_{\omega^p}(A) \ar[r] & 0}.$$
Since for all $\chi\in [\omega]$ the eigenspaces $(W_{\omega}(A))^{(\chi)}$ have the same dimension over $E$, the two equalities and the exact sequence above imply
$$\dim_E (W_{\omega}(A))^{(\omega)}=\dim_E (W_{\omega}(A^t))^{(\omega)}\,,$$
which completes the proof.\end{proof}

\begin{mytheorem}\label{t:idemx} If $X_p(A/L)$ is a torsion $\La$-module, $L/K$ is ramified only at good ordinary places
and $\Sha_{p^\infty}(A/F)$ is finite for every  finite intermediate extension of $L/K$, then
$$\chi(X_p(A/L))=\chi(X_p(A^t/L)^{(1)})^\sharp\cdot \chi(X_p(A/L)^{(2)})=\chi(X_p(A/L)^{(1)})\cdot \chi(X_p(A^t/L)^{(2)})^\sharp\,.$$
\end{mytheorem}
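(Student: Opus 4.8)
The plan is to obtain the identity by a purely formal manipulation of characteristic ideals, feeding in three results already proved: the multiplicativity of $\chi$ along short exact sequences and finite direct sums, Corollary~\ref{c:idemsha}, and Theorem~\ref{p:idemmw}. Under the hypotheses of the theorem all the relevant modules are finitely generated torsion over $\La$: $X_p(A/L)$ and $X_p(A^t/L)$ by assumption and Lemma~\ref{l:isogAtA}, $\fa$ and $\fb$ by Theorem~\ref{t:ct}, and $Y_p(A/L)$, $Y_p(A^t/L)$ as quotients of the former; hence so are all their $e_i$- (resp.\ $e_i^t$-) components, being direct summands. Since $e_1+e_2=1$ we have $\chi(X_p(A/L))=\chi(X_p(A/L)^{(1)})\cdot\chi(X_p(A/L)^{(2)})$, so the theorem will follow once I establish
$$\chi\bigl(X_p(A/L)^{(i)}\bigr)=\chi\bigl(X_p(A^t/L)^{(i)}\bigr)^{\sharp}\qquad (i=1,2):$$
substituting this for $i=1$ in the first factor gives the first displayed equality, and for $i=2$ in the second factor gives the second.

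To prove the displayed relation, first I would apply the central idempotent $e_i$ (resp.\ $e_i^t$) to the exact sequence \eqref{e:axy} for $A$ (resp.\ for $A^t$, whose Cassels--Tate system is the one of $A$ with $\fa$ and $\fb$ interchanged, so that its analogue of \eqref{e:axy} is $0\to\fb\to X_p(A^t/L)\to Y_p(A^t/L)\to0$). This yields short exact sequences of finitely generated torsion $\La$-modules
$$0\to\fa^{(i)}\to X_p(A/L)^{(i)}\to Y_p(A/L)^{(i)}\to0,\qquad 0\to\fb^{(i)}\to X_p(A^t/L)^{(i)}\to Y_p(A^t/L)^{(i)}\to0,$$
whence $\chi(X_p(A/L)^{(i)})=\chi(\fa^{(i)})\cdot\chi(Y_p(A/L)^{(i)})$ and $\chi(X_p(A^t/L)^{(i)})=\chi(\fb^{(i)})\cdot\chi(Y_p(A^t/L)^{(i)})$. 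Next I would invoke Corollary~\ref{c:idemsha} --- applicable since $X_p(A/L)$ is torsion and $L/K$ is ramified only at good ordinary places, so that $\fA$ is pseudo-controlled by Proposition~\ref{p:gdordpscontrl} --- to get $[\fa^{(i)}]=[\fb^{(i)}]^{\sharp}$, hence $\chi(\fa^{(i)})=\chi(\fb^{(i)})^{\sharp}$ by Lemma~\ref{l:phi[]chi}; and Theorem~\ref{p:idemmw} (whose hypotheses coincide with ours) to get $\chi(Y_p(A/L)^{(i)})=\chi(Y_p(A^t/L)^{(i)})^{\sharp}$. Applying $\sharp$ to the factorization of $\chi(X_p(A^t/L)^{(i)})$ and inserting these two equalities gives $\chi(X_p(A^t/L)^{(i)})^{\sharp}=\chi(\fa^{(i)})\cdot\chi(Y_p(A/L)^{(i)})=\chi(X_p(A/L)^{(i)})$, as wanted.

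The computation itself is routine; the only point that I expect to require care --- the main (and mild) obstacle --- is the bookkeeping of module structures and idempotents. The module $\fa$ carries a natural action of $\ZZ_p\,\E^t$ and, via the Cassels--Tate pairing, is also the Pontryagin dual of a $\ZZ_p\,\E$-module; Lemma~\ref{l:psha} is precisely what forces the two resulting eigenspace decompositions of $\fa$ to coincide, so that $\fa^{(i)}$ is unambiguous, and likewise for $\fb$. One must also verify that the $A^t$-system is the $A$-system with $\fa$ and $\fb$ swapped, so that the two short exact sequences above really are the $e_i$- and $e_i^t$-parts of honest short exact sequences and the cited results apply verbatim. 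Once this is in place, no further input is needed.
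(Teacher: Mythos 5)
Your proposal is correct and follows exactly the route the paper takes: factor $\chi(X_p(\,\cdot\,/L)^{(i)})$ using the idempotent-applied exact sequence \eqref{e:axy{(1)}} and its $A^t$-analogue, then feed in Corollary~\ref{c:idemsha} for the $\fa^{(i)}$, $\fb^{(i)}$ factors and Theorem~\ref{p:idemmw} for the $Y_p$-factors, and finally recombine via $e_1+e_2=1$. You simply spell out the bookkeeping (including the compatibility of the $\E$- and $\E^t$-decompositions via Lemma~\ref{l:psha} and the $\fa\leftrightarrow\fb$ swap for the $A^t$-system) that the paper leaves implicit.
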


\begin{proof} Just use the exact sequence \eqref{e:axy{(1)}} (together with its $A^t$-analogue with $\fb^{(i)}$),
 Corollary \ref{c:idemsha} and Theorem \ref{p:idemmw} to get
\begin{equation} \label{e:idemx} \chi(X_p(A^t/L)^{(i)})^\sharp = \chi(X_p(A/L)^{(i)})\,.\end{equation}
\end{proof}

In the next paper \cite{LLTT} of this series we shall apply the theorem to prove
the Iwasawa Main Conjecture for constant ordinary abelian varieties over function fields.

\end{section}

\end{document}